\documentclass[10pt]{amsart}
\usepackage{geometry}
\usepackage{amsmath}
\usepackage{amsthm}
\usepackage{amssymb}
\usepackage{graphicx}
\usepackage[multiple]{footmisc}
\usepackage{multicol}
\usepackage{tikz}
\usetikzlibrary{arrows}

\usepackage[pdfauthor={Kameryn J Williams},
    pdftitle={Minimum models of second-order set theories},
    hidelinks 
]{hyperref}

\usepackage[backend=bibtex,style=alphabetic,maxbibnames=15,maxcitenames=6,dateabbrev=false]{biblatex}
\renewcommand{\UrlFont}{\sffamily\smaller} 
\renewbibmacro{in:}{\ifentrytype{article}{}{\printtext{\bibstring{in}\intitlepunct}}} 
\DeclareFieldFormat{url}{{\UrlFont\url{#1}}} 
\DeclareFieldFormat{urldate}{
  (version \thefield{urlday}\addspace%
  \mkbibmonth{\thefield{urlmonth}}\addspace%
  \thefield{urlyear}\isdot)}
\DeclareFieldFormat{eprint:arxiv}{
  \ifhyperref
    {\href{http://arxiv.org/abs/#1}{%
        arXiv\addcolon\nolinkurl{#1}}\iffieldundef{eprintclass}{}{\UrlFont{\mkbibbrackets{\thefield{eprintclass}}}}}
    {arXiv\addcolon\nolinkurl{#1}\iffieldundef{eprintclass}{}{\UrlFont{\mkbibbrackets{\thefield{eprintclass}}}}}}

\bibliography{../../ref}

\theoremstyle{plain}
\ifdefined\theorem \else \newtheorem{theorem}{Theorem} \fi
\ifdefined\maintheorem \else \newtheorem{maintheorem}[theorem]{Main Theorem} \fi
\ifdefined\theoremschema \else  \fi
\ifdefined\lemma \else \newtheorem{lemma}[theorem]{Lemma} \fi
\ifdefined\lemmaschema \else  \fi
\ifdefined\proposition \else \newtheorem{proposition}[theorem]{Proposition} \fi
\ifdefined\corollary \else \newtheorem{corollary}[theorem]{Corollary} \fi
\ifdefined\fact \else  \fi
\ifdefined\problem \else  \fi
\ifdefined\conjecture \else  \fi
\ifdefined\question \else \newtheorem{question}[theorem]{Question} \fi
\ifdefined\observation \else \newtheorem{observation}[theorem]{Observation} \fi
\newtheorem*{theorem*}{Theorem}
\newtheorem*{lemma*}{Lemma}
\newtheorem*{proposition*}{Proposition}
\newtheorem*{conjecture*}{Conjecture}
\newtheorem*{question*}{Question}
\newtheorem*{definition*}{Definition}
\newtheorem{sublemma}{Lemma}[theorem]
\theoremstyle{definition}
\ifdefined\definition \else \newtheorem{definition}[theorem]{Definition} \fi
\ifdefined\maindefinition \else  \fi
\theoremstyle{remark}
\ifdefined\remark \else  \fi
\ifdefined\remarks \else  \fi
\ifdefined\example \else  \fi
\ifdefined\claim \else  \fi
\ifdefined\exercise \else  \fi
\newtheorem*{remark*}{Remark}
\ifdefined\acknowledgment \else  \fi
\ifdefined\dedication \else  \fi
\ifdefined\case \else  \fi

\newcounter{my_enumerate_counter}

\newcommand\comment[1]{}

\newcommand\Zsf{\mathsf{Z}}

\newcommand\Afrak{\mathfrak{A}}
\newcommand\Bfrak{\mathfrak{B}}

\newcommand\Mfrak{\mathfrak{M}}

\newcommand\Lcal{\mathcal{L}}

\newcommand\Pcal{\mathcal{P}}

\newcommand\Xcal{\mathcal{X}}
\newcommand\Ycal{\mathcal{Y}}
\newcommand\Zcal{\mathcal{Z}}

\renewcommand\Bbb{\mathbb{B}}

\newcommand\Pbb{\mathbb{P}}
\newcommand\Qbb{\mathbb{Q}}

\newcommand{\dom}{\operatorname{dom}}

\ifdefined\alt \else
  \newcommand{\alt}{\operatorname{alt}}
\fi

\newcommand{\forces}{\Vdash}

\newcommand\cat{{}^\smallfrown}

\renewcommand{\diamond}{\diamondsuit}

\newcommand\axiom{\mathsf}

\newcommand\KP{\axiom{KP}}
\newcommand\KPU{\axiom{KPU}}
\newcommand\ZF{\axiom{ZF}}
\newcommand\ZFC{\axiom{ZFC}}
\newcommand\ZFm{\axiom{ZF}^-}
\newcommand\ZFCm{\axiom{ZFC}^-}

\newcommand\GB{\axiom{GB}}
\newcommand\GBC{\axiom{GBC}}

\newcommand\ETR{\axiom{ETR}}

\newcommand\DCA{\Delta^1_1\text{-}\axiom{CA}}

\newcommand\PCA{\Pi_1^1\text{-}\axiom{CA}}
\newcommand\PnCA[1]{\Pi_{#1}^1\text{-}\axiom{CA}}


\newcommand\KM{\axiom{KM}}


\newcommand\ZFmi{\ZFm_{\mathrm I}}
\newcommand\ZFCmi{\ZFCm_{\mathrm I}}


\newcommand\GC{\axiom{GC}}


\newcommand\AC{\axiom{AC}}

\newcommand\PA{\axiom{PA}}

\newcommand\RCA{\axiom{RCA}}
\newcommand\WKL{\axiom{WKL}}
\newcommand\ACA{\axiom{ACA}}
\newcommand\ATR{\axiom{ATR}}

\newcommand\class{\mathrm}

\newcommand\HOD{\class{HOD}}
\newcommand\Ord{\class{Ord}}

\newcommand{\seq}[1]{\left\langle #1 \right\rangle}

\renewcommand{\epsilon}{\varepsilon}

\newcommand\mand{\textrm{ and }}

\newcommand\length{\operatorname{len}}
\newcommand\rest{\upharpoonright}

\newcommand\powerset{\Pcal}

\newcommand\tc{\operatorname{tc}}

\newcommand\omegaoneck{{\omega_1^{\mathrm{CK}}}}
\newcommand\Hyp{\operatorname{Hyp}}



\newcommand{\proves}{\vdash}

\newcommand{\godel}[1]{\ulcorner#1\urcorner}

\newcommand\Sk{\operatorname{Sk}}

\newcommand\Con{\operatorname{Con}}

\newcommand\Def{\operatorname{Def}}
\ifdefined\Form \else
  \newcommand\Form{\axiom{Form}}
\fi

\newcommand{\impl}{\Rightarrow}
\renewcommand{\iff}{\Leftrightarrow}
\renewcommand{\phi}{\varphi}

\newcommand\Tr{\mathrm{Tr}}

\newcommand\KMC{\axiom{KMC}}

\title{Minimum models of second-order set theories}
\DeclareRobustCommand{\okina}{%
  \raisebox{\dimexpr\fontcharht\font`A-\height}{%
    \scalebox{0.8}{`}%
  }%
}
\author{Kameryn J.\ Williams}
\address[K.~J.~Williams]{
University of Hawai\okina{}i at M\=anoa \\
Department of Mathematics \\
2565 McCarthy Mall, Keller 401A \\
Honolulu, HI  96822\\
USA
}
\email{kamerynw@hawaii.edu}
\urladdr{http://kamerynjw.net}

\thanks{I am grateful to the anonymous referee for their many helpful comments. The present article is substantially improved thanks to their time and care in reviewing it. I also want to thank Joel David Hamkins, under whose supervision I wrote my dissertation, written while this paper was under review and consisting in part of the material herein.}

\subjclass[2010]{Primary 03E70; Secondary 03C62}

\keywords{Kelley--Morse, G\"odel--Bernays, elementary transfinite recursion, minimum model, second-order set theory}

\begin{document}

\begin{abstract}
In this article I investigate the phenomenon of minimum models of second-order set theories, focusing on Kelley--Morse set theory $\KM$, G\"odel--Bernays set theory $\GB$, and $\GB$ augmented with the principle of Elementary Transfinite Recursion. The main results are the following. $(1)$ A countable model of $\ZFC$ has a minimum $\GBC$-realization if and only if it admits a parametrically definable global well-order. $(2)$ Countable models of $\GBC$ admit minimal extensions with the same sets. $(3)$ There is no minimum transitive model of $\KM$. $(4)$ There is a minimum $\beta$-model of $\GB + \ETR$.
The main question left unanswered by this article is whether there is a minimum transitive model of $\GB + \ETR$.
\end{abstract}

\maketitle

\section{Introduction}

The jumping-off point for this article is the following well-known theorem.

\begin{theorem*}[Shepherdson \cite{shepherdson1953}, Cohen \cite{cohen1963}]
If there is a transitive model of $\ZF$ then there is a minimum transitive model of $\ZF$. That is, there is a transitive model $M$ of $\ZF$ so that if $N$ is any transitive model of $\ZF$ then $M \subseteq N$. Indeed, this minimum model also satisfies $\AC$ and $V = L$.
\end{theorem*}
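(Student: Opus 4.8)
The plan is to identify the minimum transitive model, when it exists, as $L_{\alpha_0}$ where $\alpha_0$ is the least ordinal such that $L_{\alpha_0} \models \ZF$; the work is then to check that such an $\alpha_0$ exists under the hypothesis and that $L_{\alpha_0}$ really is contained in every transitive model of $\ZF$.

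\emph{Existence of $\alpha_0$.} Suppose $M$ is a transitive model of $\ZF$, and set $\alpha = M \cap \Ord$. Because $M$ is transitive, its ordinals are genuine ordinals and the constructible hierarchy is computed correctly inside it: the operation $\xi \mapsto L_\xi$ is absolute between transitive models of a sufficiently weak fragment of set theory, so $L^M = \bigcup_{\xi < \alpha} L_\xi^M = \bigcup_{\xi < \alpha} L_\xi = L_\alpha$. By G\"odel's theorem that $\ZF$ proves the relativization $\sigma^L$ of each $\ZF$-axiom $\sigma$, and since $M \models \ZF$, we conclude $L_\alpha = L^M \models \ZF$. Hence the class $\{\alpha : L_\alpha \models \ZF\}$ is nonempty; let $\alpha_0$ be its least member.

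\emph{$L_{\alpha_0}$ satisfies $\AC$ and $V = L$.} Absoluteness of the constructible hierarchy applied to the transitive set $L_{\alpha_0}$ gives $L^{L_{\alpha_0}} = L_{\alpha_0}$, i.e.\ $L_{\alpha_0} \models \forall x\,(x \in L)$, so $L_{\alpha_0} \models V = L$. Since $\ZF$ proves $L \models \AC$ and $L_{\alpha_0} \models V = L$, we get $L_{\alpha_0} \models \ZFC$.

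\emph{Minimality.} Let $N$ be an arbitrary transitive model of $\ZF$. Applying the first step to $N$ yields $L_{N \cap \Ord} = L^N \models \ZF$, whence $\alpha_0 \le N \cap \Ord$ by minimality of $\alpha_0$, and therefore $L_{\alpha_0} \subseteq L_{N \cap \Ord} = L^N \subseteq N$, the final inclusion holding because $N$ is transitive. Thus $L_{\alpha_0}$ is contained in every transitive model of $\ZF$, as desired. The main obstacle is really the input to the first step: the absoluteness of the map $\xi \mapsto L_\xi$ for transitive models of $\ZF$, together with G\"odel's result that $\ZF \vdash \sigma^L$ for every $\ZF$-axiom $\sigma$. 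These are the classical foundations of the theory of constructibility; granting them, the remainder is just bookkeeping with ordinal heights.
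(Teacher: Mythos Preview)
Your proof is correct and follows essentially the same approach as the paper's. The paper's sketch simply identifies the minimum model as $L_\alpha$ for $\alpha$ the least height of a transitive model of $\ZF$ and invokes the absoluteness of $L$; your write-up fills in exactly those details (your $\alpha_0$ coincides with the paper's $\alpha$ by the argument you give).
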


From a modern perspective, this result is not difficult to prove. This minimum model is $L_\alpha$ where $\alpha$ is the least height of a transitive model of $\ZF$. The absoluteness of $L$ gives that $L_\alpha$ is contained in every transitive model of $\ZF$.\footnote{The anonymous referee noted that the Shepherdson-Cohen minimum model $L_\alpha$ is also the minimum model of $\ZF$ in the following sense: it is, up to isomorphism, the unique model of $\ZF$ which can be isomorphically embedded into every model of $\ZF$. I reproduce their proof of this fact here. If $N$ is well-founded, then by the Shepherdson--Cohen theorem $L_\alpha$ embeds into a transitive submodel of $N$. In this case, the embedding is moreover $\Delta_0$-elementary. If $N$ is not well-founded, it has a countable elementary submodel $N_0$ which must also be ill-founded. By a theorem of Hamkins \cite[corollary 29]{hamkins2013} there is therefore an embedding of $L_\alpha$ into $N_0$, hence an embedding of $L_\alpha$ into $N$. That $L_\alpha$ is the unique such model follows from the Shepherdson--Cohen theorem combined with the fact that any model which embeds into a well-founded model must also well-founded.}

This proof can be generalized to get analogous results for many weakenings or strengthenings $T$ of $\ZF$. What we need from $T$ is that it has a transitive model, is strong enough to build the inner model $L$, and is absolute to $L$, meaning that $M \models T$ implies $L^M \models T$. On the weaker side, $\KP$ satisfies the conditions---see \cite[chapter II]{barwise1975}. The minimum transitive model of $\KP$ is $L_\omega = V_\omega$ and the minimum transitive model of $\KP$ $+$ Infinity is $L_{\omegaoneck}$, where $\omegaoneck$ is the Church--Kleene ordinal. On the stronger side, we get minimum transitive models of $\ZFC$ plus small large cardinals. There is, for instance, a minimum transitive model of $\ZFC$ $+$ there is a Mahlo cardinal. It is not difficult to see, however, that this cannot be extended too far up the large cardinal hierarchy. 

There is no minimum transitive model of any $T$ extending $\ZFC$ which proves there is a measurable cardinal. This is because by a well-known theorem of Scott \cite{scott1961}, if $N \models \ZFC$ has a measurable cardinal then there is an elementary embedding $j : N \to M$ to an inner model $M$, with the measure not in $M$.\footnote{For a full proof, consult \cite[Lemma 17.9 $(ii)$]{jech:book}.} So any transitive model of such $T$ contains a strictly smaller transitive model of $T$.

Moreover observe that Scott's theorem can be internalized to any model of $\ZFC$ with a measurable cardinal, whether or not it is well-founded. This yields that any $M \models \ZFC$ $+$ ``there is measurable cardinal'' has a proper transitive submodel which satisfies the same first-order theory. Define the pre-order $\vartriangleleft_\mathsf{end}$ on models of $\ZFC$ as $M \vartriangleleft_\mathsf{end} N$ if  $M$ embeds as a transitive submodel of $N$. Or to use language that will be introduced in the next section, $M \vartriangleleft_\mathsf{end} N$ if and only if $N$ \emph{end-extends} an isomorphic copy of $M$. So this same argument shows that if $T$ is any extension of $\ZFC$ which proves there is a measurable cardinal, then there is no $\vartriangleleft_\mathsf{end}$-minimal model of $T$.\footnote{I thank the referee for pointing out this extension of the argument.}

In contrast, results from inner model theory show that if there is a transitive model which thinks the ordinal $\kappa$ is measurable, then there is a minimum such model. This extends to strong cardinals, Woodin cardinals, etc.\footnote{Briefly: It is well-known that if $\kappa$ is measurable there is a minimum inner model which thinks $\kappa$ is measurable, of the form $L[U]$ where $U$ is a normal measure on $\kappa$. See \cite[section 20]{kanamori-book} for a proof. From this it follows that if $\alpha$ is the least height of a model of $\ZFC$ which thinks $\kappa$ is measurable then a model of the form $L_\alpha[U]$ is the minimum transitive model which thinks that $\kappa$ is measurable. A similar argument can be made for other large cardinals with canonical inner models. The reader is advised to consult \cite{zeman:InnerModels} for the construction up to a strong cardinal, and \cite{mitchell-steel:FineStructure} for the argument for Woodin cardinals.}

A second direction the Shepherdson--Cohen result could be generalized is from first-order set theory to second-order set theory. Given some second-order set theory, does it have a minimum transitive model? A natural first theory to look at here is $\GB$, G\"odel--Bernays set theory. This was already answered by  Shepherdson in the positive; see theorem \ref{thm:min-gb}.

While the existence of a minimum transitive model of $\GB$ is already settled, I also look the situation where we fix the sets of our model but allow the classes to vary. In this context, it is most interesting to consider $\GBC$, G\"odel--Bernays set theory with Global Choice.

\begin{maintheorem} \label{main:min-gbc-rlzn}
A countable model $M \models \ZFC$ has a minimum $\GBC$-realization if and only if $M$ has a parametrically definable global well-ordering.
\end{maintheorem}

\begin{maintheorem} \label{main:min-gbc-ext}
If $(M,\Xcal) \models \GBC$ is countable, then $\{ \Ycal : (M,\Ycal) \models \GBC$ and $\Xcal \subsetneq \Ycal\}$ has minimal elements.
\end{maintheorem}

In contrast to the Shepherdson result, stronger second-order set theories lack minimum transitive models.

\begin{maintheorem} \label{min-trans-km}
There is no minimum transitive model of $\KM$. Nor is there a minimum transitive model of $\GB + \PCA$, nor any computably axiomatizable extension thereof.
\end{maintheorem}

I also consider theories intermediate between $\GB$ and $\GB + \PCA$. The most important such theory I will consider is $\GB + \ETR$, where $\ETR$ is the principle of Elementary Transfinite Recursion. And I consider other theories obtained by weakening $\ETR$. My main result here concerns $\beta$-models of $\GB + \ETR$. A transitive model of second-order set theory is a \emph{$\beta$-model} if it is correct about well-foundedness for class relations. This gives a more restricted class of models.

\begin{maintheorem} \label{min-beta-etr}
There is a minimum $\beta$-model of $\GB + \ETR$, if there is any $\beta$-model of $\GB + \ETR$.
\end{maintheorem}

Combined with the results that $\GB$ and $\KM$ have minimum $\beta$-models, due respectively to Shepherdson \cite{shepherdson1953} and Marek and Mostowski \cite{marek-mostowski1975},\footnote{See, respectively, corollary \ref{cor:least-beta-gbc} and theorem \ref{thm:least-beta-km}.} 
we have the following state of affairs.

\begin{center}
\begin{tabular}{c|c c}
 & minimum transitive model? & minimum $\beta$-model? \\
\hline
$\GB$ & Y & Y \\
$\GB + \ETR$ & ? & Y \\
$\KM$  & N & Y
\end{tabular}
\end{center}

Left open by this article is filling in the question mark in the table: does $\GB + \ETR$ have a minimum transitive model? While I do not answer this question, I do show that weak enough fragments of $\GB + \ETR$ do have minimum transitive models; in particular, $\GB + \ETR_\Ord$ has a minimum transitive model.\footnote{$\ETR_\Ord$ is the restriction of $\ETR$ to recursions along $\Ord$. See definition~\ref{def:etr-gamma}.}

The structure of this article is as follows. I first discuss some preliminary definitions and results, followed by a look at some antecedents for my work in the literature, and parallels to work in second-order arithmetic. The remainder of the paper is organized by the strength of the theories considered. These three sections are largely independent, and may be read by the reader in any order. First comes the section about $\GB$. I present proofs of the above-mentioned Shepherdson results and prove main theorems \ref{main:min-gbc-rlzn} and \ref{main:min-gbc-ext}. The next section is devoted to $\ETR$ and its fragments. I prove main theorem \ref{min-beta-etr} and show that there are minimum transitive models for $\GB$ plus weak enough fragments of $\ETR$. I end the section by separating these fragments of $\ETR$. The final section of the paper is about $\KM$ and other strong second-order set theories.
I prove main theorem \ref{min-trans-km} and give a proof of the Marek--Mostowski result that there is a minimum $\beta$-model of $\KM$. I also show that no countable transitive model of $\ZF$ has a minimum $\KM$-realization. 

\section{Preliminaries} \label{sec:prelim}

The second-order set theories considered in this article are formulated so that their models consist of two sorts, the \emph{sets} of or \emph{first-order part} of the model and the \emph{classes} of or \emph{second-order part} of the model.
All second-order set theories considered in this paper will include an axiom asserting that extensionality holds for classes. This ensures that any model of these theories is isomorphic to one whose second-order part consists of subsets of the first-order part. Without loss we can consider only such models. Formally, models of second-order set theories are of the form $(M,\Xcal; \in^{(M,\Xcal)})$ where $M$ is the first-order part, $\Xcal$ is the second-order part, $\in^{(M,\Xcal)}$ is the membership relation for the model between sets and sets or between sets and classes. For simplicity, I will write $(M,\Xcal)$, suppressing writing the membership relation. I will use $\in^M$, or simply $\in$ if it is unambiguous, to refer to the membership relation of $M$. In general, when talking about a multi-sorted structure I will separate the sorts and the relations, functions, and constants with a semicolon.

A model $(M,\Xcal)$ of second-order set theory is {\em transitive} if $M$ and $\Xcal$ are transitive and its membership relation is the true $\in$. By the convention that $\Xcal$ consists of subsets of $M$, for the theories considered in this article it is enough just to ask for $M$ itself to be transitive.

\begin{definition}
Given a second-order set theory $T$, the {\em minimum transitive model of $T$}, provided it exists, is a transitive model $(M,\Xcal)$ of $T$ so that if $(N,\Ycal)$ is any transitive model of $T$ then $M \subseteq N$ and $\Xcal \subseteq \Ycal$. For the theories we consider, this is equivalent to requiring that $\Xcal \subseteq \Ycal$.
\end{definition}

If $M$ is a first-order model of set theory and $T$ is a second-order set theory, say that $\Xcal \subseteq \powerset(M)$ is a $T$-{\em realization for} $M$ if $(M,\Xcal) \models T$. If there is some $T$-realization for $M$ then we say that $M$ is $T$-{\em realizable}. The {\em minimum $T$-realization for $M$}, provided it exists, is $\Xcal \subseteq \powerset(M)$ so that $(M,\Xcal) \models T$ and for any $\Ycal \subseteq \powerset(M)$ if $(M,\Ycal) \models T$ then $\Xcal \subseteq \Ycal$. That is, $\Xcal$ is the minimum $T$-realization for $M$ if it is the minimum element in the poset consisting of all $T$-realizations for $M$ ordered by the subset relation. 

We can also talk about minimal $T$-realizations, minimal transitive models of $T$, and so forth. Recall the distinction between minimal and minimum elements. Given a partial order $(P,<)$, a \emph{minimal} element $m$ is one so that there is no $p < m$ in $P$. A \emph{minimum} element $m$ has the stronger property that $m \le p$ for all $p \in P$. So, for example, a minimal transitive model of $\ZF$ is one which does not contain any strictly smaller transitive model, while the minimum transitive model of $\ZF$ is contained inside all transitive models.\footnote{It is unfortunate that mathematicians are not always careful about the distinction; for instance, one sometimes hears the Shepherdson--Cohen minimum transitive model called the \emph{minimal} model of $\ZF$, even though it satisfies the stronger property. And observe that the existence of a minimal transitive model of $\ZF$ is trivially proven from the assumption of the existence of a transitive model of $\ZF$, simply by considering any model of minimum ordinal height. On the other hand, the Shepherdson and Cohen proofs that there is a minimum transitive model use considerable technology.}

Observe that if $\Xcal$ is the minimum $T$-realization for $M$, then $\Xcal = \bigcap \{ \Ycal : (M,\Ycal) \models T \}$. This is the motivation for the following definition.

\begin{definition} \label{def:xcalt}
Let $T$ be a second-order set theory and let $M$ be a model of $\ZF$. Set 
\[
\Xcal_T(M) = \bigcap \{ \Ycal : (M,\Ycal) \models T \}.
\]
If the context is clear I will just write $\Xcal_T$.
\end{definition}

We can then phrase the question of whether $M$ has a minimum $T$-realization as: is it the case that $(M,\Xcal_T) \models T$? In case the answer is no, the question is then: what axioms are satisfied by $(M,\Xcal_T)$?

A transitive model $(M,\Xcal)$ of second-order set theory is a {\em $\beta$-model} if it is correct about well-foundedness for class relations. That is, if $R \in \Xcal$ and $(M,\Xcal) \models \text{``}R \text{ is well-founded''}$ then $R$ really is well-founded. 
Not every transitive model is a  $\beta$-model, even if we restrict to models satisfying a strong second-order set theory. This is in contrast to the well-known fact that well-foundedness is absolute for transitive models of $\ZFC$. For models of $\ZFC$, there are two characterizations of well-foundedness: a $\Pi_1$ characterization, namely that every nonempty subset of $\dom R$ has an $R$-minimal element; and a $\Sigma_1$ characterization, namely there is a ranking function from $R$ to $\Ord$.

In the second-order context, we still have access to the $\Pi^0_1$ characterization.\footnote{To clarify, the superscript $0$ and $1$ refer to whether a formula includes quantification over classes, rather than the arithmetical/analytical hierarchies from descriptive set theory. The $0$ is for first-order quantification; for instance, a $\Pi^0_1$ formula has a single unbounded universal set quantifier, with all other quantifiers bounded. The $1$ is for second-order quantification; for instance, a $\Sigma^1_1$ formula is of the form $\exists X \phi(X,P)$ where $\phi$ is $\Pi^0_k$ for some $k$.}
The theories we consider will prove that a class relation $R$ has that every nonempty sub\emph{class} of its domain has a minimal element if and only if every nonempty sub\emph{set} of its domain has a minimal element.\footnote{See observation \ref{obs:inner-beta-models}.}
So in fact it takes only a single unbounded first-order universal quantifier to assert that $R$ is well-founded. 

As such, well-foundedness is downward absolute to transitive models of second-order set theory; if $R \in \Xcal$ is well-founded in $V$ then $(M,\Xcal)$ thinks $R$ is well-founded. However, well-foundedness for class relations is not upward absolute for transitive models of second-order set theory. The trouble is that class relations can have rank $> \Ord$ so we cannot use the $\Delta^0_0$ definition of the von Neumann ordinals to get a $\Sigma^0_1$ characterization for well-foundedness. (Of course, well-foundedness for {\em set} relations is upward absolute for transitive models.)

Similar to the definitions of the minimum transitive model of $T$, a $T$-realization for $M$, a $T$-realizable model, and the minimum $T$-realization for $M$ we can define the {\em minimum $\beta$-model of $T$}, a {\em $\beta$-$T$-realization for $M$}, a {\em $\beta$-$T$-realizable} model, and the {\em minimum $\beta$-$T$-realization for $M$}. For instance, $\Xcal \subseteq \powerset(M)$ is a {\em $\beta$-$T$-realization for $M$} if $(M,\Xcal) \models T$ is a $\beta$-model. I will also use $\Xcal_{\beta,T}(M)$ to refer to the intersection of $\beta$-$T$-realizations for $M$.

The convention in this paper when talking about formulae in the language of set theory is to use capital letters for classes and class variables and lowercase letters for sets and set variables. For example, the formula $\forall X \exists y\ y \in X$ gives the (false) proposition that every class has some set as a member.

\begin{definition}
Kelley-Morse set theory $\KM$ is the second-order set theory consisting of the following axioms:
\begin{itemize}
\item $\ZF$ for the first-order part;
\item Extensionality for classes;
\item Class Replacement, asserting that the image of any set under a class function is a set;
\item The Comprehension axiom schema, namely the axioms $\forall \bar P \exists A \forall x \ x \in A \iff \phi(x,\bar P)$, for every second-order formula $\phi(x,\bar P)$. That is, $\phi$ can include quantifier over sets or over classes.
\end{itemize}
$\KM$ can be augmented with Global Choice $\GC$, asserting that there is a bijection between $V$ and $\Ord$, to get $\KMC$. In general, my convention will be that if $\mathsf{BAR}$ is a second-order set theory I will use $\mathsf{BARC}$ to mean $\mathsf{BAR} + \GC$. 
\end{definition}

G\"odel--Bernays set theory $\GB$ is axiomatized similarly, except that the Comprehension schema is weakened. Its axioms are $\ZF$ for the first-order part, Extensionality for classes, Class Replacement, and the Elementary Comprehension schema. This is a subschema of the  Comprehension schema where we restrict to only formulae which do not include class quantifiers. Following the convention for theories with Global Choice, $\GBC$ is $\GB + \GC$. 
We can consider other restrictions of the Comprehension schema to get a hierarchy of theories ranging in strength from $\GB$ to $\KM$. For a natural number $k$, let $\PnCA k$ denote the restriction of Class Comprehension to the $\Pi^1_k$-formulae. Then $\GB + \PnCA 0$ is just $\GB$ and $\KM$ is $\GB + \bigcup_{k \in \omega} \PnCA k$. Levels of this hierarchy are separated in terms of consistency strength. That is, $\GB + \PnCA{k+1}$ proves the consistency of $\GB + \PnCA k$. 
In this article we will also see $\DCA$, Comprehension for $\Delta^1_1$-properties.\footnote{To be clear, instances of $\DCA$ have the form 
\[
\forall \bar P ([\forall x\ \forall Y \phi(x,Y,\bar P) \iff \exists Y\ \psi(x,Y,\bar P)] \impl \exists A\ \forall x\ [x \in A \iff \forall Y \phi(x,Y,\bar P)]),
\]
where $\phi$ and $\psi$ have no class quantifiers.}

Observe that Class Replacement together with Elementary Comprehension yield that Separation and Replacement hold allowing classes as parameters.

Intermediate between $\PnCA 0$ and $\PCA$ is the principle $\ETR$ of Elementary Transfinite Recursion. This schema asserts that if $\phi$ is a first-order formula (possibly with class parameters) and $R$ is a well-founded class relation then there is a solution to the recursion of $\phi$ along $R$. Formally, let $\phi(x,Y,A)$ be a first-order formula, possibly with a class parameter $A$ and let $R$ be a well-founded class relation. Denote by $<_R$ the transitive closure of $R$. The instance of $\ETR$ for $\phi$ and $R$ asserts that there is a class $S \subseteq \dom R \times V$ which satisfies
\[
S_r = \{ x : \phi(x,S \rest r, A) \}
\]
for all $r \in \dom R$. Here, $S_r = \{ x : (r,x) \in S \}$ denotes the $r$-th slice of $S$ and
\[
S \rest r = S \cap \left(\{ r' \in \dom R : r' <_R r\} \times V\right)
\]
is the partial solution below $r$. It is equivalent to define $\ETR$ by restricting the well-founded relations allowed to well-founded partial orders, or to well-founded trees, or to well-orders; see \cite[lemma 7]{gitman-hamkins2017}.

One well-known class recursion is the Tarskian definition of truth. This recursion takes place over a proper class tree of height $\omega$. Thus, $\ETR$ asserts the existence of a first-order truth predicate and hence proves $\Con(\ZF)$, as restricting the truth predicate to parameter-free formulae gives a consistent completion of $\ZF$.\footnote{If the model is $\omega$-nonstandard, meaning that its $\omega$ is ill-founded, things are a little more delicate. The model will still think that $\ZF$ is consistent because what it thinks is $\ZF$, nonstandard instances of the axiom schemata and all, will be contained in what it thinks is the first-order theory of the model, which it necessarily thinks is complete and consistent.} This shows that $\ETR$ exceeds $\GB$ in consistency strength. 
For an upper bound, one can use $\PCA$ to show that there is no least failure of a potential solution to an elementary recursion and from that conclude that a solution must exist. Indeed, $\GB + \ETR + \DCA$ proves the consistency of $\GB + \ETR$ \cite[theorem 33]{sato2014}.
In particular, $\GB + \PCA$ proves the consistency of $\GB + \ETR$. (And we can add Global Choice to both theories here and the result remains true.)

A useful fact is that Elementary Transfinite Recursion is equivalent to having solutions to a certain class of recursions. See \cite[Corollary 61]{fujimoto2012} and \cite[Theorem 8]{gitman-hamkins2017}.

\begin{theorem} \label{thm:etr-itr}
Over $\GB$, Elementary Transfinite Recursion is equivalent to the assertion that for every class $A$ and every class well-order $\Gamma$ there is a $\Gamma$-iterated first-order truth predicate relative to $A$.
\end{theorem}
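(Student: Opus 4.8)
The plan is to prove both directions over $\GB$. For the forward direction, assume $\ETR$ and fix a class $A$ and a class well-order $\Gamma$. I want to produce a $\Gamma$-iterated first-order truth predicate relative to $A$, i.e.\ a class $T$ whose $\gamma$-th slice $T_\gamma$ is a first-order truth predicate for the structure $(V,\in,A,T\rest\gamma)$, where $T\rest\gamma$ codes the sequence $\langle T_{\gamma'} : \gamma' <_\Gamma \gamma\rangle$. The key observation is that this is exactly a recursion along $\Gamma$ whose successor step and limit step are both \emph{elementary}: given the partial solution $T\rest\gamma$ as a class oracle, the statement ``$\psi$ holds of $\bar a$ in the structure with oracle $T\rest\gamma$'' is expressed by a first-order formula $\phi(x, Y, A)$ (in the free variables $x = (\godel\psi, \bar a)$ and the class variable $Y$ ranging over partial solutions), using the standard Tarskian recursion clauses, which only quantify over sets. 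So I would carefully write down the formula $\phi$ implementing the Tarski clauses relative to an oracle, check it is elementary (no class quantifiers — the oracle appears only as the parameter $Y$ and is queried by membership), and then apply the instance of $\ETR$ for $\phi$ and the well-order $\Gamma$ to obtain the desired class $S$; unpacking, $S$ is the $\Gamma$-iterated truth predicate. One subtlety to handle: one must argue that the solution $S$ really does satisfy the Tarski inductive clauses at every coordinate (not merely formally solve the recursion), but this is immediate by $R$-induction along $\Gamma$ using the defining equation $S_r = \{x : \phi(x, S\rest r, A)\}$.

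For the converse, assume that every class $A$ and every class well-order $\Gamma$ admit a $\Gamma$-iterated first-order truth predicate relative to $A$; I want to derive an arbitrary instance of $\ETR$. Fix a first-order formula $\theta(x, Y, B)$ and a well-founded class relation $R$; by the remark in the excerpt (citing \cite[lemma 7]{gitman-hamkins2017}) I may assume $R$ is a class well-order $\Gamma$. Let $A$ be a class coding the pair $(B, \Gamma)$ together with the syntactic data of $\theta$. Apply the hypothesis to get a $\Gamma$-iterated truth predicate $T$ relative to $A$. The point is that an iterated truth predicate is strong enough to \emph{define}, uniformly, the solution to the recursion of $\theta$ along $\Gamma$: at stage $\gamma$ one has available (via $T_\gamma$, or a slightly later coordinate) a truth predicate for the structure carrying the partial solution below $\gamma$, and ``$x \in S_\gamma$'' is then just ``$\theta(x, S\rest\gamma, B)$ is true'', which $T$ can evaluate. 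So I would define $S$ by Elementary Comprehension from $T$ (and $A$) as $S = \{(\gamma, x) : \gamma \in \dom\Gamma$ and $T$ says $\theta(x, (\text{decoding of }T\text{ below }\gamma), B)$ holds$\}$, and then verify by induction along $\Gamma$ that $S$ satisfies $S_\gamma = \{x : \theta(x, S\rest\gamma, B)\}$ for all $\gamma$. The verification uses that $T$ is a \emph{correct} iterated truth predicate — its coordinates genuinely satisfy the Tarski clauses — so the recursion it encodes agrees with the intended one.

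The main obstacle, in both directions, is bookkeeping rather than conceptual: one must set up a coherent coding of ``partial solutions as oracles'' and of ``the structure $(V,\in,A,T\rest\gamma)$'' so that (i) the Tarskian satisfaction clauses relative to such an oracle are genuinely expressible by an \emph{elementary} (class-quantifier-free) formula, and (ii) the slicing conventions for $\ETR$-solutions match the oracle-lookup conventions for iterated truth predicates. The delicate point in (i) is that when evaluating an atomic subformula of the form ``$\gamma' <_\Gamma \gamma$ and $\psi$ holds at stage $\gamma'$'', the lookup into the oracle $Y$ must be a bounded/membership operation, not hidden second-order quantification; getting the formula genuinely $\Pi^0_1$ (or $\Sigma^0_1$) here is what makes the $\ETR$ instance legitimate. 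Since the cited references (\cite[Corollary 61]{fujimoto2012}, \cite[Theorem 8]{gitman-hamkins2017}) carry out essentially this coding, I would follow their setup and keep the argument here at the level of a proof sketch, emphasizing the elementarity of the Tarski clauses and the induction verifying correctness, and citing those papers for the detailed coding.
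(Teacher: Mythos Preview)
Your forward direction has a genuine gap. You propose to recover $\Tr_\Gamma(A)$ by a recursion along $\Gamma$ in which the step at stage $\gamma$ produces the full first-order truth predicate for $(V,\in,A,T\rest\gamma)$ via a single elementary formula $\phi(x,Y,A)$. But Tarski's undefinability theorem blocks exactly this: there is no first-order formula $\phi$ such that $\{x : \phi(x,Y,A)\}$ is the complete truth predicate for the structure with oracle $Y$. The Tarskian clauses are individually first-order, but each clause only reduces the truth of a formula to the truth of its \emph{immediate subformulae}; the full truth predicate at level $\gamma$ is obtained only after an $\omega$-long recursion on formula complexity, not in one comprehension step from $T\rest\gamma$.

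The paper's fix is precisely to absorb this extra $\omega$ into the well-order: one runs the recursion along a relation of rank $\omega\cdot\Gamma$ (concretely, along triples $(\gamma,\phi,p)$ ordered lexicographically by $\gamma$ and then by the subformula relation), so that at each individual node the recursion formula is just a single Tarski clause and hence genuinely elementary. This factor of $\omega$ is not cosmetic --- it is exactly what drives the paper's Corollary~\ref{cor:etr-iff-itr}, where the hypothesis $\Gamma\ge\omega^\omega$ is needed to ensure that $\omega\cdot\Gamma$ stays within reach of $\ETR_\Gamma$. Your converse direction is fine and matches the paper's sketch.
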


A $\Gamma$-iterated truth predicate is a class $T$ of triples $(a,\phi,p)$ where $(a,\phi,p) \in T$ intuitively means that $\phi(p)$ is true at level $a \in \dom(\Gamma)$. Here, $\phi$ is in the language with $\in$ and a predicate symbol $\hat T$ for $T$. Formally, this is defined by a modified form of the Tarskian recursion, with an extra clause in the definition asserting that $(a, \godel{\hat T(x, y, z)}, \seq{b,\phi,p}) \in T$ if and only if $b <_\Gamma a$ and $(b,\phi,p) \in T$. A truth predicate relative to $A$ is for the language expanded with a predicate symbol $\hat A$ for $A$, with an extra clause in the definition asserting that $(a,\godel{\hat A(x)}, p) \in T$ if and only if $p \in A$.

It can be proven from $\GB$ that any two would-be $\Gamma$-iterated truth predicates relative to $A$ must be the same class: By Elementary Comprehension we can form the class of indices where they disagree, and so if they disagree they must disagree at a minimal point, from which we can derive a contradiction. I will use $\Tr_\Gamma(A)$ to refer to the unique $\Gamma$-iterated truth predicate relative to $A$. Iterated truth predicates not relative to any class will be denoted $\Tr_\Gamma$. As a special case, $\Tr(A) = \Tr_1(A)$ is the ordinary truth predicate relative to $A$.

I include a brief proof sketch for theorem \ref{thm:etr-itr} below, in order to illustrate that we in fact get a stronger result. For a full proof see the above cited literature.

\begin{proof}[Proof sketch of theorem \ref{thm:etr-itr}]
$(\Rightarrow)$ Construct $\Tr_\Gamma(A)$ by means of a recursion of rank $\omega \cdot \Gamma$.

$(\Leftarrow)$ Given $\Tr_\Gamma(A)$ one can construct a solution to a recursion of $\phi$ (with parameter $A$) along a relation of rank $\Gamma$.
\end{proof}

As an immediate corollary of this proof we get the following.

\begin{definition} \label{def:etr-gamma}
For a class well-order $\Gamma$, let $\ETR_\Gamma$ denote the principle of Elementary Transfinite Recursion restricted to relations of rank $\le \Gamma$ and let $\ETR_{<\Gamma}$ denote the restriction of Elementary Transfinite Recursion to relations of rank $< \Gamma$. 
\end{definition}

\begin{corollary} \label{cor:etr-iff-itr}
If $\omega^\omega \le \Gamma$ then $\ETR_\Gamma$ is equivalent over $\GB$ to the existence of $\Gamma$-iterated truth predicates relative to any class.
\end{corollary}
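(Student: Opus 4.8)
The plan is to extract the corollary from the proof of Theorem~\ref{thm:etr-itr} while bookkeeping the ranks that occur, and to supply the one extra ingredient that is genuinely needed: an ordinal-arithmetic absorption that becomes available precisely when $\omega^\omega \le \Gamma$. One direction is immediate. Suppose $\Tr_\Gamma(A)$ exists for every class $A$. Given an elementary formula $\phi$, a class parameter $A$, and a well-founded class relation $R$ of rank $\le \Gamma$, the $(\Leftarrow)$ half of the sketch of Theorem~\ref{thm:etr-itr} produces a solution of the recursion of $\phi$ along $R$ by reading it off of $\Tr_\Gamma(A')$ for a suitable class $A'$ coding $A$ together with $R$ and $\phi$; this half never invokes an iterated truth predicate of length exceeding $\Gamma$, so it yields exactly $\ETR_\Gamma$. (The details of the extraction are in \cite{fujimoto2012,gitman-hamkins2017}.)

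\textbf{The converse.} Assume $\ETR_\Gamma$; I would show that $\Tr_\Gamma(A)$ exists for every class $A$. By the $(\Rightarrow)$ half of the sketch, $\Tr_\Gamma(A)$ is the output of an elementary transfinite recursion of rank $\omega\cdot\Gamma$, the factor of $\omega$ coming from interleaving the Tarskian clauses at each of the $\Gamma$ levels; since Tarskian truth is not elementarily definable, this $\omega$ cannot be compressed into a single step, so the rank $\omega\cdot\Gamma$ is unavoidable. It therefore suffices to see that $\ETR_\Gamma$ proves $\ETR_{\omega\cdot\Gamma}$, and this is where the hypothesis enters. First, an ordinal computation: since $\omega^\omega \le \Gamma$, the Cantor normal form of $\Gamma$ has leading exponent $\alpha_1 \ge \omega$, so left-multiplication by $\omega$ replaces each exponent $\alpha_i$ by $1+\alpha_i$; this fixes the leading term $\omega^{\alpha_1}c_1$ and leaves the tail below $\omega^{\alpha_1}$, whence $\omega\cdot\Gamma < \omega^{\alpha_1}(c_1+1) \le \Gamma\cdot 2$. (This fails for $\Gamma = \omega^k$ with $k$ finite, where $\omega\cdot\Gamma = \omega^{k+1}$ outstrips every $\Gamma\cdot n$, so $\omega^\omega \le \Gamma$ is really used.) Second, a closure lemma: $\ETR_\Gamma$ proves $\ETR_{\Gamma\cdot 2}$. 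Given an elementary recursion of $\phi$ along a well-founded relation $R$ of rank $\le \Gamma\cdot 2$, use a class rank function $R \to \Gamma\cdot 2$ and Elementary Comprehension to split $\dom R$ into the downward-closed part $D_0$ of $R$-rank $<\Gamma$ and its complement $D_1$. Apply $\ETR_\Gamma$ to the restriction $R\rest D_0$, obtaining a partial solution $S_0$; on $D_1$ the recursion relativizes to one of rank $\le\Gamma$, but now with $(S_0,A)$ as a class parameter, since for $r\in D_1$ the partial solution below $r$ is $S_0$ together with the already-computed slices at the $D_1$-elements below $r$. A second application of $\ETR_\Gamma$ yields a class $S_1$, and $S_0\cup S_1$ is the desired solution (a class by Class Replacement). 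Combining, $\ETR_\Gamma \vdash \ETR_{\Gamma\cdot 2} \vdash \ETR_{\omega\cdot\Gamma}$, and running the recursion of the sketch then produces $\Tr_\Gamma(A)$ for each $A$.

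\textbf{Conclusion and main obstacle.} Putting the two directions together gives the stated equivalence over $\GB$. I expect the most delicate step to be the closure lemma $\ETR_\Gamma \vdash \ETR_{\Gamma\cdot 2}$: one must make precise the decomposition of an \emph{arbitrary} well-founded class relation (not merely a well-order) into a bottom block and a top block, verify that the top recursion is genuinely an instance of Elementary Transfinite Recursion with $S_0$ absorbed into the class parameter, and check that the two partial solutions paste together to the intended solution. The ordinal arithmetic and the easy direction are routine by comparison; much of this closure phenomenon can also be read out of \cite[Corollary~61]{fujimoto2012} and \cite[Theorem~8]{gitman-hamkins2017}.
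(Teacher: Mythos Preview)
Your proposal is correct and follows essentially the same route as the paper: both arguments observe that $\omega^\omega \le \Gamma$ gives $\omega\cdot\Gamma < \Gamma + \Gamma = \Gamma\cdot 2$, invoke the closure $\ETR_\Gamma \Rightarrow \ETR_{\Gamma\cdot 2}$ (stated without proof in the paper, sketched by you via the two-block decomposition), and then run the rank-$\omega\cdot\Gamma$ recursion from the proof of Theorem~\ref{thm:etr-itr} to produce $\Tr_\Gamma(A)$. The paper's proof is a two-sentence compression of exactly what you wrote out.
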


\begin{proof}
The assumption on $\Gamma$ implies that $\omega \cdot \Gamma < \Gamma + \Gamma$. Because $\ETR_\Gamma$ implies $\ETR_{\Gamma + \Gamma}$ (and $\ETR_{\Gamma \cdot n}$ for all standard $n$), this gives that we can do the necessary recursions of rank $\omega \cdot \Gamma$.
\end{proof}

It must be clarified just what it means for a model to satisfy $\ETR_\Gamma$. If $\Gamma \in \Xcal$ then it makes sense to 
ask whether $(M,\Xcal)$ satisfies $\ETR_\Gamma$---simply use $\Gamma$ as a parameter. However, this approach flounders upon the questions of whether there is a minimum $\beta$-model of or a minimum transitive model of $\GB + \ETR_\Gamma$. How should $\Gamma$ be interpreted if we do not already have a model in mind?

To handle this context, suppose $\Gamma$ is a name for a class well-order, meaning $\Gamma$ is defined by some formula which $\GB$ proves defines a well-order. For example, $\Gamma$ could be $\omega^\omega$, $\omega_1$, $\Ord$, or some other definable well-order. I will slightly abuse notation and use $\ETR_\Gamma$ in this context, referring to the theory in the language with $\in$ as its only non-logical symbol. And $\ETR_{<\Gamma}$ is similarly used in this context. One natural instance of this is $\ETR_\Ord$, asserting that recursions of rank $\le \Ord$ have solutions.\footnote{To briefly attempt to justify the naturalness of $\ETR_\Ord$, Gitman, Hamkins, Holy, Schlict, and myself showed \cite{GHHSW2017} that $\ETR_\Ord$ exactly captures the strength of the forcing theorem for class forcings, along with several other principles.}
Different models will have different $\Ord$s, but since $\GB$ proves that $\Ord$ is a well-order, it always is sensible to ask whether $(M,\Xcal)$ satisfies $\ETR_\Ord$. 

\smallskip

\begin{center}
\begin{tikzpicture}[best/.style={draw,very thick,rectangle,scale=.7,minimum height=6.5mm,fill=white},
good/.style={draw,rectangle,scale=.7,minimum height=6.5mm,fill=white},
scale=.3]
\draw (0:0) node[best] (GBC) {$\GB$}
 ++(90:2.5) node[good] (GBCtr) {$\GB\ +$ there is a truth predicate}
 ++(90:2.5) node[good] (ETRgamma) {$\GB + \ETR_\gamma$; $\omega^\omega \le \gamma < \Ord$}
 ++(90:2.5) node[good] (ETRlord) {$\GB + \ETR_{<\Ord}$}
 ++(90:2.5) node[good] (ETRord) {$\GB + \ETR_\Ord$}
 ++(90:2.5) node[good] (ETRbiggamma) {$\GB + \ETR_\Gamma$; $\Gamma \ge \Ord \cdot \omega$}
 ++(90:2.5) node[best] (ETR) {$\GB + \ETR$}
 ++(90:2.5) node[good] (PCA) {$\GB + \PCA$}
 ++(90:2.5) node[good] (PkCA) {$\GB + \PnCA{k}$; $k > 1$}
 ++(90:2.5) node[best] (KM) {$\KM$};
\draw[<-]
 (GBC) edge (GBCtr) (GBCtr) edge (ETRgamma) (ETRgamma) edge (ETRlord) (ETRlord) edge (ETRord) (ETRord) edge (ETRbiggamma) (ETRbiggamma) edge (ETR) (ETR) edge (PCA) (PCA) edge (PkCA) (PkCA) edge (KM);
\end{tikzpicture}

Some second-order set theories from $\GB$ to $\KM$, separated by consistency strength.
\end{center}

\subsection{Models of second-order set theories}

In the interest of completeness of presentation, I will summarize some well-known facts about how we construct models of these second-order set theories.

Let $M \models \ZF$. Say that $A \subseteq M$ is {\em strongly amenable (to $M$)} if $(M,A) \models \ZF(A)$, where $\ZF(A)$ is the expansion of $\ZF$ obtained by allowing a  predicate symbol for $A$ in the axiom schemata of $\ZF$. If $A$ is strongly amenable then $A$ is {\em amenable}, meaning that for $A \cap x$ is a set in $M$ for any $x \in M$.
Observe that every class in a model of $\GB$ is strongly amenable to the first-order part. 

The following observation is key in constructing models of $\GB$ and $\GBC$.

\begin{observation} \label{obs:gbc}
Let $M$ be a model of $\ZF$ and suppose $A \subseteq M$ is strongly amenable. Then $\Def(M;A)$ is a $\GB$-realization for $M$, where $\Def(M;A)$ is the collection of classes of $M$ first-order definable from $A$ and set parameters. If a global well-order of $M$ is definable from $A$, then $\Def(M;A)$ is a $\GBC$-realization for $M$.
\end{observation}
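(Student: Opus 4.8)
The plan is to verify the four axiom-groups of $\GB$ for the two-sorted structure $(M,\Def(M;A))$, and then to deal with Global Choice. Since the first-order part is $M$, which satisfies $\ZF$ by hypothesis, that axiom group is free; and Extensionality for classes is immediate, because every element of $\Def(M;A)$ is literally a subset of $M$, so distinct classes are separated by a set. It remains to check Elementary Comprehension and Class Replacement, and this is exactly where the strong amenability of $A$ is used.

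For Elementary Comprehension, the point is that a first-order formula $\phi(x,\bar P)$ \emph{without class quantifiers}, evaluated in $(M,\Def(M;A))$ with class parameters $\bar P$ drawn from $\Def(M;A)$, collapses to a formula about $M$ in the expanded language $\{\in,\hat A\}$. Indeed each $P_i \in \Def(M;A)$ has the form $\{\, y \in M : (M,A) \models \psi_i(y,\bar a_i)\,\}$ for a first-order $\psi_i$ in $\{\in,\hat A\}$ with set parameters $\bar a_i$; replacing every atomic subformula ``$t \in P_i$'' of $\phi$ by ``$\psi_i(t,\bar a_i)$'' produces a formula $\phi^\ast(x,\bar a)$ in $\{\in,\hat A\}$ with set parameters such that
\[
\{\, x \in M : (M,\Def(M;A)) \models \phi(x,\bar P)\,\} = \{\, x \in M : (M,A) \models \phi^\ast(x,\bar a)\,\},
\]
and the right-hand side is by definition a member of $\Def(M;A)$. (It is precisely here that the restriction to elementary formulae matters: a class quantifier would range over all of $\Def(M;A)$, which is not in general expressible by a single $\{\in,\hat A\}$-formula over $M$.)

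For Class Replacement, let $F \in \Def(M;A)$ be such that $(M,\Def(M;A)) \models$ ``$F$ is a function'', and let $a \in M$. Fix a defining formula $\theta(u,v)$ for $F$ in the language $\{\in,\hat A\}$ with set parameters. Since $A$ is strongly amenable, $(M,A) \models \ZF(A)$, so $(M,A)$ in particular satisfies the Replacement schema for $\{\in,\hat A\}$-formulae; applying this to $\theta$ and $a$ shows that $F[a] = \{\, v : \exists u \in a\ (M,A)\models\theta(u,v)\,\}$ is a set of $M$. This completes the verification that $(M,\Def(M;A)) \models \GB$. Finally, if a global well-order of $M$ is definable from $A$, then it lies in $\Def(M;A)$; since over $\GB$ a global well-order is interdefinable with a bijection between $V$ and $\Ord$, the structure $(M,\Def(M;A))$ also satisfies Global Choice, whence $(M,\Def(M;A)) \models \GBC$.

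The only step requiring any care is the reduction in the Elementary Comprehension clause: one must check that substituting the definitions of the class parameters into an elementary $\phi$ really produces a legitimate $\{\in,\hat A\}$-formula over $M$ --- which breaks down the moment class quantifiers appear --- and that the tuple $\bar a$ assembled from the various $\bar a_i$ together with the original set parameters of $\phi$ is still an admissible tuple of set parameters. Everything else is a direct appeal to $M \models \ZF$ and to $(M,A) \models \ZF(A)$.
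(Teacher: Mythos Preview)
Your proof is correct. The paper states this result as an observation without proof, treating it as standard folklore; your write-up supplies exactly the expected verification, reducing Elementary Comprehension to definability over $(M,A)$ by substituting the defining formulae of the class parameters, and using the full $\ZF(A)$ afforded by strong amenability to get Class Replacement.
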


In particular, $\Def(M) = \Def(M;\emptyset)$ is always a $\GB$-realization for $M$. So any model of $\ZF$ is $\GB$-realizable. If $M$ moreover has a definable (from parameters) global well-order then $\Def(M)$ is a $\GBC$-realization for $M$. Note that having a definable global well-order is equivalent to satisfying the axiom $\exists x \ V = \HOD(\{x\})$, where $\HOD(\{x\})$ denotes the (definable) class of all sets hereditarily definable from ordinal parameters and $x$. (Having a global well-order definable without parameters is equivalent to $V = \HOD$.) Therefore, any model of of $\ZFC + \exists x\ V = \HOD(\{x\})$ is $\GBC$-realizable. 

We can do better. Any countable model of $\ZFC$ has a $\GBC$-realization, even if it does not have a definable global well-order. This can be seen by a class forcing argument, which was originally noticed by several mathematicians, among them Cohen, Felgner, and Solovay. Of these, only Felgner published the argument \cite{felgner1971,felgner1976}. Gaifman later found a forcing-free model-theoretic proof \cite{gaifman1975}. To sketch the argument: If $C$ is a Cohen-generic subclass of $\Ord$, then by density $C$ codes a strongly amenable global well-order. If $M \models \ZFC$ is countable we can find such $C \in V$ as we only need to meet countably many dense definable classes of $M$. This gives us $(M,\Def(M;C)) \models \GBC$ in $V$. As a consequence, $\GBC$ is conservative over $\ZFC$. That is, any first-order theorem of $\GBC$ is a theorem of $\ZFC$. In particular, $\GBC \not \proves \Con(\ZFC)$, supposing $\GBC$ is consistent.

This construction does not generalize to uncountable models of $\ZFC$. Say that a model of set theory is {\em rather classless} if its only amenable classes are its classes definable from set parameters. Note that the height of a rather classless model must have uncountable cofinality. Any $\omega$-sequence cofinal in the ordinals of a model will be amenable because its initial segments are finite. 

\begin{definition}
Let $M$ be a model of set theory. An extension $N \supseteq M$ is an \emph{end-extension} if $M$ is a transitive subclass of $N$. That is, $N$ end-extends $M$ whenever given any $a \in M$, if $N \models b \in a$ then $b \in M$.
And an end-extension $N \supseteq M$ is a {\em top-extension} or {\em rank-extension} if $M$ is a rank-initial segment of $N$.
\end{definition}

Note that top-extensions are end-extensions but the converse does not in general hold. However, if $N$ is an elementary end-extension of $M \models \ZF$ then it must be a top-extension: By elementarity, $V_\alpha^M = V_\alpha^N$ cannot have any new elements.

\begin{theorem}[Keisler \cite{keisler1974}, Shelah \cite{shelah1978}]
Any countable model of $\ZF$ has an elementary end-extension to a rather classless model.\footnote{Keisler showed this theorem under the assumption of $\diamond$ and the assumption of $\diamond$ was later eliminated by Shelah.}
\end{theorem}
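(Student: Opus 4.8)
The plan is to produce $N$ as the union of a continuous elementary chain $\seq{M_\alpha : \alpha < \omega_1}$ of countable models of $\ZF$ with $M_0 = M$, each successor $M_{\alpha+1}$ a proper elementary top-extension of $M_\alpha$ chosen by a bookkeeping argument designed to destroy potential amenable classes that fail to be definable. The model-theoretic engine is an extension lemma: if $M'$ is a countable model of $\ZF$ and $A \subseteq M'$ is amenable to $M'$ but not parametrically definable over $M'$, then there is a countable proper elementary top-extension $N' \succ M'$ together with an ordinal $\mu \in \Ord^{N'}$ lying above every ordinal of $M'$ such that no $s \in N'$ with $\rk^{N'}(s) \le \mu$ satisfies $s \cap M' = A$. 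Recall from the excerpt that a proper elementary end-extension of a model of $\ZF$ is automatically a top-extension, and that countable models of $\ZF$ have proper elementary top-extensions (Keisler--Morley).

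To prove the lemma, work in the language with $\in$, a constant $\bar a$ for each $a \in M'$, and a new constant $d$, and let $T$ be the elementary diagram of $M'$ together with ``$d$ is an ordinal'' and ``$\bar\gamma \in d$'' for every ordinal $\gamma$ of $M'$; this $T$ is consistent (interpret $d$ as a large ordinal of $M'$ for any finite subset), and any model of $T$ is a proper elementary extension of $M'$. If the model moreover omits, for each $a \in M'$, the ``new element'' type $\{x \in \bar a\} \cup \{x \ne \bar b : b \in^{M'} a\}$ then it end-extends $M'$, hence is a top-extension; the non-principality of these types over $T$ is the classical Keisler--Morley input. Now adjoin one further type to be omitted: $p(y) = \{\rk(y) \le d\} \cup \{\bar a \in y : a \in A\} \cup \{\bar a \notin y : a \in M' \setminus A\}$. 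A routine compactness computation shows that an isolating formula $\chi(y,\bar c, d)$ for $p$ over $T$ would make $A$ definable over $M'$ from the parameters $\bar c$: using that each relevant consequence of $T$ comes from finitely many axioms, one checks that $a \in A$ iff $M'$ satisfies ``for all sufficiently large ordinals $z$, $\forall y\,(\chi(y,\bar c,z)\to a\in y)$'', while the complementary condition defines $M'\setminus A$ (the two conditions are incompatible in $M'$ because $\chi(\cdot,\bar c,z)$ is satisfiable cofinally in $z$). So $p$ is non-principal, and the omitting types theorem, applied to these countably many non-principal types over the countable language $T$, yields the desired countable $N'$ with $\mu := d^{N'}$.

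With the lemma available, build the chain: a $\diamond_{\omega_1}$-sequence (with coding bijections between each $M_\alpha$ and an ordinal fixed coherently in advance) supplies at stage $\alpha$ a candidate $A_\alpha \subseteq M_\alpha$; if $A_\alpha$ is amenable to $M_\alpha$ and not definable over $M_\alpha$, let $M_{\alpha+1}$ be the extension from the lemma and record the witness $\mu_\alpha$, otherwise let $M_{\alpha+1}$ be any countable proper elementary top-extension; take unions at limits and set $N = \bigcup_{\alpha<\omega_1} M_\alpha$. Then $N \models \ZF$, $N$ is an elementary top-extension (hence elementary end-extension) of $M$, and $\card{N} = \aleph_1$. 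Now suppose $A \subseteq N$ is amenable; I claim $A$ is parametrically definable over $N$. A closure argument yields a club of $\alpha$ for which $A \cap M_\alpha$ is amenable to $M_\alpha$ (for $x \in M_\alpha$, $A\cap x$ lies in some $M_\beta$, and the closure points of the resulting bound form a club). If $A$ were not definable over $N$, then a Fodor-plus-pigeonhole argument --- only countably many formulas, $M_\alpha = \bigcup_{\gamma<\alpha}M_\gamma$ at limits, and $M_\alpha \prec N$ --- shows $\{\alpha : A\cap M_\alpha$ is definable over $M_\alpha\}$ is nonstationary, since a single formula-and-parameter defining $A\cap M_\alpha$ over $M_\alpha$ on a stationary set would define $A$ over $N$. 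Intersecting the two clubs and applying $\diamond$, there is an $\alpha$ at which $A_\alpha = A\cap M_\alpha$ is amenable to $M_\alpha$ and not definable over $M_\alpha$, so the lemma was applied and there is no $s\in M_{\alpha+1}$ of rank $\le\mu_\alpha$ with $s\cap M_\alpha = A\cap M_\alpha$. But $N$ is a top-extension of $M_{\alpha+1}$, so $\Pcal(V_{\mu_\alpha})^N = \Pcal(V_{\mu_\alpha})^{M_{\alpha+1}}$; in particular every set of $N$ of rank $\le\mu_\alpha$ already lies in $M_{\alpha+1}$, and amenability of $A$ in $N$ makes $A\cap V_{\mu_\alpha}^N$ such a set, with $(A\cap V_{\mu_\alpha}^N)\cap M_\alpha = A\cap M_\alpha$ --- a contradiction. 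Hence $N$ is rather classless. (Note also that since the $\Ord^{M_\alpha}$ form a strictly increasing continuous sequence cofinal in $\Ord^N$, we get $\cf(\Ord^N)=\omega_1$, as must hold for a rather classless model.)

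The one ingredient beyond $\ZFC$ here is $\diamond_{\omega_1}$, Keisler's original hypothesis, and I expect its elimination (Shelah's contribution) to be the real obstacle; the model-theoretic extension lemma above is comparatively soft. The point of the $\diamond$-free version is that one need not guess whole subsets of $N$: because any amenable class $A$ of $N$ has $A\cap V_\mu^N$ appearing as a genuine set only a bounded number of stages after $V_\mu$ itself appears, one can instead run a $\ZFC$ bookkeeping scheme which, exploiting the freedom to choose the coding bijections as the chain grows, is guaranteed to catch and kill every such restriction at some stage. This is what is carried out in Shelah's cited paper; for the applications in this article one always starts from a fixed countable model, where even the $\diamond$-version of the theorem suffices.
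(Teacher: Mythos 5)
The paper does not actually prove this statement: it is imported from the literature, cited to Keisler and Shelah, with the footnote recording exactly the division of labor you describe (Keisler under $\diamond$, Shelah eliminating $\diamond$). So there is no in-paper proof to compare against; measured on its own terms, your proposal is a sound reconstruction of Keisler's $\diamond$-argument. The extension lemma is correct: $T$ (elementary diagram plus a new ordinal constant $d$ with $\bar\gamma \in d$ for all $\gamma \in \Ord^{M'}$) is consistent, the Keisler--Morley types give an end- hence top-extension, and your non-principality computation for the new type works -- the two essential points being that each implication $T \vdash \chi(y,\bar c,d) \to \bar a \in y$ (or $\bar a \notin y$) uses only finitely many sentences ``$\bar\gamma \in d$'', so it reflects to ``for all sufficiently large ordinals $z$'' inside $M'$, and that $\exists y\, \chi(y,\bar c,z)$ holds for cofinally many $z \in \Ord^{M'}$ (else one diagram sentence would refute consistency of $T \cup \{\chi\}$), which is what makes the two ``sufficiently large'' conditions incompatible and turns an isolating $\chi$ into a definition of $A$. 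The closing verification is also right, and it is the crux of Keisler's idea: amenability of $A$ over $N$ puts $A \cap V^N_{\mu_\alpha}$ in $N$ with rank $\le \mu_\alpha$, the top-extension property pulls it down into $M_{\alpha+1}$, and its trace on $M_\alpha$ is the guessed class, contradicting the omitted type; the club/Fodor/pigeonhole reduction to a single stage $\alpha$ is standard and you state it correctly.

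Two caveats. First, what you prove is only Keisler's half; the statement as quoted is a theorem of $\ZFC$, and the removal of $\diamond$ is genuinely Shelah's contribution, which you (like the paper) delegate to the citation -- acceptable, since you flag it explicitly. Second, your final remark that the $\diamond$-version ``suffices for the applications in this article'' is not right as stated: the paper's application (exhibiting, outright in $\ZFC$, uncountable models of $\ZFC$ with no $\GBC$-realization) does start from a fixed countable model, but that does not remove $\diamond$ as a hypothesis, and the existence of a rather classless elementary end-extension of a given countable $M$ is not an obviously absolute statement that one could force $\diamond$ to verify; what the paper actually relies on is the full Shelah version.
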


If we start with countable $M \models \ZFC$ with no definable global well-order, then any rather classless $N$ which is an elementary extension of $M$ has no amenable global well-order and therefore has no $\GBC$-realization. That such $M$ exists was proved by Easton in his doctoral dissertation \cite{easton1964}.\footnote{Easton's dissertation was published in the Annals of Mathematical Logic \cite{easton1970}, but this publication omits the section where he proves this result. See \cite[section 3]{felgner1976} for a published copy of Easton's argument, or \cite{hamkins-mathoverflow2012} for a publicly available version of the argument on MathOverflow.}

For stronger theories, such as $\KM$ or $\GB + \ETR$, we cannot hope for techniques as general as in the $\GB$ case. A significant obstacle is that these theories prove $\Con(\ZF)$; consequently, a model of $\ZF$ can fail to have, say, a $\KM$-realization simply by having the wrong theory. And we can separate models of these stronger theories in a similar way; for example, if $(M,\Xcal) \models \GB + \ETR + \neg \Con(\GB + \ETR)$, then $M$ cannot be $\KM$-realizable.

For $M \models \ZF$ to be $\KM$-realizable it must satisfy the first-order consequences of $\KM$. But this condition is in fact insufficient. There is no theory whose models are all $\KM$-realizable. In fact, we need much less than $\KM$ for this.

\begin{theorem}[Krajewski {\cite[page 475]{marek-mostowski1975}}] 
There is no first-order theory all of whose models (or even just whose countable models) are $(\GB + \ETR_\omega)$-realizable. That is, given any consistent $T$ extending $\ZF$ there is a countable model of $T$ which has no $(\GB + \ETR_\omega)$-realization.
\end{theorem}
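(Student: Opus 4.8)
The plan is to exploit two facts: that a $(\GB+\ETR_\omega)$-realization for a model $M$ secretly contains a full first-order truth predicate on $M$, and that no consistent first-order theory can force all of its countable models to carry such a predicate amenably.

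\emph{Step 1: a realization supplies an amenable satisfaction class.} The Tarskian recursion defining first-order truth of the universe is a recursion of rank $\omega$ (on the complexity of formulas), so $\GB+\ETR_\omega$ proves the existence of a class $S$ which is a \emph{full} satisfaction class, i.e.\ $S$ obeys the Tarski clauses for every coded first-order formula, standard or nonstandard. Hence if $(M,\Xcal)\models\GB+\ETR_\omega$ then there is $S\in\Xcal$ that is a full satisfaction class for $M$; and since every class of a model of $\GB$ is strongly amenable to its first-order part, $(M,S)\models\ZF(S)$.

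\emph{Step 2: amenable satisfaction classes force recursive saturation.} I would prove: if $M\models\ZF$ carries a full satisfaction class $S$ with $(M,S)\models\ZF(S)$, then $M$ is recursively saturated. First, a meta-induction on standard formulas shows $S$ is correct on them. Now let $p(v)=\{\phi_i(v,\bar a):i\in\omega\}$ be a recursive type over parameters $\bar a\in M$ that is finitely satisfiable in $M$. Using the recursiveness of $p$, choose a formula $\Phi(i,v,\bar a)$ mentioning $S$ with $\Phi(\underline i,v,\bar a)\leftrightarrow M\models\phi_i(v,\bar a)$ for standard $i$ (take $\Phi$ to be $S$ applied to the coded formula computed from $i$). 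Then $\theta(n):=\exists v\,\forall i<n\,\Phi(i,v,\bar a)$ holds in $(M,S)$ for every standard $n$; since $(M,S)$ satisfies induction for formulas mentioning $S$, overspill gives a nonstandard $c\in\omega^M$ with $(M,S)\models\theta(c)$, and any witness $v^\ast$ then satisfies $\phi_i(v^\ast,\bar a)$ for all standard $i$, so $v^\ast$ realizes $p$. Hence $M$ is recursively saturated, and in particular every $(\GB+\ETR_\omega)$-realizable model of $\ZF$ is recursively saturated.

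\emph{Step 3 and conclusion.} Given a consistent $T\supseteq\ZF$, it now suffices to exhibit a countable $M\models T$ that is \emph{not} recursively saturated, since by Step 2 such an $M$ has no $(\GB+\ETR_\omega)$-realization. When $T$ is recursively axiomatized this comes from the omitting types theorem: omit a recursive type that is non-principal over $T$ — morally, the type asserting that $x$ codes a complete consistent extension of $T$, whose non-principality traces back, via the arithmetized completeness theorem and Gödel's second incompleteness theorem, to the fact that $T$ does not prove its own consistency — and the resulting model, omitting a recursive type that remains finitely satisfiable in it, is not recursively saturated. For a general (not necessarily recursively axiomatized) $T$ one instead invokes standard facts about models of set theory: pass to a completion of $T$ and realize it as the theory of a countable model whose standard system is not closed under the Turing jump, which therefore cannot be recursively saturated. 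Either way, the model produced witnesses the theorem.

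\emph{The main obstacle.} I expect Step 3 to be the delicate part, specifically the non-recursively-axiomatized case together with the verification that the relevant type is genuinely non-principal: the subtlety is that "$T$ proves each finite approximation" does not, on its face, give "$T$ proves the limit," so non-principality has to be extracted from the right input (Gödel's theorem for recursive $T$; the structure of standard systems in general). A secondary point that needs care is purely bookkeeping inside Step 2 — that $S$, being only a class, nevertheless evaluates the standard instances $\phi_i(\cdot,\bar a)$ correctly, and that the single formula $\Phi$ can be written using $S$ so that overspill inside $(M,S)$ genuinely applies.
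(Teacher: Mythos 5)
Your Step 2 is where the argument breaks. The implication you claim --- every $(\GB + \ETR_\omega)$-realizable model of $\ZF$ is recursively saturated --- is false, and your own proof of it shows why: the overspill step needs a \emph{nonstandard} element of $\omega^M$ to exist, so the argument only establishes that an $\omega$-\emph{nonstandard} model with a strongly amenable full satisfaction class is recursively saturated. For $\omega$-standard models the conclusion simply fails: any transitive model of $\KM$ (e.g.\ $(V_\kappa,V_{\kappa+1})$ for $\kappa$ inaccessible, cut down to a countable transitive model) is $(\GB+\ETR_\omega)$-realizable, yet no $\omega$-standard model is recursively saturated, since the recursive type $\{x\in\omega\}\cup\{x\neq \bar n : n\in\omega\}$ would have to be realized. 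So ``not recursively saturated'' is not a sufficient criterion for non-realizability, and Step 3, which only produces a non-recursively-saturated countable model of $T$, does not finish the proof: if that model happens to be $\omega$-standard (nothing in your omitting-types construction rules this out when $T$ has $\omega$-models), you learn nothing. To salvage the strategy you would have to produce, for \emph{every} consistent $T\supseteq\ZF$, a countable model that is both $\omega$-nonstandard and not recursively saturated; your sketch of the non-recursively-axiomatized case (``a countable model whose standard system is not closed under the Turing jump'') is exactly where this unproved work is hiding, and it is not a routine citation.

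For contrast, the paper's proof extracts from the truth predicate $\Tr\in\Xcal$ a criterion that is necessary for realizability of \emph{all} models, $\omega$-standard or not: since $\Tr$ restricted to standard formulae is genuine truth for $M$, reflection gives a club of $\alpha$ with $V_\alpha^M\prec M$, so any realizable $M$ is the union of an elementary tower of its $V_\alpha$'s and in particular has ordinals not definable without parameters. It then quotes Paris's theorem that every consistent $T\supseteq\ZF$ has a countable model all of whose ordinals are definable; such a Paris model can never be realizable. (Note that Paris models are never recursively saturated either --- the recursive type asserting ``$x$ is an ordinal not uniquely defined by $\phi$'' for each formula $\phi$ is finitely satisfiable --- so the paper's definability criterion strictly subsumes what your recursive-saturation criterion can detect, while avoiding the $\omega$-standard/nonstandard case split entirely.) If you want to keep your satisfaction-class approach, the fix is to replace ``recursively saturated'' by a consequence of having an amenable truth predicate that survives in the $\omega$-standard case, and the existence of undefinable ordinals is precisely such a consequence.
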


\begin{proof}
First suppose that $M$ is a model of $T$ and $\Xcal$ is an $(\GB + \ETR_\omega)$-realization for $M$. (If no model of $T$ is $(\GB + \ETR_\omega)$-realizable we are already done.) 
Consider $\Tr \in \Xcal$, the truth predicate for $M$, which exists as it can be constructed by an elementary recursion of $\omega$. By reflection, there is a club of ordinals $\alpha \in \Ord^M$ so that $(V_\alpha^M, \Tr \cap V_\alpha^M)$ thinks that $\Tr \cap V_\alpha^M$ satisfies the recursive Tarskian definition of truth. If $M$ is $\omega$-standard, meaning that $\omega^M$ is well-founded, then $\Tr$ is the real truth predicate for $M$. If $M$ is $\omega$-nonstandard, then $\Tr$ restricted to the standard formulae is truth for $M$. In either case, we get that $V_\alpha^M \subseteq M$ satisfies the full elementary diagram of $M$ and hence is elementary in $M$. Moreover, these $V_\alpha^M$ form an elementary tower which unions up to $M$. Altogether, we have seen that the first-order part of any model of $\ETR_\omega$ must be the union of an elementary tower from its $V_\alpha$s. In particular, any $\ETR_\omega$-realizable model must contain many undefinable ordinals.

On the other hand, Paris showed \cite{paris1973} that every $T$ extending $\ZF$ has a model whose every ordinal is definable without parameters. Such models are now known as \emph{Paris models}.\footnote{The name Paris model comes from \cite{enayat2005}, which contains a thorough investigation of their properties.}
The previous paragraph shows that no Paris model can be $\ETR_\omega$-realizable, and thus no first-order theory can characterize the $\ETR_\omega$-realizable models.
\end{proof}

One take-away from this is that constructing models of stronger second-order set theories is less straightforward than constructing models of $\GB$ or $\GBC$. This topic for $\ETR$ will be taken up in section~\ref{sec:etr}. For $\KM$, probably the simplest way to produce a model comes from looking at inaccessible cardinals: if $\kappa$ is inaccessible, then $(V_\kappa, V_{\kappa+1}) \models \KM$. Of course, once we have any transitive model of $\KM$ we get countable transitive models by L\"owenheim--Skolem. Indeed, $(V_\kappa, V_{\kappa+1})$ satisfies more than just $\KM$. 

\begin{definition}
The \emph{Class Collection} axiom schema asserts that if for every set there is a class satisfying some property, then there is a single class coding the ``meta-class'' consisting of a witnessing class for every set. Formally, instances of this schema take the form
\[
\forall \bar P \left[ (\forall x \exists Y\ \phi(x,Y,\bar P)) \impl (\exists C \forall x\ \exists i\  \phi(x,(C)_i,\bar P)) \right],
\]
where $(C)_i = \{ y : (i,y) \in C \}$.

The \emph{Class Choice} axiom schema strengthens Class Collection to have that the index of the witnessing class for $x$ is $x$ itself. That is, instances of Class Collection have the form
\[
\forall \bar P \left[ (\forall x \exists Y\ \phi(x,Y,\bar P)) \impl (\exists C \forall x\ \phi(x,(C)_x,\bar P)) \right].
\]
\end{definition}

\begin{proposition}
Over $\GB$, Class Choice is equivalent to Class Collection plus Global Choice.
\end{proposition}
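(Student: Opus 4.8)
The plan is to establish the two implications of the equivalence separately. For the forward direction, that Class Choice implies Class Collection is immediate: given a Class Choice instance whose hypothesis is $\forall x\,\exists Y\,\phi(x,Y,\bar P)$, Class Choice supplies a class $C$ with $\phi(x,(C)_x,\bar P)$ for every $x$, and taking the index $i := x$ then witnesses the weaker Class Collection conclusion $\forall x\,\exists i\,\phi(x,(C)_i,\bar P)$. So the real work in this direction is to derive Global Choice from Class Choice, and for this I would run the standard argument that a class choice function yields a bijection $V \leftrightarrow \Ord$.

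More precisely, first I would apply Class Choice to the elementary formula $\phi(x,Y)$ expressing "$x \neq \emptyset$ and $Y = \{z\}$ for some $z \in x$, or else $x = \emptyset$ and $Y = \emptyset$." Its hypothesis $\forall x\,\exists Y\,\phi(x,Y)$ is a theorem of $\ZF$ (no choice is needed to find a singleton subset of a nonempty set), so Class Choice yields a class $C$ with $\phi(x,(C)_x)$ for all $x$; letting $F(x)$ be the unique member of $(C)_x$ when $x \neq \emptyset$ defines, via Elementary Comprehension, a global choice function on the nonempty sets. From $F$ I would build a set-like global well-order $<_*$ of $V$: for each ordinal $\gamma$ let $W_\gamma$ be the well-order of $V_\gamma$ obtained by transfinite recursion, repeatedly taking the $F$-value of the as-yet-unenumerated part (so $W_\gamma$ is a set, uniformly definable from $F$ and $\gamma$), and declare $x <_* y$ iff $\rk(x) < \rk(y)$, or else $\rk(x) = \rk(y) = \delta$ and $x$ precedes $y$ in $W_{\delta+1}$. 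Then $<_*$ is a class by Elementary Comprehension, it is a linear order, it is set-like since $\{z : z <_* x\} \subseteq V_{\rk(x)+1}$, and it is well-founded since any nonempty subclass $X$ has a least element, namely the $W_{\delta_0+1}$-least element among the (nonempty, set-sized) family of members of $X$ of least rank $\delta_0$. The transfinite-recursion rank function of a set-like class well-order is an order isomorphism onto an initial segment of $\Ord$, and the segment must be all of $\Ord$ because $V$ is a proper class; this isomorphism is the bijection $V \leftrightarrow \Ord$ witnessing Global Choice.

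For the reverse direction, I would assume Class Collection and Global Choice and fix a global well-order $<_*$ of $V$ pulled back from a bijection $V \leftrightarrow \Ord$. Given a Class Choice instance with hypothesis $\forall x\,\exists Y\,\phi(x,Y,\bar P)$, Class Collection produces a class $C$ with $\forall x\,\exists i\,\phi(x,(C)_i,\bar P)$, and the point is to re-index it. Let $i(x)$ be the $<_*$-least $i$ with $\phi(x,(C)_i,\bar P)$; since substituting the elementary definition of $(C)_i$ turns $\phi(x,(C)_i,\bar P)$ into an elementary formula in $x$, $i$, $C$, $\bar P$, the map $x \mapsto i(x)$ is total (by well-foundedness of $<_*$) and definable from $C$, $\bar P$, and $<_*$. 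Then $D := \{(x,y) : (i(x),y) \in C\}$ is a class by Elementary Comprehension, and $(D)_x = (C)_{i(x)}$, so $\phi(x,(D)_x,\bar P)$ holds for every $x$, which is exactly the Class Choice instance.

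I expect the only delicate point to be the passage, in the forward direction, from a global choice function to a genuine bijection $V \leftrightarrow \Ord$: because $\GB$ provides only $\ZF$ and not $\ZFC$ at the first-order level, one cannot simply well-order each $V_\gamma$ outright but must produce the well-orders $W_\gamma$ canonically from $F$, and one must verify that the assembled relation $<_*$ is set-like so that its order type is really $\Ord$ rather than some proper class. The rest is bookkeeping. The same argument relativizes verbatim to the $\Pi^1_k$ versions of the two schemas over $\GB + \PnCA{k}$, and to the full versions over $\KM$, since every instance of Comprehension used is of complexity at most that of the formula $\phi$ in question.
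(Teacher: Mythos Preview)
Your proof is correct and takes essentially the same approach as the paper's: derive a global choice function by applying Class Choice to singletons, and for the converse re-index the Class Collection witness by picking, for each $x$, the $<_*$-least index that works. The only difference is that the paper's stated definition of Global Choice is the existence of a bijection $V \to \Ord$, and the paper simply cites the equivalence of this with the existence of a global choice function, whereas you supply the argument explicitly via the canonical well-orders $W_\gamma$ of the $V_\gamma$'s --- a welcome addition, since (as you correctly flag) we are working over $\GB$ without set-level $\AC$ and so cannot well-order the $V_\gamma$ outright.
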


\begin{proof}
$(\Rightarrow)$ It is immediate that Class Collection follows from Class Choice. To see that Global Choice also follows, observe that for each $x \ne \emptyset$ there is a singleton class $Y \subseteq x$. We do not need any fragment of choice beyond that afforded by first-order logic to make this conclusion. So by Class Choice there is a class $C$ so that for each $x \ne \emptyset$ we have $(C)_x \subseteq x$ is a singleton. Now define a class function $F : V \setminus \{\emptyset\} \to V$ by setting $F(x)$ to be the unique element of $(C)_x$. It is clear that such $F$ is a global choice function, the existence of such being one equivalent formulation of Global Choice.

$(\Leftarrow)$ Suppose for each $x$ there is $Y$ so that $\phi(x,Y,P)$ for some fixed parameter $P$. By Class Collection there is a class $D$ so that for each $x$ there is $i$ so that $\phi(x,(D)_i,P)$. Now define a new class $C$ by putting $(x,y)$ into $C$ if and only if $(i_x,y) \in D$ where $i_x$ is least, according to a fixed global well-order, so that $\phi(x,(D)_{i_x},P)$. Then $C$ witnesses the instance of Class Choice for $\phi(x,Y,P)$.
\end{proof}

It can be readily checked that if $\kappa$ is an inaccessible cardinal, then $(V_\kappa,V_{\kappa+1}) \models \KM$ $+$ Class Collection. If $\AC$ holds in the background universe, it will moreover satisfy Global Choice, and hence Class Choice.
Indeed, every model of $\KM$ $+$ Class Collection looks like the $(V_\kappa,V_{\kappa+1})$ of some appropriate model of set theory. 

\begin{definition} \label{def:zfmi}
Let $\ZFmi$ denote the theory whose axioms are those of $\ZFm$ (namely, the axioms of $\ZF$ without Powerset but axiomatized with Separation and Collection instead of Replacement\footnote{In the absence of Powerset, Collection is stronger than Replacement \cite{zarach1996}.})
along with the assertion that there is a largest cardinal $\kappa$ and that $\kappa$ is inaccessible. To be clear since in this context not all sets have powersets, the assertion that $\kappa$ is inaccessible is: $\kappa$ is regular, $V_\kappa$ exists, and for every $a \in V_\kappa$ we have that $\powerset(a)$ exists and is an element of $V_\kappa$. And let $\ZFCmi$ be the theory obtained from $\ZFmi$ by adding the assertion that every set can be well-ordered.
\end{definition}

Marek and Mostowski \cite{marek-mostowski1975} showed that $\KM$ $+$ Class Collection and $\ZFmi$ are bi-interpretable, as are $\KMC$ $+$ Class Collection and $\ZFCmi$. The backward direction is simple. Namely, if $M \models \ZFmi$ with largest cardinal $\kappa \in M$ then one can straightforwardly see that $({V_{\kappa}}^M,{V_{\kappa+1}}^M) \models \KM$ $+$ Class Collection. To have a name for this, call it the \emph{cutting-off construction}.

The forward interpretation goes through what I will call the \emph{unrolling construction}. The idea is, just as we can code hereditarily countable sets as subsets of $\omega$, we can code `sets' of rank $> \Ord$ via certain class relations. Call a well-founded, extensional binary relation with a maximum element a \emph{membership code}. These are the representatives of the sets in the unrolled structure, where we identify isomorphic membership codes. For example, $\mathord\in \rest \tc(\{x\})$ is a membership code which represents the set $x$. We can define a suitable membership relation between membership codes---namely $A$ is a member of $B$ if there is a predecessor $a$ to the maximum element of $B$ so that $A$ is isomorphic to the restriction of $B$ below $a$. One can then show that this structure satisfies all of $\ZFmi$. And if one carries out one interpretation then the other, the resulting model is isomorphic to the original model.\footnote{Antos and S.\ Friedman \cite{antos-friedman2017} consider a different version of this construction, where they use trees for coding rather than well-founded extensional graphs. Their construction gives the same bi-interpretability result.}

In section \ref{sec:km} I will use that the unrolling construction can be carried out over models satisfying much less than $\KM$. The key facts are the same as in the $\KM$ $+$ Class Collection case. Namely, if $(M,\Xcal)$ satisfies a strong enough theory to carry out the unrolling construction then carrying out the unrolling construction followed by the cutting construction produces a model isomorphic to $(M,\Xcal)$, and this isomorphism is definable. And the same holds for cutting off followed by unrolling. In my dissertation, I show that $\GB + \ETR$ suffices to carry out the unrolling construction to get a structure which satisfies an appreciable basic set theory \cite[chapter 2]{williams-diss}.\footnote{Global Choice is taken as a basic axiom in that chapter, but it is straightforward to see that it is not necessary to carry out the unrolling construction and prove basic facts about the unrolled structure.}
To give more detail: $\GB$ alone is enough to define the unrolled structure, with an appropriate membership relation between well-founded, extensional class relations. To see that the unrolled structure satisfies basic axioms of set theory, enough to carry out the cutting off construction, I went through the key lemma that between two well-founded, extensional class relations $A$ and $B$ there is a maximum initial partial isomorphism.\footnote{An initial partial isomorphism is a partial isomorphism whose domain and range are both downward closed.}
From this lemma one can prove that the unrolled structure satisfies Extensionality, Pairing, and other basic axioms of set theory, including $\Pi_0$-Separation and Transfinite Recursion for $\Pi_0$-properties. This lemma is an easy consequence of $\ETR$, since the maximum initial partial isomorphism is built by an elementary transfinite recursion along either $A$ or $B$. I conjecture that this lemma cannot be proven from $\GB$ alone. Specializing this lemma to the case where $A$ and $B$ are well-orders we get that class well-orders are comparable.

\begin{conjecture*}
$\GB$ does not prove the comparability of class well-orders, asserting that given any two class well-orders $\Gamma$ and $\Delta$ one embeds as an initial segment of the other.
\end{conjecture*}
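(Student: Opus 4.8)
The plan is as follows. Since $\GB$ is recursively axiomatized, by the completeness theorem it suffices to produce a single model $(M,\Xcal) \models \GB$ together with two class well-orders $\Gamma, \Delta \in \Xcal$ such that no class in $\Xcal$ is an initial-segment embedding of $\Gamma$ into $\Delta$ nor of $\Delta$ into $\Gamma$. This is the precise second-order-set-theory shadow of the reverse-mathematics situation: $\ACA_0 \nvdash \ATR_0$, while by a theorem of Friedman strong comparability of countable well-orderings is equivalent over $\RCA_0$ to $\ATR_0$, with $\GB$ and $\ETR$ playing the roles of $\ACA_0$ and $\ATR_0$. Accordingly there are two dual routes.

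\emph{Route A, via an implication.} Attempt to show that $\GB$ plus comparability of class well-orders proves $\ETR$, or even just that it proves the existence of a first-order truth predicate. Either conclusion finishes the proof: a truth predicate already yields $\Con(\ZF)$, whereas $\GBC$ is conservative over $\ZFC$ and $\ZFC \nvdash \Con(\ZF)$, so $\GB \nvdash \ETR$ and a fortiori $\GB$ cannot prove comparability. To get the implication one would port Friedman's pseudo-hierarchy construction: given a class well-order $\Gamma$ and a class $A$, build a definable class linear order $L = L_{\Gamma,A}$ threading the would-be partial solutions of the $\Gamma$-iterated truth recursion relative to $A$ along $\Gamma$, arranged so that $\GB$ proves $L$ is well-founded if and only if $\Tr_\Gamma(A)$ exists; then apply comparability to $L$ against a canonical copy of, say, $\Gamma \cdot 2 + 1$ and extract $\Tr_\Gamma(A)$ from the resulting embedding.

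\emph{Route B, via a direct model construction.} Alternatively, build the witnessing model. The natural candidate is the minimum $\GB$-realization $\Def(M)$ of a suitable countable $M \models \ZFC$, or a realization $\Def(M;A)$ for a carefully chosen amenable $A$ --- the analogue of the minimum $\omega$-model $\mathrm{ARITH}$ of $\ACA_0$ --- inside which one then exhibits incomparable $\Gamma, \Delta$. The subtlety is that Elementary Comprehension is very generous: whenever the comparison map of $\Gamma$ and $\Delta$ is computable by a transfinite recursion of length $\le \Ord$ --- e.g.\ for any pair of order type a set ordinal times a canonical ordinal notation --- the order-type-matching map, sending $a$ to the $b$ with $\otp(\Gamma\rest a) = \otp(\Delta\rest b)$, is already a class in $\Xcal$. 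So $\Gamma$ and $\Delta$ must be chosen \emph{wild}: either pseudo-well-orders in a non-$\beta$ model, so that comparability genuinely has no witness rather than merely lacking the canonical one, or genuine class well-orders of order type exceeding $\Ord$ whose comparison provably demands a recursion longer than $\Ord$; in either case one must rule out \emph{every} class of $\Xcal$ as a comparison map.

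The main obstacle, shared by both routes, is the \emph{width of the partial attempts}: in the class setting even a partial attempt at the finite-length Tarski truth recursion is already a proper class, so ``the collection of partial attempts, ordered by extension'' is not a class and $L_{\Gamma,A}$ cannot be defined naively; dually, this same phenomenon is what makes Elementary Comprehension rich enough to manufacture comparison maps for all the tame pairs in Route B. Getting past it would seem to require either a genuinely more economical coding of attempts (perhaps through membership codes and the unrolling construction) or an altogether new device for producing a pair of well-orders whose comparison provably escapes a $\GB$-realization. This is closely entangled with the other question this paper leaves open, namely how much of the maximum-initial-partial-isomorphism lemma --- and in particular of $\GB + \ETR_\Ord$, or even of $\GB$ --- is needed for the relevant class recursions, so I would not expect the conjecture to be settled without progress there as well.
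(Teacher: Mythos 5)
The statement you were asked to prove is not proved in the paper at all: it appears there as a conjecture, motivated by the maximum-initial-partial-isomorphism lemma in the unrolling discussion, and is explicitly left open. So there is no proof of record to compare against, and your submission --- as you yourself say in your closing paragraph --- is a research plan rather than a proof: neither route is carried to completion, and the conjecture is exactly as open after your write-up as before.

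Concretely, the missing step in Route A is the implication ``$\GB$ plus comparability of class well-orders proves the existence of a (first-order, or iterated) truth predicate.'' Granting that implication, your conditional argument is fine: $\GB$ is conservative over $\ZF$ via $\Def(M)$, a truth predicate yields $\Con(\ZF)$, and G\"odel does the rest. But the implication itself is the whole content, and it is precisely where the paper's own observed disanalogy between $\ATR_0$ and $\GB + \ETR$ cuts against a naive port of Friedman's pseudo-hierarchy argument: in arithmetic that argument exploits well-foundedness being $\Pi^1_1$-universal, whereas for class relations well-foundedness is expressible by a first-order formula over $\GB$, and, as you note yourself, partial solutions of class-length recursions are already proper classes, so the space of ``attempts'' lives a type level up and cannot be threaded into a single class linear order by Elementary Comprehension. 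Route B is likewise only a specification of what a witness would have to look like: you would need to produce a particular $(M,\Xcal) \models \GB$ together with $\Gamma, \Delta \in \Xcal$ and then rule out \emph{every} class of $\Xcal$ as an initial-segment embedding of one into the other, and no device for that exclusion is proposed. Identifying these obstacles is honest and consistent with the paper's discussion, but no part of the conjecture has been established.
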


To end this section, I wish to remark that when looking at the phenomenon of minimum models moving from $\KM$ to $\KM$ $+$ Class Collection is harmless. 

\begin{theorem}[{Marek and Mostowski \cite[theorem 2.5]{marek-mostowski1975}}] \label{thm:kmp-inside-km}
Let $(M,\Xcal) \models \KM$ and let $N \in \Xcal$ be an inner model of $M$. Then there is $\Ycal \subseteq \Xcal$ so that $(N,\Ycal) \models \KM$ $+$ Class Collection. If $N$ has a global well-order in $\Xcal$, then $\Ycal$ may be picked so that $(N,\Ycal)$ satisfies Global Choice, and hence also Class Choice. 
\end{theorem}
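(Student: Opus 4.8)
The plan is to route the argument through the unrolling and cutting-off correspondence developed above. Because $\KM$ contains $\GB + \ETR$, the unrolling construction applies to $(M,\Xcal)$ and produces a model $\Mbb$ of a weak base theory — enough of $\ZFm$ to have full Separation and Replacement, together with ``there is a largest cardinal $\kappa$, and $\kappa$ is inaccessible'' — in which $M = V_\kappa^{\Mbb}$ and $\Xcal$ is identified with $\Pcal^{\Mbb}(M)$. Write $\delta = \Ord^{\Mbb}$; standard facts about unrolling give that every set of $\Mbb$ injects into $M$, so $\kappa$ really is the largest cardinal of $\Mbb$. (Had $(M,\Xcal)$ satisfied Class Collection we would get full $\ZFmi$; that it need not is precisely what forces the extra care below.) Inside $\Mbb$ the inner model $N$ is just a transitive set of rank $\kappa$ with $\kappa \subseteq N$ which $\Mbb$ regards as a model of $\ZF$. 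The naive guess $\Ycal := \Xcal \cap \Pcal(N)$ is no good: for instance when $N = L^M \subsetneq M$, a non-constructible real of $M$ together with an enumeration of it both lie in $\Xcal \cap \Pcal(N)$ and witness a failure of Class Replacement. So the plan is instead to locate inside $\Mbb$ a carefully chosen inner model $\widehat N \models \ZFmi$ whose cut-off at its largest cardinal is exactly $N$ together with an acceptable realization.

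For the construction, let $\Scal$ be the $\Mbb$-definable class of all pairs $(s,\mu)$ with $\kappa < \mu < \delta$ and $s$ a surjection $\kappa \twoheadrightarrow \mu$; it is amenable by Separation in $\Mbb$, and an $\in$-induction argument using only Replacement shows that, since $\kappa$ is the largest cardinal of $\Mbb$, every $\mu$ with $\kappa < \mu < \delta$ is the target of some such $s$. Put $\widehat N := L[\Scal](N)^{\Mbb}$, the constructible closure of $\{N\}$ relative to the predicate $\Scal$. Then I would verify two things. First, $V_\kappa^{\widehat N} = N$: every element of $\Scal$ has rank $> \kappa$, so $\Scal$ contributes nothing to $\widehat N$ below rank $\kappa$, while the bare $L(N)$-construction stays inside $N$ below rank $\kappa$ because $N$ can internally simulate it, treating the parameter $N$ as a ``point at infinity'' — any set of rank $<\kappa$ definable over a stage $L_\alpha(N)$ translates back to a set that $N$ separates out of one of its own sets. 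Second, $\widehat N \models \ZFmi$: being an $L$-like model built from $N$ and $\Scal$, it has a definable well-order and hence satisfies Collection on top of Replacement; $\kappa$ remains inaccessible in $\widehat N$ since regularity and strong-limitness are downward absolute to inner models; and $\kappa$ is the largest cardinal of $\widehat N$ precisely because $\Scal \subseteq \widehat N$ supplies a surjection $\kappa \twoheadrightarrow \mu$ for every $\mu$ with $\kappa < \mu < \delta$. Applying the cutting-off construction to $\widehat N$ then yields $(N,\Ycal) \models \KM$ $+$ Class Collection with $\Ycal := \Pcal^{\widehat N}(N)$, and $\Ycal \subseteq \Pcal^{\Mbb}(N) \subseteq \Pcal^{\Mbb}(M) = \Xcal$.

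For the choice clause, suppose $N$ has a global well-order $W \in \Xcal$. Passing to the well-order induced by $\Def(N;W)$ if necessary, I may take $W$ strongly amenable, hence amenable, to $N$, and then run the same argument with $\widehat N := L[\Scal,W](N)^{\Mbb}$. Amenability of $W$ keeps $V_\kappa^{\widehat N} = N$, and the definable well-order of $\widehat N$ now extends $W$, so $\widehat N \models \ZFCmi$; cutting off gives $(N,\Ycal) \models \KMC$ $+$ Class Collection, from which Class Choice follows by the proposition identifying it over $\GB$ with Class Collection plus Global Choice.

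The hard part will be the two verifications in the second paragraph: that $\widehat N$ is simultaneously thin enough below $\kappa$ — literally equal to $N$, so the cut-off really is $N$ rather than some proper extension — and yet thick enough above $\kappa$ that $\kappa$ survives as its largest cardinal. That second requirement is exactly why plain $L(N)$ will not do: $(\kappa^+)^{L(N)}$ can be strictly below $\delta$, which would manufacture an unwanted cardinal in the cut-off. Relatedly, the need to obtain Collection in $\widehat N$ from its definable well-order, rather than inheriting it — which one cannot, since $\KM$ without Class Collection need not unroll to a model of Collection — is why the argument must go through an $L$-like inner model rather than, say, an $\HOD$-style one. Everything else is routine bookkeeping with the unrolling/cutting-off correspondence.
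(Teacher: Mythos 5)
Your argument is not a reconstruction of a proof the paper contains (the paper only cites Marek--Mostowski for this theorem), so I can only assess it on its own terms, and there is a genuine gap: the two verifications you yourself flag as ``the hard part'' are not merely unverified --- on either reading of $\widehat N = L[\Scal](N)$, one of them is false, and your $\Scal$ gadget does not resolve the tension between them. If $L[\Scal](N)$ means predicate-style relative constructibility, then $\widehat N$ contains only $\Scal \cap \widehat N$, not $\Scal$: an element $(s,\mu)$ of $\Scal$ enters the hierarchy only if the surjection $s$ is itself constructed from $N$, so the collapsing maps you are counting on are generally absent and the claim ``$\Scal \subseteq \widehat N$'' fails. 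Concretely, let $\kappa$ be inaccessible with $0^\sharp$ existing, let $(M,\Xcal) = (V_\kappa, V_{\kappa+1})$ and $N = L^M = L_\kappa$. The unrolling $\Mbb$ is isomorphic to $H(\kappa^+)$, so $\delta = (\kappa^+)^V$. The recursion defining the levels $L_\alpha[\Scal](N)$ is absolute to $L$ (the clause $\mu < \delta$ is vacuous on sets of $\Mbb$, and the remaining clauses are absolute), so by induction every level lies in $L$ and $\widehat N \subseteq L$, while $\widehat N$ contains all ordinals below $\delta$. Since $0^\sharp$ exists, $(\kappa^+)^L < \delta$, and no surjection $\kappa \twoheadrightarrow (\kappa^+)^L$ is constructible; hence $(\kappa^+)^L$ remains a cardinal of $\widehat N$ strictly above $\kappa$, so $\kappa$ is \emph{not} the largest cardinal of $\widehat N$, and cutting off at the largest cardinal does not return a structure with first-order part $N$.

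If instead you intend $\widehat N = L(\Scal \cup \{N\})$, so that the elements of $\Scal$ really are available and $\Scal \subseteq \widehat N$, then the other verification collapses: an arbitrary surjection in $\Mbb$ can code arbitrary low-rank sets of $\Mbb$ in its fibers. For a nonempty non-constructible real $a$ of $M$ in the same $0^\sharp$ example, the map $s_a : \kappa \to \kappa+1$ with $s_a(n) = \kappa$ for $n \in a$ and $s_a$ otherwise arranged to be surjective gives $(s_a,\kappa+1) \in \Scal$, and $a = \{ n < \omega : s_a(n) = \kappa \}$ is definable from $s_a$, $\omega$, $\kappa$ at the first level containing $s_a$; so $a \in \widehat N$ has rank $\omega$ but $a \notin N = L_\kappa$, and $V_\kappa^{\widehat N} \neq N$. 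A further, independent problem is your route to Collection: in the main case no global well-order of $N$ is assumed, and $L$-like closures of the form $L(\Scal \cup \{N\})$ or $L(N)$ need \emph{not} have a definable well-order (one cannot well-order $\tc(\{N\}) \supseteq N$ from nothing), so ``$L$-like, hence definably well-ordered, hence Collection'' is unavailable exactly when the theorem still promises Class Collection without Global Choice; obtaining Class Collection there requires collecting \emph{all} witnesses under set-sized indices rather than choosing least ones, and making that indexing work is where the real content of Marek and Mostowski's argument lies. Their construction, accordingly, never changes the first-order part at all: it builds $\Ycal$ directly as the subsets of $N$ arising in a constructibility-style hierarchy over $N$ inside $(M,\Xcal)$, which is precisely what avoids the thin-below-$\kappa$ versus collapsing-above-$\kappa$ dilemma that sinks the inner-model detour proposed here.
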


Ratajczyk showed \cite{ratajczyk1979} that an appropriate version of Marek and Mostowski's theorem holds with $\KM$ replaced by $\GB + \PCA$,\footnote{Ratajczyk refers to this theory as $\KM_1$.}
and a restricted form of Class Collection.

\section{Historical background}

Before moving to the novel content in this article, I wish to place my results in the context of previous work. 
The first result in this theme I will mention is a classical theorem, due to Marek and Mostowski \cite{marek-mostowski1975}.
They proved that every $\beta$-$\KM$-realizable model has a minimum $\beta$-$\KM$-realization.\footnote{See theorem~\ref{thm:mm-min-beta-km-rlzn} below.}
As an immediate corollary, if $M$ has a definable global well-order and is $\beta$-$\KM$-realizable then it has a minimum $\beta$-$\KMC$-realization. And it then follows from observation \ref{obs:inner-beta-models} below that if $M$ is $\beta$-$\KM$-realizable then $M$ has a minimal $\KM$-realization, which is moreover a minimal $\KMC$-realization.

Their methods were improved by Ratajczyk \cite{ratajczyk1979} to apply not just to $\KM$ but also to $\GB + \PnCA k$, for $k \ge 1$. This yields the same results from the previous paragraph but with $\KM$ replaced with $\GB + \PnCA k$.

More recently, Antos and S.\ Friedman \cite{antos-friedman2017} investigated what they call minimal $\beta$-models of $\mathsf{MK}^{**}$, a certain strengthening of $\KMC$ $+$ Class Collection. They showed that if $(M,\Xcal)$ is a countable $\beta$-model of $\mathsf{MK}^{**}$ then it can be expanded by hyperclass forcing to a $\beta$-model $(N,\Ycal)$ of $\mathsf{MK}^{**}$ containing a certain real $r$ which is minimum among $\beta$-models of $\mathsf{MK}^{**}$ containing $r$. My main theorem \ref{min-trans-km} can be strengthened to show that if $T$ is a fixed computably axiomatizable extension of $\KM$, then given any real $r$ there is never a transitive model of $T$ which is minimum among those transitive models of $T$ containing $r$; see theorem \ref{thm:main}. I view this as justifying Antos and Friedman's restriction to looking only at $\beta$-models. They justify this restriction, in their footnote 3, by analogy to $\ZFC$, saying that it is only sensible to talk about minimum models when restricting to the class of models correct about well-foundedness. In my view, this analogy is ill-founded. As other results in the current paper show, there can be minimum models in collections of models which contain non-$\beta$-models. Or for an example from first-order set theory: $L_\omegaoneck$ is the minimum transitive model of $\KP$ $+$ Infinity. But $L_\omegaoneck$ is not a $\beta$-model; Harrison showed \cite{harrison1968} there are computable linear orders which are ill-founded but have no infinite descending chain in $L_\omegaoneck$.
Nevertheless, Antos and Friedman's restriction to $\beta$-models is necessary to achieve such a result in their context.

Also important to mention in this context is that the class of transitive models of $\KM$ is strictly larger than the class of $\beta$-models of $\KM$. Indeed, Marek and Mostowski showed \cite[theorem 3.2]{marek-mostowski1975} that if $\tau$ is the least height of a transitive model of $\KM$ and $\beta$ is the least height of a $\beta$-model of $\KM$, then $\tau < \beta$ and $L_\beta \models \tau$ is countable. Applying Ratajczyk's improvements gives an analogous result for $\GB + \PnCA k$, for $k \ge 1$. 

Moving from models of second-order to a more general context, the same questions can be asked about other second-order theories. The analogies and disanalogies between the second-order set theory and second-order arithmetic have received considerable attention, much of it from the Warsaw school of logicians centered around Mostowski. Similar questions to the ones I study in this article have been asked and answered in the context of second-order arithmetic. Before I review those results, let me briefly describe the setting for second-order arithmetic.

Models of second-order arithmetic are two-sorted. One sort, the first-order part of the model, consists of the numbers, while the other, the second-order part, consists of sets of numbers. The chief axiom for the numbers is induction, and the sets are axiomatized by principles similar to those for classes in second-order set theory. This produces the following analogy between theories of arithmetic and set theories. (I omit mention of $\RCA_0$ and $\WKL_0$, the weakest of the ``Big Five'' theories of second-order arithmetic, as this paper does not study any set theoretic analogues thereof.)

\begin{center}
\begin{tabular}{c c}
Arithmetic & Set Theory \\
\hline
$\PA$ & $\ZF$ \\
$\ACA_0$ & $\GB$ \\
$\ATR_0$ & $\GB + \ETR$ \\
$\PnCA k_0$ & $\GB + \PnCA k$ \\
$\Zsf_2$ & $\KM$
\end{tabular}
\end{center}

\newcommand\ARITH{\mathrm{ARITH}}
\newcommand\HYP{\mathrm{HYP}}
In set theory, there are a plethora of different first-order models whose membership relation is well-founded. In arithmetic on the other hand, there is a unique first-order model which is well-founded, namely $\omega$. Those models of second-order arithmetic with $\omega$ as their first-order part are appropriately known as \emph{$\omega$-models}. It is common practice to identify $\omega$-models with their second-order part, speaking for example of the $\omega$-model $\HYP$ rather than $(\omega,\HYP)$. Those $\omega$-models which are moreover correct about which of their sets are well-founded are called \emph{$\beta$-models}. The following are  some of the results about minimum/minimal $\omega$- and $\beta$-models of arithmetic.\footnote{The reader who wishes to know more is advised to consult \cite{simpson:book}, especially chapter VIII. In particular, they can find therein proofs and references for the following theorems.}
Below, $\ARITH$ is the collection of arithmetical subsets of $\omega$ and $\HYP$ is the collection of hyperarithmetical subsets of $\omega$.

\begin{theorem*}[Mostowski]
$\ARITH$ is the minimum $\omega$-model of $\ACA_0$. In general, if $M \models \PA$ then $\Def(M)$, the collection of parametrically definable subsets of $M$ is the minimum $\ACA_0$-realization for $M$. 
\end{theorem*}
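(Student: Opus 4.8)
The plan is to verify the two halves separately: that $\Def(M)$ is an $\ACA_0$-realization for $M$, and that it is contained in every $\ACA_0$-realization for $M$. Throughout, the crucial point is the exact match between the comprehension scheme of $\ACA_0$ --- which ranges over arithmetical formulae, i.e.\ first-order formulae whose quantifiers are all number quantifiers --- and ordinary first-order definability over the structure $M$.

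First I would show $(M,\Def(M)) \models \ACA_0$. For arithmetical comprehension, suppose $\phi(n, \bar X, \bar m)$ is an arithmetical formula with set parameters $\bar X$ drawn from $\Def(M)$ and number parameters $\bar m \in M$. Writing each $X_i = \{ k \in M : M \models \psi_i(k, \bar p_i) \}$ and substituting $\psi_i(t,\bar p_i)$ for each atomic subformula $t \in X_i$ of $\phi$ yields a first-order formula $\phi'(n, \bar m, \bar p)$; since $\phi$ has no set quantifiers, an easy induction on the structure of $\phi$ shows $(M,\Def(M)) \models \phi(n,\bar X,\bar m)$ iff $M \models \phi'(n,\bar m,\bar p)$, so $\{ n : \phi(n,\bar X,\bar m) \} = \{ n : M \models \phi'(n,\bar m,\bar p) \} \in \Def(M)$. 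The set-induction axiom of $\ACA_0$ holds because $M \models \PA$ has full first-order induction: any $X \in \Def(M)$ witnessing a failure of induction would, via its defining formula, give a failure of a first-order induction instance in $M$. Hence $(M,\Def(M)) \models \ACA_0$.

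For minimality, let $(M, \Ycal) \models \ACA_0$ and let $A \in \Def(M)$, say $A = \{ n \in M : M \models \phi(n, \bar m) \}$ with $\phi$ first-order and $\bar m \in M$. Because $\phi$ is a formula in the language of arithmetic it has no set quantifiers, so its satisfaction in $(M,\Ycal)$ does not depend on $\Ycal$; thus $\{ n : (M,\Ycal) \models \phi(n,\bar m) \} = A$. Now $\phi$ is arithmetical (with number parameters only), so an application of arithmetical comprehension inside $(M,\Ycal)$ places $A$ into $\Ycal$. Therefore $\Def(M) \subseteq \Ycal$, and combined with the first half, $\Def(M)$ is the minimum $\ACA_0$-realization for $M$. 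The claim about $\ARITH$ is the special case $M = \omega$, the unique well-founded first-order model, for which $\Def(\omega)$ is by definition the collection of arithmetical subsets of $\omega$.

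I do not expect a serious obstacle here; the only points demanding care are the substitution step (checking that replacing each set-parameter atom by its first-order definition keeps the formula arithmetical and preserves the defined set, which hinges on there being no set quantifiers) and confirming that the restricted induction axiom of $\ACA_0$ is a consequence of $M \models \PA$.
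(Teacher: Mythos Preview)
Your proof is correct. The paper does not actually prove this theorem: it is stated in the historical background section as a classical result of Mostowski, with the reader referred to Simpson's book for proofs, so there is no proof in the paper to compare against. Your argument is the standard one, and the two points you flag as needing care (the substitution step eliminating set parameters, and deriving set-induction from full $\PA$-induction) are exactly the places where a careful write-up would dwell.
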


\begin{theorem*}[Kleene]
$\HYP$ is the minimum $\omega$-model of $\Delta^1_1$-$\mathsf{CA}_0$, whose principal axiom is $\Delta^1_1$-Comprehension.
\end{theorem*}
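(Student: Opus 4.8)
The plan is to establish two things: first, that $(\omega,\HYP)$ is itself an $\omega$-model of $\Delta^1_1\text{-}\mathsf{CA}_0$; and second, that $\HYP$ is contained in the second-order part of every $\omega$-model of $\Delta^1_1\text{-}\mathsf{CA}_0$. Together these say $\HYP$ is the minimum one: by the first it is among these models, and by the second it sits below all of them.

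For the first point, the $\RCA_0$ axioms and the induction scheme hold automatically, since $\HYP$ has $\omega$ as its first-order part and is closed under $\leq_T$ (indeed under $\leq_h$). The content is $\Delta^1_1$-comprehension. So suppose $\vec Z \in \HYP$, $\phi(n,\vec Z) = \exists Y\,\theta_1(n,Y,\vec Z)$ is $\Sigma^1_1$, $\psi(n,\vec Z) = \forall Y\,\theta_0(n,Y,\vec Z)$ is $\Pi^1_1$, and $(\omega,\HYP) \models \forall n\,(\phi(n,\vec Z) \iff \psi(n,\vec Z))$; write $B$ for the witnessing set, so that $B = \{ n : (\omega,\HYP) \models \phi(n,\vec Z)\} = \{n : (\omega,\HYP)\models \psi(n,\vec Z)\}$. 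I would show $B$ is genuinely $\Delta^1_1$, hence hyperarithmetic, hence in $\HYP$. Since $\vec Z$ is hyperarithmetic we have $\omega_1^{\vec Z} = \omegaoneck$ and $\HYP = L_{\omegaoneck} \cap \Pcal(\omega)$, so the displayed satisfaction unwinds to $B = \{n : \exists Y \in L_{\omegaoneck}\ \theta_1(n,Y,\vec Z)\}$. Because $\omegaoneck$ is admissible such a witness $Y$ already appears at a bounded stage, so $B$ is $\Sigma_1$-definable over $L_{\omegaoneck}$, hence $\Pi^1_1$; running the dual argument on $\omega\setminus B$ using $\psi$ shows $B$ is $\Sigma^1_1$. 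Thus $B \in \Delta^1_1 = \HYP$. This is precisely the place where one uses admissibility rather than any $\beta$-model correctness of $\HYP$ --- indeed, as noted above, $\HYP$ is not a $\beta$-model.

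For the second point, let $\Scal$ be an $\omega$-model of $\Delta^1_1\text{-}\mathsf{CA}_0$. Since arithmetical formulas are trivially both $\Sigma^1_1$ and $\Pi^1_1$, $\Delta^1_1\text{-}\mathsf{CA}_0$ proves $\ACA_0$, so $\Scal$ is closed under the Turing jump. By the Suslin--Kleene theorem ($\Delta^1_1$ coincides with hyperarithmetic) it suffices to show $\Scal$ contains every set $\leq_h \emptyset$, and for this it is enough to show that $h_a$, the code for the hierarchy $\langle H_c : c \leq_{\Ocal} a\rangle$ of iterated Turing jumps along $a$, belongs to $\Scal$ for every notation $a$ in Kleene's $\Ocal$. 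I would prove this by transfinite recursion on $|a|$, legitimate in the metatheory since $\Ocal$ is genuinely well-founded. The base case $a=1$ and the successor case $a = 2^c$ (where $h_a$ extends $h_c$ by $(H_c)'$) are immediate from closure under the jump. In the limit case $a = 3\cdot 5^e$, the inductive hypothesis gives $h_{\{e\}(n)} \in \Scal$ for every $n$, witnessing in $\Scal$ that some $Z$ codes an $H$-hierarchy along the recursive order determined by $\{e\}(n)$; uniqueness of such $Z$ is provable over $\ACA_0$; so $\Scal \models \forall n\,\exists!\,Z\,\chi(n,Z)$ where $\chi$ is the $\Sigma^1_1$ formula ``$Z$ codes an $H$-hierarchy along $<_{\{e\}(n)}$.'' The set $W$ with $W_n = h_{\{e\}(n)}$ then has $\{(n,m) : m\in W_n\}$ equal both to $\{(n,m) : \exists Z(\chi(n,Z)\wedge m\in Z)\}$ (a $\Sigma^1_1$ set) and to $\{(n,m): \forall Z(\chi(n,Z)\to m\in Z)\}$ (a $\Pi^1_1$ set), and these agree by uniqueness, so $\Delta^1_1$-comprehension puts $W$, and hence $h_a$, into $\Scal$.

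The main obstacle is this limit step. One naively wants to ``do the recursion inside $\Scal$,'' but $\Delta^1_1\text{-}\mathsf{CA}_0$ does not prove arithmetical transfinite recursion --- that is the strictly stronger $\ATR_0$ --- so $\Scal$ need not internally believe $H_a$ exists for a nonstandard $a$ in its own copy of $\Ocal$. The point is that the recursion need not be internalized: it runs in the metatheory, and at each genuine standard notation the only thing required of $\Scal$ is the single instance of ``unique $\Sigma^1_1$-choice'' used above, which, unlike full $\Sigma^1_1$-choice, does follow from $\Delta^1_1$-comprehension precisely because the witness at each $n$ is unique. Getting this bookkeeping right --- including that $\Scal$ proves each relevant recursive order $<_a$ is well-founded, again by a metatheoretic induction on $|a|$ --- is the technical heart; the remaining verifications are routine, and full details are in Simpson's book.
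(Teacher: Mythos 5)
Your proof is correct, and it is essentially the standard Kleene argument; the paper does not prove this statement at all but quotes it as classical background with a pointer to Simpson's book, where the proof given is the same as yours: the $\Sigma_1$-over-$L_{\omegaoneck}$ (admissibility/Spector--Gandy) analysis to verify that $\HYP$ satisfies $\Delta^1_1$-comprehension, and an external induction along Kleene's $\Ocal$ using uniqueness of jump hierarchies together with the ``unique $\Sigma^1_1$-choice'' instance supplied by $\Delta^1_1$-comprehension for the minimality direction. One minor simplification: no metatheoretic induction is needed to see that $\Scal$ believes each $<_a$ is well-founded --- since $<_a$ is genuinely well-founded and $\Scal$ is an $\omega$-model, any descending sequence in $\Scal$ would be a genuine one, so this is immediate by downward absoluteness of $\Pi^1_1$ statements.
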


\begin{theorem*}[Gandy-Kreisel-Tait]
$\HYP$ is the intersection of all $\omega$-models of $\Zsf_2$. Indeed, the same holds with $\Zsf_2$ replaced with any computably axiomatizable extension of $\Sigma^1_1$-$\AC_0$ (in the language of second-order arithmetic).\footnote{See \cite[section VIII.4]{simpson:book} for a definition of $\Sigma^1_1$-$\AC_0$, whose principal axiom is the axiom of choice for $\Sigma^1_1$ definable collections of sets. The reader should also be aware that $\ATR_0$ proves every instance of the schema of $\Sigma^1_1$-Choice.}
\end{theorem*}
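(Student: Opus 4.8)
The plan is to establish the two inclusions $\HYP \subseteq \bigcap\{\Mcal : \Mcal \text{ an }\omega\text{-model of }T\}$ and $\bigcap\{\Mcal : \Mcal \text{ an }\omega\text{-model of }T\} \subseteq \HYP$ separately; the first is soft and the second is where all the work lies.

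For the first inclusion I would argue that over $\omega$-models the schema $\Sigma^1_1$-$\AC_0$ implies $\Delta^1_1$-$\mathsf{CA}_0$: given a $\Delta^1_1$ definition of a set, $\Sigma^1_1$-choice produces a sequence of witnesses for the $\Sigma^1_1$ side, and the set is then arithmetically definable from that sequence. Hence every $\omega$-model of $T$ is an $\omega$-model of $\Delta^1_1$-$\mathsf{CA}_0$, and by Kleene's theorem (quoted above) $\HYP$ is contained in each of them. I would also remark here that $\HYP$ itself satisfies $\Sigma^1_1$-$\AC_0$, so $\HYP$ is literally the minimum $\omega$-model of $\Sigma^1_1$-$\AC_0$; the theorem therefore has genuine content only for proper extensions such as $\Zsf_2$, $\Pi^1_1\text{-}\mathsf{CA}_0$, and $\ATR_0$, whose $\omega$-models never include $\HYP$, so that for each of these the point is that the intersection does not \emph{grow}.

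For the second inclusion, the first observation is that the intersection is \emph{countable}: by downward L\"owenheim--Skolem in the two-sorted language, $T$ has a countable $\omega$-model (this presupposes $T$ has an $\omega$-model at all, without which the intersection is $\Pcal(\omega)$ and the statement is false), and the intersection sits inside it. Next I would note that the intersection is a lightface $\Pi^1_1$ set of reals that is downward closed under $\le_h$: the assertion ``$Y$ lies in every $\omega$-model of $T$'' is the negation of ``there is a countable coded $\omega$-model of $T$ omitting $Y$'', and this latter statement is $\Sigma^1_1(Y)$ because $T$ is computably axiomatizable and Tarskian satisfaction inside a countable structure coded by a real $W$ is arithmetic in $W$, so that ``$W$ codes an $\omega$-model of $T$'' is $\Sigma^1_1$ while ``the structure coded by $W$ omits $Y$'' is arithmetic in $W \oplus Y$; downward $\le_h$-closure holds because any $\omega$-model of $T \supseteq \Delta^1_1\text{-}\mathsf{CA}_0$ containing $Y$ contains all of $\HYP(Y)$. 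One tempting way to finish would be to show the intersection is moreover $\Sigma^1_1$, hence thin $\Sigma^1_1$, hence contained in $\HYP$; but I do not see how to extract that extra bound directly, so I would instead argue as follows.

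The remaining step — showing that every non-hyperarithmetic real $Y$ is omitted by some $\omega$-model of $T$ — is the heart of the matter and the step I expect to be the main obstacle. Here I would turn to effective descriptive set theory: the $\Sigma^1_1$ class of codes of $\omega$-models of $T$ is nonempty, so by the Gandy Basis Theorem, relativized to $Y$, it has a member $W$ with $\omega_1^{W \oplus Y} = \omega_1^{Y}$ and $\Ocal^Y \not\le_h W \oplus Y$; the $\omega$-model $\Mcal$ coded by $W$ has all of its reals recursive in $W$, and the aim is to show that $Y \in \Mcal$ would force $Y$ to be hyperarithmetic, by playing the lowness of $W$ off against the fact that $\Mcal \models \Sigma^1_1$-$\AC_0$ (so $\HYP(Z) \subseteq \Mcal$ for each real $Z \in \Mcal$) together with the Spector--Gandy analysis of $\Ocal^Y$. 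Getting this matching argument exactly right is delicate precisely because a Gandy-basis model genuinely does contain non-hyperarithmetic reals, so the omission of our particular $Y$ has to be read off from the low complexity of $W$ rather than from any crude cardinality count; for these details I would follow the treatment in Simpson's book, section VIII.4.
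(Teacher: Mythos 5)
The soft half of your argument is fine: every $\omega$-model of $T \supseteq \Sigma^1_1$-$\AC_0$ satisfies $\Delta^1_1$-comprehension, hence is closed under hyperarithmetic reducibility and contains $\mathrm{HYP}$. The substantive half --- that every non-hyperarithmetic $Y$ is omitted by some $\omega$-model of $T$ --- is precisely where your proposal stops being a proof, and you say so yourself: you set up the Gandy basis theorem relativized to $Y$ and then defer ``the matching argument'' to Simpson's book. That deferred step \emph{is} the theorem. Moreover, the properties you extract from the basis theorem do not by themselves do the job: if $W$ codes an $\omega$-model $\Mcal$ of $T$ with $\omega_1^{W \oplus Y} = \omega_1^Y$ and $\Ocal^Y \not\le_h W \oplus Y$, and if $Y \in \Mcal$, then $Y \le_T W$, so these conditions merely say $\omega_1^W = \omega_1^Y$ and $\Ocal^Y \not\le_h W$ --- facts entirely consistent with $Y$ being a non-hyperarithmetic column of $W$, as you yourself observe when noting that a Gandy-basis model genuinely contains non-hyperarithmetic reals. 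So no contradiction is derived, and no concrete idea is offered for deriving one; the needed argument must exploit that $Y$ lies in \emph{every} member of the $\Sigma^1_1$ class of model codes, not just in one well-chosen member. The preliminary observations (countability of the intersection, its lightface $\Pi^1_1$ definability, downward $\le_h$-closure) are correct but are never used, so they do not narrow the gap; and your pointer to section VIII.4 of Simpson is to the definition of $\Sigma^1_1$-$\AC_0$, not to a proof you can invoke without simply importing the very result to be proved.

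For context, the paper itself does not prove this statement: it is quoted as classical background, and the closest the paper comes is its set-theoretic analogue, lemma \ref{lem:key}, proved from Barwise's hard core theorem \ref{thm:barwise-hard-core} for admissible fragments, which the paper describes as an infinitary version of Gandy--Kreisel--Tait. So a complete writeup on your part would need either to carry out the genuine Gandy--Kreisel--Tait/Simpson argument in full, or to run a Barwise-style hard-core argument over $\mathrm{HYP}$; as it stands, the heart of the matter is acknowledged but missing.
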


Combined with the fact that $(\omega,\HYP) \not \models \Sigma^1_1$-$\AC_0$, this theorem immediately implies that no computably axiomatizable extension of $\Sigma^1_1$-$\AC_0$ has a minimum $\omega$-model. In particular, $\Zsf_2$ has no minimum $\omega$-model. 

\begin{theorem*}[Simpson]
$\HYP$ is the intersection of all $\omega$-models of $\ATR_0$. And the same holds if we restrict to $\beta$-models. In particular, $\ATR_0$ has no minimum $\beta$-model.
\end{theorem*}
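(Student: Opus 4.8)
The plan is to establish the displayed equality in two halves — $\HYP$ is contained in the intersection, and the intersection is contained in $\HYP$, the latter even when only $\beta$-models are used — and then to read off the non-existence of a minimum $\beta$-model; the $\omega$-model version comes for free along the way. Write $\bigcap_\omega$ and $\bigcap_\beta$ for the intersections of the second-order parts of, respectively, all $\omega$-models and all $\beta$-models of $\ATR_0$. For the easy half, $\HYP \subseteq \bigcap_\omega$: since $\ATR_0$ proves every instance of $\Sigma^1_1$-$\AC_0$ (as noted above), and $\Sigma^1_1$-$\AC_0$ in turn proves $\Delta^1_1$-$\mathsf{CA}_0$, every $\omega$-model of $\ATR_0$ is an $\omega$-model of $\Delta^1_1$-$\mathsf{CA}_0$; by Kleene's theorem above $\HYP$ is the minimum such model, so $\HYP \subseteq \bigcap_\omega$, and hence also $\HYP \subseteq \bigcap_\beta$ since every $\beta$-model is an $\omega$-model.

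The hard half is $\bigcap_\beta \subseteq \HYP$: arguing contrapositively, given a real $X \notin \HYP$ I must produce a $\beta$-model $(\omega,\mathcal{M})$ of $\ATR_0$ with $X \notin \mathcal{M}$. The obvious candidates fail. Taking $\mathcal{M} = \HYP$ is no good, since $\HYP$ is not even a model of $\ATR_0$ (it fails $\Sigma^1_1$-$\AC_0$, which $\ATR_0$ proves); and taking $\mathcal{M} = L_\delta \cap \powerset(\omega)$ for the least $\delta$ making this a model of $\ATR_0$ is also no good, since that $\delta$ exceeds $\omegaoneck$ (because $\HYP = L_{\omegaoneck} \cap \powerset(\omega)$ is not a model of $\ATR_0$), so every such ``$L$-like'' model properly extends $\HYP$, containing non-hyperarithmetical reals such as Kleene's $\mathcal{O}$. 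What is needed is a ``thinner'', non-$L$-like $\beta$-model, and I would construct one following Simpson's treatment of pseudohierarchies and the $\beta$-models of $\ATR_0$ \cite[Ch.~VIII]{simpson:book}: the ingredients are jump hierarchies along Harrison linear orderings \cite{harrison1968} together with the Gandy Basis Theorem, used to manufacture a $\beta$-model that is closed under the transfinite-recursion operations of $\ATR_0$, correct about well-foundedness, and omits $X$.

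For the conclusion, combining the two halves gives $\bigcap_\beta = \HYP$, whence $\HYP \subseteq \bigcap_\omega \subseteq \bigcap_\beta = \HYP$ forces $\bigcap_\omega = \HYP$ as well. Now suppose, toward a contradiction, that $(\omega,\mathcal{M})$ were a minimum $\beta$-model of $\ATR_0$. Then $\mathcal{M}$ is contained in every $\beta$-model of $\ATR_0$ and is itself one of them, so $\mathcal{M} = \bigcap_\beta = \HYP$; but $(\omega,\HYP) \not\models \Sigma^1_1$-$\AC_0$ while $\ATR_0$ proves $\Sigma^1_1$-$\AC_0$, so $(\omega,\HYP) \not\models \ATR_0$ — a contradiction. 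The identical argument with $\bigcap_\omega$ replacing $\bigcap_\beta$ shows there is no minimum $\omega$-model of $\ATR_0$ either.

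I expect the main obstacle to be the construction in the hard half — producing, for a prescribed non-hyperarithmetical $X$, a $\beta$-model of $\ATR_0$ that omits $X$. Getting such a model to be closed under the $\ATR_0$ operations is routine; the delicate points are that it be correct about well-foundedness rather than merely closed under those operations, and that it be ``thin'' enough to omit $X$ (in the extreme case, to omit Kleene's $\mathcal{O}$). This is exactly where the pseudohierarchy method, the Gandy Basis Theorem, and the absoluteness of jump hierarchies along genuine well-orders carry the argument.
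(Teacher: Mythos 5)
The paper offers no proof of this statement: it appears in the historical-background section as a survey item, with the reader directed to chapter VIII of Simpson's book for proofs and references. Your decomposition is the standard one from that source, and the portions you actually carry out are correct: the containment $\HYP \subseteq \bigcap_\omega \subseteq \bigcap_\beta$, via the facts that $\ATR_0$ proves $\Sigma^1_1$-$\AC_0$, which in turn proves $\Delta^1_1$-$\mathsf{CA}_0$, together with Kleene's theorem that $\HYP$ is the minimum $\omega$-model of the latter; and the closing deduction that a minimum $\beta$-model would have to equal $\bigcap_\beta = \HYP$, which is impossible because $(\omega,\HYP) \not\models \Sigma^1_1$-$\AC_0$ while $\ATR_0$ proves it. The caveat is that the entire mathematical content of the theorem sits in the half you defer: producing, for each non-hyperarithmetical $X$, a $\beta$-model of $\ATR_0$ omitting $X$. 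You correctly diagnose why the naive candidates fail and you name the right machinery (pseudohierarchies along Harrison orderings and the Gandy basis theorem, as in Simpson's section VIII.6), but you do not execute that construction, so as written this is an annotated reduction to Simpson's theorem rather than a self-contained proof. Since the paper itself treats the result the same way---by citation---this is defensible in context; just be aware that nothing you have written independently establishes the inclusion $\bigcap_\beta \subseteq \HYP$.
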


\begin{theorem*}[Gandy]
$\Zsf_2$ has a minimum $\beta$-model. 
\end{theorem*}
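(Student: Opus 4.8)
The plan is to reduce the statement to a fact about the constructible hierarchy, using the arithmetic analogue of the unrolling construction discussed in the preliminaries. First I would recall that a $\beta$-model $\Scal$ of $\Zsf_2$, via the standard coding of hereditarily countable sets by well-founded extensional relations on $\omega$, yields a transitive $\in$-model $M_\Scal$ of $\ZFm$ together with ``every set is hereditarily countable'', and that $\Scal = \powerset(\omega) \cap M_\Scal$; here transitivity of $M_\Scal$ is exactly where the $\beta$-model hypothesis is used, and the Comprehension schema of $\Zsf_2$ corresponds to Separation over $\omega$ in $M_\Scal$. (Whether $\Zsf_2$ interprets full Collection here is immaterial for what follows.) Conversely, any transitive set model $M$ of this theory gives a $\beta$-model $\powerset(\omega) \cap M$ of $\Zsf_2$. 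Note that $\Zsf_2$ always has $\beta$-models---$(\omega, \powerset(\omega))$ is one---so, unlike with $\GB + \ETR$, no hypothesis is needed.

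Next I would let $\delta_0$ be the least ordinal with $L_{\delta_0} \models \ZFm$; this exists, since $L_{\omega_1^L} = H_{\omega_1}^L$ works. The key point is that $L_{\delta_0}$ automatically satisfies ``every set is hereditarily countable''. Otherwise, put $\gamma = \aleph_1^{L_{\delta_0}} < \delta_0$; since $L_{\delta_0} \models \ZFm + V = L$ (and $V = L$ supplies a global well-order, hence choice), condensation gives $H_{\aleph_1} = L_\gamma$ there and $L_{\delta_0}$ verifies $L_\gamma \models \ZFm$. But satisfaction of $\ZFm$ in the set $L_\gamma$ is absolute, so really $L_\gamma \models \ZFm$ with $\gamma < \delta_0$, contradicting minimality. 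So by the converse direction above, $\powerset(\omega) \cap L_{\delta_0}$ is a $\beta$-model of $\Zsf_2$, and it remains to show it is contained in every $\beta$-model of $\Zsf_2$; this proves the theorem (and shows en passant that it is the intersection of all $\beta$-models of $\Zsf_2$, in contrast to the situation for $\ATR_0$).

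The technical heart is the following lemma: if $\Scal$ is a $\beta$-model of even just $\ATR_0$ and some real of $\Scal$ codes a well-order of order type $\alpha$, then $\powerset(\omega) \cap L_\alpha \subseteq \Scal$. From such a code one runs, inside $\Scal$, an arithmetical transfinite recursion building a real that codes the structure $(L_\alpha, \in)$---at successor stages applying the $\Def$ operation, at limits amalgamating---which $\ATR_0$ provides. Because $\Scal$ is a $\beta$-model the recursion proceeds along a genuine well-order, so the resulting code really codes (an isomorphic copy of) the true $L_\alpha$; each real of $L_\alpha$ is then arithmetical in that code and so lies in $\Scal$ by Comprehension.

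Finally, let $\Scal$ be any $\beta$-model of $\Zsf_2$. Then $M_\Scal$ is a transitive model of $\ZFm$, so $L^{M_\Scal} = L_{\Ord^{M_\Scal}} \models \ZFm$, forcing $\Ord^{M_\Scal} \ge \delta_0$; and since $M_\Scal \models$ ``every set is hereditarily countable'', every ordinal below $\Ord^{M_\Scal}$ is coded by a real of $\Scal$. In particular each $\alpha < \delta_0$ is coded in $\Scal$, so the lemma gives $\powerset(\omega) \cap L_\alpha \subseteq \Scal$ for every $\alpha < \delta_0$; taking the union (and using that $\delta_0$ is a limit) yields $\powerset(\omega) \cap L_{\delta_0} \subseteq \Scal$, as required. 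The main obstacle I anticipate is the bookkeeping in the first and third paragraphs---pinning down exactly what set theory $\Zsf_2$ interprets, and checking that $\ATR_0$ suffices to build codes for $L_\alpha$ uniformly in codes for $\alpha$---but both are well-trodden ground; the conceptual content lies entirely in isolating $\delta_0$ and in the downward absoluteness of the $L$-construction through $\beta$-models.
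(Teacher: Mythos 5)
The paper does not actually prove this statement; it is quoted from the literature with a pointer to Simpson's book, so I am comparing your argument against the standard proof. Your architecture is the right one, and most of the pieces are sound: the candidate model $\powerset(\omega)\cap L_{\delta_0}$ with $\delta_0$ least such that $L_{\delta_0}\models\ZFm$, the condensation argument that $L_{\delta_0}$ thinks every set is hereditarily countable, the verification that $\powerset(\omega)\cap L_{\delta_0}$ is a $\beta$-model of $\Zsf_2$, and the $\ATR_0$ lemma that a $\beta$-model containing a code for $\alpha$ contains every real of $L_\alpha$ are all correct and are exactly the ingredients of the standard argument.

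The gap is in your final paragraph, and it sits precisely at the point you bracket as ``immaterial.'' From full comprehension in a $\beta$-model $\Scal$ you do get full Separation in the collapse $M_\Scal$, but Collection (and Replacement) in $M_\Scal$ corresponds to the choice schemes $\Sigma^1_k$-$\AC_0$ over $\Scal$: to collect witnesses $y$ for the elements of a coded set $a$ one must \emph{choose} codes for them. Full comprehension does not prove these schemes (Feferman--L\'evy; see chapter VII of \cite{simpson:book}), so there are $\beta$-models of $\Zsf_2$ whose collapse is not a model of $\ZFm$ as this paper defines it---and the auxiliary fact you invoke, that $M\models\ZFm$ implies $L^M\models\ZFm$, itself depends on Collection (this is the Zarach subtlety that motivates axiomatizing $\ZFm$ with Collection rather than Replacement). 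Consequently the step ``$M_\Scal$ is a transitive model of $\ZFm$, so $L^{M_\Scal}\models\ZFm$, forcing $\Ord^{M_\Scal}\ge\delta_0$'' is unsupported, and this is not bookkeeping: it is the entire content of the minimality direction. The standard proof replaces it with a genuine condensation/reflection argument showing that for each $k$, and each ordinal $\alpha$ coded in $\Scal$, the least $\beta>\alpha$ with $L_\beta\models\Sigma_k$-Separation is again coded in $\Scal$, and that closure under these operations drives $\sup\{\otp(W): W \text{ coded in } \Scal\}$ up to $\delta_0$. If you supply an argument of that kind---or otherwise show directly that the well-ordering ordinal of every $\beta$-model of $\Zsf_2$ reaches $\delta_0$ without passing through Collection in the collapse---the rest of your write-up goes through.
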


Simpson showed that the same holds for $\ATR_0$ and $\PnCA k_0$, for $k \ge 1$.

\begin{theorem*}[H.~Friedman]
There is no minimal $\omega$-model of $\Zsf_2$.
\end{theorem*}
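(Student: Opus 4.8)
The plan is to prove the statement in its uniform contrapositive form: given \emph{any} $\omega$-model $\mathcal{M} \models \Zsf_2$ — identifying $\omega$-models with their second-order parts, i.e.\ with collections of reals — I will construct an $\omega$-model $\mathcal{N} \models \Zsf_2$ with $\mathcal{N} \subsetneq \mathcal{M}$, witnessing that $\mathcal{M}$ is not minimal. First I reduce to the countable case: if $\mathcal{M}$ is uncountable, a countable elementary substructure of the two-sorted structure $(\omega, \mathcal{M})$ is already a proper $\omega$-submodel of $\Zsf_2$, so we are done; hence assume $\mathcal{M}$ is countable. One feature to note up front: by Gandy's theorem $\Zsf_2$ has a minimum $\beta$-model, so whenever $\mathcal{M}$ is at or near that model the submodel $\mathcal{N}$ must \emph{fail} to be a $\beta$-model — the construction has to manufacture a model which looks ill-founded from outside. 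This is exactly why the statement holds for $\omega$-models while its $\beta$-model analogue is false, and it means the proof cannot be a soft Löwenheim--Skolem argument; it is essentially the proof that $\HYP = \bigcap\{\omega\text{-models of }\Zsf_2\}$ (the Kreisel--Gandy--Tait theorem quoted above), relocalized so as to take place inside $\mathcal{M}$.

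Here is the construction. Since $\mathcal{M} \models \Zsf_2 \supseteq \Sigma^1_1$-$\AC_0$ while $(\omega,\HYP) \not\models \Sigma^1_1$-$\AC_0$, we have $\HYP \subsetneq \mathcal{M}$, so $\mathcal{M}$ contains a real $a \notin \HYP$; I will want $a$ chosen ``as generic (as high) as possible'' within $\mathcal{M}$, for the reason explained below. Working in $V$, and using at stage $n$ that $\mathcal{M}$ is a model of $\PnCA{n+2}_0$, recursively build an increasing chain of countable coded structures $\mathcal{W}_0 \subseteq \mathcal{W}_1 \subseteq \cdots$, each coded by a real $c_n \in \mathcal{M}$, with: (i)~$\mathcal{W}_n \models \PnCA{n}_0$; (ii)~$c_n \in \mathcal{W}_{n+1}$; (iii)~$\mathcal{W}_n$ is a $\Sigma^1_n$-elementary submodel of $\mathcal{W}_{n+1}$; and (iv)~$a \notin \mathcal{W}_{n+1}$. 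Each step is available because $\PnCA{n+2}_0$ proves the relativized basis fact that any coded $\omega$-model of $\PnCA{n}_0$ extends to a coded $\omega$-model of $\PnCA{n+1}_0$ which $\Sigma^1_n$-reflects it and which omits any prescribed real that is not hyperarithmetic in the code of the smaller model — this is the internal content of the Gandy-basis construction, and $\PnCA{n+1}_0$ supplies the $\Pi^1_n$-truth predicate it needs. Put $\mathcal{N} = \bigcup_n \mathcal{W}_n$.

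Three things must then be checked. First, $\mathcal{N} \subseteq \mathcal{M}$: each $c_n$ lies in $\mathcal{M}$, and $\mathcal{M} \models \Zsf_2$ contains every slice of $c_n$, so every real of every $\mathcal{W}_n$ lies in $\mathcal{M}$. Second, $\mathcal{N} \subsetneq \mathcal{M}$: by (iv), $a \in \mathcal{M}\setminus\mathcal{N}$. Third, $\mathcal{N}\models\Zsf_2$: from (iii), the chain $\mathcal{W}_n \preceq \mathcal{W}_{n+1}\preceq\cdots$ is $\Sigma^1_n$-elementary from stage $n$ on, so by the ($\Sigma^1_n$-)elementary chain theorem each $\mathcal{W}_n$ is a $\Sigma^1_n$-elementary submodel of $\mathcal{N}$; consequently, given a $\Pi^1_m$ instance of Comprehension with parameters $\bar p \in \mathcal{N}$, choose $n\geq m$ with $\bar p\in\mathcal{W}_n$, and then $\{k:\mathcal{N}\models\phi(k,\bar p)\} = \{k:\mathcal{W}_n\models\phi(k,\bar p)\}$, which belongs to $\mathcal{W}_n$ since $\mathcal{W}_n\models\PnCA{n}_0$ includes $\Pi^1_m$-Comprehension, hence belongs to $\mathcal{N}$. (Here ``$\PnCA{n}_0$'' must be read as the \emph{standard} schema, so that this goes through even when $\mathcal{M}$, and hence $\mathcal{W}_n$, is $\omega$-nonstandard.)

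I expect the main obstacle to be keeping conditions (iii) and (iv) simultaneously satisfiable along the whole recursion. The codes $c_n$ grow in complexity, and one must prevent the tower $\langle\mathcal{W}_n\rangle$ from eventually ``swallowing'' $a$ — equivalently, from producing some $c_n$ that hyperarithmetically computes $a$ — which is a genuine recursion-theoretic difficulty, the same one at the heart of the Kreisel--Gandy--Tait theorem; it should be handled by choosing $a$ (or, dually, choosing the tower) sufficiently generically, e.g.\ via priority-style bookkeeping inside the recursion that permanently diagonalises out one real. The other thing requiring care is the exact internal form of the basis lemma invoked at each stage — that $\PnCA{n+2}_0$ is strong enough both to $\Sigma^1_n$-reflect a given coded $\omega$-model of $\PnCA{n}_0$ and to omit a prescribed high real — together with the $\Sigma^1_n$-elementary chain lemma for second-order structures; both are standard but should be set down precisely. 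Everything else is bookkeeping.
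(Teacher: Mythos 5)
The paper does not actually prove this statement --- it is quoted in the historical background section, with Simpson's book cited for proofs and references --- so I can only assess your argument on its own terms. Your architecture is a recognized skeleton for the theorem: reduce to a countable $\omega$-model $\mathcal{M}$, observe $\HYP \subsetneq \mathcal{M}$ (correct, since $\omega$-models of $\Zsf_2$ are closed under hyperarithmetic reducibility while $(\omega,\HYP) \not\models \Sigma^1_1\text{-}\AC_0$), and build inside $\mathcal{M}$ a $\Sigma^1_n$-elementary tower of coded $\omega$-models of $\PnCA{n}_0$ whose union satisfies $\Zsf_2$ while omitting some real of $\mathcal{M}$. Your limit-stage analysis (the $\Sigma^1_n$-elementary chain lemma together with comprehension in the approximating models) is correct, as is your observation that the construction must produce a non-$\beta$-model.

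The gap is at the successor step, and you have located it yourself but not closed it. The ``relativized basis fact'' you invoke --- that $\PnCA{n+2}_0$ proves every coded $\omega$-model of $\PnCA{n}_0$ extends to a coded $\omega$-model of $\PnCA{n+1}_0$ which $\Sigma^1_n$-reflects it and omits any prescribed real not hyperarithmetic in its code --- is essentially the content of the theorem, not a citable lemma, and as stated it is doubtful. If you run a Gandy-basis argument \emph{inside} $\mathcal{M}$, then since $\mathcal{M}$ is an arbitrary $\omega$-model rather than a $\beta$-model, it may be wrong about assertions of the form ``$a$ is not hyperarithmetic in $c_{n+1}$,'' so the code it hands you may in fact compute $a$; if instead you run the argument \emph{externally}, the Gandy basis theorem produces a low witness in $V$ with no guarantee that it lies in $\mathcal{M}$. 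Reconciling ``$c_{n+1} \in \mathcal{M}$'' with ``$c_{n+1}$ does not compute $a$'' is the heart of Friedman's theorem. Note also that the natural internal source of coded models of $\PnCA{n+1}_0$ --- $\Sigma^1_{n+2}$-elementary hulls of $\mathcal{M}$ --- provably cannot work when $\mathcal{M}$ is Gandy's minimum $\beta$-model: such a hull must contain any real, like Kleene's $\mathcal{O}$, that is $\Sigma^1_2$-definable in $\mathcal{M}$, so the tower has to be produced by a genuinely different mechanism. ``Choose $a$ as generic as possible'' and ``priority-style bookkeeping'' are placeholders, not arguments. The standard ways to close this gap are a Barwise-compactness/omitting-types argument over $\Hyp$ of the two-sorted structure $(\omega,\mathcal{M})$ --- the same machinery this paper deploys for the $\KM$ analogue in lemma \ref{lem:key} --- or Quinsey's later and more general theorem; until one of these (or an honestly proved internalized basis lemma) is supplied, the proof is incomplete.
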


\begin{theorem*}[Quinsey]
Let $T$ be a computably axiomatizable extension of $\ATR_0$ in the language of second-order arithmetic. Let $(M,\Xcal) \models T$ be countable. Then there is $Y \subsetneq \Xcal$ so that $(M,\Ycal) \models T$. That is, no countable $M \models \PA$ has a minimal $T$-realization. In particular, $T$ has no minimal $\omega$-model.
\end{theorem*}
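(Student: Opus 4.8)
The result is a pseudo-hierarchy theorem, and my plan is to follow Quinsey's argument, as presented in Simpson's book~\cite{simpson:book}. Fix a countable model $(M,\Xcal) \models T$. Since producing any $\Ycal \subsetneq \Xcal$ with $(M,\Ycal) \models T$ already witnesses that $\Xcal$ is not a minimal $T$-realization, and since an $\omega$-model is just the case $M = \omega$, it suffices to produce one such $\Ycal$.

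The idea is to re-slice $\Xcal$ along a \emph{pseudo-well-order}: to build the ramified (constructible) hierarchy of $(M,\Xcal)$ along an ill-founded linear order rather than along a genuine ordinal. Concretely, using the countability of $\Xcal$ and a compactness argument in the style of Barwise's theorem --- carried out over a small admissible set formed from $(M,\Xcal)$, with the consistency of the relevant theory supplied by the hypothesis $(M,\Xcal) \models T$ --- I would produce an $\in$-structure $\Acal$, possibly external to $\Xcal$, with first-order part $M$, satisfying a weak set theory (say $\KP$) together with ``$V = L$'' and the statement that its reals form a model of $T$, and arranged so that all of its reals lie inside $\Xcal$ while it is ill-founded on its ordinals. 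The ordinals of $\Acal$ then form a linear order $\Gamma$ which is a pseudo-well-order (ill-founded in $V$ but with no descending sequence witnessed inside $\Acal$), and the constructible hierarchy of $\Acal$ gives a coherent hierarchy $\langle H_a : a \in \Gamma \rangle$. Let $\Ycal$ be the set of reals appearing in this hierarchy, i.e.\ the reals of $\Acal$.

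Then $(M,\Ycal) \models T$: this is just the internal assertion of $\Acal$ that its reals model $T$, which is correct since $\Acal$ has first-order part $M$. In particular $\Ycal$ is closed under the axioms of $T$, the delicate ones being its transfinite-recursion and comprehension schemata, and for these one uses that $\ATR_0$ proves the comparability of well-orders --- so that any well-order of $(M,\Ycal)$ can be matched against an initial segment of $\Gamma$ and the resulting iteration located among the $H_a$, the ill-founded tail of $\Gamma$ providing the extra room --- together with the fact that $\ATR_0$ proves $\Sigma^1_1$-choice. Moreover $\Ycal \subsetneq \Xcal$: although $\Acal$'s reals model $T$, the ill-foundedness of $\Gamma$ lets the hierarchy close off under $T$'s recursions \emph{without} the reals reaching all the way through $\Xcal$, and the omitting-types step of the construction is precisely what produces such an $\Acal$ while keeping $\Ycal$ a proper subclass --- this is where genuine ill-foundedness of $\Gamma$ is essential. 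The computable axiomatizability of $T$ enters in the compactness step: it is what makes the theory describing $\Acal$ recursively enumerable over the admissible set, as Barwise's compactness theorem requires. I expect the main obstacle to lie precisely in the two linked verifications just described --- that $\Ycal$ remains closed under $T$'s recursion schemata, yet stays a proper subclass of $\Xcal$ --- and in arranging the construction uniformly enough to cover the case in which $(M,\Xcal)$ is itself a $\beta$-model, where no pseudo-well-order lives inside $\Xcal$ and $\Gamma$ must be built externally. For the full argument see~\cite[chapter~VIII]{simpson:book}.
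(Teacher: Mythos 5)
First, a point of order: the paper does not prove this statement. It appears in the historical-background section as a quoted theorem of Quinsey, with a footnote sending the reader to Simpson's book for proofs and references, so there is no in-paper argument to compare yours against; I can only assess the sketch on its own terms.

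On its own terms, the sketch points at the right circle of ideas (pseudo-hierarchies, Barwise compactness over $\Hyp$ of the given model), but the two steps you yourself flag as ``the main obstacles'' are genuine gaps, and I do not believe the argument as described can be completed. (1) The consistency of your infinitary theory is never established, and the compactness step does no work: since $T$ is computably axiomatizable it is an element of $\Hyp((M,\Xcal))$, so the entire theory---infinitary diagram of $M$, the disjunctive axiom ``every real lies in $\Xcal$'', and ``my reals satisfy $\phi$'' for all $\phi \in T$---is a single $\Afrak$-finite set. Barwise compactness then requires the whole theory to be consistent outright, which amounts to already possessing a model of $\KP$ whose reals lie in $\Xcal$ and form a $T$-model; the hypothesis $(M,\Xcal) \models T$ does not supply this, as $T$ need not prove the existence of any coded model of $T$. (2) Properness of $\Ycal$ is attributed to ``the omitting-types step,'' but no type to be omitted is identified, and nothing in the setup prevents the construction from returning a well-founded $\Acal$ whose reals are all of $\Xcal$; arranging $\Ycal \subsetneq \Xcal$ is precisely the content of the theorem. (3) Your verification that $(M,\Ycal) \models T$ wavers between internal satisfaction in $\Acal$ and a hands-on verification of the recursion and comprehension schemata via comparability of well-orders; only the former can possibly work, since $T$ is an arbitrary computably axiomatizable extension of $\ATR_0$ (it could be $\Zsf_2$) and its axioms cannot be checked by $\ATR_0$-level reasoning about the hierarchy. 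For what it is worth, the route the present paper takes for the $\KM$ analogue (theorem \ref{thm:when-min-rlzn}) suggests the intended shape of the argument: when the model is not a $\beta$-model, unroll it to a nonstandard model of a weak set theory and apply Quinsey's $\Sigma_n$-elementary proper end-extension theorem to cut it down; the $\beta$-model case needs a separate treatment, which is exactly the case your sketch defers.
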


Several of these results have reflections upward in the set theoretic realm. For example, that $\Zsf_2$ has no minimum $\omega$-model is the arithmetic counterpart to the fact that $\KM$ has no minimum transitive model. I want to highlight one case where there is a known disanalogy. Simpson's result that there is no minimum $\beta$-model of $\ATR_0$ does not carry over to the set theoretic realm. Main theorem \ref{min-beta-etr} is that there is a minimum $\beta$-model of $\GB + \ETR$. The underlying reason for this disanalogy is that in set theory, a class being well-founded is an elementary property, whereas in arithmetic it is a $\Pi^1_1$ property, and in fact is $\Pi^1_1$-universal. As will be made clear in section \ref{sec:etr}, my construction of the minimum $\beta$-model of $\GB + \ETR$ heavily uses that submodels of $\beta$-models of set theory with the same first-order part are automatically $\beta$-models, a fact that follows immediately from well-foundedness being elementary. From the other direction, Simpson's argument uses that being well-founded is $\Pi^1_1$-universal in arithmetic. This is the source of other disanalogies between $\GB + \ETR$ and $\ATR_0$; see e.g. \cite{sato2014} or \cite{fujimoto2012}.

\section{\texorpdfstring{$\GB$}{GB}}

Let us begin with a classical result.

\begin{theorem}[Shepherdson \cite{shepherdson1953}] \label{thm:min-gb}
If there is a transitive model of $\GB$ then there is a minimum transitive model of $\GB$, and it moreover satisfies Global Choice.
\end{theorem}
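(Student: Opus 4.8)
The plan is to run the Shepherdson--Cohen argument one level up. Let $\alpha$ be the least ordinal which is the height of a transitive model of $\ZF$; this exists because the hypothesis supplies a transitive model of $\GB$, whose first-order part is a transitive model of $\ZF$. I claim the minimum transitive model of $\GB$ is $(L_\alpha, \Def(L_\alpha))$, where $\Def(L_\alpha)$ is the collection of subsets of $L_\alpha$ first-order definable over $(L_\alpha,\in)$ with parameters from $L_\alpha$. So the two things to verify are that this is a transitive model of $\GBC$, and that it sits inside every transitive model of $\GB$.

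For the first point: any transitive model of $\ZF$ of height $\alpha$ computes the same $L_\alpha$ (absoluteness of $L$), and $L_\alpha \models \ZF$. By observation~\ref{obs:gbc}, $\Def(L_\alpha)$ is then a $\GB$-realization for $L_\alpha$; and since the canonical well-order $<_L$ restricts to a parameter-free definable global well-order of $L_\alpha$, the same observation upgrades this to $(L_\alpha,\Def(L_\alpha)) \models \GBC$. This also shows, incidentally, that the least height of a transitive model of $\GB$ is exactly $\alpha$.

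The substantive point is containment. Let $(N,\Ycal) \models \GB$ be transitive. Then $N$ is a transitive model of $\ZF$, so $\Ord \cap N \ge \alpha$ by minimality of $\alpha$, whence $L_\alpha \subseteq L_{\Ord \cap N} = L^N \subseteq N$. It remains to see $\Def(L_\alpha) \subseteq \Ycal$, which I would get from Elementary Comprehension in $(N,\Ycal)$ after splitting on whether $L_\alpha$ is a set or a proper class of $N$. If $\alpha < \Ord \cap N$, then $L_\alpha$ is a genuine set of $N$; for a standard first-order formula $\phi$ and parameters $\bar p \in L_\alpha$, the class $\{a : a \in L_\alpha \wedge \phi^{L_\alpha}(a,\bar p)\}$ (superscript $L_\alpha$ = relativization of quantifiers to $L_\alpha$) is defined over $N$ by a first-order $\in$-formula with the set parameters $\bar p, L_\alpha$, so it lies in $\Ycal$, and being $\phi$ standard it is exactly the corresponding member of $\Def(L_\alpha)$. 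If instead $\alpha = \Ord \cap N$, then $L^N = L_\alpha$, so ``$x \in L$'' defines $L_\alpha$ over $N$ without parameters, and the class $\{a : a \in L \wedge \phi^L(a,\bar p)\}$ is first-order definable over $N$ from the set parameters $\bar p \in L_\alpha \subseteq N$, hence again lies in $\Ycal$ and equals the relevant member of $\Def(L_\alpha)$. Either way $\Def(L_\alpha) \subseteq \Ycal$, which finishes the proof.

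The one place to be careful — and the only real obstacle — is this case division. When $N$ is $\omega$-nonstandard, its internal notion of ``least height of a transitive model of $\ZF$'' need not agree with the true $\alpha$ (indeed $N$ need not believe such a model exists at all), so the argument cannot route through any definition of $\alpha$ inside $N$; it must use the externally computed $L_\alpha$ together with the fact that, whether $L_\alpha$ is a set or the proper class $L^N$ of $N$, each member of $\Def(L_\alpha)$ is cut out of $N$ by a standard first-order $\in$-formula with set parameters and so is available via Elementary Comprehension. Everything else — absoluteness of $L$, the content of Elementary Comprehension, and observation~\ref{obs:gbc} — is routine.
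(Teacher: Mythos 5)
Your proof is correct and follows essentially the same route as the paper: take $(L_\alpha,\Def(L_\alpha))$ for $\alpha$ the least height of a transitive model of $\ZF$, verify it is a transitive model of $\GBC$ via the definable $L$-order, and prove containment in any transitive $(N,\Ycal) \models \GB$ by splitting on whether $\alpha < \Ord \cap N$ (so $\Def(L_\alpha)$ consists of sets of $N$) or $\alpha = \Ord \cap N$ (so $\Def(L_\alpha) \subseteq \Ycal$ by Elementary Comprehension, $L$ being a definable class). Your added care about $\omega$-nonstandard $N$ and about working with the external $L_\alpha$ is a fine elaboration of the same argument, not a different approach.
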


\begin{proof}
Let $L_\alpha$ be the minimum transitive model of $\ZFC$. Then $(L_\alpha, \Def(L_\alpha)) = (L_\alpha, L_{\alpha+1})$ is the minimum transitive model of $\GBC$. If $(M,\Xcal)$ is a model of $\GBC$ whose height is $\alpha$, then $L_{\alpha+1} \subseteq \Xcal$ because $L$ is a definable class. If the height of $M$ is greater than $\alpha$, then $L_{\alpha+1} \subseteq M$. Finally, note that it satisfies Global Choice because the $L$-order is definable.
\end{proof}

Essentially the same argument gives the following.

\begin{corollary} \label{cor:least-beta-gbc}
If there is a $\beta$-model of $\GB$ then there is a minimum $\beta$-model of $\GB$, and it satisfies Global Choice.
\end{corollary}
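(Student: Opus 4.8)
The plan is to run the proof of Theorem~\ref{thm:min-gb} with ``minimum transitive model of $\ZFC$'' replaced by its $\beta$-model analogue. Concretely, the target is to show that $(L_\beta, L_{\beta+1}) = (L_\beta, \Def(L_\beta))$ is the minimum $\beta$-model of $\GB$, where $\beta$ is the least ordinal such that $(L_\beta, L_{\beta+1})$ is a $\beta$-model of $\GB$.

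The linchpin, playing the role that $L$-absoluteness of $\ZFC$ plays in the transitive case, is the claim: \emph{if $(N, \Ycal)$ is a $\beta$-model of $\GB$, then $(L^N, \Def(L^N))$ is again a $\beta$-model of $\GB$.} Here $L^N = L_{\Ord^N}$ is a parameter-free definable class of $(N,\Ycal)$, so by Elementary Comprehension relativized to $L^N$ every member of $\Def(L^N) = L_{\Ord^N+1}$ is a class of $(N,\Ycal)$; thus $\Def(L^N) \subseteq \Ycal$. Since $N \models \ZF$ we get $L^N \models \ZFC$, whence $(L^N, \Def(L^N)) \models \GBC$ by Observation~\ref{obs:gbc}. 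The real content is that it is a $\beta$-model: a class relation $R \in \Def(L^N)$ that $(L^N, \Def(L^N))$ believes well-founded must be genuinely well-founded. Since $R \in \Ycal$ and $(N,\Ycal)$ is a $\beta$-model, were $R$ genuinely ill-founded then $(N,\Ycal)$ would contain a nonempty subset of $\field(R) \subseteq L^N$ with no $R$-minimal element, and the task is to transfer such a witness down into $L^N$. I expect this to be the main obstacle. The cleanest route I see is: by Löwenheim--Skolem and the Mostowski collapse --- noting the collapse of an elementary submodel of a transitive $\beta$-model is again a $\beta$-model, since true well-foundedness of the relevant set-sized relations is isomorphism-invariant --- reduce to a countable transitive $\beta$-model of $\GB$ of least possible ordinal height, and then argue that such a least-height model may be taken with $N \models V = L$; once $N = L^N$ the transfer is vacuous, leaving only the (easy, because well-foundedness is elementary) fact that a submodel of a $\beta$-model with the same first-order part is itself a $\beta$-model.

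Granting the linchpin, the rest copies Theorem~\ref{thm:min-gb}. First $\beta$ is well-defined, since any $\beta$-model $(N,\Ycal)$ of $\GB$ produces the $\beta$-model $(L^N, \Def(L^N)) = (L_{\Ord^N}, L_{\Ord^N+1})$, so the class of $\gamma$ with $(L_\gamma, L_{\gamma+1})$ a $\beta$-model of $\GB$ is nonempty; by construction $(L_\beta, L_{\beta+1})$ is a $\beta$-model of $\GBC$. It is contained in every $\beta$-model $(N,\Ycal)$ of $\GB$: the linchpin gives that $(L_{\Ord^N}, L_{\Ord^N+1})$ is a $\beta$-model of $\GB$, so $\Ord^N \ge \beta$ by minimality of $\beta$; hence $L_\beta \subseteq L^N \subseteq N$, and $L_{\beta+1} \subseteq \Ycal$ --- if $\Ord^N > \beta$ then $L_{\beta+1} \subseteq N$ and sets are classes in $\GB$, while if $\Ord^N = \beta$ then $L_\beta = L^N$ is a definable class and $L_{\beta+1} = \Def(L_\beta) \subseteq \Ycal$ by Elementary Comprehension relativized to $L^N$. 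Finally, Global Choice holds in $(L_\beta, L_{\beta+1})$ because the canonical well-order of $L$ is definable. As flagged, the one nonroutine point is the linchpin, and within it the delicate step is arranging that the least-height $\beta$-model of $\GB$ has a constructible first-order part, since a general $\beta$-model $(N,\Ycal)$ may carry witnesses to ill-foundedness that do not lie in $L^N$.
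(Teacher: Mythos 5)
You have the right skeleton --- the minimum $\beta$-model should be $(L_\alpha,\Def(L_\alpha))$ for $\alpha$ the least height of a $\beta$-model of $\GB$, and everything reduces to your ``linchpin'' that $(L^N,\Def(L^N))$ is again a $\beta$-model whenever $(N,\Ycal)$ is --- but the linchpin is exactly the step you do not prove, and the route you sketch for it is circular. Passing to a countable elementary submodel and collapsing does preserve $\beta$-ness, but it is orthogonal to the actual difficulty; and ``argue that such a least-height model may be taken with $N \models V = L$'' is not an argument: a $\beta$-model of height $\alpha$ whose first-order part is $L_\alpha$ is precisely the object you are trying to produce, so you cannot assume one exists in order to make the transfer of ill-foundedness witnesses ``vacuous.''

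The gap is closed by observation \ref{obs:inner-beta-models}(2) of the paper, whose point is that you never need to move the witness $d$ itself into $L^N$. Suppose $R \in \Def(L^N)$ is genuinely ill-founded. Since $R \in \Ycal$ and $(N,\Ycal)$ is a $\beta$-model, there is $d \in N$, a nonempty subset of $\dom R$ with no $R$-minimal element; pick $\theta$ large enough that $d \in V_\theta^N$ and set $r = R \cap V_\theta^{L^N}$. Because $(L^N,\Def(L^N)) \models \GB$, the class $R$ is amenable to $L^N$, so $r$ is a \emph{set} of $L^N$; and $d$ witnesses, genuinely, that the set relation $r$ is ill-founded. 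Well-foundedness of set relations is absolute for the transitive $\ZF$-model $L^N$, so $L^N$ already contains a nonempty subset of $\dom r \subseteq \dom R$ with no minimal element, and hence $(L^N,\Def(L^N))$ correctly judges $R$ ill-founded. With that lemma in hand, your assembly of the corollary --- nonemptiness of the class of suitable heights, the minimality argument split into the equal-height and greater-height cases, and Global Choice from the definable $L$-order --- goes through exactly as you wrote it, and matches the paper's proof.
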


The key to proving this is the following observation.

\begin{observation} \label{obs:inner-beta-models}
\ 
\begin{enumerate}
\item Let $M \models \ZF$. If $(M,\Xcal) \models \GB$ is a $\beta$-model and $\Ycal \subseteq \Xcal$ then $(M,\Ycal)$ is a $\beta$-model. 
\item Moreover, if $(M,\Xcal) \models \GB$ is a $\beta$-model and $(N,\Ycal) \models \GB$ is such that $\Ord^N = \Ord^M$, $N \subseteq M$, and $\Ycal \subseteq \Xcal$, then $(N,\Ycal)$ is a $\beta$-model.
\end{enumerate}
\end{observation}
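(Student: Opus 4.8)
The plan is to reduce everything to the $\Pi^0_1$ characterization of class well-foundedness recorded in section~\ref{sec:prelim}: over $\GB$, a class relation $R$ is well-founded if and only if every nonempty \emph{subset} of $\dom R$ has an $R$-minimal element. Apart from the parameter $R$ itself, every quantifier in this assertion ranges over the first-order part. Being a $\beta$-model says precisely that this assertion, whenever it holds in the model (reading the subset quantifier over the model's own sets), also holds in $V$ (reading it over all sets); the reverse implication is automatic for any transitive model, since the model's sets are genuine sets. So in each part, given a class relation $R$ in the smaller second-order part which the smaller model believes to be well-founded, it suffices to check that ``every nonempty subset of $\dom R$ has an $R$-minimal element'' is genuinely true.

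For $(1)$ this is immediate. Fix $R \in \Ycal \subseteq \Xcal$ and suppose $(M,\Ycal) \models \text{``}R\text{ is well-founded''}$. Since $(M,\Ycal) \models \GB$, this is equivalent to the assertion that every nonempty subset of $\dom R$ in $M$ has an $R$-minimal element; but that assertion involves only $M$ and the parameter $R$, both of which are shared with $(M,\Xcal)$, so $(M,\Xcal) \models \text{``}R\text{ is well-founded''}$ too, whence by the $\beta$-property of $(M,\Xcal)$ the relation $R$ is really well-founded.

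For $(2)$ the hypothesis $\Ord^N = \Ord^M$ is what carries the argument. Given $R \in \Ycal \subseteq \Xcal$ with $(N,\Ycal) \models \text{``}R\text{ is well-founded''}$, I would build, inside $(N,\Ycal)$, a ranking function $\rho \colon \dom R \to \Ord$ for $R$. When $R$ is set-like this is provable in $\GB$: the $R$-downward closure of any $\{a\}$ is a set (a recursion of length $\omega$, available from $\ZF$), the restriction of $R$ to it is a well-founded set relation, hence has a rank function by $\ZF$, and patching these local rank functions together is an instance of Elementary Comprehension. Such a $\rho$ lies in $\Ycal \subseteq \Xcal$, and since $\Ord^N = \Ord^M$ its values are genuine ordinals; as ``$\rho$ is a ranking function for $R$'' is absolute, $\rho$ witnesses that $R$ is really well-founded. (Alternatively one can aim to show $(M,\Xcal) \models \text{``}R\text{ is well-founded''}$ directly, by arguing that a nonempty subset of $\dom R$ lying in $M$ with no $R$-minimal element could be replaced by one lying in $N$, using $N \subseteq M$, $\Ord^N = \Ord^M$, and that $V_\alpha^N = V_\alpha^M \cap N$ for $\alpha < \Ord^M$, and then appealing to the $\beta$-property of $(M,\Xcal)$.)

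The one place I expect real difficulty is in $(2)$, for class relations that are not set-like in $(N,\Ycal)$: there the rank-function construction is unavailable, and one instead has to show that a witness to ill-foundedness present in $M$ (or in $V$) can be found as an honest \emph{element} of $N$ rather than merely as a subset of $N$. This is precisely where $\Ord^N = \Ord^M$, rather than the weaker $N \subseteq M$, must be used. Part $(1)$ and the skeleton of $(2)$ are otherwise routine consequences of the first-order characterization of class well-foundedness.
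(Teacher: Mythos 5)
Your part (1) has the right shape, but as written it is circular: the ``$\Pi^0_1$ characterization recorded in the preliminaries'' is stated there with a footnote that defers its proof to this very observation, so that characterization is the content you are being asked to establish, not something you may quote. The missing argument is the nontrivial direction: provably in $\GB$, an ill-founded class relation $R$ admits a \emph{set-sized} witness. One builds a set $X=\bigcup_{n<\omega}X_n$ by letting $X_0$ be the set of $x\in\dom R$ of minimal von~Neumann rank such that $R$ restricted below $x$ is ill-founded, and letting $X_{n+1}$ collect, again at the minimal possible rank, for each $x'\in X_n$ some $R$-predecessors $x$ of $x'$ below which $R$ remains ill-founded; then $R\rest X$ is an ill-founded set-sized relation and some nonempty set $d\subseteq X$ has no $R$-minimal element. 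With that lemma in hand, your transfer of well-foundedness between $(M,\Ycal)$ and $(M,\Xcal)$ is exactly right.

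The genuine gap is in part (2). Your primary route via ranking functions covers only set-like relations, as you concede; class relations over $\GB$ can have rank exceeding $\Ord$, so the general case cannot be handled that way. Your parenthetical alternative is the correct idea, but it is left as an aim, and the step you flag as the ``real difficulty'' is precisely the one that must be carried out --- and it can be, as follows. Suppose $R\in\Ycal$ is really ill-founded. Since $R\in\Xcal$ and $(M,\Xcal)$ is a $\beta$-model, $(M,\Xcal)$ thinks $R$ is ill-founded, so by the characterization from part (1) there is a set $d\in M$, $d\subseteq\dom R$, nonempty and with no $R$-minimal element. Choose $\theta\in\Ord^M=\Ord^N$ with $d\in V_\theta^M$, and let $r=R\cap V_\theta^N$. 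This $r$ is a \emph{set} of $N$, by amenability of $R\in\Ycal$ to $N$; and since ranks are absolute between the transitive models $N\subseteq M$ we have $d\subseteq V_\theta^N$, so $d$ witnesses that $r$ is really ill-founded. Well-foundedness of set relations is absolute for the transitive model $N\models\ZF$, so $N$ contains its own witness $d'\subseteq\dom r$ with no $r$-minimal element, and because $R$ and $r$ agree on $V_\theta^N$ this $d'$ also has no $R$-minimal element. Hence $(N,\Ycal)$ thinks $R$ is ill-founded, and the contrapositive gives (2). Note that no ranking functions are needed anywhere; the whole proof runs through set witnesses and absoluteness for set relations.
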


\begin{proof}
$(1)$ This follows immediately from the fact that ``$R$ is a well-founded class relation'' can be expressed via a first-order formula. Namely, I claim that, over $\GB$, a binary relation $R$ is well-founded if and only if every nonempty \emph{set} $d \subseteq \dom R$ has a minimal element. Clearly, every well-founded relation satisfies this property. For the other direction, suppose that $R$ is ill-founded. Let $\alpha_0$ be least so that $V_{\alpha_0}$ contains $x \in \dom R$ so that the class $R \rest x = \{ (y,y') \in R : y,y' <_R x \}$, where $<_R$ is the transitive closure of $R$, is ill-founded. Let $X_0 \subseteq V_{\alpha_0}$ be the set of such $x$. Given $\alpha_n$ and $X_n$, let $\alpha_{n+1}$ be least such that for each $x' \in X_n$ we have $V_{\alpha_{n+1}}$ contains some $x \mathbin R x'$ for which $R \rest x$ is ill-founded. And set $X_{n+1} \subseteq V_{\alpha_{n+1}}$ to be the set of these $x$. Finally, set $X = \bigcup_{n \in \omega} X_n$. Then $X$ is a set and $R \rest X$ is an ill-founded set-sized subrelation of $R$. So there is some $d \subseteq X$ without an $R$-minimal element.

$(2)$ Suppose $R \in \Ycal$ is ill-founded. Then $(M,\Xcal)$ thinks that $R$ is ill-founded, so there is $d \in M$ witnessing this. Let $\theta \in \Ord^M$ be so that $d \in V_\theta^M$. Set $r = R \cap V_\theta^N \in N$. Then $r \in M$ because $N \in \Ycal \subseteq \Xcal$. Observe that $r$ is ill-founded, as $M$ sees that $d$ witnesses the ill-foundedness of $r$. But $r$ is a set in $N$ which is a transitive model of $\ZFC$. So $N$ must think $r$ is ill-founded and hence $(N,\Ycal)$ thinks $R$ is ill-founded.
\end{proof}

\begin{proof}[Proof of corollary \ref{cor:least-beta-gbc}]
Consider $(L_\alpha,\Def(L_\alpha))$, where $\alpha$ is least such that there is a $\beta$-model of $\GB$ of height $\alpha$. Let $(M,\Xcal) \models \GB$ be a $\beta$-model with $\Ord^M = \alpha$. Then $L_\alpha \supseteq M$ and $\Def(L_\alpha) \subseteq \Xcal$. By the observation, $(L_\alpha, \Def(L_\alpha)) \models \GBC$ is a $\beta$-model. It must be contained in every $\beta$-model of $\GB$.
\end{proof}

More interesting is the question of when there is a minimum $\GBC$-realization for a model of $\ZFC$. Note that this is not interesting if we do not include Global Choice, as $\Def(M)$ is always the minimum $\GB$-realization for $M$.

\begin{theorem}[Main theorem \ref{main:min-gbc-rlzn}] \label{least-gbc}
Let $M$ be a countable model of $\ZFC$. Then $M$ has a minimum $\GBC$-realization if and only if $M$ has a parametrically definable global well-order.
\end{theorem}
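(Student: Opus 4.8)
The plan is to prove the two directions separately; the backward direction is immediate and the forward direction carries all the weight. For the backward direction: if $M$ has a parametrically definable global well-order, then $(M,\Def(M)) \models \GBC$ by observation~\ref{obs:gbc}, while every $\GBC$-realization for $M$ must contain $\Def(M)$ since Elementary Comprehension forces closure under first-order definability from set parameters; hence $\Def(M)$ is the minimum $\GBC$-realization.

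For the forward direction, suppose $\Xcal$ is the minimum $\GBC$-realization for $M$. Since $\Def(M) \subseteq \Xcal$ automatically, it suffices to prove $\Xcal \subseteq \Def(M)$: then $\Xcal = \Def(M)$, so $(M,\Def(M))\models\GBC$ and the global well-order witnessing Global Choice there is a parametrically definable global well-order of $M$. To get $\Xcal \subseteq \Def(M)$ I will exhibit two $\GBC$-realizations of $M$ whose intersection is exactly $\Def(M)$; minimality of $\Xcal$ then pins it inside that intersection. The two realizations come from the class forcing $\mathbb{P}$ that generically adds a global well-order --- the forcing behind the Cohen--Felgner--Solovay argument recalled in the preliminaries, whose conditions are the set-sized partial approximations to a bijection between $\Ord$ and $V$. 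Since $M$ is countable I can build inside $V$ a filter $(G_0,G_1)$ that is $\mathbb{P}\times\mathbb{P}$-generic over $M$, by meeting the countably many definable dense subclasses; then each $G_i$ is $\mathbb{P}$-generic over $M$, codes a strongly amenable global well-order, and so $(M,\Def(M;G_i))\models\GBC$ for $i=0,1$ by observation~\ref{obs:gbc}.

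It remains to prove $\Def(M;G_0)\cap\Def(M;G_1)=\Def(M)$, the nontrivial inclusion being $\subseteq$. Take $A$ in the intersection, defined over $(M,G_0)$ by $\theta_0$ and a set parameter $p_0$ and over $(M,G_1)$ by $\theta_1$ and $p_1$. In the product extension the two definitions agree, so by the forcing theorem for the pretame, $\ZFC$-preserving forcing $\mathbb{P}\times\mathbb{P}$ over $M$ there is $(r_0,r_1)\in G_0\times G_1$ forcing $\forall x\,(\theta_0(x,p_0,\dot G_0)\iff\theta_1(x,p_1,\dot G_1))$. Factoring the product and passing to the intermediate model $M[G_0]$ --- in which $\theta_0(\cdot,p_0,G_0)$ already defines $A$ --- the condition $r_1$ forces over $M[G_0]$ that $\theta_1(\cdot,p_1,\dot G_1)$ defines $A$; and since the first-order forcing relation for $\mathbb{P}$ is uniformly definable, hence computed the same way in $M$ and in $M[G_0]$ for formulas about sets of $M$, this yields $x\in A \iff r_1\Vdash_{\mathbb{P}}^{M}\theta_1(\check x,\check p_1,\dot G_1)$ for $x\in M$. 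The right-hand side is evaluable in $M$ from $r_1$ and $p_1$, so $A\in\Def(M)$.

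The main obstacle is exactly this mutual-genericity computation: it relies on the definability of the first-order forcing relation for the global-well-order forcing and its square over an arbitrary $M\models\ZFC$ --- the same technology underlying the Cohen--Felgner--Solovay argument --- and on the product/factoring manipulation showing that a class definable over both of two mutually generic extensions already lies in the ground model. A minor bookkeeping point is to fix the convention that $\Def(M;-)$ uses standard formulas throughout, so that $\omega$-nonstandard $M$ causes no trouble with observation~\ref{obs:gbc} or with the forcing-relation manipulations above.
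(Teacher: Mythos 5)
Your proof is correct, and it runs on the same engine as the paper's: the Cohen--Felgner--Solovay class forcing for adding a global well-order, plus the definability and ground-model absoluteness of its forcing relation. The packaging differs in a way worth noting. The paper argues by contradiction: it first observes that a minimum realization must have the form $\Def(M;G)$ for some generic-style global well-order $G$, then adds a single Cohen class $H$ generic over the structure $(M,\Def(M;G))$ and uses the forcing-relation computation to show $G \notin \Def(M;H)$, contradicting minimality. You instead take two filters $G_0, G_1$ that are product-generic over $M$ and prove the symmetric intersection lemma $\Def(M;G_0) \cap \Def(M;G_1) = \Def(M)$, so that minimality directly traps $\Xcal$ inside $\Def(M)$. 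The core computation is identical in both (a condition forces the generic-side definition to agree with something absolute, so membership reduces to ``$p \Vdash \phi(\check x, \dot G)$'' evaluated in $M$); your version additionally requires the product/factoring lemma for this class forcing, which is standard since the forcing is $\kappa$-closed for every $\kappa$ and hence pretame. What your route buys is that you never need to identify $\Xcal$ with some $\Def(M;G)$, and you obtain the paper's subsequent corollary $\Xcal_{\GBC}(M) = \Def(M)$ for free; in fact the paper's own proof of that corollary reproves essentially your intersection lemma, with $H$ generic over $\Def(M;G)$ playing the role of the second factor. No gaps: the existence of the product generic from countability, the strong amenability and coding of a global well-order by each $G_i$, and the absoluteness of the forcing relation for formulas not mentioning $G_0$ are all exactly the facts the paper itself relies on.
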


\begin{proof}
$(\Leftarrow)$ Any $\GBC$-realization for $M$ must contain $\Def(M)$. If $M$ has a definable global well-order then $\Def(M)$ is itself a $\GBC$-realization, hence it is the minimum $\GBC$-realization for $M$.

$(\Rightarrow)$ Suppose towards a contradiction that $\Xcal$ is the minimum $\GBC$-realization for $M$. As $M$ has no definable global well-order, it must be that $\Xcal = \Def(M;G)$, where $G \not \in \Def(M)$ is some global well-order of $M$. Consider the forcing to add a Cohen subclass to $\Ord$ with set-sized conditions. Note that this forcing does not add any new sets as it is $\kappa$-closed for every $\kappa$. Let $H$ be $(M,\Xcal)$-generic for this forcing. That is, $H$ meets every dense class in $\Xcal$. Then every set $a \in M$, is coded into $H$. To expound, $\ZF$ proves that every set $a$ is uniquely determined by the isomorphism type of the graph $E_a = (\tc(\{a\}),\mathord\in \rest \tc(\{a\}))$. Now use Choice to take an isomorphic copy of $E_a$ on the ordinals and look at its image under your favorite $\Sigma_0$-definable pairing function $\Ord^2 \to \Ord$. This gives an ordinal-length binary sequence $s_a$ from which we can recover $a$. Indeed, $a$ is constructible from $s_a$. By density, every ordinal-length binary sequence is coded into $H$, and thereby every set is coded into $H$.  This yields a global well-order definable from $H$: say that $a <_H b$ if the index where $a$ is first coded into $H$ is less than the index where $b$ is first coded into $H$. Therefore, $\Def(M;H)$ is a $\GBC$-realization for $M$. 

I claim that $G \not \in \Def(M;H)$. Otherwise, there is a first-order formula $\phi$, possibly with set parameters and $H$ as a class parameter but where $G$ does not appear as a parameter, so that $(M,\Def(M;G,H)) \models \forall x\ x \in G \iff \phi(x,H)$. There is some $p \in H$ forcing this statement; that is, there is $p \in H$ so that $(M,\Def(M;G)) \models \text{``}p \forces \forall x\ x \in \check G \iff \phi(x,\dot H)\text{''}$. Thus, $(M,\Def(M;G)) \models \text{``} \forall x\ x \in G \iff p \forces \phi(\check x, \dot H)\text{''}$ so for all $x \in M$, we have $x \in G$ if and only if $(M,\Def(M;G)) \models [p \forces \phi(\check x, \dot H)]$. But this formula doesn't depend upon $G$, so this happens if and only if $M \models [p \forces \phi(\check x, \dot H)]$. Therefore $G \in \Def(M)$, contradicting that $M$ has no definable global well-order.

As $G \not \in \Def(M;H)$, it cannot be that $\Xcal = \Def(M;G) \subseteq \Def(M;H)$, contradicting that $\Xcal$ is the minimum realization. So $M$ cannot have a minimum $\GBC$-realization, as desired.
\end{proof}

As remarked in Section \ref{sec:prelim}, a model of $\ZFC$ having a definable global well-order is equivalent to that model satisfying the first-order sentence $\exists x\ V = \HOD(\{x\})$. Thus, whether a countable model of set theory has a minimum $\GBC$-realization is recognizable from the theory of the model. 

\begin{corollary}
A countable model of $\ZFC$ has a minimum $\GBC$-realization if and only if it satisfies $\exists x\ V = \HOD(\{x\})$. \qed
\end{corollary}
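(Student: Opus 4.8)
The plan is to derive this as an immediate corollary of Theorem~\ref{least-gbc} (main theorem~\ref{main:min-gbc-rlzn}) together with the standard observation, already recorded in Section~\ref{sec:prelim}, that for a model of $\ZFC$ having a parametrically definable global well-order is equivalent to satisfying the first-order sentence $\exists x\ V = \HOD(\{x\})$. Granting that auxiliary equivalence, the argument is a one-line chain: a countable $M \models \ZFC$ has a minimum $\GBC$-realization iff (by Theorem~\ref{least-gbc}) it has a parametrically definable global well-order iff it satisfies $\exists x\ V = \HOD(\{x\})$, which is exactly what is claimed.

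For completeness I would spell out the auxiliary equivalence, even though it needs no countability hypothesis and is independent of the minimum-realization machinery. For the forward direction, if $M \models V = \HOD(\{x\})$ for some set $x$, then the canonical well-order of $\HOD(\{x\})$ --- comparing two sets according to the least pair $(\godel{\phi},\bar\alpha)$ consisting of a formula and a tuple of ordinal parameters that defines them from $x$ --- is a global well-order of $M$ definable with $x$ as its only parameter. For the backward direction, suppose $G$ is a global well-order of $M$ definable from a set parameter $p$. Then every set $a \in M$ is definable from $p$ and the ordinal recording the position of $a$ in $G$; applying this to each element of $\tc(\{a\})$ shows $a \in \HOD(\{p\})^M$, so $M \models V = \HOD(\{p\})$, witnessing the sentence.

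I do not expect any real obstacle here: all of the mathematical content sits inside Theorem~\ref{least-gbc}, whose substantive direction is the class-forcing argument producing, from any global well-order $G \notin \Def(M)$, a Cohen-generic subclass $H$ of $\Ord$ with $G \notin \Def(M;H)$. The present corollary merely repackages that theorem as a statement about a first-order property of $M$, making explicit the point already noted in the surrounding text: whether a countable model of set theory admits a minimum $\GBC$-realization is detectable from its first-order theory alone.
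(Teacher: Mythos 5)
Your proposal is correct and matches the paper's treatment: the corollary is stated there with no separate proof, being immediate from Theorem~\ref{least-gbc} together with the equivalence (already noted in Section~\ref{sec:prelim}) between having a parametrically definable global well-order and satisfying $\exists x\ V = \HOD(\{x\})$. Your spelled-out verification of that auxiliary equivalence is the standard argument and is fine (modulo the usual point that ``definable from $x$'' is formalized via definability over levels $V_\alpha$).
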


\begin{corollary}
Let $M \models \ZFC$ be countable. Then $\Xcal_\GBC(M) = \Def(M)$.\footnote{Recall definition \ref{def:xcalt}.}
\end{corollary}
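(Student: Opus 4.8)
The plan is to deduce this corollary directly from the preceding theorem (main theorem~\ref{main:min-gbc-rlzn}, stated as theorem~\ref{least-gbc}) by splitting into the two cases according to whether $M$ has a parametrically definable global well-order. Recall that $\Xcal_\GBC(M) = \bigcap\{\Ycal : (M,\Ycal)\models\GBC\}$ by definition~\ref{def:xcalt}, and note that any $\GBC$-realization $\Ycal$ for $M$ contains $\Def(M)$, since $\Def(M)$ is the collection of classes of $M$ definable from set parameters and elementary comprehension (a consequence of $\GB$) forces every such class to lie in $\Ycal$. Hence $\Def(M)\subseteq\Xcal_\GBC(M)$ in all cases, and it remains to argue the reverse inclusion.

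First I would handle the case where $M$ \emph{does} have a parametrically definable global well-order. Then, as observed just before the statement (or as in the proof of theorem~\ref{least-gbc}), $\Def(M)$ is itself a $\GBC$-realization for $M$; consequently $\Def(M)$ is one of the sets in the intersection defining $\Xcal_\GBC(M)$, so $\Xcal_\GBC(M)\subseteq\Def(M)$, giving equality.

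Second I would handle the case where $M$ has no parametrically definable global well-order. By the proof of the $(\Rightarrow)$ direction of theorem~\ref{least-gbc}, given any $\GBC$-realization $\Xcal$ for $M$ there is another $\GBC$-realization $\Def(M;H)$ (obtained by forcing to add a Cohen subclass of $\Ord$ with set conditions) such that some fixed global well-order is dropped; more usefully, I would reuse the following feature of that argument. If $A\in\Xcal_\GBC(M)$ with $A\notin\Def(M)$, then $A$ lies in \emph{every} $\GBC$-realization, in particular in $\Def(M;H)$ for every such Cohen-generic $H$ over every $\GBC$-realization $\Def(M;G)$ containing $A$; but the argument of theorem~\ref{least-gbc} shows (taking $\phi$ to define $A$ rather than $G$) that if $A\in\Def(M;H)$ via a formula not mentioning $G$, then already $A\in\Def(M)$, a contradiction. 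So no such $A$ exists, i.e.\ $\Xcal_\GBC(M)\subseteq\Def(M)$, and again equality holds. Thus in both cases $\Xcal_\GBC(M)=\Def(M)$.

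The only delicate point — the ``main obstacle'' — is making the second case genuinely rigorous rather than merely gesturing at the proof of theorem~\ref{least-gbc}: one must check that the Cohen-forcing argument there does not actually require that $G$ generate $\Xcal$, only that $\Def(M;G)$ be \emph{some} $\GBC$-realization containing the putative $A$, and that the forcing relation manipulation ($p\forces\phi(\check x,\dot H)$ being decided inside $M$ independently of $G$) goes through verbatim with $A$ in place of $G$. Since countability of $M$ is exactly what guarantees a suitable generic $H$ exists in $V$, and the rest is the same $\kappa$-closure and coding bookkeeping already carried out, I expect this to be short. Alternatively — and perhaps cleaner for exposition — one can simply invoke theorem~\ref{least-gbc} as a black box: if $M$ has no definable global well-order then $M$ has no minimum $\GBC$-realization, so $\Xcal_\GBC(M)$ is \emph{not} a $\GBC$-realization; but if $\Xcal_\GBC(M)$ properly contained some class beyond $\Def(M)$ one would want to derive a contradiction, and this still needs the forcing input, so the black-box route does not fully avoid the work. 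I would therefore present the corollary with a brief proof that in the no-well-order case cites the relevant paragraph of the proof of theorem~\ref{least-gbc} with ``$G$'' replaced by an arbitrary $A\in\Xcal_\GBC(M)\setminus\Def(M)$.
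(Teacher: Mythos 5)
Your proposal is correct and follows essentially the same route as the paper: in the definable-well-order case $\Def(M)$ is itself a $\GBC$-realization and hence the minimum one, and otherwise the Cohen-forcing argument from theorem~\ref{least-gbc} is reapplied with an arbitrary $A \in \Def(M;G)\setminus\Def(M)$ in place of $G$ to show $A \notin \Def(M;H)$, so that $\Xcal_\GBC(M) \subseteq \Def(M;G)\cap\Def(M;H) = \Def(M)$. The ``delicate point'' you flag is exactly the observation the paper makes in its one-line justification, so there is nothing further to add.
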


\begin{proof}
If $\Def(M)$ is a $\GBC$-realization for $M$ then it is the minimum $\GBC$-realization, whence $\Xcal_\GBC(M) = M$. Otherwise, the argument from the proof of theorem \ref{least-gbc} that $G \not \in \Def(M;H)$ applies to any $A \in \Def(M;G) \setminus \Def(M)$. Thus $\Def(M;G) \cap \Def(M;H) = \Def(M)$ and so $\Xcal_\GBC(M) \subseteq \Def(M)$. The other inclusion is trivial, giving the desired equality.
\end{proof}

The construction used in the proof of theorem \ref{least-gbc} does not show show that any countable $M \not \models \exists x\ V = \HOD(\{x\})$ lacks a \emph{minimal} $\GBC$-realization. The reason is that the $\Def(M;G)$ and $\Def(M;H)$ from the proof are incomparable. We have already seen one direction, that $\Def(M;G) \not \subseteq \Def(M;H)$. The other direction, that $\Def(M;H) \not \subseteq \Def(M;G)$, is because by genericity $H \not \in \Def(M;G)$. We get that $\Def(M;G)$ is not the minimum $\GBC$-realization, but left open is the possibility that it could be minimal.

\begin{question}
Is there $M \models \ZFC + \forall x \ V \ne \HOD(\{x\})$ with a minimal $\GBC$-realization $\Xcal$ for $M$? Can this happen for countable $M$?
\end{question}

Answering this question in general seems difficult, but something can be said for special cases. First is a negative result that minimal $\GBC$-realizations cannot come from Cohen forcing.

\begin{proposition} \label{prop:cohen-not-min}
Consider $M \models \ZFC$ and suppose $C \subseteq \Ord^M$ is Cohen-generic. Then, $\Def(M;C)$ is not a minimal $\GBC$-realization.
\end{proposition}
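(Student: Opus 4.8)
The plan is to produce a $\GBC$-realization for $M$ that is strictly contained in $\Def(M;C)$, by discarding ``half'' of the generic information in $C$. Partition $\Ord$ into two $M$-definable classes $A_0,A_1$, each order-isomorphic to $\Ord$ by an $M$-definable map (for instance the even and the odd ordinals), and set $C_i = C\cap A_i$. Splitting a condition $s$ of $\Add(\Ord,1)$ (set-sized conditions) as $(s\rest A_0, s\rest A_1)$ displays $\Add(\Ord,1)$ as an $M$-definable product of two copies of itself, under which the generic $C$ corresponds to the pair $(C_0,C_1)$. By the product lemma for tame class forcing (see \cite{GHHSW2017}), this pair is generic over $(M,\Def(M))$ for the product; transporting each $A_i$ back to $\Ord$ it follows that, writing $\Pbb=\Add(\Ord,1)$, the class $C_1$ is $\Pbb$-generic over $(M,\Def(M;C_0))$ and $C_0$ is $\Pbb$-generic over $(M,\Def(M;C_1))$. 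In particular each $C_i$ is Cohen-generic over $(M,\Def(M))$, hence strongly amenable to $M$ (the forcing is $\kappa$-closed for every $\kappa$ and so preserves $\ZF$, exactly as in the proof of Theorem~\ref{least-gbc}).

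I would next observe that $C_1$ by itself already codes a global well-order of $M$: the density argument in the proof of Theorem~\ref{least-gbc} used nothing about $H$ beyond its being Cohen-generic for $\Add(\Ord,1)$, so run verbatim with $C_1$ in place of $H$ it shows that every ordinal-length binary sequence, and hence every set of $M$, is coded into $C_1$; ordering sets by the first index at which they appear gives a global well-order of $M$ definable from $C_1$. By Observation~\ref{obs:gbc}, $(M,\Def(M;C_1))\models\GBC$, so $\Def(M;C_1)$ is a $\GBC$-realization for $M$, and $\Def(M;C_1)\subseteq\Def(M;C)$ since $C_1$ is $M$-definable from $C$.

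It then remains to see that this containment is proper, i.e.\ that $C_0\notin\Def(M;C_1)$. Suppose towards a contradiction that $C_0=\{x:(M,\Def(M;C_1))\models\phi(x,C_1,p)\}$ for some first-order $\phi$ and parameter $p\in M$. Since $C_1$ is $\Pbb$-generic over $(M,\Def(M;C_0))$ and $C_0$ lies in this ground model, some $q\in C_1$ forces $\forall x\,(x\in\check C_0\iff\phi(x,\dot C_1,\check p))$; hence for each $x\in M$ we have $x\in C_0$ iff $q\forces_{\Pbb}\phi(\check x,\dot C_1,\check p)$ holds over $(M,\Def(M;C_0))$. But $\Pbb$ is a tame class forcing of $M$ and neither $\Pbb$, the names $\check x,\check p,\dot C_1$, nor $\phi$ mentions $C_0$, so by the class forcing theorem this forcing relation is definable over $M$ alone, giving $x\in C_0\iff M\models[q\forces_{\Pbb}\phi(\check x,\dot C_1,\check p)]$ and thus $C_0\in\Def(M)$. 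This contradicts the genericity of $C_0$, since for any $D\in\Def(M)$ the $M$-definable class of conditions $s$ having some $\beta\in\dom s$ at which $s$ disagrees with the characteristic function of $D$ is dense, so $C_0\ne D$. Therefore $C_0\in\Def(M;C)\setminus\Def(M;C_1)$, so $\Def(M;C_1)\subsetneq\Def(M;C)$ is a strictly smaller $\GBC$-realization, and $\Def(M;C)$ is not minimal.

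\textbf{Main obstacle.} The substantive content is purely the class-forcing bookkeeping: I need the product lemma for $\Add(\Ord,1)$ as a class forcing and the definability over $M$ of its forcing relation (so that the parameter $C_0$ can be eliminated in the last step). Both are standard for this very tame forcing and are already relied upon implicitly in the proof of Theorem~\ref{least-gbc}, so I would invoke them by citation rather than reprove them.
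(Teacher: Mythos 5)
Your proof is correct and takes essentially the same route as the paper: split the generic into halves, observe that one half (your $C_1$, the paper's $C'$) is itself Cohen-generic and hence yields a $\GBC$-realization $\Def(M;C_1)$ contained in $\Def(M;C)$, and then show the containment is proper via mutual genericity. The only difference is one of detail: the paper dispatches the properness step by citing ``a standard result about Cohen forcing,'' whereas you spell out that standard result (the product lemma plus the definability of the forcing relation for this tame forcing, exactly as in the proof of Theorem~\ref{least-gbc}).
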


\begin{proof}
Define $C'$ as $\alpha \in C'$ if and only if $\alpha \cdot 2 + 1 \in C$. If we think of $C$ as an $\Ord$-length binary sequence, $C'$ is what appears on the odd coordinates of $C$. It is a standard result about Cohen forcing that $C'$ is Cohen-generic but $C \not \in \Def(M;C')$. Thus, $\Def(M;C') \subsetneq \Def(M;C)$ and so $\Def(M;C)$ is not minimal.
\end{proof}

If we want a model of $\ZFC$ with a minimal $\GBC$-realization which is not minimum, we must start with a model without a definable global well-order. We want to add a strongly amenable global well-order to this model which is minimal in a certain sense. A natural place to look here is at generalizations of Sacks forcing. As is well-known, Sacks reals are minimal over the ground model. Can we generalize Sacks forcing to produce a minimal generic class of ordinals which codes a global well-order?

Generalizing Sacks forcing to add generic classes of ordinals has been considered before. Hamkins, Linetsky, and Reitz \cite{hamkins:math-tea} consider so-called ``perfect generics'', an adaptation of a technique of Kossak and Schmerl \cite{kossak-schmerl:book} from models of arithmetic. Kossak and Schmerl's technique is in turn an adaptation from Sacks forcing over $\omega$. They produce minimal generics for countable models $M$ of arithmetic, i.e.\ inductive $G \subseteq M$ so that for $A \in \Def(M;G)$ either $A \in \Def(M)$ or $G \in \Def(M;A)$.

I do not see a way to use perfect generics to produce a minimal but not minimum $\GBC$-realizations.
Nevertheless, a related result can be achieved.

\begin{theorem}[Main theorem \ref{main:min-gbc-ext}] \label{thm:min-above}
Let $(M,\Xcal) \models \GBC$ be countable. Then there is $\Ycal \supsetneq \Xcal$ a $\GBC$-realization which is minimal above $\Xcal$: if $\Zcal \supsetneq \Xcal$ is a $\GBC$-realization, then $\Ycal \subseteq \Zcal$.
\end{theorem}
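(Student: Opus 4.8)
The plan is to reduce the theorem to building a single new class. Call a class $G \subseteq \Ord^M$ \emph{minimal over $(M,\Xcal)$} if every $A \subseteq M$ which is first-order definable over $(M,\in)$ from $G$, finitely many classes of $\Xcal$, and set parameters is either already a member of $\Xcal$, or else has $G$ first-order definable over $(M,\in)$ from $A$, finitely many classes of $\Xcal$, and set parameters. Suppose we have such a $G$ with $G \notin \Xcal$, and set $\Ycal = \Def(M;\Xcal, G)$, the least collection of subsets of $M$ containing $\Xcal \cup \{G\}$ and closed under first-order definability with set parameters. If moreover $(M,\Ycal) \models \GBC$ (note $\Ycal$ inherits Global Choice from $\Xcal$), then $\Ycal$ is as required: $\Ycal \supsetneq \Xcal$ since $G \notin \Xcal$, and $\Ycal$ is minimal above $\Xcal$, i.e.\ no $\GBC$-realization of $M$ lies strictly between. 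Indeed, if $\Zcal$ is a $\GBC$-realization with $\Xcal \subsetneq \Zcal \subseteq \Ycal$, pick $A \in \Zcal \setminus \Xcal$; then $A \in \Ycal$ is definable over $(M,\in)$ from $G$ and classes of $\Xcal$, so by minimality $G$ is definable over $(M,\in)$ from $A$ and classes of $\Xcal$; since $\Zcal$ is closed under Elementary Comprehension with class parameters from $\Zcal \supseteq \Xcal \cup \{A\}$, we get $G \in \Zcal$, hence $\Ycal = \Def(M;\Xcal, G) \subseteq \Zcal$, so $\Zcal = \Ycal$.

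To build such a $G$ I would adapt the ``perfect generics'' / generalized Sacks technique of Hamkins--Linetsky--Reitz, and of Kossak--Schmerl in the arithmetic setting, but forcing over $(M,\Xcal)$ rather than over $\Def(M)$ --- it is precisely because $\Xcal$ already carries a global well-order that there is no conflict here between minimality and Global Choice. Let $\mathbb{S}$ be the forcing whose conditions are perfect subtrees $T \in \Xcal$ of $2^{<\Ord^M}$: downward-closed classes of bounded binary sequences in which splitting nodes are cofinal, say with splitting levels forming a club in $\Ord^M$; order by inclusion. Since each $T \in \Xcal$ is strongly amenable, $T \cap 2^{\le \alpha}$ is a set of $M$ for every $\alpha < \Ord^M$, so any cofinal branch of $T$ is an amenable class of $M$. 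Because $(M,\Xcal)$ is countable I would not pass to a generic extension of $V$ but construct $G$ by hand: enumerate all requirements and build a descending $\omega$-chain of conditions $T_0 \ge T_1 \ge \cdots$ in $V$, with $T_{n+1}$ agreeing with $T_n$ below an ordinal threshold $\alpha_n$, the $\alpha_n$ taken cofinal in $\Ord^M$; then $\bigcap_n T_n$ is again a condition and $G$ is any cofinal branch through it.

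Two families of requirements must be met. For $(M,\Ycal)\models\GBC$: since $\Ycal$ is closed under Elementary Comprehension by construction, it is enough that every class of $\Ycal$ be strongly amenable, which I would secure by requiring, for each name $\dot F$ for a class function definable from $\dot G$ and classes of $\Xcal$ and each set $a\in M$, that some $T_n$ force $\dot F[\check a]$ to be a set --- obtained by a short fusion of $T_n$ deciding $\dot F(x)$ for each $x \in a$, the set-many decided values being bounded by Replacement in $M$. For minimality: for each name $\dot A = \{x : \phi(x,\dot G,\bar P,\bar p)\}$ with $\bar P$ from $\Xcal$ and $\bar p \in M$, require some $T_n$ to settle the Sacks dichotomy. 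Either $T_n$ forces $\dot A$ equal to the class $B = \{x : T_n \Vdash x \in \dot A\}$, which lies in $\Xcal$ since $T_n$ and $\dot A$ do, so that every branch of $T_n$ --- in particular $G$ --- reads $\dot A$ as a class of $\Xcal$. Or, if no refinement of $T_n$ forces $\dot A$ into $\Xcal$, then one thins $T_n$ by a fusion so that the two immediate successors of each splitting node force incompatible values of $\dot A$; along the resulting tree the branch $G$ is recoverable from $A = \dot A^G$ by a procedure definable over $(M,\in)$ from $A$ and the class $T_n\in\Xcal$, yielding the second horn of minimality.

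The main obstacle, and the bulk of the work, is the fusion apparatus for these $\Ord^M$-length perfect trees: one must fix the notion of perfect class tree and the threshold-indexed fusion orders so that (1) a descending $\omega$-chain coherent with thresholds cofinal in $\Ord^M$ limits to a genuine condition whose branches all have length $\Ord^M$; and (2) the short fusions needed for strong amenability, and (3) the two horns of the Sacks dichotomy, are each realizable by a threshold-respecting refinement of the current condition. One must also be mildly careful about the meaning of ``$T \Vdash \cdots$'' for a class forcing over a model of $\GBC$, where the forcing relation need not be definable; but since the whole construction is carried out externally over the countable $(M,\Xcal)$, it suffices that the relevant collections of conditions are genuinely dense, which is checked directly. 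The reduction of the theorem to the existence of a minimal class, by contrast, is the short argument of the first paragraph.
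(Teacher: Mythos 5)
Your proposal is correct in outline and is essentially the proof in the paper: reduce the theorem to producing a single class $G$ satisfying the Sacks-style dichotomy (every class definable from $G$, classes of $\Xcal$, and set parameters is either already in $\Xcal$ or defines $G$ over $\Xcal$), then build $G$ externally, using countability, as a branch through an $\omega$-fusion of perfect class trees lying in $\Xcal$, with the amenability/Class Replacement requirements and the two-horned dichotomy handled by successive refinements. The fusion apparatus you flag as the main remaining work is exactly what the paper supplies: it introduces $n$-deciding subtrees, uses the definability of the forcing relation restricted to $\Sigma_k$-formulae so that each refined tree (and hence each horn of the dichotomy) stays inside $\Xcal$, and invokes Global Choice of $(M,\Xcal)$ to make the thinning canonical --- precisely the point you anticipate when you note that forcing over $(M,\Xcal)$ rather than over $\Def(M)$ removes the tension with Global Choice.
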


This theorem shows that the poset consisting of countable $\GBC$-realizations of a fixed countable $M \models \ZFC$ is not dense in a strong sense. For any $\Xcal$ in this poset there is $\Ycal \supseteq \Xcal$ with no elements of the poset in between them. On the other hand, proposition \ref{prop:cohen-not-min} gives $\Zcal \supseteq \Xcal$ so that the interval $(\Xcal, \Zcal)$ of intermediate $\GBC$-realizations contains a dense linear order.

Before proving this theorem, it will be necessary to set up some machinery.

Work with a fixed countable $\Mfrak = (M,\Xcal) \models \GBC$. Let $\Bbb$ denote the full $\Ord$-length binary tree. For a perfect tree $\Pbb \subseteq \Bbb$, say that $\Qbb \subseteq \Pbb$ is {\em $n$-deciding for $\Pbb$} if for every $\Sigma_n$ formula $\phi$ in the forcing language for $M$, there is an ordinal $\alpha$ so that if $p \in \Qbb$ has length greater than $\alpha$ then $p$ decides $\phi$. It is not hard to see that if $\Pbb \in \Xcal$ then we can find $\Qbb \in \Xcal$ which is $n$-deciding for $\Pbb$. Moreover, we can find such $\Qbb$ which does not split below a fixed $\alpha \in \Ord$. 

\begin{definition}
A function $G : \Ord \to 2$ is called a {\em perfect generic} if there is a sequence $\Bbb = \Pbb_0 \supseteq \Pbb_1 \supseteq \cdots \supseteq \Pbb_n \supseteq \cdots$ of perfect trees from $\Xcal$ so that $G = \bigcup (\bigcap_n \Pbb_n)$ and $\Pbb_{n+1}$ is $n$-deciding for $\Pbb_n$ for all $n$.
\end{definition}

A subtle point is that a perfect generic need not be fully generic over any of the $\Pbb_n$. So the model generated by a perfect generic $G$, namely 
\[
\Mfrak[G] = (M, \{ X \subseteq M : X \in \Def(M; G, A) \text{ for some } A \in \Xcal \}),
\]
need not be a class forcing extension of $\Mfrak$.
As such, we need to refine some standard facts about class forcing. I summarize them here. First, the forcing relation (for pretame forcings, such as the posets we will use) for formulae of bounded complexity is definable. That is, the relation $p \forces_\Pbb \phi$, confined to $\phi$ which are $\Sigma_k$ is $\Sigma_j$-definable from $\Pbb$ for some $j > k$. (It does not matter for our purposes what this $j$ is, just that it is finite.) Second, we need to refine the usual ``forcing equals truth'' lemma: For $\phi$ which are $\Sigma_k$ there is $j > k$ so that for any $G \subseteq \Pbb$ which is $\Sigma_j$-generic over $\Mfrak$ then $\Mfrak[G] \models \phi$ if and only if there is $p \in G$ so that $p \forces_\Pbb \phi$. This can be proved via a standard inductive argument, attending to the complexity of the definitions of the dense sets involved. In short, the standard facts for forcing apply for partial generics, except that we must be careful to confine ourselves to things which are sufficiently simple.

Using these refined forcing facts, we can see that $\Mfrak[G]$ will still satisfy $\GBC$, despite not in general being a forcing extension. Extensionality, Global Choice, and Elementary Comprehension are immediate. To see $\Mfrak[G]$ satisfies Class Replacement, take a class function $F$. Then $F$ is definable by a $\Sigma_k$-formula for some $k$ with parameters $G$ and $A \in \Xcal$. But then the behavior of $F$ is forced by conditions in $\Pbb_{j} \supseteq G$ for large enough $j > k$.

The following lemmata will be used to prove the theorem. They are set theoretic analogues of results from section 6.5 of \cite{kossak-schmerl:book}.

\begin{lemma} \label{lem:per-gen-min}
Let $\phi(x)$ be a formula in the forcing language and $\Pbb \in \Xcal$ be a perfect subtree of $\Bbb$. There is $\Qbb \subseteq \Pbb$ in $\Xcal$ so that one of the two cases holds:
\begin{enumerate}
\item There is an ordinal $\alpha$ so that for all ordinals $\xi$ we have that all $p \in \Qbb$ of length greater than $\alpha$ decide $\phi(\xi)$ (in $\Pbb$) the same.

\item For every ordinal $\alpha$ there is $\beta > \alpha$ so that if $p,q \in \Qbb$ both have length $\beta$ and $p \rest \alpha = q \rest \alpha$ then there is an ordinal $\xi$ so that $p$ and $q$ decide $\phi(\xi)$ differently (in $\Pbb$).
\end{enumerate}
\end{lemma}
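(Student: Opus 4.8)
The plan is to adapt the classical Sacks splitting lemma for minimal reals to this class-forcing setting, carrying out the whole construction of $\Qbb$ in a way that exhibits it as a \emph{definable} subclass of $M$, so that it automatically lands in $\Xcal$. Throughout we work inside the fixed $\Mfrak = (M,\Xcal)\models\GBC$, using the refined forcing facts stated above — the relation $p\forces_\Pbb\psi$ is first-order definable from $\Pbb$ on formulas $\psi$ of the fixed bounded complexity of $\phi(x)$ and of $\neg\phi(x)$ — together with a fixed global well-order $<$ of $M$, which exists since $\Mfrak\models\GBC$. For $s\in\Pbb$ write $\Pbb\rest s$ for the cone $\{p\in\Pbb : p\subseteq s\text{ or }s\subseteq p\}$, a perfect subtree of $\Bbb$ definable from $\Pbb$ and $s$, hence in $\Xcal$. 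The key dichotomy is whether some node $s\in\Pbb$ is \emph{$\phi$-stable}, meaning no two $p,q\in\Pbb\rest s$ force $\phi(\xi)$ respectively $\neg\phi(\xi)$ for any ordinal $\xi$. If some $s$ is $\phi$-stable, put $\Qbb=\Pbb\rest s$: this is case~(1) with $\alpha=|s|$, since any two $p,q\in\Qbb$ of length greater than $|s|$ lie above $s$ and hence never force any $\phi(\xi)$ to opposite values, i.e.\ they decide each $\phi(\xi)$ the same. (I read ``decide $\phi(\xi)$ the same'' as ``do not force $\phi(\xi)$ to opposite values''; this is the only workable reading, since for typical $\phi$ — e.g.\ $\phi(\xi)$ asserting the generic has a $1$ in coordinate $\xi$ — no single condition can decide $\phi(\xi)$ for $\xi$ beyond its length, so stronger readings of case~(1) are unattainable.)

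Now suppose no $s\in\Pbb$ is $\phi$-stable, so for every $s\in\Pbb$ there are $p,q\in\Pbb\rest s$ and an ordinal $\xi$ with $p\forces_\Pbb\phi(\xi)$ and $q\forces_\Pbb\neg\phi(\xi)$. I build $\Qbb$ by a canonical top-down procedure starting from $\stem(\Pbb)$: given a ``frontier node'' $u$ already placed in $\Qbb$, let $(\hat p_u,\hat q_u,\xi_u)$ be the $<$-least triple with $\hat p_u,\hat q_u\in\Pbb\rest u$ of equal length, $\hat p_u\forces_\Pbb\phi(\xi_u)$ and $\hat q_u\forces_\Pbb\neg\phi(\xi_u)$ — equalizing lengths by a canonical non-splitting extension inside $\Pbb$ does not destroy incompatibility of the two forcing statements, so $\hat p_u$ and $\hat q_u$ stay incomparable, diverging at some level $\delta_u\in[\,|u|,|\hat p_u|\,)$. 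Put into $\Qbb$ the unique path from $u$ up to $\hat p_u\rest\delta_u=\hat q_u\rest\delta_u$, then the split at level $\delta_u$, then the unique paths up to $\hat p_u$ and $\hat q_u$, and take $\hat p_u,\hat q_u$ as the two new frontier nodes above $u$; at limit levels a branch of $\Qbb$ through a node is the union of its shorter initial segments, which lies in $\Pbb$ as $\Pbb$ is closed under limits of its branches. Since each frontier node again admits such a triple, every node of $\Qbb$ extends to a splitting node, so $\Qbb$ is a perfect subtree of $\Pbb$. Moreover, membership of a node $t$ in $\Qbb$ is witnessed by the \emph{set}-sized record of the frontier nodes below $t$ together with the $<$-least data attached at each, so ``$t\in\Qbb$'' is expressible by a single formula over $(M;\Pbb,\forces_\Pbb,<)$ with no class quantifiers; by Elementary Comprehension, $\Qbb\in\Xcal$.

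It remains to check case~(2) for this $\Qbb$. Its splitting nodes are precisely the $\hat p_u\rest\delta_u$, and by construction the two $\Qbb$-successors of such a node extend, along non-splitting paths, to $\hat p_u$ and $\hat q_u$, which force $\phi(\xi_u)$ and $\neg\phi(\xi_u)$. Let $H(\gamma)=\sup\{\,|\hat p_u| : u\text{ a frontier node with }\delta_u=\gamma\,\}$; this is a set-sized supremum, so by Class Replacement $H$ is a set-like definable class function and the class $D$ of its closure points is a definable club. Given an ordinal $\alpha$, set $\beta=\min(D\setminus(\alpha+1))$. If $p,q\in\Qbb$ are distinct, of length $\beta$, with $p\rest\alpha=q\rest\alpha$, then they first diverge at a splitting node $w=\hat p_u\rest\delta_u$ with $\alpha\le\delta_u<\beta$, whence $|\hat p_u|\le H(\delta_u)<\beta$; so $\{p\rest|\hat p_u|,\,q\rest|\hat p_u|\}=\{\hat p_u,\hat q_u\}$, and therefore $p$ and $q$ force $\phi(\xi_u)$ and $\neg\phi(\xi_u)$ in some order, i.e.\ they decide $\phi(\xi_u)$ differently. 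That is exactly case~(2), completing the proof.

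I expect the main obstacle to lie not in the combinatorics — which faithfully transcribes the Sacks dichotomy — but in the care needed to make $\Qbb$ genuinely a class of $\Mfrak$: the top-down procedure reads like an $\Ord$-length recursion, and one must verify that membership in $\Qbb$ depends only on set-sized data about a node's initial segments (so it is captured by one elementary formula with $\Pbb$, the definable forcing relation for the bounded-complexity formulas $\phi(\xi),\neg\phi(\xi)$, and the global well-order as parameters), and one must handle the limit-level behaviour of the trees so the construction never runs off $\Pbb$.
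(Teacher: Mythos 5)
Your case-$(2)$ half is essentially the paper's argument: recursively choose, via the global well-order, a least pair of equal-length extensions of each frontier node forcing some $\phi(\xi)$ oppositely, and glue these into a perfect subtree. Your extra verification that this tree actually witnesses case $(2)$ --- via the club of closure points of the function recording the lengths of the chosen pairs --- is correct and more detailed than what the paper records, and your attention to why $\Qbb$ is elementarily definable (hence in $\Xcal$) is well placed.

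The genuine gap is in case $(1)$, and it comes from skipping the paper's very first move: fix $k$ with $\phi$ a $\Sigma_k$ formula and replace $\Pbb$ by a $k$-deciding subtree $\Pbb' \in \Xcal$ \emph{before} running the dichotomy. You instead run the dichotomy on $\Pbb$ itself and, in the stable case, take $\Qbb = \Pbb \rest s$, which forces you to weaken case $(1)$ to ``no two long conditions force $\phi(\xi)$ to opposite values.'' Your justification for this weakening --- that no single condition can decide $\phi(\xi)$ for $\xi$ beyond its length, so nothing stronger is attainable --- overlooks exactly the $n$-deciding machinery set up immediately before the lemma: inside a $k$-deciding subtree, for each fixed $\xi$ there is a threshold $\alpha_\xi$ past which \emph{every} condition decides $\phi(\check\xi)$, and your stability argument then upgrades the conclusion to ``they all decide it the same way.'' The intended content of case $(1)$ is this conjunction (agreement of decisions \emph{plus} eventual decidedness for each $\xi$), and the difference is not cosmetic: in the minimality lemma, case $(1)$ is used to conclude $A \in \Xcal$ via the equivalence ``$\xi \in A$ iff every sufficiently long $p \in \Qbb$ forces $\phi(\xi)$,'' and with your $\Qbb$ that equivalence can fail, since arbitrarily long conditions in a mere cone of $\Pbb$ may leave $\phi(\xi)$ undecided even when $\xi \in A$. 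The repair is local: thin to a $k$-deciding $\Pbb'$ first (which exists in $\Xcal$, and can be arranged not to split below any prescribed ordinal) and run your dichotomy and both constructions inside $\Pbb'$.
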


\begin{proof}
Fix $k$ so that $\phi$ is a $\Sigma_k$ formula. Take $\Pbb' \subseteq \Pbb$ a $k$-deciding subtree for $\Pbb$. We may assume that there is a function $f : \Bbb \to \Pbb'$ which embeds the full binary tree onto the splitting nodes of $\Pbb'$ and that $f(s)$ decides $\phi(\length s)$. There are two cases. The first is that there is some $s \in \Bbb$ so that for every $t, t' >_\Bbb s$ if $\length t = \length t'$ then $f(t)$ and $f(t')$ decide $\phi(\xi)$ the same for all ordinals $\xi$. In this case, set $Q = \Pbb' \rest f(s)$ and get the first conclusion in the lemma.

The second case is that this does not happen for any $s \in \Bbb$. In this case, we can inductively define a $g : \Bbb \to \Pbb'$ as follows:
\begin{itemize}
\item Set $g(0) = f(0)$.
\item Set $g(s \cat 0) = p_0$ and $g(s \cat 1) = p_1$, where $p_0, p_1$ are least (according to a fixed global well-order) so that $\length p_0 = \length p_1$ and there is an ordinal $\xi$ so that $p_0$ and $p_1$ decide $\phi(\xi)$ differently. Such $p_0$ and $p_1$ always exist, as otherwise we would be in the previous case.
\item At limit stages take unions.
\end{itemize}
Set $\Qbb = \{ p \in \Pbb' : \exists s \in \Bbb\ p \le_{\Pbb'} g(s) \}$. This yields the second conclusion in the lemma.
\end{proof}

Observe that we used global choice in an essential manner here. There are possibly many choices for $p_0$ and $p_1$ in the successor stage of the construction of $g$. In order to guarantee that $g \in \Xcal$ and hence that $\Qbb \in \Xcal$, we need to uniquely specify a choice.

This puts us in a position to prove a minimality lemma for perfect generics. 

\begin{lemma}
Let $(M,\Xcal) \models \GBC$ be countable. Then there is a perfect generic $G \not \in \Xcal$ so that for any class $A$ of ordinals definable from $G$ (and possibly parameters from $\Xcal$), either $G$ is definable from $A$ and parameters from $\Xcal$ or else $A \in \Xcal$.
\end{lemma}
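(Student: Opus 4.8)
The plan is to run a Sacks-style fusion---the set-theoretic analogue of the argument in section~6.5 of \cite{kossak-schmerl:book}---building a descending sequence $\Bbb = \Pbb_0 \supseteq \Pbb_1 \supseteq \cdots$ of perfect trees from $\Xcal$ that witnesses $G$ is a perfect generic, while diagonalizing against every candidate definition of a class of ordinals from $G$. Since $(M,\Xcal)$ is countable, I would fix an enumeration $\langle \phi_n(x) : n \in \omega \rangle$ of all formulas of the forcing language in one free variable $x$, with parameters among the $\check a$ for $a \in M$, the names $\hat A$ for $A \in \Xcal$, and $\dot G$; every class of ordinals definable over $M$ from $G$ and parameters in $\Xcal$ then has the form $A_n^G = \{ \xi \in \Ord^M : \Mfrak[G] \models \phi_n(\xi) \}$ for some $n$. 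Fixing also an enumeration $\langle X_m : m \in \omega \rangle$ of $\Xcal$, I would dovetail three families of requirements: (i) pass to a subtree $n$-deciding for the previous tree, so that $G := \bigcup(\bigcap_n \Pbb_n)$ is a perfect generic and hence $\Mfrak[G] \models \GBC$; (ii) at stage $n$, apply Lemma~\ref{lem:per-gen-min} to $\phi_n$ and the current tree, landing in alternative (1) or (2); (iii) restrict to a cone of the current tree that the branch $X_m$ avoids, forcing $G \ne X_m$ and so securing $G \notin \Xcal$. One bookkeeping subtlety: applying Lemma~\ref{lem:per-gen-min} must not spoil the $n$-deciding property; this is handled by first thinning, at stage $n$, to a subtree deciding all $\Sigma_n$-formulas cofinally and only then running the construction from the proof of Lemma~\ref{lem:per-gen-min} on that subtree, since the resulting tree is a subtree of one that already decides all $\Sigma_n$-formulas cofinally.

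Now fix a class $A = A_n^G$; the goal is to show $A \in \Xcal$ or that $G$ is definable over $M$ from $A$ and the parameters $\Pbb_n, \Pbb_{n+1} \in \Xcal$. By the refined ``forcing equals truth'' fact for the perfect generic $G$ (applicable because the $n$-deciding requirements make $G$ sufficiently generic with respect to the relevant tree of the sequence), $\xi \in A$ iff some initial segment of $G$ forces $\phi_n(\xi)$. If alternative (1) of Lemma~\ref{lem:per-gen-min} held at stage $n$, there is $\alpha$ so that all conditions of $\Pbb_{n+1}$ of length $> \alpha$ decide $\phi_n(\xi)$ the same way, for every $\xi$; since $G$ has initial segments in $\Pbb_{n+1}$ of every length and forcing is downward persistent, $\xi \in A$ iff that common decision about $\phi_n(\xi)$ is affirmative. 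This describes $A$ using only $\Pbb_n$, $\Pbb_{n+1}$, $\alpha$, so $A \in \Def(M;\Pbb_n,\Pbb_{n+1}) \subseteq \Xcal$.

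Suppose instead alternative (2) held. For an ordinal $\gamma$ let $\beta(\gamma) > \gamma$ be least witnessing (2) for $\gamma$. The key observation is that $G \rest \beta(\gamma)$ is the \emph{unique} $r \in \Pbb_{n+1}$ of length $\beta(\gamma)$ with $r \rest \gamma = G \rest \gamma$ that decides, in accordance with $A$, every $\phi_n(\xi)$ it decides at all: it has this property since $G \rest \beta(\gamma) \in G$, and any competitor $r' \ne G \rest \beta(\gamma)$ is excluded because by (2) the two decide some $\phi_n(\xi)$ oppositely, so $r'$ decides it wrongly. Iterating via $\gamma_0 = 0$, $\gamma_{\eta+1} = \beta(\gamma_\eta)$, $\gamma_\lambda = \sup_{\eta<\lambda}\gamma_\eta$ gives a club trace along which $G \rest \gamma_\eta$ is recovered from $A$. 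What makes this legitimate in $\GBC$---rather than needing $\ETR_\Ord$---is that computing $G \rest \gamma$ for a single fixed $\gamma$ is a recursion of set length, which exists by $\ZF$ inside $M$; so $\xi \in G$ iff the unique set-sized recursion up to some trace point above $\xi$ puts a $1$ in coordinate $\xi$, which is first-order over $M$ with parameters $A, \Pbb_n, \Pbb_{n+1}$. Hence $G \in \Def(M;A,\Pbb_n,\Pbb_{n+1})$.

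The step I expect to be the main obstacle is the interaction of the fusion with the forcing machinery: ensuring the $n$-deciding requirement survives each use of Lemma~\ref{lem:per-gen-min}, and identifying which tree in the hierarchy the forcing relation must be taken over so that ``forcing equals truth'' genuinely applies to $G$---the paper's warning that a perfect generic need not be fully generic over any single $\Pbb_n$ is what makes this delicate, and it is navigated by confining all forcing statements used to bounded complexity relative to the stage. Given that, the rest is a routine transcription of the Sacks minimality argument, and that $G \notin \Xcal$ and $\Mfrak[G] \models \GBC$ follow from the construction and the facts already recorded.
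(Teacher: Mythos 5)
Your proposal is correct and follows essentially the same route as the paper: the same fusion of a descending sequence of perfect trees from $\Xcal$ alternating $n$-deciding refinements with applications of Lemma~\ref{lem:per-gen-min}, and the same dichotomy at each stage (case (1) putting $A$ into $\Def(M;\Pbb_n,\Pbb_{n+1})\subseteq\Xcal$, case (2) recovering $G$ from $A$). Your two variations are harmless and sound: the explicit diagonalization against each $X_m\in\Xcal$ makes $G\notin\Xcal$ visibly part of the construction (the paper leaves this to genericity), and your stepwise trace recursion defining $G$ from $A$ in case (2) --- legitimate in $\GB$ since each initial segment of $G$ is computed by a set-length first-order recursion with class parameters --- replaces, but is equivalent to, the paper's one-shot definition that $p$ is an initial segment of $G$ iff cofinally many extensions of $p$ in the relevant tree decide the $\phi_n(\xi)$ in accordance with $A$.
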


\begin{proof}
Fix a cofinal sequence $\seq{\alpha_n}$ of ordinals and an enumeration $\seq{\phi_n(x)}$ of formulae in the forcing language. We construct a descending sequence of perfect trees
\[
\Bbb = \Qbb_0 \supseteq \Pbb_0 \supseteq \Qbb_1 \supseteq \Pbb_1 \supseteq \cdots \supseteq \Qbb_n \supseteq \Pbb_n \supseteq \cdots
\]
so that $\Pbb_n$ is an $n$-deciding subtree of $\Qbb_n$ which does not split below $\alpha_n$ and $\Qbb_{n+1} \subseteq \Pbb_n$ is as in the previous lemma for $\phi_n$. Our perfect generic is $G = \bigcup (\bigcap_n \Pbb_n)$. 

Consider a class of ordinals $A$ defined from $G$ and parameters in $\Xcal$, where $G \forces x \in A \iff \phi_n(x)$. Consider $\Qbb_{n+1} \subseteq \Pbb_n$. If the first case from the previous lemma holds, then $A \in \Xcal$ because $\xi \in A$ if and only if for every $p \in \Qbb_{n+1}$ the length of $p$ being sufficiently long implies that $p \forces_{\Pbb_n} \phi_n(\xi)$. If the second case of the previous lemma holds, then we can define $G$ from $A$. In this case, $p \in \bigcap_n \Pbb_n$ if and only if for every ordinal $\alpha$ there is $q >_{\Qbb_{n+1}} p$ of length greater than $\alpha$ so that $q \forces_{\Pbb_n} \phi_n(\xi) \iff \xi \in A$ for all ordinals $\xi$. From a definition of $\bigcap_n \Pbb_n$ can easily be produced a definition for $G$.
\end{proof}

\begin{proof}[Proof of theorem \ref{thm:min-above}]
We start with countable $(M,\Xcal) \models \GBC$. Let $G$ be the perfect generic  as in the minimality lemma. Set $\Ycal$ to be the classes definable from $G$ and parameters in $\Xcal$. Then, $\Ycal$ is a $\GBC$-realization for $M$. Suppose that $\Zcal$ is a $\GBC$-realization so that $\Xcal \subseteq \Zcal \subseteq \Ycal$. Consider arbitrary $A \in \Zcal$. We may assume that $A$ is a class of ordinals by mapping $A$, via a fixed bijection $V \to \Ord$ from $\Xcal$, into the ordinals. By the minimality lemma, either $G$ is definable from $A$ or else $A \in \Xcal$. Therefore, if $\Zcal \ne \Xcal$ then $G$ is definable from an element of $\Zcal$ (with parameters from $\Xcal$) and hence $\Ycal = \Zcal$.
\end{proof}

As remarked earlier, Global Choice was used essentially in the proof of lemma \ref{lem:per-gen-min}. Proving this lemma without Global Choice would yield a construction for minimal but not minimum $\GBC$-realizations. Namely, start with a countable $M \models \ZFC$ with no definable global well-order. Let $\Xcal = \Def(M)$. Then $(M,\Xcal)$ is a model of $\GB$ which does not satisfy Global Choice. Applying the theorem to $(M,\Xcal)$ would yield a $\GBC$-realization $\Ycal$ for $M$ which is minimal above $\Xcal$. But since any $\GBC$-realization must contain $\Xcal$, this would give that $\Ycal$ is a minimal $\GBC$-realization for $M$. 

Thus, the problem of constructing a minimal but not minimum $\GBC$-realization can be reduced down to the problem of proving the minimality lemma for perfect generics without using choice. A similar question can be asked for ordinary Sacks forcing.

\begin{question}
Is choice needed to prove the minimality lemma for Sacks forcing? That is, is it consistent that there are $M \models \ZF + \neg AC$, $s \subseteq \omega^M$ Sacks-generic over $M$, and $A \in M[s]$ so that $M \subsetneq M[A] \subsetneq M[s]$?
\end{question}

\section{\texorpdfstring{$\ETR$ and its fragments}{ETR and its fragments}} \label{sec:etr}

The main result of this section is that there is a smallest $\beta$-model of $\GB + \ETR$. First, a warm-up.

\begin{theorem} \label{thm:least-etr-rlzn}
If $M \models \ZF$ is $\beta$-$(\GB + \ETR)$-realizable, then $M$ has a minimum $\beta$-$(\GB + \ETR)$-realization. Moreover, if $M$ has a definable global well-order then this minimum $\beta$-$(\GB + \ETR)$-realization satisfies Global Choice.
\end{theorem}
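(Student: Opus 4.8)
The plan is to show that $\Xcal_{\beta,\GB+\ETR}(M)$, the intersection of all $\beta$-$(\GB+\ETR)$-realizations for $M$, is itself such a realization; being contained in every one of them, it is then automatically the minimum. Write $\Xcal_* = \bigcap\{\Ycal : (M,\Ycal)\models\GB+\ETR$ is a $\beta$-model$\}$, a non-empty intersection by hypothesis. Checking $(M,\Xcal_*)\models\GB$ is routine: $\Xcal_*$ is closed under Boolean operations, products, and elementary comprehension because each realization being intersected is, and since a class-quantifier-free formula $\phi(x,\bar P)$ with parameters from $\Xcal_*$ is evaluated over $M$ the same way by every class structure sharing that first-order part, the comprehension witness $\{x : \phi(x,\bar P)\}$ is literally a common element of all the $\Ycal$'s and hence lies in $\Xcal_*$; Class Replacement holds because ``$F[a]$ is a set'' is a fact about $M$ and $F$, inherited from any one $\Ycal$. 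That $(M,\Xcal_*)$ is a $\beta$-model is immediate from observation \ref{obs:inner-beta-models}, since $\Xcal_*\subseteq\Ycal_0$ for any fixed $\beta$-$(\GB+\ETR)$-realization $\Ycal_0$.

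The heart of the matter is $(M,\Xcal_*)\models\ETR$. By theorem \ref{thm:etr-itr} it suffices to produce, for each $A\in\Xcal_*$ and each class well-order $\Gamma\in\Xcal_*$, a $\Gamma$-iterated truth predicate relative to $A$ lying in $\Xcal_*$. Fix $A,\Gamma$. For every $\beta$-$(\GB+\ETR)$-realization $\Ycal$ we have $A,\Gamma\in\Xcal_*\subseteq\Ycal$, and $\Gamma$ remains a class well-order from the viewpoint of $(M,\Ycal)$ since well-foundedness is absolute between $\beta$-models with the same first-order part; so by theorem \ref{thm:etr-itr} there is $T_\Ycal\in\Ycal$ which $(M,\Ycal)$ sees as $\Tr_\Gamma(A)$. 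The crucial point is that $T_\Ycal$ does not depend on $\Ycal$. Being a $\Gamma$-iterated truth predicate relative to $A$ is expressed by a formula $\theta(T,\Gamma,A)$ whose only unbounded quantifiers range over sets of $M$ — the level parameter runs over $\dom\Gamma$, the formula-codes over $\omega^M$, the variable assignments over $M$, and the transitive closure $<_\Gamma$ is itself first-order definable over $M$ from $\Gamma$ via finite $\Gamma$-paths, which are sets — so whether $\theta(T,\Gamma,A)$ holds depends only on $M,T,\Gamma,A$ and not on the ambient class structure. Given $T_\Ycal\in\Ycal$ and $T_{\Ycal'}\in\Ycal'$ both satisfying $\theta$, I run the standard uniqueness argument externally in $V$: form the class $D\subseteq\dom\Gamma$ of $<_\Gamma$-points at which $T_\Ycal$ and $T_{\Ycal'}$ disagree, note that $<_\Gamma$ is genuinely well-founded (because $(M,\Ycal_0)$ is a $\beta$-model thinking $\Gamma$ is a well-order), pick a $<_\Gamma$-minimal $a_0\in D$ and within level $a_0$ a disagreeing pair of least formula-complexity, and derive a contradiction from the recursion clauses of $\theta$. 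Hence all the $T_\Ycal$ coincide with a single class $T_*$; since $T_*=T_\Ycal\in\Ycal$ for every $\Ycal$, we get $T_*\in\Xcal_*$, and $(M,\Xcal_*)\models\theta(T_*,\Gamma,A)$ because $\theta$ transfers from $(M,\Ycal_0)$ to $(M,\Xcal_*)$. So $(M,\Xcal_*)\models\GB+\ETR$ and it is the minimum $\beta$-$(\GB+\ETR)$-realization. For the ``moreover'' clause, a parametrically definable global well-order of $M$ lies in $\Def(M)$, and $\Def(M)\subseteq\Ycal$ for every $\GB$-realization $\Ycal$, hence $\Def(M)\subseteq\Xcal_*$; thus $(M,\Xcal_*)\models\GC$.

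I expect the main obstacle to be the argument that the iterated truth predicates $T_\Ycal$ are genuinely the same class across different realizations: one must pin down that $\theta(T,\Gamma,A)$ carries no hidden second-order content (in particular handling $<_\Gamma$), and then carry out the comparison of $T_\Ycal$ with $T_{\Ycal'}$ in $V$. This is precisely where the $\beta$-model hypothesis is essential — without it $\Gamma$ could be ill-founded in $V$ so the external uniqueness recursion would break, and since there need be no single model of $\GB$ containing both $T_\Ycal$ and $T_{\Ycal'}$, an internal comparison is unavailable.
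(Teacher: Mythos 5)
Your proof is correct, and it reaches the same destination as the paper's by a dual route. The paper fixes one $\beta$-realization $\Ycal$ and builds the minimum realization bottom-up as $\Xcal=\bigcup_n\Xcal_n$, where $\Xcal_0=\Def(M)$ and $\Xcal_{n+1}$ closes under definability from the iterated truth predicates $\Tr_\Gamma(A)$ for $A,\Gamma\in\Xcal_n$; it then observes that each $\Xcal_n$ lands inside every $\beta$-realization because the construction never actually used $\Ycal$. You instead work top-down with the intersection $\Xcal_*$ of all $\beta$-realizations and verify $\GB+\ETR$ there directly. The mathematical crux is identical in both: being a $\Gamma$-iterated truth predicate relative to $A$ is a first-order condition over $M$, and genuine well-foundedness of $\Gamma$ (available because all realizations are $\beta$-models over a transitive $M$) forces uniqueness, so the truth predicates are common to all realizations --- this is exactly the paper's remark that ``because $M$ is transitive it has unique iterated truth predicates relative to a given parameter,'' and your external uniqueness recursion (minimal disagreement in $\dom\Gamma$, then minimal formula complexity, using that $\omega^M=\omega$) is the right way to cash that in. What each approach buys: yours is slightly more economical for the theorem as stated, since minimality of the intersection is automatic; the paper's explicit stratification into the $\Xcal_n$ is what gets reused in theorem \ref{thm:least-trans-etr}, where a transitive model of larger height must be able to reconstruct the minimum realization of $L_\alpha$ internally by a set-length recursion, and for that one wants the concrete closure description rather than an external intersection.
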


\begin{proof}
Suppose $(M,\Ycal) \models \GB + \ETR$ is a $\beta$-model. Externally to $(M,\Ycal)$ we will construct a $(\GB + \ETR)$-realization $\Xcal \subseteq \Ycal$ for $M$ and then see that the construction for $\Xcal$ gives the same $(\GB + \ETR)$-realization no matter which $\Ycal$ we started with. 

Let $\Xcal_0 = \Def(M)$. Observe that if $M$ has a definable global well-order, $(M,\Xcal_0)$ satisfies Global Choice. Now, given $\Xcal_n$, define $\Xcal_{n+1}$ to consist of all classes of $M$ definable from elements of $\{ \Tr_\Gamma(A) : A, \Gamma \in \Xcal_n \mand \Gamma \text{ is a class well-order} \}$.\footnote{Recall that $\Tr_\Gamma(A)$ denotes the $\Gamma$-iterated truth predicate relative to the class $A$, where $\Gamma$ is a class well-order.}
A few remarks are in order to clarify why this definition is well-formed. First, note that $\Xcal_0 \subseteq \Ycal$ and thus $(M,\Xcal_0)$ is a $\beta$-model by observation \ref{obs:inner-beta-models}. Second, if $\Xcal_n \subseteq \Ycal$ then $\Xcal_{n+1} \subseteq \Ycal$. This is because $\Ycal$ must have iterated truth predicates relative to any of its classes and because $\Ycal$ and $\Xcal_n$ agree on what class relations are well-founded, both being $\beta$-models. Inductively, this yields that each $\Xcal_n \subseteq \Ycal$ is a $\beta$-model. Because each of these is a $\beta$-model, they agree with $V$ on whether $\Gamma$ is a well-order. Finally, because $M$ is transitive it has unique iterated truth predicates relative to a given parameter. 

Set $\Xcal = \bigcup_n \Xcal_n$, so that $\Xcal \subseteq \Ycal$. Because $\Xcal$ is an increasing union of $\GB$-realizations for $M$, it must be that $\Xcal$ itself is a $\GB$-realization for $M$. And if $M$ has a definable global well-order, then $(M,\Xcal)$ moreover satisfies Global Choice. To see that $(M,\Xcal)$ satisfies Elementary Transfinite Recursion, note that if $A, \Gamma \in \Xcal$ then $A,\Gamma \in \Xcal_n$ for some $n$ and thus $\Tr_\Gamma(A) \in \Xcal_{n+1} \subseteq \Xcal$.

It remains only to see that we get the same $\Xcal$ regardless of our choice of $\Ycal$. But this is immediate, since $\Ycal$ was not actually used to define the $\Xcal_n$. We only used that $\Xcal_n \subseteq \Ycal$ to conclude that $(M,\Xcal_n)$ is a $\beta$-model, and this is true for any $\Ycal$ a $\beta$-$(\GB + \ETR)$-realization for $M$.
\end{proof}

\begin{theorem}[Main Theorem \ref{min-beta-etr}]  \label{thm:least-trans-etr}
There is a minimum $\beta$-model of $\GB + \ETR$, if there is any $\beta$-model of $\GB + \ETR$. Moreover, this minimum $\beta$-model satisfies Global Choice.
\end{theorem}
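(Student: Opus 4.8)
The plan is to combine Shepherdson's $L_\alpha$-argument for $\GB$ (theorem~\ref{thm:min-gb}) with the fixed-model result theorem~\ref{thm:least-etr-rlzn}. Let $\alpha$ be the least ordinal which is the height of some $\beta$-model of $\GB + \ETR$; this is well-defined by hypothesis, since $\beta$-models are transitive. Fix a witnessing $\beta$-model $(N_0,\Ycal_0) \models \GB + \ETR$ with $\Ord^{N_0} = \alpha$. Its first-order part models $\ZF$, so (by absoluteness of constructibility) $L^{N_0} = L_\alpha$ is a model of $\ZFC$, a definable class of $(N_0,\Ycal_0)$, and hence an element of $\Ycal_0$. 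The claim will be that $(L_\alpha,\Xcal)$ is the desired minimum $\beta$-model, where $\Xcal$ is the minimum $\beta$-$(\GB+\ETR)$-realization for $L_\alpha$ furnished by theorem~\ref{thm:least-etr-rlzn}.

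First I would show $L_\alpha$ is $\beta$-$(\GB+\ETR)$-realizable, so that theorem~\ref{thm:least-etr-rlzn} applies. Run the construction from the proof of that theorem with $M = L_\alpha$: put $\Xcal_0 = \Def(L_\alpha)$, let $\Xcal_{n+1}$ consist of the classes of $L_\alpha$ definable from $\{ \Tr_\Gamma(A) : A, \Gamma \in \Xcal_n \mand \Gamma \text{ is a class well-order} \}$, and set $\Xcal = \bigcup_n \Xcal_n$. A joint induction shows $\Xcal_n \subseteq \Ycal_0$ and $(L_\alpha,\Xcal_n)$ is a $\beta$-model: for $n = 0$, $(N_0,\Ycal_0)$ has a truth predicate for the definable structure $(L^{N_0},\in)$ (built by an elementary recursion of length $\omega$), so Elementary Comprehension puts every element of $\Def(L_\alpha)$ into $\Ycal_0$; given $\Xcal_n \subseteq \Ycal_0$ with $(L_\alpha,\Xcal_n)$ a $\beta$-model, each relevant $\Tr_\Gamma(A)$ is produced in $(N_0,\Ycal_0)$ by an elementary transfinite recursion along $\Gamma$ — valid because $\Gamma$ is genuinely well-founded and $(N_0,\Ycal_0)$ is a $\beta$-model — and coincides with the iterated truth predicate computed over $(L_\alpha,\Xcal_n)$ by uniqueness over transitive models, so $\Xcal_{n+1} \subseteq \Ycal_0$; and part (2) of observation~\ref{obs:inner-beta-models}, applied against $(N_0,\Ycal_0)$ using $\Ord^{L_\alpha} = \Ord^{N_0}$ and $L_\alpha \subseteq N_0$, gives that $(L_\alpha,\Xcal_n)$ is a $\beta$-model. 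Hence $(L_\alpha,\Xcal)$ is a $\beta$-model; it satisfies $\GB + \ETR$ by the argument in theorem~\ref{thm:least-etr-rlzn}, and satisfies Global Choice since $L_\alpha \models V = L$ has a definable global well-order. So $L_\alpha$ is $\beta$-$(\GB+\ETR)$-realizable, and by theorem~\ref{thm:least-etr-rlzn} this $\Xcal$ is its minimum $\beta$-$(\GB+\ETR)$-realization.

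It remains to show $(L_\alpha,\Xcal)$ is contained in every $\beta$-model $(N,\Ycal) \models \GB + \ETR$; by minimality of $\alpha$, $\Ord^N \ge \alpha$. That $L_\alpha \subseteq N$ is immediate: either $\Ord^N = \alpha$ and $L_\alpha = L^N \subseteq N$, or $\Ord^N > \alpha$ and $L_\alpha$ is an element of $N$ (again by absoluteness of constructibility and $\alpha < \Ord^N$). For $\Xcal \subseteq \Ycal$ I would induct on $n$ that $\Xcal_n \subseteq \Ycal$, splitting on the height of $N$. If $\Ord^N = \alpha$, repeat the previous paragraph with $(N,\Ycal)$ in place of $(N_0,\Ycal_0)$. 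If $\Ord^N > \alpha$, then $L_\alpha$ is a set of $N$, so $\Def(L_\alpha) = L_{\alpha+1}$ is an element of $N$ and $\Xcal_0 \subseteq \Ycal$; and inductively, any $A, \Gamma \in \Xcal_n \subseteq \Ycal$ are subclasses of the set $L_\alpha$, hence sets of $N$ by amenability of classes in models of $\GB$, so each $\Tr_\Gamma(A)$ is computed inside $N$ by an ordinary set-sized transfinite recursion, is therefore a set of $N$ — the real one, by absoluteness — and so lies in $\Ycal$, whence $\Xcal_{n+1} \subseteq \Ycal$. Thus $\Xcal = \bigcup_n \Xcal_n \subseteq \Ycal$, and $(L_\alpha,\Xcal)$ is the minimum $\beta$-model of $\GB + \ETR$, satisfying Global Choice.

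The step requiring the most care — and where this genuinely goes beyond the trivial $\GB$ case — is the bookkeeping when $\Ord^N > \alpha$: one must check that the iterated truth predicates generating $\Xcal$ are absolute between their official construction over $(L_\alpha,\Xcal_n)$ and their recomputation as sets inside $N$, so that membership in $\Ycal$ really is preserved; uniqueness of iterated truth predicates over transitive models and amenability of $\GB$-classes are what make this work. Notably, no analogue of Scott's obstruction arises, precisely because $\beta$-submodels with the same first-order part are automatically $\beta$-models, by observation~\ref{obs:inner-beta-models}.
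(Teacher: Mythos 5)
Your overall architecture matches the paper's: take the least $\alpha$ for which $L_\alpha$ carries a $\beta$-$(\GB+\ETR)$-realization, apply theorem~\ref{thm:least-etr-rlzn} to get the minimum realization $\Xcal$, and split the containment argument according to whether $\Ord^N = \alpha$ or $\Ord^N > \alpha$ (your treatment of the latter case, via amenability of classes of $(N,\Ycal)$ and absoluteness of set-sized recursions, is fine). But there is a genuine gap in the step where you show $L_\alpha$ is $\beta$-$(\GB+\ETR)$-realizable. You verify that each $\Xcal_n \subseteq \Ycal_0$ and invoke ``the argument in theorem~\ref{thm:least-etr-rlzn}'' to conclude $(L_\alpha,\Xcal) \models \GB + \ETR$. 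In that theorem, however, the fact that each $\Xcal_{n}$ is a $\GB$-realization for $M$ comes for free: the truth predicates lie in a realization $\Ycal$ \emph{of $M$ itself}, hence are strongly amenable to $M$, so observation~\ref{obs:gbc} applies. In your setting the truth predicates lie in $\Ycal_0$, a realization of $N_0$, which only makes them strongly amenable to $N_0$. A subclass of $L^{N_0}$ that is amenable to $N_0$ need not be amenable to $L^{N_0}$ --- this is exactly the $0^\sharp$ phenomenon: for $A = 0^\sharp \cup (\Ord \setminus \omega)$ one has $A \cap \omega \notin L^{N_0}$, so Class Replacement fails over $L^{N_0}$. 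Without strong amenability of $\Tr_\Gamma(A)$ to $L_\alpha$, you cannot conclude that $(L_\alpha,\Xcal_{n+1})$ satisfies Class Replacement, nor even apply observation~\ref{obs:inner-beta-models}$(2)$, whose hypothesis requires $(L_\alpha,\Xcal_n) \models \GB$. Your phrase ``coincides with the iterated truth predicate computed over $(L_\alpha,\Xcal_n)$'' presupposes the very thing at issue, namely that these predicates behave as legitimate classes of a $\GB$-model over $L_\alpha$.

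This missing step is precisely the content of the paper's lemma~\ref{lem:etr-in-l}: if $M$ is $\beta$-$(\GB+\ETR)$-realizable then so is $L^M$, witnessed by the collection of subclasses of $L^M$ that are amenable to $L^M$. Its proof is not a formality --- one must check Class Replacement and Elementary Comprehension for $L^M$ with these parameters, and then show by an induction along $\Gamma$, carried out inside $(M,\Ycal)$ and using that amenability is an elementary property, that solutions to recursions over $L^M$ with amenable parameters are themselves amenable (the limit stages require a separate union argument). Supplying that lemma, or an equivalent amenability induction for your $\Tr_\Gamma(A)$'s, would close the gap and make your proof essentially the paper's.
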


\begin{proof}
First, let us see that if $M$ is $\beta$-$(\GB + \ETR)$-realizable then so is $L^M$. This is of independent interest, so I will separate it out as its own lemma.

\begin{sublemma} \label{lem:etr-in-l}
If $M$ is $\beta$-$(\GB + \ETR)$-realizable then $L^M$ is $\beta$-$(\GBC + \ETR)$-realizable.
\end{sublemma}

Before proving this, let us see with an example why merely restricting the classes to those which are subclasses of $L$ will not work. Suppose $(M,\Ycal) \models \GB + \ETR$ is transitive with $0^\sharp \in M$. Set $\Xcal = \Ycal \cap \powerset(L^M)$. Then, $(L^M,\Xcal)$ fails to satisfy Replacement. Consider $A = 0^\sharp \cup (\Ord^M \setminus \omega) \in \Xcal$. But $A \cap \omega \not \in L^M$, so $(L^M,\Xcal)$ fails to satisfy Replacement. That is, the problem is that $\Ycal$ may contain subclasses of $L^M$ which are not amenable to $L^M$. 

\begin{proof}[Proof of lemma \ref{lem:etr-in-l}]
Let $(M,\Ycal) \models \GB + \ETR$ be a $\beta$-model. Define $\Xcal \subseteq \Ycal$ to consist of all the subsets of $L^M$ which are amenable to $L^M$. That is, $X \in \Xcal$ if and only if $X \subseteq L^M$ and $X \cap a \in L^M$ for all $a \in L^M$. By observation \ref{obs:inner-beta-models} we get that $(L^M,\Xcal)$ is a $\beta$-model. So the only work is to see that $(L^M,\Xcal) \models \GBC + \ETR$. That it satisfies Extensionality and Global Choice is immediate. To see that it satisfies Class Replacement, take a class function $F \in \Xcal$ and a set $a \in L^M$. Working in $(M,\Ycal)$, pick $\alpha$ so that $F \rest a \in {V_\alpha}^M$, which exists by Class Replacement in $(M,\Ycal)$. By amenability we now get that $f = F \cap ({V_\alpha}^M \cap L^M) \in L^M$. So $F''a = f''a \in L^M$, as desired.

To see that $(L^M,\Xcal)$ satisfies Elementary Comprehension, note that this follows immediately once we know that $L^M$ satisfies Separation with parameters from $\Xcal$, since $\{x : \phi(x,\bar P)\} \cap a = \{ x \in a : \phi(x,\bar P) \}$. We check that Elementary Comprehension with parameters from $\Xcal$ holds by induction on formulae. The atomic and boolean cases are easy, but note that we use amenability to check the atomic case for the formula ``$x \in A$''. For the quantifier case, we want to see that $b = \{ x \in a : \forall y\ \phi^{L^M}(x,y) \} \in L^M$, where I have suppressed writing the parameters in $\phi$. For each $y$, let $b_y = \{ x \in a : \phi^{L^M}(x,y) \}$, which is in $L^M$ by inductive hypothesis. Then observe that $b = \{ x \in a : \forall y\ x \in b_y \}$, so $b \in L^M$. 

Finally, let us see that $(L^M,\Xcal) \models \ETR$. Consider a first-order formula $\phi(x,Y,P)$ with the parameter $P \in \Xcal$ and a class well-order $\Gamma \in \Xcal$. By $\ETR$ in $(M,\Ycal)$ there is a class $S \in \Ycal$ which satisfies $S_g = \{ x : \phi^{L^M}(x, S \rest x, P) \}$ for all $g \in \dom \Gamma$. We inductively show that $S \in \Xcal$. This induction can be carried out inside $(M,\Ycal)$ because being amenable to $L^M$ is an elementary property. The successor case follows because $(L^M,\Xcal)$ satisfies Elementary Comprehension. For the limit case, we consider $\Delta \subseteq \Gamma$ an initial segment without a maximum element and assume that $S \rest g \in \Xcal$ for all $g \in \Delta$. We want to see that $S \rest \Delta \in \Xcal$. Toward this end, fix $a \in L^M$. Now observe that
\[
(S \rest \Delta) \cap a = \left(\bigcup_{g \in \dom \Delta} S \rest g\right) \cap a = \{ x \in a : \exists g \in \dom \Delta\ x \in S \rest g \}.
\]
Because being $S \rest g$ is an elementary property in the parameters $\Delta$ and $g$, and because we have that $S \rest g \in \Xcal$ for all $g \in \dom \Delta$, by Separation applied inside $(L^M,\Xcal)$ we thereby get that $(S \rest \Delta) \cap a \in L^M$. So $S \rest \Delta \in \Xcal$, completing the argument.
\end{proof}

Let $L_\alpha$ be so that $\alpha$ is the least ordinal with $L_\alpha$ being $\beta$-$(\GB + \ETR)$-realizable. By theorem \ref{thm:least-etr-rlzn}, let $\Xcal$ be the minimum $\beta$-$(\GB + \ETR)$-realization for $L_\alpha$. Note that $(L_\alpha,\Xcal)$ satisfies Global Choice. I claim that $(L_\alpha,\Xcal)$ is the desired minimum $\beta$-model of $\GB + \ETR$. To see this, there are two cases to consider. If $(M,\Ycal) \models \GB + \ETR$ is a $\beta$-model with $\Ord^M = \alpha$, then theorem \ref{thm:least-etr-rlzn} yields that $\Xcal \subseteq \Ycal$. For the other case, if $(M,\Ycal) \models \GB + \ETR$ with $\Ord^M > \alpha$ then $L_\alpha \in M$ and thus $M$ can construct $\Xcal$ by ordinary transfinite recursion on sets. Note that this uses that that the $\Gamma$ used to build $\Xcal$ really are well-founded from the perspective of $V$, and hence also from the perspective of $M$.
\end{proof}

Essentially the same argument gives minimum $\beta$-models for $\GB + \ETR_\Gamma$. 

\begin{lemma} \label{lem:beta-etr-gamma}
If $M \models \ZF$ is $\beta$-$(\GB + \ETR_\Gamma)$-realizable for $\Gamma \in \powerset(M)$ with $\Gamma \ge \omega^\omega$ then $M$ has a minimum $\beta$-$(\GB + \ETR_\Gamma)$-realization.
\end{lemma}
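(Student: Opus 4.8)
The plan is to mimic the proof of Theorem \ref{thm:least-etr-rlzn} essentially verbatim, the only change being to replace the full hierarchy of iterated truth predicates along arbitrary class well-orders with the single fixed well-order $\Gamma$, and to invoke Corollary \ref{cor:etr-iff-itr} --- this is where the hypothesis $\Gamma \ge \omega^\omega$ is used --- so that over $\GB$ the principle $\ETR_\Gamma$ is interchangeable with the assertion that $\Gamma$-iterated truth predicates relative to any class exist.

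First I would fix a $\beta$-$(\GB + \ETR_\Gamma)$-realization $\Ycal$ for $M$. Note that $M$ is transitive, since it carries a $\beta$-model, and that $\Gamma \in \Ycal$ is genuinely a well-order, since $(M,\Ycal)$ is correct about well-foundedness. Set $\Xcal_0 = \Def(M;\Gamma)$, and recursively let $\Xcal_{n+1}$ be the collection of classes of $M$ definable with set parameters from members of $\{ \Tr_\Gamma(A) : A \in \Xcal_n \}$; put $\Xcal = \bigcup_n \Xcal_n$. (If $M$ has a parametrically definable global well-order then $\Xcal_0$, and hence $\Xcal$, realizes Global Choice, exactly as in Theorem \ref{thm:least-etr-rlzn}.) Observe that $\Tr_\Gamma(A)$ is unambiguous here because $M$ is transitive and $\GB$ proves uniqueness of iterated truth predicates.

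Then I would verify the following chain of claims. (a) $\Xcal_n \subseteq \Ycal$ for all $n$, by induction: $\Xcal_0 = \Def(M;\Gamma) \subseteq \Ycal$ since $\Gamma \in \Ycal$ and $\Ycal$ is closed under first-order definability; and if $\Xcal_n \subseteq \Ycal$ then for each $A \in \Xcal_n$ Corollary \ref{cor:etr-iff-itr} applied inside $(M,\Ycal) \models \ETR_\Gamma$ yields $\Tr_\Gamma(A) \in \Ycal$, so $\Xcal_{n+1} \subseteq \Ycal$. (b) Consequently each $(M,\Xcal_n)$, and hence $(M,\Xcal)$, is a $\beta$-model by observation \ref{obs:inner-beta-models}(1), so all of these structures agree with $V$ on well-foundedness; in particular they all see $\Gamma$ as a well-order. (c) $\Xcal$, being an increasing union of $\GB$-realizations for $M$, is itself a $\GB$-realization. (d) $(M,\Xcal) \models \ETR_\Gamma$: since $\Gamma \in \Xcal_0 \subseteq \Xcal$ this statement is meaningful, and by Corollary \ref{cor:etr-iff-itr} it suffices to check that $(M,\Xcal)$ has $\Gamma$-iterated truth predicates relative to every $A \in \Xcal$; given such $A$ we have $A \in \Xcal_n$ for some $n$, and then $\Tr_\Gamma(A) \in \Xcal_{n+1} \subseteq \Xcal$. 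Finally, the construction of the $\Xcal_n$ never referred to $\Ycal$ itself --- only the containment $\Xcal_n \subseteq \Ycal$ was used, and that holds for every $\beta$-$(\GB + \ETR_\Gamma)$-realization of $M$ --- so $\Xcal$ is contained in every such realization and is therefore the minimum one.

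I do not expect a serious obstacle; this is a routine adaptation. The one point deserving care is ensuring that the passage between ``$\ETR_\Gamma$'' and ``$\Gamma$-iterated truth predicates relative to any class'' is legitimate on both sides: in $\Ycal$, when pushing each $\Xcal_n$ into $\Ycal$, and in $\Xcal$, when verifying $\ETR_\Gamma$ at the end. This is exactly the content of Corollary \ref{cor:etr-iff-itr}, and it is precisely here that $\Gamma \ge \omega^\omega$ enters: for smaller $\Gamma$ one would have to additionally close $\Xcal$ under $\Tr_{\Gamma'}$ for a cofinal family of $\Gamma' \le \Gamma$ (or else worry that the rank-$\omega\cdot\Gamma$ recursions needed to build $\Tr_\Gamma$ are unavailable), which would complicate the bookkeeping without obviously breaking it.
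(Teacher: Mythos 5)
Your proposal is correct and follows the paper's own proof essentially verbatim: the paper likewise sets $\Xcal_0 = \Def(M;\Gamma)$, lets $\Xcal_{n+1}$ consist of classes definable from $\Tr_\Gamma(A)$ for $A \in \Xcal_n$, takes $\Xcal = \bigcup_n \Xcal_n$, and notes that the argument of theorem \ref{thm:least-etr-rlzn} then goes through, with corollary \ref{cor:etr-iff-itr} supplying the equivalence between $\ETR_\Gamma$ and the existence of $\Gamma$-iterated truth predicates (which is exactly where $\Gamma \ge \omega^\omega$ is used). Your verification of the individual claims is just a more explicit write-up of what the paper leaves as ``similar to before.''
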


The purpose of requiring $\Gamma \ge \omega^\omega$ is that this ensures $\ETR_\Gamma$ is equivalent to the existence of $\Gamma$-iterated truth predicates relative to any class. See corollary \ref{cor:etr-iff-itr}. The same applies to later theorems, but I will suppress making this comment every time.

\begin{proof}[Proof of lemma \ref{lem:beta-etr-gamma}]
This is done almost the same as the proof for theorem \ref{thm:least-etr-rlzn}. Namely, set $\Xcal_0 = \Def(M;\Gamma)$ and set $\Xcal_{n+1}$ to consist of all classes of $M$ definable from $\Tr_\Gamma(A)$ for $A \in \Xcal_n$. Finally, set $\Xcal = \bigcup_n \Xcal_n$. These are well-defined because $\Gamma$ is a well-order. Then, similar to before, $(M,\Xcal)$ is easily seen to be contained in any $\beta$-model $(M,\Ycal) \models \GB + \ETR_\Gamma$ and is itself a $\beta$-model of $\GB + \ETR_\Gamma$.
\end{proof}

\begin{theorem} \label{thm:least-beta-etr-gamma}
Let $\Gamma$ be a name for a class well-order $\ge \omega^\omega$ which is absolute to $L$. That is, $\Gamma$ is defined by some first-order formula without parameters so that any model of $\GB$ thinks the class defined by that formula is well-ordered and moreover $\Gamma^M = \Gamma^{L^M}$ . (For instance, $\Gamma$ might be $\Ord$.) If there is a $\beta$-model of $\GB + \ETR_\Gamma$ then there is a minimum $\beta$-model of $\GB + \ETR_\Gamma$. And this minimum $\beta$-model moreover satisfies Global Choice.
\end{theorem}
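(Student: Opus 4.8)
The plan is to run the proof of main theorem \ref{min-beta-etr} (theorem \ref{thm:least-trans-etr}) essentially verbatim, with $\ETR$ replaced throughout by $\ETR_\Gamma$ and with lemma \ref{lem:beta-etr-gamma} playing the role that theorem \ref{thm:least-etr-rlzn} played there. The first step is to establish the $\ETR_\Gamma$-analogue of lemma \ref{lem:etr-in-l}: if $M$ is $\beta$-$(\GB + \ETR_\Gamma)$-realizable, then $L^M$ is $\beta$-$(\GBC + \ETR_\Gamma)$-realizable. Fixing a witnessing $\beta$-model $(M,\Ycal) \models \GB + \ETR_\Gamma$, I would let $\Xcal$ consist of all subsets of $L^M$ that are amenable to $L^M$ and lie in $\Ycal$; that $(L^M,\Xcal)$ is a $\beta$-model (by observation \ref{obs:inner-beta-models}) satisfying $\GBC$ and Elementary Comprehension is proved word for word as in lemma \ref{lem:etr-in-l}. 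The one point needing attention is $\ETR_\Gamma$. Given a recursion of an elementary formula $\phi^{L^M}$ along a class relation $R \in \Xcal$ of rank $\le \Gamma^{L^M}$, the hypothesis that $\Gamma$ is absolute to $L$ gives $\Gamma^{L^M} = \Gamma^M$, so this recursion is of rank $\le \Gamma^M$ and the relevant instance of $\ETR_\Gamma$ holds in $(M,\Ycal)$, yielding a solution $S \in \Ycal$; one then shows $S \in \Xcal$ by the same amenability induction, carried out inside $(M,\Ycal)$ along $R$, as in lemma \ref{lem:etr-in-l}. The requirement $\Gamma \ge \omega^\omega$ is what makes $\ETR_\Gamma$ equivalent to the existence of the appropriate iterated truth predicates, per corollary \ref{cor:etr-iff-itr}, matching the form of lemma \ref{lem:beta-etr-gamma}.

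With this lemma in hand, the base model is located as before. Since there is by hypothesis a $\beta$-model of $\GB + \ETR_\Gamma$, and such a model is transitive with first-order part a transitive model of $\ZF$, the lemma shows its constructible part, which is some $L_\gamma$, is $\beta$-$(\GB + \ETR_\Gamma)$-realizable; so I may let $\alpha$ be least such that $L_\alpha$ is $\beta$-$(\GB + \ETR_\Gamma)$-realizable. Applying lemma \ref{lem:beta-etr-gamma} to $M = L_\alpha$ with $\Gamma^{L_\alpha} \in \powerset(L_\alpha)$, which is $\ge \omega^\omega$, produces the minimum $\beta$-$(\GB + \ETR_\Gamma)$-realization $\Xcal$ for $L_\alpha$. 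Because the construction in lemma \ref{lem:beta-etr-gamma} starts from $\Xcal_0 = \Def(L_\alpha;\Gamma^{L_\alpha}) = \Def(L_\alpha)$ and $L_\alpha$ has a parameter-free definable global well-order, $(L_\alpha,\Xcal)$ satisfies Global Choice.

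Finally I would check that $(L_\alpha,\Xcal)$ is contained in every $\beta$-model of $\GB + \ETR_\Gamma$, splitting into the two cases of the proof of theorem \ref{thm:least-trans-etr}. Let $(M,\Ycal) \models \GB + \ETR_\Gamma$ be a $\beta$-model; by the lemma $L^M$ is $\beta$-$(\GB + \ETR_\Gamma)$-realizable, so $\Ord^M \ge \alpha$. If $\Ord^M = \alpha$ then $L^M = L_\alpha$, and applying the first lemma to $(M,\Ycal)$ manufactures a $\beta$-$(\GBC + \ETR_\Gamma)$-realization $\Xcal'$ for $L_\alpha$ with $\Xcal' \subseteq \Ycal$, whence $\Xcal \subseteq \Xcal' \subseteq \Ycal$ by minimality of $\Xcal$. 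If $\Ord^M > \alpha$ then $L_\alpha \in M$, and $M$ can reconstruct the increasing sequence $\seq{\Xcal_n}$ of lemma \ref{lem:beta-etr-gamma}, and hence $\Xcal$, by ordinary transfinite recursion on its sets; this uses that $\Gamma^{L_\alpha}$ is genuinely well-founded in $V$ and hence, being a set relation, also in the transitive model $M$, so the iterated truth predicates $M$ computes along it are the real ones. Then each member of $\Xcal$ is a set of $M$, hence a class of $(M,\Ycal)$, giving $\Xcal \subseteq \Ycal$. In both cases $L_\alpha \subseteq M$ and $\Xcal \subseteq \Ycal$, as required. The one place real care is needed is the $\ETR_\Gamma$-clause in the first step: ensuring the recursion one must solve over $L^M$ is one that $(M,\Ycal)$'s own instance of $\ETR_\Gamma$ can handle, which is exactly where absoluteness of $\Gamma$ to $L$ enters, lining up $\Gamma^{L^M}$ with $\Gamma^M$, and then pushing the amenability induction through along that recursion. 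Everything else is a routine transcription of the arguments for $\ETR$.
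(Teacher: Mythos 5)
Your proposal is correct and follows essentially the same route as the paper: the paper's own proof is exactly to combine lemma \ref{lem:beta-etr-gamma} with the $\ETR_\Gamma$-analogue of lemma \ref{lem:etr-in-l} (restricting the recursions to rank $\le \Gamma$), and your identification of where the absoluteness of $\Gamma$ to $L$ is needed is the right point of care.
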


If there is to be a minimum $\beta$-model of $\GB + \ETR_\Gamma$ it must have a first-order part of the form $L_\alpha$. The point of requiring $\Gamma$ to be absolute to $L$ is to exclude uninteresting counterexamples like ``$\Gamma = \Ord$ if $V = L$ and $\Gamma = \omega^\omega$ otherwise''. And $\Gamma$ should be definable by a first-order formula so that different $\GB$-realizations for the same $M$ agree on what $\Gamma$ is.

\begin{proof}
This follows from lemma \ref{lem:beta-etr-gamma} plus the fact that if $M$ is $\beta$-$(\GB + \ETR_\Gamma)$-realizable then so is $L^M$. This fact can be proved similar to the argument for lemma \ref{lem:etr-in-l}. The difference is that rather than needing to consider recursions along any well-founded relation, we only consider recursions along well-founded relations of rank $\le \Gamma$.
\end{proof}

We get minimum transitive models for $\ETR_\Ord$ and other sufficiently weak fragments of $\ETR$.

\begin{lemma} \label{lem:trans-etr-ord}
If transitive $M \models \ZF$ is $(\GB + \ETR_\Ord)$-realizable then $M$ has a minimum $(\GB + \ETR_\Ord)$-realization. If $M$ has a definable global well-order then this minimum $(\GB + \ETR_\Ord)$-realization moreover satisfies Global Choice.
\end{lemma}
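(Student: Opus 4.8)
The plan is to repeat the construction from the proof of Theorem~\ref{thm:least-etr-rlzn}, using the single name $\Ord$ in place of the class well-orders $\Gamma \in \Xcal_n$ and observing that transitivity of $M$ makes the appeals to $\beta$-ness in that proof unnecessary. Fix some $(\GB+\ETR_\Ord)$-realization $\Ycal$ for $M$. Set $\Xcal_0 = \Def(M)$, a $\GB$-realization for $M$ by Observation~\ref{obs:gbc}, which moreover satisfies Global Choice if $M$ has a definable global well-order; note $\Xcal_0 \subseteq \Ycal$. Given $\Xcal_n \subseteq \Ycal$, let $\Xcal_{n+1}$ consist of all classes of $M$ first-order definable with set parameters from finitely many members of $\{\Tr_\Ord(A) : A \in \Xcal_n\}$, where for $A \in \Xcal_n \subseteq \Ycal$ we take $\Tr_\Ord(A)$ to be the $\Ord$-iterated truth predicate relative to $A$; this exists in $\Ycal$ because $\Ord^M \ge \omega^\omega$ and $(M,\Ycal) \models \ETR_\Ord$, by Corollary~\ref{cor:etr-iff-itr}, and it is unique since $\GB$ proves uniqueness of iterated truth predicates. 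Put $\Xcal = \bigcup_n \Xcal_n \subseteq \Ycal$.

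First I would check that each $\Xcal_n$, hence $\Xcal$, is a $\GB$-realization for $M$ satisfying Global Choice when $M$ has a definable global well-order. This is routine and goes exactly as in Theorem~\ref{thm:least-etr-rlzn}: each $\Tr_\Ord(A)$ with $A \in \Xcal_n \subseteq \Ycal$ is a genuine class of the $\GB$-model $(M,\Ycal)$, hence strongly amenable to $M$; finite joins of such classes are again classes of $(M,\Ycal)$ and thus strongly amenable; so $\Xcal_{n+1}$ is a directed union of the $\GB$-realizations $\Def(M;\Tr_\Ord(A_1)\oplus\cdots\oplus\Tr_\Ord(A_k))$ and is itself a $\GB$-realization, and $\Xcal$ is an increasing union of $\GB$-realizations. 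Global Choice, once present in $\Xcal_0$, persists to every larger realization. Next, to see $(M,\Xcal)\models\ETR_\Ord$, invoke Corollary~\ref{cor:etr-iff-itr}: since $\Ord \ge \omega^\omega$ it suffices to produce, for each $A \in \Xcal$, an $\Ord$-iterated truth predicate relative to $A$ inside $(M,\Xcal)$; but $A \in \Xcal_n$ for some $n$, whence $\Tr_\Ord(A) \in \Xcal_{n+1} \subseteq \Xcal$, and since $M$ is transitive, so that $\Ord^M$ is genuinely well-founded, this class satisfies the defining recursion internally in $(M,\Xcal)$ as well.

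For minimality, observe that the construction of the $\Xcal_n$ never used $\Ycal$ except through the facts that $\Xcal_n \subseteq \Ycal$ and that $\Tr_\Ord(A)$ for $A \in \Xcal_n$ is a class of $\Ycal$ — and by uniqueness of iterated truth predicates together with the genuine well-foundedness of $\Ord^M$, this $\Tr_\Ord(A)$ is the same canonical object no matter which realization $\Ycal$ it is drawn from. Hence if $(M,\Ycal')$ is any $(\GB+\ETR_\Ord)$-realization for $M$, an induction on $n$ shows $\Xcal_n \subseteq \Ycal'$: the base case is $\Def(M) \subseteq \Ycal'$, and at the successor step each $A \in \Xcal_n \subseteq \Ycal'$ has, by $\ETR_\Ord$ in $(M,\Ycal')$ and Corollary~\ref{cor:etr-iff-itr}, the (unique, canonical) iterated truth predicate $\Tr_\Ord(A)$ in $\Ycal'$, and $\Ycal'$ is closed under first-order definability with set parameters and under finite joins, so $\Xcal_{n+1} \subseteq \Ycal'$. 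Therefore $\Xcal = \bigcup_n \Xcal_n \subseteq \Ycal'$, so $\Xcal$ is the minimum $(\GB+\ETR_\Ord)$-realization for $M$.

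The one place genuine care is needed — and the point at which transitivity does the work that $\beta$-ness does in Theorems~\ref{thm:least-etr-rlzn} and~\ref{thm:least-trans-etr} — is the identification of the iterated truth predicate computed inside a realization with the canonical externally-defined one. For recursions along an arbitrary class well-order this identification can fail for non-$\beta$ transitive models, which is exactly why those theorems are restricted to $\beta$-models; here, however, the recursion is along $\Ord$, which is genuinely well-founded in every transitive model, so no hypothesis of $\beta$-ness is required. The remaining steps — amenability of iterated truth predicates and stability of the $\GB$ axioms under directed and increasing unions of realizations — are identical to the corresponding parts of the proof of Theorem~\ref{thm:least-etr-rlzn}.
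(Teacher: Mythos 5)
Your proposal is correct and follows essentially the same route as the paper: the paper proves this lemma by observing that the only use of $\beta$-ness in the proof of lemma~\ref{lem:beta-etr-gamma} was to guarantee that $\Gamma$ is genuinely well-founded, which for $\Gamma = \Ord$ is automatic in a transitive model, and then runs the identical tower construction $\Xcal_0 = \Def(M)$, $\Xcal_{n+1} = \bigcup\{\Def(M;\Tr_\Ord(A)) : A \in \Xcal_n\}$, $\Xcal = \bigcup_n \Xcal_n$. You correctly isolate the same key point (canonicity of $\Tr_\Ord(A)$ across realizations via genuine well-foundedness of $\Ord^M$), and your explicit use of finite joins is just a slightly more careful rendering of the same argument.
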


\begin{proof}
For the proof of lemma \ref{lem:beta-etr-gamma} the only place it was used that $M$ is $\beta$-$(\GB + \ETR_\Gamma)$-realizable was to get that $\Gamma$ really is well-founded (i.e.\ in $V$). But the $\Ord$ of a transitive model of $\GB$ is well-founded, even if it fails to be a $\beta$-model. So the exact same argument goes through. That is, the minimum $(\GB + \ETR_\Ord)$-realization for $M$ is $\Xcal = \bigcup_n \Xcal_n$ where $\Xcal_0 = \Def(M)$ and $\Xcal_{n+1} = \bigcup \{ \Def(M; \Tr_\Ord(A)) : A \in \Xcal_n \}$.
\end{proof}

\begin{theorem} \label{thm:least-trans-etr-ord}
If there is a transitive model of $\GB + \ETR_\Ord$ then there is a minimum transitive model of $\GB + \ETR_\Ord$. And this minimum model satisfies Global Choice.
\end{theorem}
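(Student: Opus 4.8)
The plan is to run the same argument as for Theorem~\ref{thm:least-trans-etr} (Main Theorem~\ref{min-beta-etr}), with Lemma~\ref{lem:trans-etr-ord} playing the role of Theorem~\ref{thm:least-etr-rlzn}. The key point allowing the transitive analogue is that the class relations relevant to $\ETR_\Ord$ have rank $\le \Ord$, and a transitive model's $\Ord$ is a genuine ordinal; so a transitive model of $\GB$ automatically computes the well-foundedness of those relations correctly, and no appeal to $\beta$-ness is needed.

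First I would establish the transitive analogue of Lemma~\ref{lem:etr-in-l}: if transitive $M \models \ZF$ is $(\GB + \ETR_\Ord)$-realizable then so is $L^M$, via a realization satisfying $\GBC$. Given $(M,\Ycal) \models \GB + \ETR_\Ord$, set $\Xcal = \{ X \in \Ycal : X \subseteq L^M \text{ and } X \cap a \in L^M \text{ for all } a \in L^M \}$, the classes of $\Ycal$ that are amenable to $L^M$ --- one cannot simply take all subclasses of $L^M$ lying in $\Ycal$, as the $0^\sharp$ example in the proof of Theorem~\ref{thm:least-trans-etr} shows. The verification that $(L^M,\Xcal) \models \GBC$ is verbatim as in the proof of Lemma~\ref{lem:etr-in-l}. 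For $\ETR_\Ord$, given a first-order $\phi$, a parameter $P \in \Xcal$, and a class well-order $\Gamma \in \Xcal$ of rank $\le \Ord$, use $\ETR_\Ord$ in $(M,\Ycal)$ to obtain a solution $S \in \Ycal$, then show $S \in \Xcal$ by the same induction, carried out inside $(M,\Ycal)$ using that amenability to $L^M$ is elementary. The only change from Lemma~\ref{lem:etr-in-l} is that we restrict throughout to recursions of rank $\le \Ord$, which is legitimate since $\Ord^{L^M} = \Ord^M$.

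Then I would finish exactly as in Theorem~\ref{thm:least-trans-etr}. Let $\alpha$ be least such that $L_\alpha$ is $(\GB + \ETR_\Ord)$-realizable; this exists because any transitive model $(N,\Zcal) \models \GB + \ETR_\Ord$ yields, by the previous paragraph, a $(\GBC + \ETR_\Ord)$-realization for $L^N = L_{\Ord^N}$. By Lemma~\ref{lem:trans-etr-ord} let $\Xcal$ be the minimum $(\GB + \ETR_\Ord)$-realization for $L_\alpha$; since $L_\alpha$ has a definable global well-order, $(L_\alpha,\Xcal) \models \GBC + \ETR_\Ord$. To see $(L_\alpha,\Xcal)$ is the minimum transitive model, take any transitive $(N,\Zcal) \models \GB + \ETR_\Ord$ and split on $\Ord^N$. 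If $\Ord^N = \alpha$, then $L^N = L_\alpha \subseteq N$, and the amenable-to-$L_\alpha$ classes of $\Zcal$ form a $(\GB + \ETR_\Ord)$-realization for $L_\alpha$ contained in $\Zcal$, so minimality of $\Xcal$ gives $\Xcal \subseteq \Zcal$. If $\Ord^N > \alpha$, then $L_\alpha \in N$ as a set, and $N$ can reconstruct $\Xcal = \bigcup_n \Xcal_n$ (with $\Xcal_0 = \Def(L_\alpha)$ and $\Xcal_{n+1} = \bigcup \{ \Def(L_\alpha; \Tr_\Ord(A)) : A \in \Xcal_n \}$, as in Lemma~\ref{lem:trans-etr-ord}) by ordinary transfinite recursion on sets: the iterations involved have length below $\Ord^{L_\alpha} = \alpha < \Ord^N$, which is genuinely, hence $N$-, well-founded, so the recursions go through; and every element of $\Xcal$ is a subset of the set $L_\alpha$, hence a set of $N$, hence a class of $(N,\Zcal)$, giving $\Xcal \subseteq \Zcal$. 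Either way $(L_\alpha,\Xcal)$ is contained in $(N,\Zcal)$, and Global Choice holds because the $L$-order is definable.

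The main obstacle is the well-foundedness bookkeeping rather than any new idea: in the $L$-transfer one must check that restricting to rank-$\le \Ord$ recursions is preserved because $\Ord^{L^M} = \Ord^M$; and in the $\Ord^N > \alpha$ case one must check that the recursion building $\Xcal$ has set length from $N$'s point of view. This latter point is precisely where transitivity, rather than $\beta$-ness, does the work: for transitive $M$ the ordinal $\Ord^M$ is genuinely well-founded even when $(M,\Ycal)$ is not a $\beta$-model, so the $\Gamma$'s and iterations used to build $\Xcal$ are well-founded in $V$ and hence in any transitive model containing $L_\alpha$.
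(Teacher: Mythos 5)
Your proposal is correct and follows the paper's own route exactly: combine Lemma~\ref{lem:trans-etr-ord} with the transitive analogue of Lemma~\ref{lem:etr-in-l} (restricting to recursions of rank $\le \Ord$), then close as in Theorem~\ref{thm:least-trans-etr} by splitting on whether $\Ord^N = \alpha$ or $\Ord^N > \alpha$. You in fact supply more detail than the paper, which compresses all of this into two sentences; your bookkeeping about $\Ord^{L^M} = \Ord^M$ and the set-length of the recursion reconstructing $\Xcal$ is exactly the right thing to check.
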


\begin{proof}
Combine lemma \ref{lem:trans-etr-ord} plus the fact that $M$ being $(\GB + \ETR_\Ord)$-realizable implies $L^M$ is $(\GB + \ETR_\Ord)$-realizable. Again, this is proved much the same as lemma \ref{lem:etr-in-l}, except looking only at recursions along well-founded relations of rank $\le \Ord$. 
\end{proof}

The attentive reader will notice that the above argument applies to more than just $\Ord$. Specifically, the only fact about $\Ord$ that was used was that if $M$ is transitive then $\Ord^M$ is well-founded. The same argument goes through if $\Ord$ is replaced by a name for an ordinal or a class well-order which is well-founded in any transitive model. So the same argument goes through more generally, for example if $\Ord$ is replaced by $\omega_1$ or $\Ord^\Ord$ or $\Ord^{\Ord^\Ord} + \Ord^\Ord + \Ord$ or many other names for well-orders. 

Left open by this analysis is how far we can go. There are names $\Gamma$ for class well-orders so that there are transitive $(M,\Xcal)$ with $\Gamma^{(M,\Xcal)}$ ill-founded (i.e.\ in $V$). In particular, this holds if $\Gamma$ is defined as the least class well-order so that $L_\Gamma$ is admissible. But that definition of $\Gamma$ requires quantifying over classes. What if we only allow definitions that are first-order?

\begin{question}
If $\Gamma$ is a name for a class well-order defined by a first-order formula, must it be the case that $\Gamma^M$ is well-founded (in the ambient universe) for any transitive $M \models \ZF$?
\end{question}

To finish this section, let us see that $\ETR$ is not equivalent to $\ETR_\Gamma$ for any $\Gamma$. Moreover, we can separate fragments of $\ETR$. This establishes that theorem \ref{thm:least-beta-etr-gamma} is not redundant with theorem \ref{thm:least-trans-etr} and that theorem \ref{thm:least-trans-etr-ord} does not immediately imply that there is a minimum transitive model of $\ETR$.

First, if $\Gamma$ is sufficiently smaller than $\Delta$ then a model of $\ETR_\Gamma$ need not be a model of $\ETR_\Delta$. 

\begin{theorem} \label{thm:sep-etr-gamma}
Let $(M,\Xcal) \models \GB$ and let $\Gamma \in \Xcal$ be a well-order $\ge \omega^\omega$. Suppose that $(M,\Xcal) \models \ETR_{\Gamma \cdot \omega}$. Then, there is $\Ycal \subseteq \Xcal$ coded in $\Xcal$ so that $(M,\Ycal) \models \GB + \ETR_\Gamma$ but $(M,\Ycal) \not \models \ETR_{\Gamma \cdot \omega}$.
If $(M,\Xcal)$ moreover satisfies Global Choice, then $\Ycal$ can be picked so that $(M,\Ycal)$ also satisfies Global Choice.
\end{theorem}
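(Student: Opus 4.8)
The plan is to build $\Ycal$ as an increasing $\omega$-union of $\Def$-closures of iterated truth predicates, cut off just short of the one $\ETR_{\Gamma\cdot\omega}$ would demand. Since $\Gamma \ge \omega^\omega$, each $\Gamma\cdot k$ (and $\Gamma\cdot\omega$) is $\ge\omega^\omega$, so by Corollary \ref{cor:etr-iff-itr} the relevant objects are the iterated truth predicates $\Tr_{\Gamma\cdot k}(\Gamma)$. Because $(M,\Xcal) \models \ETR_{\Gamma\cdot\omega}$, the predicate $T := \Tr_{\Gamma\cdot\omega}(\Gamma)$ lies in $\Xcal$, and each $\Tr_{\Gamma\cdot k}(\Gamma)$ is its restriction to the first $\Gamma\cdot k$ levels, hence also in $\Xcal$. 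I would set
\[
\Ycal = \bigcup_{k<\omega} \Def\bigl(M;\Tr_{\Gamma\cdot k}(\Gamma)\bigr),
\]
replacing the parameter $\Gamma$ by a pair $\langle \Gamma, G\rangle$ with $G \in \Xcal$ a global well-order in the Global Choice case. Then $\Ycal \subseteq \Xcal$ since $\Xcal$ is closed under elementary definability, and $\Ycal$ is coded in $\Xcal$: from $T$ and an ordinary truth predicate $\Tr(T)$ relative to $T$ — which $\Xcal$ contains, being a recursion of rank $\omega \le \Gamma\cdot\omega$ — one can, uniformly in $k$, $\godel\psi$, and $p$, evaluate whether $(M,\in,\Tr_{\Gamma\cdot k}(\Gamma)) \models \psi(x,p)$, each $\Tr_{\Gamma\cdot k}(\Gamma)$ being uniformly definable from $T$; so the class $C$ whose $\langle k,\godel\psi,p\rangle$-th slice is $\{x : (M,\in,\Tr_{\Gamma\cdot k}(\Gamma)) \models \psi(x,p)\}$ is in $\Xcal$ and enumerates $\Ycal$.

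Next I would verify the assertions about $(M,\Ycal)$. For $\GB$: each $\Def(M;\Tr_{\Gamma\cdot k}(\Gamma))$ is a $\GB$-realization by Observation \ref{obs:gbc}, since a class of a model of $\GB$ is strongly amenable; the union is increasing because $\Tr_{\Gamma\cdot k}(\Gamma)$ is definable from $\Tr_{\Gamma\cdot(k+1)}(\Gamma)$; and an increasing union of $\GB$-realizations is a $\GB$-realization. (In the Global Choice case, Global Choice is already witnessed in $\Def(M;\langle\Gamma,G\rangle)$.) For $\ETR_\Gamma$: by Corollary \ref{cor:etr-iff-itr} it is enough to show $\Tr_\Gamma(A) \in \Ycal$ whenever $A \in \Ycal$. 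Such an $A$ is definable from $B := \Tr_{\Gamma\cdot k}(\Gamma)$ for some $k$; the $\Gamma$-iterated truth predicate relative to $A$ then reduces, by the standard compositional behaviour of iterated truth predicates, to the one relative to $B$, and $\Tr_\Gamma(B)$ is in turn recoverable from the segment of $\Tr_{\Gamma\cdot(k+1)}(\Gamma)$ on the levels from $\Gamma\cdot k$ up to $\Gamma\cdot(k+1)$, since on those levels the iteration refers to nothing beyond the class $B$ and the earlier same-block levels. Thus $\Tr_\Gamma(A) \in \Def(M;\Tr_{\Gamma\cdot(k+1)}(\Gamma)) \subseteq \Ycal$.

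For the failure of $\ETR_{\Gamma\cdot\omega}$: again by Corollary \ref{cor:etr-iff-itr}, it suffices to show $T = \Tr_{\Gamma\cdot\omega}(\Gamma) \notin \Ycal$. Were it in $\Ycal$, it would be definable from $B := \Tr_{\Gamma\cdot k}(\Gamma)$ for some $k$, hence so would the segment of $T$ on the levels from $\Gamma\cdot k$ onward; but the bottom slice of that segment, at level $\Gamma\cdot k$, has its $\hat T$-clause ranging only over the levels below $\Gamma\cdot k$ — that is, over $B$ — so it is exactly the ordinary truth predicate $\Tr(B)$ of $(M,\in,B)$. That would make $\Tr(B)$ definable from $B$, contradicting Tarski's undefinability of truth, which applies because $B$ is strongly amenable and so $(M,B)\models\ZF(B)$.

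The step I expect to be the real work is making the ``segment'' claims precise: that a $\Gamma$-iterated truth predicate relative to a class definable from $B$ reduces to one relative to $B$; that such a predicate relative to $\Tr_{\Gamma\cdot k}(\Gamma)$ is recoverable from the appropriate $\Gamma$-block of $\Tr_{\Gamma\cdot(k+1)}(\Gamma)$; and dually that the tail block of $T$ recovers $\Tr(B)$. These are the standard facts about how iterated truth predicates compose and restrict — see the cited work of Fujimoto \cite{fujimoto2012} and Gitman--Hamkins \cite{gitman-hamkins2017} — but verifying them requires carefully translating formulas across the predicate symbols $\hat T$ and $\hat B$ and bookkeeping the level indices; the rest of the argument is routine and takes place inside $(M,\Xcal)$.
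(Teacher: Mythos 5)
Your construction is essentially the paper's: the paper takes $\Ycal$ to be the increasing union of $\Def(M;\Tr_\Xi)$ over initial segments $\Xi$ of $\Gamma\cdot\omega$, codes it directly off $\Tr_{\Gamma\cdot\omega}$, verifies $\GB$ as an increasing union of $\GB$-realizations, gets $\ETR_\Gamma$ by the same block-composition of iterated truth predicates you describe, obtains the failure of $\ETR_{\Gamma\cdot\omega}$ from $\Tr_{\Gamma\cdot\omega}\notin\Ycal$ (your Tarski-undefinability argument is the right justification, which the paper leaves implicit), and handles Global Choice by relativizing the truth predicates to a fixed global well-order, exactly as you propose. The ``segment'' facts you flag as the real work are indeed where the content lies; the paper also treats them as standard and does not spell them out.

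The one step that fails as literally written is the indexing of your union when $M$ is $\omega$-nonstandard. You set $\Ycal=\bigcup_{k<\omega}\Def(M;\Tr_{\Gamma\cdot k}(\Gamma))$ with $k$ ranging over the \emph{standard} naturals, but the code $C$ you build is a class of $M$ and so necessarily has a slice for every $k\in\omega^M$, including nonstandard $k$; the collection of standard indices is not a class of $M$, so $C$ codes a strictly larger family than your $\Ycal$ and the ``coded in $\Xcal$'' clause of the theorem breaks. The repair is exactly what the paper does: define $\Ycal$ as the union over all initial segments of $\Gamma\cdot\omega^M$ (equivalently, all $k\in\omega^M$). Every other step of your argument---the $\GB$ verification, the $\ETR_\Gamma$ verification via the next block of $\Tr_{\Gamma\cdot(k+1)}(\Gamma)$, and the Tarski argument showing $\Tr_{\Gamma\cdot\omega}(\Gamma)$ is not definable from any $\Tr_{\Gamma\cdot k}(\Gamma)$---goes through verbatim with $k$ internal to $M$.
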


\begin{proof}
Define 
\[
\Ycal = \{ X \in \Xcal : X \in \Def(M; \Tr_\Xi), \text{ where } \Xi \text{ is some initial segment of } \Gamma \cdot \omega^M \}.
\]
Then, $\Ycal$ is coded in $\Xcal$ via the code $C = \{ ((g,\phi,a),x) : (g,\phi, a \cat x) \in \Tr_{\Gamma \cdot \omega} \}$. Also, $\Ycal$ is the increasing union of the $\Def(M; \Tr_\Xi)$ for $\Xi$ initial segments of $\Gamma \cdot \omega$, each of which is a $\GB$-realization for $M$. Thus, $(M,\Ycal) \models \GB$. Next, $(M,\Ycal)$ satisfies Elementary Transfinite Recursion for recursions of rank $\le \Gamma$ because given $X \in \Ycal$ it must be that $X \in \Def(M; \Tr_\Xi)$ for some $\Xi$ an initial segment of $\Gamma \cdot \omega$ and thus $\Tr_\Gamma(X) \in \Def(M; \Tr_{\Xi + \Gamma}(X)) \subseteq \Ycal$. Finally, $(M,\Ycal) \not \models \ETR_{\Gamma \cdot \omega}$ because it does not contain $\Tr_{\Gamma \cdot \omega}$.

To prove the fact about Global Choice, do the same construction but look at iterated truth predicates relative to a fixed global well-order.
\end{proof}

This result is optimal for transitive models because $\ETR_\Gamma$ is equivalent to $\ETR_{\Gamma \cdot n}$ for any standard $n \in \omega$.\footnote{For $\omega$-nonstandard models $\ETR_\Gamma$ is not equivalent to $\ETR_{< \Gamma \cdot \omega}$. Suppose $(M,\Xcal) \models \ETR_{\Gamma \cdot \omega}$ and let $I \subseteq \omega^M$ be an initial segment closed under addition. A similar construction can be used to get $\Ycal \subseteq \Xcal$ so that $(M,\Ycal)$ satisfies $\ETR_{\Gamma \cdot n}$ if and only if $n \in I$.}

By near the same argument one can separate $\ETR_\Gamma$ from $\ETR_{<\Gamma}$, for $\Gamma$ satisfying a basic closure condition.

\begin{corollary}
Let $(M,\Xcal) \models \GB$ and let $\Gamma \in \Xcal$ be a well-order $\ge \omega^\omega$ so that $\Gamma$ is closed under addition. That is, if $\Delta < \Gamma$ is a well-order then $\Delta + \Delta < \Gamma$. Then, if $(M,\Xcal) \models \ETR_\Gamma$ there is $\Ycal \subseteq \Xcal$ coded in $\Xcal$ so that $(M,\Ycal) \models \GB + \ETR_{<\Gamma}$ but $(M,\Ycal) \not \models \ETR_\Gamma$. If $(M,\Xcal)$ moreover satisfies Global Choice then $\Ycal$ may be fixed so that $(M,\Ycal)$ also satisfies global choice. \qed
\end{corollary}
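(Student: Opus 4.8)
The plan is to reprise the proof of Theorem~\ref{thm:sep-etr-gamma}, with $\Gamma \cdot \omega$ replaced throughout by $\Gamma$ itself. Observe first that a well-order $\ge \omega^\omega$ which is closed under addition in the stated sense is necessarily additively indecomposable, hence a limit ordinal; in particular $\Xi < \Gamma$ implies $\Xi + 1 < \Gamma$, which is convenient below. Since $(M,\Xcal) \models \ETR_\Gamma$ and $\Gamma \ge \omega^\omega$, Corollary~\ref{cor:etr-iff-itr} gives $\Tr_\Gamma \in \Xcal$, and from $\Tr_\Gamma$ each $\Tr_\Xi$ for $\Xi < \Gamma$ is read off uniformly. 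Put
\[
\Ycal = \{ X \in \Xcal : X \in \Def(M; \Tr_\Xi) \text{ for some } \Xi < \Gamma \},
\]
where $\Xi$ ranges over all well-orders below $\Gamma$. As in Theorem~\ref{thm:sep-etr-gamma}, $\Ycal$ is coded in $\Xcal$ --- now by $C = \{ ((g,\phi,a),x) : (g,\phi,a \cat x) \in \Tr_\Gamma \}$ --- and it is an increasing union of the $\GB$-realizations $\Def(M;\Tr_\Xi)$, so $(M,\Ycal) \models \GB$. If $(M,\Xcal)$ satisfies Global Choice, carry out the same construction with the iterated truth predicates taken relative to a fixed global well-order, exactly as in Theorem~\ref{thm:sep-etr-gamma}; this gives that $(M,\Ycal)$ also satisfies Global Choice.

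To see $(M,\Ycal) \models \ETR_{<\Gamma}$, fix a well-order $\Delta < \Gamma$ and a class $X \in \Ycal$, say $X \in \Def(M;\Tr_\Xi)$ with $\Xi < \Gamma$. Taking $\mu = \max(\Xi,\Delta)$, the closure hypothesis yields $\Xi + \Delta \le \mu + \mu < \Gamma$; hence, exactly as in Theorem~\ref{thm:sep-etr-gamma}, $\Tr_\Delta(X) \in \Def(M; \Tr_{\Xi + \Delta}(X)) \subseteq \Ycal$, so every elementary recursion of rank $\le \Delta$ with parameter $X$ has a solution in $(M,\Ycal)$. Since $\Delta < \Gamma$ was arbitrary (and since $\Ycal$ was defined using all $\Xi < \Gamma$, so that recursions of small rank are covered as well), $(M,\Ycal) \models \ETR_{<\Gamma}$.

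To see $(M,\Ycal) \not\models \ETR_\Gamma$ it suffices to show $\Tr_\Gamma \notin \Ycal$, i.e.\ $\Tr_\Gamma \notin \Def(M;\Tr_\Xi)$ for every $\Xi < \Gamma$. Suppose otherwise. Since $\Xi + 1 < \Gamma$, the predicate $\Tr_{\Xi+1}$ is an initial segment of $\Tr_\Gamma$, and therefore $\Tr_{\Xi+1} \in \Def(M;\Tr_\Gamma) \subseteq \Def(M;\Tr_\Xi)$. But $\Tr_{\Xi+1}$ includes a full first-order truth predicate for the structure $(M;\in,\Tr_\Xi)$, which by Tarski's theorem on the undefinability of truth cannot lie in $\Def(M;\Tr_\Xi)$ --- a contradiction. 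Hence $(M,\Ycal) \models \GB + \ETR_{<\Gamma}$ while $(M,\Ycal) \not\models \ETR_\Gamma$, as required.

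Everything transfers essentially verbatim from Theorem~\ref{thm:sep-etr-gamma}; the one genuinely new ingredient is the ordinal bookkeeping $\Xi + \Delta < \Gamma$, which is precisely where closure under addition is needed. The step most worth checking carefully is the last one: the non-containment $\Tr_\Gamma \notin \Def(M;\Tr_\Xi)$ must be obtained one $\Xi$ at a time, from relativized Tarski undefinability, rather than by appealing to a cofinal family of $\Xi$'s --- this is what makes the failure of $\ETR_\Gamma$ clean.
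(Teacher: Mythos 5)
Your proof is correct and is exactly the argument the paper intends: the corollary is stated with a \qed and justified only by the remark that it follows ``by near the same argument'' as theorem~\ref{thm:sep-etr-gamma}, and your writeup is precisely that adaptation, with the ordinal bookkeeping $\Xi + \Delta \le \mu + \mu < \Gamma$ correctly isolating where closure under addition enters, and with the relativized Tarski undefinability argument (which the paper leaves implicit even in theorem~\ref{thm:sep-etr-gamma}) spelled out to get $\Tr_\Gamma \notin \Def(M;\Tr_\Xi)$ one $\Xi$ at a time. One small point deserves tightening. In theorem~\ref{thm:sep-etr-gamma} the class $\Gamma$ lands in $\Ycal$ automatically, since $\Gamma$ itself is among the allowed initial segments of $\Gamma\cdot\omega$ and $\Gamma$ is definable from $\Tr_\Gamma$; in your $\Ycal = \bigcup_{\Xi<\Gamma}\Def(M;\Tr_\Xi)$ there is no guarantee that $\Gamma \in \Ycal$, nor even that $\Ycal$ contains any well-order of rank exactly $\Gamma$, in which case showing $\Tr_\Gamma \notin \Ycal$ would not by itself witness the failure of $\ETR_\Gamma$ over $(M,\Ycal)$ (and the comparisons ``rank $<\Gamma$'' in $\ETR_{<\Gamma}$ would also lack an internal parameter). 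The fix is to carry $\Gamma$ along as a parameter throughout, e.g.\ set $\Ycal = \bigcup_{\Xi<\Gamma}\Def(M;\Tr_\Xi(\Gamma))$; then $\Gamma \in \Def(M;\Gamma) \subseteq \Ycal$, the recursion along $\Gamma$ producing $\Tr_\Gamma(\Gamma)$ is a genuine instance of $\ETR_\Gamma$ with parameters in $\Ycal$ that has no solution there, and every other step of your argument goes through verbatim.
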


As an immediate corollary we get that fragments of $\ETR$ can be separated by consistency strength.

\begin{corollary} 
Let $\Gamma$ be a name for a well-order $\ge \omega^\omega$, given by a first-order formula without parameters. Then $\GB + \ETR$ proves $\Con(\GB + \ETR_\Gamma)$. Moreover if $\Gamma$ and $\Delta$ are names for well-orders given by first-order formulae without parameters, then $\GB$ proves $\Gamma \cdot \omega \le \Delta$ implies that $\GB + \ETR_\Delta$ proves $\Con(\GB + \ETR_\Gamma)$.
\end{corollary}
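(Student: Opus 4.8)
The plan is to get this from Theorem~\ref{thm:sep-etr-gamma} together with the standard fact --- already used in this paper to see $\ETR \vdash \Con(\ZF)$ --- that a recursively axiomatized theory which provably possesses a coded model equipped with a full satisfaction class is consistent. I would prove the stronger second assertion; the first is then the special case in which the ambient theory is $\GB + \ETR$, which proves every instance of $\ETR_{\Gamma\cdot\omega}$.

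So fix parameter-free names $\Gamma,\Delta$ for well-orders with $\Gamma \ge \omega^\omega$, and work inside an arbitrary $(M,\Xcal) \models \GB + \ETR_\Delta$ which believes $\Gamma\cdot\omega \le \Delta$. Since $\Gamma \ge \omega^\omega$ forces $\Gamma\cdot\omega \ge \omega^\omega$, the hypothesis gives $(M,\Xcal)\models\ETR_{\Gamma\cdot\omega}$, so by Corollary~\ref{cor:etr-iff-itr} the iterated truth predicate $\Tr_{\Gamma\cdot\omega}$ is available. Feeding it to the construction in the proof of Theorem~\ref{thm:sep-etr-gamma} produces a class $C \in \Xcal$, namely $C = \{((g,\phi,a),x) : (g,\phi,a\cat x) \in \Tr_{\Gamma\cdot\omega}\}$, whose slices $\Ycal_C = \{(C)_i : i \in M\}$ form a $(\GB + \ETR_\Gamma)$-realization for $M$. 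The one ingredient beyond Theorem~\ref{thm:sep-etr-gamma} is a satisfaction class: because $\Tr(C)$ --- an ordinary, rank-$\omega$ recursion relative to the class $C$ --- is likewise provided by $\ETR_\Delta$, the model $(M,\Xcal)$ also has a full first-order satisfaction class for the expanded structure $(M;\mathord\in,C)$.

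With $\Tr(C)$ in hand, $(M,\Xcal)$ can internally verify that $(M,\Ycal_C) \models \GB + \ETR_\Gamma$, and --- crucially --- can do so for the \emph{nonstandard} instances of the schemata (those of $\ZF$, of Elementary Comprehension) as well as the standard ones: two-sorted satisfaction over $(M,\Ycal_C)$ translates, by reinterpreting each class quantifier as a quantifier over slice-indices and each atomic subformula $t \in Y$ as membership in the corresponding slice of $C$, into first-order satisfaction over $(M;\mathord\in,C)$, which $\Tr(C)$ decides for every formula --- nonstandard ones included --- with parameters among the $(C)_i$. Finitely many first-order assertions --- one per axiom schema (each quantifying over formula-codes via $\Tr(C)$) and one per non-schematic axiom --- then express that $(M,\Ycal_C)$ satisfies $\GB + \ETR_\Gamma$. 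Hence $(M,\Xcal)$ sees a coded model of $\GB + \ETR_\Gamma$ together with a satisfaction class for it, and the formalized soundness theorem, which $\GB$ proves, yields $(M,\Xcal) \models \Con(\GB + \ETR_\Gamma)$. Since this entire argument is a single parameter-free first-order derivation whose only use of $\Gamma\cdot\omega\le\Delta$ is to pass from $\ETR_\Delta$ to $\ETR_{\Gamma\cdot\omega}$, $\GB$ proves that $\Gamma\cdot\omega\le\Delta$ implies $\GB + \ETR_\Delta \vdash \Con(\GB + \ETR_\Gamma)$; taking $\Delta = \Gamma\cdot\omega$ already gives $\GB + \ETR_{\Gamma\cdot\omega} \vdash \Con(\GB + \ETR_\Gamma)$, and hence $\GB + \ETR \vdash \Con(\GB + \ETR_\Gamma)$ since $\ETR$ proves every instance of $\ETR_{\Gamma\cdot\omega}$.

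The step I expect to demand the most care is the internal verification that the coded structure satisfies the full \emph{schemata} of $\GB + \ETR_\Gamma$, not merely that each standard instance is a theorem of the ambient theory --- this is exactly the delicacy flagged in the treatment of $\omega$-nonstandard models around Krajewski's theorem, and it is precisely here that possessing the genuine satisfaction class $\Tr(C)$, rather than only the coding class $C$, does the real work. The remaining points --- that the construction of Theorem~\ref{thm:sep-etr-gamma} is literally a first-order definition and that its correctness proof can be carried out inside $\GB + \ETR_\Delta$ rather than only in the metatheory --- are routine but worth noting explicitly.
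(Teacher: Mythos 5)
Your proposal is correct and follows essentially the same route as the paper: apply Theorem~\ref{thm:sep-etr-gamma} inside an arbitrary model of $\GB+\ETR_\Delta$ to obtain a coded $(\GB+\ETR_\Gamma)$-realization, then use the $\omega$-length elementary recursion giving a truth predicate for the coded structure to conclude consistency internally. The paper's proof is only three sentences, so your added care about nonstandard instances of the schemata and the reduction of two-sorted satisfaction over $(M,\Ycal_C)$ to first-order satisfaction over $(M;\mathord\in,C)$ is a faithful elaboration of what the paper leaves implicit, not a different argument.
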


\begin{proof}
Let $(M,\Xcal) \models \GB + \ETR_\Delta$. By the previous theorem there is $\Ycal \subseteq \Xcal$ coded in $\Xcal$ so that $(M,\Ycal) \models \GB + \ETR_\Gamma$. But $(M,\Xcal)$ has access to the full second-order truth predicate for coded models, via an elementary recursion of length $\omega$. So $(M,\Xcal) \models \Con(\GB + \ETR_\Gamma)$.
\end{proof}

One can similarly state and prove a separation of $\ETR_{<\Gamma}$ and $\ETR_\Gamma$ by consistency strength. I leave this to the reader.

In theorem \ref{thm:sep-etr-gamma} we produced a model $(M,\Ycal)$ of $\GB + \ETR_\Gamma$ which was not a model of $\GB + \ETR_{\Gamma \cdot \omega}$. But this was due to a deficiency the second-order part. We produced $\Ycal$ by taking a restricted collection of classes from $\Xcal$ so as to not have long enough iterated truth predicates to get a model of $\ETR_{\Gamma \cdot \omega}$. Can we separate fragments of $\ETR$ with the first-order part of a model? That is, is there $M \models \ZFC$ so that $M$ is $(\GB + \ETR_\Gamma)$-realizable but not $(\GB + \ETR_{\Gamma \cdot \omega})$-realizable?

Of course, if we put no restriction on $M$ then this follows immediately from the fact that $\GB + \ETR_{\Gamma \cdot \omega}$ proves the consistency of $\GB + \ETR_\Gamma$. If $M$ is the first-order part of a model of $\GB + \ETR_\Gamma + \neg \Con(\GB + \ETR_\Gamma)$ then $M$ will not be $(\GB + \ETR_{\Gamma \cdot \omega})$-realizable. Such $M$ will necessarily be $\omega$-nonstandard. Can we make the same separation, but with an $M$ which is $\omega$-standard, or even transitive?

The answer is yes, looking below $\ETR_\Ord$.

\begin{theorem} \label{thm:trans-sep-etr-gamma}
Suppose $\gamma$ is a name for an ordinal $\ge \omega^\omega$ given by a first-order formula without parameters. (In particular, $\gamma$ is a set, not a proper class.) If there is a transitive model of $\GB + \ETR_{\gamma \cdot \omega}$ then there is transitive $M$ which is $(\GB + \ETR_\gamma)$-realizable but not $(\GB + \ETR_{\gamma \cdot \omega})$-realizable. 
\end{theorem}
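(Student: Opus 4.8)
The plan is to reduce the theorem to the ordinal inequality $\alpha < \alpha'$, where $\alpha$ is the least height of a transitive model of $\GB + \ETR_\gamma$ and $\alpha'$ is the least height of a transitive model of $\GB + \ETR_{\gamma\cdot\omega}$ (which exists, by hypothesis); note $\alpha \le \alpha'$ trivially since $\GB + \ETR_{\gamma\cdot\omega}$ extends $\GB + \ETR_\gamma$. Since $\gamma$, and hence $\gamma\cdot\omega$, names a set ordinal, and a set ordinal is well-founded in every transitive model, the generalization of theorem~\ref{thm:least-trans-etr-ord} remarked on in the text applies: there is a minimum transitive model of $\GB + \ETR_\gamma$, with first-order part $L_\alpha$, so in particular $L_\alpha$ is $(\GB + \ETR_\gamma)$-realizable. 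On the other hand, if $L_\alpha$ were $(\GB + \ETR_{\gamma\cdot\omega})$-realizable it would be the first-order part of a transitive model of $\GB + \ETR_{\gamma\cdot\omega}$ of height $\alpha < \alpha'$, contradicting the minimality of $\alpha'$. So $M = L_\alpha$ witnesses the theorem, once the inequality is established.

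To prove $\alpha < \alpha'$, fix a transitive $(N,\Ycal) \models \GB + \ETR_{\gamma\cdot\omega}$ with $\Ord^N = \alpha'$; the goal is to locate inside it a transitive set model of $\GB + \ETR_\gamma$ of height strictly below $\alpha'$. By theorem~\ref{thm:sep-etr-gamma} applied with $\Gamma := \gamma$, there is inside $(N,\Ycal)$ a sub-realization coded by a class $C \in \Ycal$ such that the coded structure $(N,(C)_\bullet)$ satisfies $\GB + \ETR_\gamma$. Since $\ETR_\omega$ is available, $(N,\Ycal)$ also contains a class $S$ which is a full satisfaction predicate for $(N,(C)_\bullet)$, produced by the Tarskian recursion of length $\omega$ with parameter $C$. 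Because $N$ is transitive, hence $\omega$-standard, and because $(N,(C)_\bullet) \models \GB + \ETR_\gamma$ holds externally, the first-order statement $\phi(C,S)$ asserting ``$S$ is a satisfaction predicate for $(\cdot,(C)_\bullet)$ and $(\sigma,\emptyset) \in S$ for every $\sigma$ in the (computable, hence definable) set of axioms of $\GB + \ETR_\gamma$'' holds in $(N,\Ycal)$.

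The final step is a standard reflection argument. The classes $C$ and $S$ are strongly amenable, being classes of a model of $\GB$, so $(N,\langle C,S\rangle) \models \ZF(\langle C,S\rangle)$, which proves L\'evy reflection relative to the predicate $\langle C,S\rangle$. Hence there are cofinally many $\delta < \alpha'$ with $(L_\delta, C\cap L_\delta, S\cap L_\delta) \prec_{\Sigma_k} (N,C,S)$, for $k$ large enough that $\phi$ reflects. For such $\delta$, the structure $(L_\delta,(C\cap L_\delta)_\bullet)$ is a genuine set structure---its second-order part is coded by the set $C\cap L_\delta$---and, since the transitive set $L_\delta$ computes satisfaction for set structures correctly, $S\cap L_\delta$ is its genuine satisfaction predicate; moreover $L_\delta$ computes the axiom set of $\GB + \ETR_\gamma$ correctly. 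Reflecting $\phi$ down therefore yields that $(L_\delta,(C\cap L_\delta)_\bullet)$ genuinely models every axiom of $\GB + \ETR_\gamma$, i.e.\ it is a transitive model of $\GB + \ETR_\gamma$ of height $\delta < \alpha'$. Thus $\alpha \le \delta < \alpha'$, which is what was needed.

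I expect the reflection step to be the main obstacle: one must check that the internal satisfaction class $S$ localizes correctly along an elementary submodel, so that $S\cap L_\delta$ really is the satisfaction class of $(L_\delta,(C\cap L_\delta)_\bullet)$, and that reflecting the single formula $\phi$ suffices to recover the schematically many axioms of $\GB + \ETR_\gamma$ in the smaller structure. Both are handled by reflecting sufficiently far up the L\'evy hierarchy together with the absoluteness of satisfaction for set structures between transitive models, but this is where the care lies. The remaining ingredients---the existence of the relevant minimum transitive models, and the coded sub-realization $C$---are quoted directly from theorems~\ref{thm:least-trans-etr-ord} and~\ref{thm:sep-etr-gamma} and the surrounding discussion.
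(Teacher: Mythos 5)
Your proof is correct, but it takes a genuinely different route from the paper's. The paper constructs its witness directly: starting from a transitive $(N,\Ycal) \models \GB + \ETR_{\gamma\cdot\omega}$ with a first-order definable global well-order, it takes the Skolem hull $M$ of $\gamma$ in the structure $(N; \seq{\Tr_{\gamma\cdot n}}_{n\in\omega})$; the restricted iterated truth predicates $(\Tr_{\gamma\cdot n})^M$ give a $(\GB+\ETR_\gamma)$-realization of $M$ by the mechanism of theorem~\ref{thm:sep-etr-gamma}, while no $(\GB+\ETR_{\gamma\cdot\omega})$-realization can exist because $\Tr_{\gamma\cdot\omega+1}^M$ would reveal $M$ to be the Skolem hull of a set, which no model of $\ZFC$ can admit. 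You instead take the witness to be $L_\alpha$ for $\alpha$ the least height of a transitive model of $\GB+\ETR_\gamma$, reducing everything to the strict inequality $\alpha < \alpha'$, which you obtain by reflecting the coded submodel of theorem~\ref{thm:sep-etr-gamma} together with its internal satisfaction class down to a set level. This buys something the paper's proof does not explicitly give --- the least heights of transitive models of $\GB+\ETR_\gamma$ and of $\GB+\ETR_{\gamma\cdot\omega}$ are distinct, an analogue of the Marek--Mostowski $\tau < \beta$ phenomenon --- and it makes the witness canonical; the cost is the extra reflection machinery. Two small repairs are needed. First, L\'evy reflection in $(N; C, S) \models \ZF(\seq{C,S})$ reflects to $V_\delta^N$, not to $L_\delta$; either work with $V_\delta^N$ (a transitive set of height $\delta < \alpha'$ is all you need) or first replace $N$ by $L_{\alpha'}$ using the $L^M$-realizability lemma. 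Second, the reason $S \cap V_\delta^N$ is the true satisfaction relation of $(V_\delta^N, (C\cap V_\delta^N)_\bullet)$ is not absoluteness of satisfaction for set structures --- the reflected structure is not an element of $V_\delta^N$ --- but the uniqueness of total compositional satisfaction classes over an $\omega$-standard stock of formulae: the reflected instance of your $\phi$ says $S\cap V_\delta^N$ obeys the Tarski clauses for that structure, and there is only one such total class. You flagged exactly this step as the point of care, and the fix is routine.
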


\begin{proof}
Take $(N,\Ycal) \models \GB + \ETR_{\gamma \cdot \omega}$ which has a first-order definable global well-order. Consider the iterated truth predicates $\vec T = \{ \Tr_\xi : \xi = \gamma \cdot n \}$ and let $(M; \vec S) = \Sk^{(N, \vec T)}(\gamma)$ be the Skolem hull of $\gamma$ in the structure $(N; \vec T)$. By elementarity, $\vec S = \{ (\Tr_\xi)^M : \xi = \gamma \cdot n \}$. Moreover, $\gamma \cdot n \in N$ for all $n$ because $(N; \vec T)$ satisfies that the length of each truth predicate in $\vec T$ is a set. 

Let $\Xcal = \Def(M; \vec S)$. Then, $(M, \Xcal) \models \ETR_\gamma$; given $A \in \Xcal$ there is $n$ so that $A$ is definable from $\Tr_{\gamma \cdot n}(\bar G)$. But then $\Tr_\gamma(A)$ is definable from $\Tr_{\gamma \cdot (n+1)}$.

However, $M$ is not $(\GB + \ETR_{\gamma \cdot \omega})$-realizable. This is because the $(\gamma \cdot \omega+1)$-iterated truth predicate for $M$ reveals that $M$ is a Skolem hull and hence countable. But no model of $\ZFC$ thinks that it is countable. So no $\GB$-realization for $M$ can contain $\Tr_{\gamma \cdot \omega + 1}^M$, which $\GB + \ETR_{\gamma \cdot \omega}$ proves exists.
\end{proof}

The proof that there is a minimum $\beta$-model of $\GB + \ETR$ came down to two facts: first, that if a model of $\ZFC$ with a definable global well-order is $\beta$-$(\GB + \ETR)$-realizable then it has a minimum $\beta$-$(\GB + \ETR)$-realization; second, that if $M$ is $\beta$-$(\GB + \ETR)$-realizable then so is $L^M$. One strategy to try to prove the existence of a minimum transitive model of $\GB + \ETR$ would be to prove analogues of these two facts. The latter---i.e.\ that if $M$ is $(\GB + \ETR)$-realizable then so is $L^M$---can be proven by an argument similar to the proof of lemma \ref{lem:etr-in-l}.

I do not see how to settle the analogue of the first fact. Working towards a positive answer, the proof of theorem \ref{thm:least-trans-etr} does not generalize to show that transitive $(\GB + \ETR)$-realizable models have minimum $(\GB + \ETR)$-realizations. Working towards a negative answer, the argument in the next section that no countable model of $\ZFC$ has a minimum $\KM$-realization does not generalize down to $\GB + \ETR$. The trouble is that $\GB + \ETR$ is too weak for the same proof to go through. Namely, $\GB + \ETR$ does not prove that $\Hyp(V)$ exists; cf.~ lemma~\ref{lem:hypv-exists}.

\begin{question}
If $M$ is $(\GB + \ETR)$-realizable, must $M$ have a minimum $(\GB + \ETR)$-realization? What if we restrict to the case where $M$ is transitive?
\end{question}

A positive answer to this question would imply a positive answer to the following, the main open question from this article.

\begin{question}
Is there a minimum transitive model of $\GB + \ETR$?
\end{question}

I have a partial answer.

\begin{theorem*}[{\cite[theorem 4.40]{williams-diss}}]
Suppose $M \models \ZF$ is $(\GB + \ETR)$-realizable. Then $M$ has minimal $(\GB + \ETR)$-realizations. Further, if $\Xcal$ is any $(\GB + \ETR)$-realization for $M$ then $\Xcal$ contains precisely one of these minimal realizations. Even further, if $\Xcal$ and $\Ycal$ are amalgamable $(\GB + \ETR)$-realizations, meaning there is a $\GB$-realization $\Zcal \supseteq \Xcal,\Ycal$ for $M$, then $\Xcal$ and $\Ycal$ contain the same minimal realization.\footnote{The version of this theorem in my dissertation is stated for $M$ satisfying $\AC$, but it is straightforward to see that this extra assumption is not used.}
\end{theorem*}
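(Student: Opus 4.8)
The plan is to adapt the closure construction from the proof of theorem \ref{thm:least-etr-rlzn} so that it works inside an arbitrary $(\GB+\ETR)$-realization rather than only inside a $\beta$-model. Fix $M\models\ZF$ that is $(\GB+\ETR)$-realizable and fix a $(\GB+\ETR)$-realization $\Xcal$ for $M$. Set $\Xcal_0=\Def(M)$, and recursively let $\Xcal_{n+1}$ be the collection of all $X\subseteq M$ first-order definable over $M$ from set parameters together with finitely many classes of the form $\Tr_\Gamma(A)$, where $\Gamma,A\in\Xcal_n$, where $(M,\Xcal_n)\models$ ``$\Gamma$ is a well-order'', and where $\Tr_\Gamma(A)$ denotes the iterated truth predicate \emph{as computed inside $\Xcal$}. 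Put $\Xcal^*=\bigcup_n\Xcal_n$. Two facts do all the work. First (\emph{well-foundedness absoluteness}): by the proof of observation \ref{obs:inner-beta-models}, over $\GB$ the assertion ``$R$ is a well-founded class relation'' is equivalent to a statement quantifying only over sets, so any two $\GB$-realizations of $M$ having $R$ as a common class decide well-foundedness of $R$ the same way. Second (\emph{uniqueness}): $\GB$ proves that the $\Gamma$-iterated truth predicate relative to $A$ is unique whenever $\Gamma$ is a well-order in the sense of the ambient model.

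Using these, I would first run through the basic properties: each $\Xcal_n$ is a $\GB$-realization (a finite tuple of classes of the $\GB$-model $(M,\Xcal)$ codes to one strongly amenable class, so each $\Def(M;\cdots)$ involved is a $\GB$-realization by observation \ref{obs:gbc}, and Class Replacement and Elementary Comprehension pass to the union over finite tuples); $\Xcal_n\subseteq\Xcal_{n+1}$ (use a one-point order $\Gamma$, so $A\in\Def(M;\Tr_1(A))$); and $\Xcal_n\subseteq\Xcal$ by induction, since if $(M,\Xcal_n)\models$ ``$\Gamma$ is a well-order'' then so does $(M,\Xcal)$ by well-foundedness absoluteness, whence $\Tr_\Gamma(A)$ genuinely exists in $\Xcal$ (by $\ETR$ in $\Xcal$ and theorem \ref{thm:etr-itr}) and by uniqueness is the class named. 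Therefore $(M,\Xcal^*)\models\GB$, being an increasing union of $\GB$-realizations, and $(M,\Xcal^*)\models\ETR$: by theorem \ref{thm:etr-itr} it suffices that $\Xcal^*$ contain $\Tr_\Gamma(A)$ for every $A\in\Xcal^*$ and every class well-order $\Gamma$ of $(M,\Xcal^*)$, and such $\Gamma,A$ appear at a common stage $\Xcal_n$, where well-foundedness absoluteness gives $(M,\Xcal_n)\models$ ``$\Gamma$ is a well-order'', so $\Tr_\Gamma(A)\in\Xcal_{n+1}\subseteq\Xcal^*$. Thus $\Xcal^*\subseteq\Xcal$ is a $(\GB+\ETR)$-realization. (Inside a $\beta$-model there are no internally well-founded but genuinely ill-founded orders, so $\Xcal^*$ does not depend on the ambient $\beta$-model and one recovers theorem \ref{thm:least-etr-rlzn}.)

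The crux is the amalgamation step: if $\Xcal$ and $\Ycal$ are $(\GB+\ETR)$-realizations for $M$ with a common $\GB$-realization $\Zcal\supseteq\Xcal,\Ycal$, then $\Xcal^*=\Ycal^*$. The construction of the $\Xcal_n$ secretly depends on the ambient model---through the choice of $\Tr_\Gamma(A)$ when $\Gamma$ is internally well-founded but genuinely ill-founded---but this dependence disappears given a common $\GB$-extension. By induction $\Xcal_n=\Ycal_n=:\Dcal_n$ (base case trivial): for a well-order $\Gamma\in\Dcal_n$ and $A\in\Dcal_n$, the class $\Tr_\Gamma(A)$ computed in $\Xcal$ and the one computed in $\Ycal$ are both classes of $(M,\Zcal)$ satisfying the defining recursion, and $(M,\Zcal)\models$ ``$\Gamma$ is a well-order'' by well-foundedness absoluteness, so they coincide by uniqueness; hence $\Xcal_{n+1}$ and $\Ycal_{n+1}$ are generated identically. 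The remaining claims are bookkeeping. If $\Wcal\subseteq\Xcal^*$ is a $(\GB+\ETR)$-realization, then $\Wcal,\Xcal$ are amalgamable (via $\Xcal$), so $\Wcal^*=\Xcal^*$, and $\Wcal^*\subseteq\Wcal\subseteq\Xcal^*$ forces $\Wcal=\Xcal^*$; so $\Xcal^*$ is minimal, giving existence of minimal realizations. If $\Wcal\subseteq\Xcal$ is any minimal realization, then $\Wcal^*=\Wcal$ (minimality applied to $\Wcal^*\subseteq\Wcal$) and $\Wcal^*=\Xcal^*$ (amalgamability via $\Xcal$), so $\Wcal=\Xcal^*$; hence $\Xcal$ contains exactly one minimal realization. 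And for amalgamable $\Xcal,\Ycal$ the common minimal realization is $\Xcal^*=\Ycal^*$.

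The main obstacle I anticipate is the bookkeeping around the two facts above: that $\Tr_\Gamma(A)$ is only unambiguous relative to a fixed ambient model, yet still lands inside $\Xcal$ thanks to $\ETR$ in $\Xcal$ and well-foundedness absoluteness; and that the coincidence of the towers under amalgamation needs only a common $\GB$-realization, not a common $\ETR$-realization, since uniqueness of iterated truth predicates is already a theorem of $\GB$. Checking that each finite-parameter stage $\Xcal_n$ is a genuine $\GB$-realization---in particular that Class Replacement survives the union over finite tuples---is routine but worth spelling out.
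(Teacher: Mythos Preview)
The paper does not include a proof of this theorem; it is stated without argument and attributed to the author's dissertation. Your proof is correct and is the natural adaptation of the paper's proof of theorem~\ref{thm:least-etr-rlzn} to the non-$\beta$-model setting. The two observations you isolate---that well-foundedness of a class relation is, over $\GB$, equivalent to a $\Pi^0_1$ assertion about sets (hence absolute between $\GB$-realizations sharing the same first-order part) and that uniqueness of $\Tr_\Gamma(A)$ is already a $\GB$-theorem---are exactly what is needed to make the tower construction canonical up to amalgamation. The amalgamation step is the crux, and your handling of it is right: both $\Tr_\Gamma(A)$ as computed in $\Xcal$ and as computed in $\Ycal$ lie in the common $\GB$-realization $\Zcal$, each satisfies the (first-order) defining recursion there, $(M,\Zcal)$ agrees that $\Gamma$ is well-founded by the absoluteness fact, and so uniqueness inside $\Zcal$ forces them to coincide. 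The subsequent deductions---that $\Xcal^*$ is minimal and is the unique minimal realization contained in $\Xcal$---follow cleanly from amalgamation applied with $\Zcal = \Xcal$, exactly as you indicate.
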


\section{\texorpdfstring{$\KM$ and other strong theories}{KM and other strong theories}} \label{sec:km}

Before presenting the results about models of $\KM$, let me review for the benefit of the reader some facts from the study of admissible sets. Kripke--Platek set theory $\KP$ is the (first-order) set theory consisting of the axioms of Extensionality, Pairing, and Union, along with the $\Sigma_0$-Separation, $\Sigma_0$-Collection, and $\Pi_1$-Foundation\footnote{Recall that $\Gamma$-Foundation asserts that every nonempty class definable from a formula in $\Gamma$ has an $\in$-minimal element.} 
schemata. It will be convenient in this context to go beyond pure sets and consider \emph{urelements}, objects which are not sets but can be members of sets. $\KPU$ is the variation of $\KP$ formulated in a context allowing urelements; see \cite[Chapter I.2]{barwise1975} for a precise axiomatization. Transitive models of $\KPU$ are known as \emph{admissible sets}. I will write $\Afrak = (A,U;R_0, \ldots)$ for the admissible set $A$ with $U$ as its set of urelements, $\in$ for its membership relation, and additional relations, constants, or functions $R_0, \ldots$ Given a structure $(U;R_0, \ldots)$, where we may assume $U$ consists of urelements, there is a smallest admissible set $\Afrak = (A,U;R_0,\ldots)$ with $U \in A$. This smallest admissible set containing $U$ is denoted $\Hyp(U)$. If $U$ is a well-founded model of set theory without urelements then $U$ will be isomorphic to a pure transitive set in $\Hyp(U)$, where a set is pure if it has no urelements in its transitive closure. If we call this transitive set $\bar U$, then the pure sets of $\Hyp(U)$ are precisely $L_\alpha(\bar U)$, where $\alpha$ is the least ordinal so that $L_\alpha(\bar U) \models \KP$. And from $L_\alpha(\bar U)$ we can recover $U$ thought of as a set of urelements and all of $\Hyp(U)$. When working with $U$ a transitive model of set theory I will tacitly identify $\Hyp(U)$, the model with urelements, with $L_\alpha(\bar U)$, the model with only pure sets.

There is a tight connection between admissible sets and infinitary logics. Countable admissible sets capture fragments of $\Lcal_{\omega_1,\omega}$ which admit a form of compactness. Let $\Afrak$ be a countable admissible set. Then $\Lcal_\Afrak$ is the infinitary fragment of $\Lcal_{\infty,\omega}$ associated with $\Afrak$. Namely, formulae in $\Lcal_\Afrak$ may only have finite quantifier blocks but allow disjunctions and conjunctions over $\Phi$ a set of formulae so that $\Phi \in \Afrak$. Barwise proved that these admissible fragments of $\Lcal_{\omega_1,\omega}$ satisfy a form of compactness: if an $\Lcal_\Afrak$ theory $T \subseteq A$ is $\Sigma_1$ definable over $\Afrak$ then $T$ has a model if and only if every $T_0 \subseteq T$ in $\Afrak$ has a model. 

I will make use of the following result about admissible sets, due also to Barwise. First, let me fix some notation. Consider the structure $\Afrak = (A,U;E,\ldots)$ where $U$ consists of urelements, $A$ consists of sets, and $E$ is a binary relation. For $a \in A$ set $a_E = \{ x \in \Afrak : x \mathbin E a \}$. Say that $S \subseteq U$ is \emph{internal} for $\Afrak$ if there is $a \in A$ so that $S = a_E$. 

\begin{theorem}[Barwise {\cite[Theorem IV.1.1]{barwise1975}}] \label{thm:barwise-hard-core}
Let $(U;R_0,\ldots)$ be a countable structure and let $\Afrak = \Hyp(U)$. Suppose $T$ is an $\Lcal_\Afrak$ theory which is $\Sigma_1$-definable over $\Afrak$ and has a model of the form $\Bfrak = (B,U;E,R_0,\ldots)$. Suppose $S \subseteq U$ is internal for every such $\Bfrak$. Then $S \in \Hyp(U)$. 
\end{theorem}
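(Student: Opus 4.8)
The plan is to try to build a model of $T$ of the required form $(B,U;E,R_0,\ldots)$ in which $S$ is \emph{not} internal. Such a model cannot exist, by hypothesis, so the attempted construction must break down at some point; and I would arrange matters so that a breakdown directly exhibits $S$ as a $\Delta_1$-definable subset of $\Afrak := \Hyp(U)$. Since $U \in \Afrak$ and admissible sets enjoy $\Delta_1$-separation, this gives $S \in \Hyp(U)$, as wanted.

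I would work in the admissible fragment $\Lcal_\Afrak$ --- whose language has a constant $\bar a$ for each $a \in \Afrak$, in particular for each urelement --- and run the Henkin-style construction underlying the Barwise compactness theorem quoted above. Starting from $T$ together with the atomic diagram of $(U;R_0,\ldots)$ and the $\Lcal_\Afrak$-sentence $\forall x(\text{``$x$ is an urelement''} \to \bigvee_{v \in U} x = \bar v)$ (a legitimate $\Lcal_\Afrak$-formula because $U \in \Afrak$), one builds an increasing, $\Afrak$-recursively presented (possibly transfinite) chain of consistent theories $\langle T_s \rangle$, each of the form $T_s = T \cup F_s$ with $F_s \in \Afrak$, whose union is a complete Henkin theory; its term model is then a model of $T$ whose urelement part is precisely $(U;R_0,\ldots)$. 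The only new feature is a list of \emph{diagonal requirements}, one per closed term $t$ of the $B$-sort: the requirement for $t$ asks that, if $t$ denotes a set all of whose members are urelements, then $t_E \neq S$. Since every element of the $B$-sort of the term model is denoted by a closed term, meeting all of these forces $S$ not to be internal in that model.

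To meet the requirement for $t$ at its stage, with $T_s = T \cup F_s$ built so far: if $T_s$ does not prove that $t$ denotes such a set, adjoin the negation and the requirement is vacuous. Otherwise, try to adjoin --- consistently over $T_s$ --- either $\bar u \mathbin E t$ for some $u \in U \setminus S$ or $\neg(\bar u \mathbin E t)$ for some $u \in S$; either choice makes $t_E = S$ impossible. The only obstruction to doing so is that $T_s$ already proves $\bar u \mathbin E t$ for every $u \in S$ and $\neg(\bar u \mathbin E t)$ for every $u \in U \setminus S$; and then, since $T_s$ is consistent, one checks that $S = \{\,u \in U : T \vdash \bigwedge F_s \to \bar u \mathbin E t\,\}$ and $U \setminus S = \{\,u \in U : T \vdash \bigwedge F_s \to \neg(\bar u \mathbin E t)\,\}$. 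Because provability from the $\Sigma_1$ theory $T$ is $\Sigma_1$ over $\Afrak$, and $\bigwedge F_s$, $t$, and the constants $\bar u$ all lie in $\Afrak$, both of these sets are $\Sigma_1$ over $\Afrak$; as they partition $U \in \Afrak$, the set $S$ is $\Delta_1$ over $\Afrak$, hence $S \in \Hyp(U)$.

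So the construction either runs to completion --- yielding a model of $T$ in which $S$ is not internal, contradicting the hypothesis --- or stalls at some requirement and thereby puts $S$ in $\Hyp(U)$; in either case $S \in \Hyp(U)$. The part I expect to demand the most care is the bookkeeping that keeps the whole process internal to $\Hyp(U)$: one must verify that the chain $\langle T_s \rangle$, the enumeration of closed $B$-terms, the selection of witnesses $u$, and the consistency checks can all be carried out by $\Sigma_1$-recursion and $\Sigma_1$-collection inside $\Hyp(U)$ (this is where countability of $U$ enters), since it is exactly this that guarantees each $F_s \in \Hyp(U)$ and hence that the obstruction case deposits $S$ in $\Hyp(U)$ rather than in some larger admissible set. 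All of this is essentially the admissible-fragment omitting types theorem applied to the type ``$x$ names $S$''; one could alternatively invoke that theorem directly.
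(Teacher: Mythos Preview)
The paper does not give its own proof of this theorem; it is quoted from Barwise's book and used as a black box in the proof of the Key Lemma~\ref{lem:key}. So there is no in-paper argument to compare against.

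That said, your sketch is essentially Barwise's original argument and is correct in outline: it is the omitting-types theorem for admissible fragments, applied to the type $\{\,\bar u \mathbin E x : u \in S\,\} \cup \{\,\neg(\bar u \mathbin E x) : u \in U \setminus S\,\}$. Two points deserve tightening. First, the Henkin construction is carried out \emph{externally} in $\omega$ stages, using the countability of $\Afrak$; what you need is not that the sequence $\langle F_s \rangle$ lives in $\Afrak$, but only that each individual $F_s \in \Afrak$ --- so your remark about ``$\Sigma_1$-recursion inside $\Hyp(U)$'' is misleading and unnecessary. Second, at the obstruction stage you need that $\Lcal_\Afrak$-provability from the $\Sigma_1$ theory $T$ is itself $\Sigma_1$ over $\Afrak$; this is the Barwise completeness theorem and should be cited, since it is not immediate from the usual finitary completeness theorem. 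With those two adjustments the argument goes through, and your closing observation that one could alternatively invoke the admissible omitting-types theorem directly is exactly right.
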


This result is an infinitary version of a well-known theorem due to Gandy, Kreisel, and Tait, with $\Hyp(U)$ being the higher-order analogue to the collection of hyperarithmetic sets.

\begin{theorem*}[Gandy--Kreisel--Tait \cite{GTK1960}]
Let $T$ be a $\Pi^1_1$-definable set of axioms for second-order arithmetic, where $T$ extends $\Sigma^1_1\text{-}\axiom{AC}_0$. If $S \subseteq \omega$ is internal to every model of $T$, then $S$ is hyperarithmetic.
\end{theorem*}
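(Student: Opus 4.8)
The plan is to derive the statement from its infinitary counterpart, Theorem~\ref{thm:barwise-hard-core}, by specializing that result to first-order arithmetic. Take $U = \Nbb = (\omega; +, \cdot, 0, 1, <)$, the standard model of arithmetic, regarded as a structure whose elements serve as urelements, and set $\Afrak = \Hyp(U)$; in this case $\Afrak$ is the least admissible set, whose collection of pure sets is $L_{\omegaoneck}$, and for $S \subseteq \omega$ one has $S \in \Hyp(U)$ if and only if $S$ is hyperarithmetic. Observe next that an $\omega$-model of second-order arithmetic is precisely a structure of the form $\Bfrak = (B, U; E, +, \cdot, 0, 1, <)$, with $B$ the collection of sets and $E \subseteq U \times B$ the membership relation between numbers and sets, and that $S \subseteq \omega$ is internal for such a $\Bfrak$ exactly when $S$ belongs to $B$. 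So it suffices to produce an $\Lcal_\Afrak$ theory $T^\ast$ which is $\Sigma_1$-definable over $\Afrak$ and whose models of this form are exactly the $\omega$-models of $T$: for then, if $S$ is internal to every $\omega$-model of $T$, it is internal to every model of $T^\ast$, and Theorem~\ref{thm:barwise-hard-core} gives $S \in \Hyp(U)$, i.e.\ $S$ is hyperarithmetic.

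For $T^\ast$ I would take the single finitary axiom of extensionality for elements of $B$, together with, for each axiom $\phi \in T$, the translation $\phi^\ast$ obtained by letting the number quantifiers of $\phi$ range over $U$ and the set quantifiers over $B$. The axioms of second-order arithmetic are finitary, so no infinitary connectives are needed, and a structure of the prescribed shape satisfies $T^\ast$ if and only if it is an $\omega$-model of $T$. The one point requiring care is that $T^\ast$ is $\Sigma_1$ over $\Afrak$, and this is exactly where the hypothesis that $T$ is $\Pi^1_1$-definable is used: a subset of $\omega$ is $\Pi^1_1$ precisely when it is $\Sigma_1$-definable over $\Hyp(\Nbb) = L_{\omegaoneck}$ (a form of the Spector--Gandy theorem; see \cite{barwise1975}), so the set of codes of axioms of $T$ is $\Sigma_1$ over $\Afrak$, and since the map from a code to the sentence $\phi^\ast$ is $\Afrak$-recursive, so is $T^\ast$. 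One also needs $T^\ast$ to have a model of the required form, i.e.\ $T$ to have an $\omega$-model --- this is the standing assumption hidden in the phrase ``internal to every model of $T$'', without which the hypothesis is vacuous; and the hypothesis $T \supseteq \Sigma^1_1\text{-}\axiom{AC}_0$ is what, in addition, forces every $\omega$-model of $T$ to contain all hyperarithmetic sets (since $\Hyp(\Nbb)$ is already the minimum $\omega$-model of $\Delta^1_1$-Comprehension), so that the ``hard core'' of $T$ equals, rather than merely contains, $\Hyp(\Nbb)$.

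Thus the essential content is the bookkeeping of the previous paragraph: checking that the translation $T \mapsto T^\ast$ does not push complexity past $\Sigma_1$ over $L_{\omegaoneck}$, and that the models of $T^\ast$ of the prescribed form are genuinely the $\omega$-models of $T$; everything else is a one-line appeal to Theorem~\ref{thm:barwise-hard-core}, and I expect that bookkeeping to be the only real obstacle. If one instead wanted a self-contained proof not routed through Theorem~\ref{thm:barwise-hard-core} --- closer to the original argument of Gandy, Kreisel, and Tait --- one would argue directly with the Barwise compactness and completeness theorems for the admissible fragment $\Lcal_{L_{\omegaoneck}}$, along the lines of Barwise's proof of the hard-core theorem itself, in which case reproducing that proof becomes the main obstacle.
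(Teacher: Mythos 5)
The paper does not actually prove this statement: it is quoted as a classical result from \cite{GTK1960} immediately after theorem \ref{thm:barwise-hard-core}, which the surrounding text describes as its ``infinitary version,'' so there is no in-paper proof to compare against. Your derivation from theorem \ref{thm:barwise-hard-core} --- taking $U = (\omega; +, \cdot, 0, 1, <)$ so that $\Hyp(U) \cap \powerset(\omega)$ is exactly the hyperarithmetic sets and the relevant structures $\Bfrak$ are exactly the $\omega$-models of $T$ --- is correct and is precisely the specialization that remark gestures at; in particular you correctly identify that the $\Pi^1_1$-definability of $T$ is what keeps the translated theory $\Sigma_1$ over $L_{\omegaoneck}$, that the existence of an $\omega$-model of $T$ must be presupposed for the statement to be non-vacuous, and that the hypothesis $T \supseteq \Sigma^1_1\text{-}\axiom{AC}_0$ is only needed for the converse inclusion that makes $\Hyp(U)\cap\powerset(\omega)$ the exact hard core rather than merely an upper bound.
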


The admissible preliminaries out of the way, let us begin with Marek and Mostowski's theorem that there is a smallest $\beta$-model of $\KM$. I include a proof for the sake of completeness of presentation.

\begin{theorem}[Marek--Mostowski \cite{marek-mostowski1975}] \label{thm:least-beta-km}
There is a minimum $\beta$-model of $\KM$, if there is any $\beta$-model of $\KM$. Moreover, this minimum model also satisfies Global Choice and Class Choice. 
\end{theorem}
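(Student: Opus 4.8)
The plan is to reduce to the constructible case and then determine the class part of the minimum model by means of Barwise's hard‑core theorem (theorem~\ref{thm:barwise-hard-core}) together with the unrolling/cutting‑off bi‑interpretation between $\KMC + {}$Class Collection and $\ZFCmi$. This is the set‑theoretic counterpart of the Gandy--Kreisel--Tait theorem and of Gandy's theorem that $\Zsf_2$ has a minimum $\beta$-model. First I would reduce to $L$: if $(M,\Xcal)\models\KM$ is a $\beta$-model then $M$ is transitive, $\Def(M)\subseteq\Xcal$ supplies a global well‑order of $L^M$, and by theorem~\ref{thm:kmp-inside-km} together with observation~\ref{obs:inner-beta-models} some $\Ycal\subseteq\Xcal$ has $(L^M,\Ycal)\models\KMC + {}$Class Collection a $\beta$-model. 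Hence the least ordinal height of a $\beta$-model of $\KM$ equals the least $\alpha$ such that $L_\alpha$ is $\beta$-$(\KMC + {}$Class Collection$)$-realizable; fix this $\alpha$, and note that $L_\alpha$ is countable (take a countable elementary submodel of any $\beta$-model of $\KM$ and collapse) and carries the parametrically definable global well‑order given by the constructibility order.

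Next I would establish the minimality of the candidate. Set $\Xcal := \powerset(L_\alpha)\cap\Hyp(L_\alpha) \supseteq \Def(L_\alpha)$. For any $\beta$-model $(M,\Ycal)\models\KM$ one has $\Ord^M\ge\alpha$, so $L_\alpha\subseteq L^M\subseteq M$; since $M$ is a transitive model of $\ZF$, hence of $\KP$, containing $L_\alpha$, it is an admissible set containing $L_\alpha$, whence $\Hyp(L_\alpha)\subseteq M$, and every subset of $L_\alpha$ belonging to $M$ belongs to $\Ycal$ by Elementary Comprehension. Therefore $\Xcal\subseteq\Ycal$. It thus suffices to exhibit a $\beta$-$\KM$-realization of $L_\alpha$ contained in $\Xcal$: it must then equal $\Xcal$ and be the minimum $\beta$-model of $\KM$, which satisfies Global Choice because the constructibility order lies in $\Xcal$, and Class Choice because it satisfies Class Collection, by the proposition identifying Class Choice with Class Collection plus Global Choice.

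The remaining point is that $(L_\alpha,\Xcal)$ is itself a model, and this is where Barwise's machinery is essential. I would work in the countable admissible set $\Afrak=\Hyp(L_\alpha)$ and consider the $\Lcal_\Afrak$-theory $T$ whose axioms are the elementary diagram of $L_\alpha$, the finitary axioms of $\KMC + {}$Class Collection for a two‑sorted structure with first‑order part $L_\alpha$, and the assertions ``$S$ is internal'' for each $S\in\Xcal$. Then $T$ is $\Sigma_1$-definable over $\Afrak$, and every $\Afrak$-sized subtheory of $T$ has a model, namely the code of a fixed $\beta$-$(\KMC + {}$Class Collection$)$-realization $\Ycal_0$ of $L_\alpha$, which contains $\Xcal$ by the previous paragraph; so $T$ has a model by Barwise compactness. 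By theorem~\ref{thm:barwise-hard-core} the subsets of $L_\alpha$ internal to every model of $T$ lie in $\Hyp(L_\alpha)$, so, together with the internality axioms, the hard core of $T$ is exactly $\Xcal$; one then wants a single model $\Bfrak\models T$ whose internal subsets of $L_\alpha$ are precisely $\Xcal$, from which decoding $\Bfrak$ (equivalently, cutting off the well‑founded model of $\ZFCmi$ it determines --- well‑founded since $\Xcal\subseteq\Ycal_0$ and $(L_\alpha,\Ycal_0)$ is a $\beta$-model) yields $(L_\alpha,\Xcal)\models\KMC + {}$Class Collection, a $\beta$-model by observation~\ref{obs:inner-beta-models}.

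I expect the main obstacle to be precisely that last step: producing a single model of $T$ that realizes exactly $\Xcal$, equivalently verifying that the hard core $\Xcal$ is closed under $\KM$-Comprehension. Since $\Xcal$ is a proper class from the standpoint of $\Hyp(L_\alpha)$, one cannot define second‑order truth over $(L_\alpha,\Xcal)$ inside $\Hyp(L_\alpha)$, and the bare Kripke--Platek axioms of $\Hyp(L_\alpha)$ do not obviously suffice; bridging this gap is exactly what the compactness theorem for the admissible fragment $\Lcal_{\Hyp(L_\alpha)}$ is for, and this is the genuine content of the Marek--Mostowski argument. By contrast, the $\beta$-property costs nothing once $\KM$ is in hand: it follows immediately from observation~\ref{obs:inner-beta-models}, since $\Xcal$ sits inside a $\beta$-model of $\KM$ with the same first‑order part.
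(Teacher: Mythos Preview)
Your approach has a genuine gap at exactly the point you flag as the obstacle, and in fact that gap cannot be filled: the candidate $\Xcal = \powerset(L_\alpha) \cap \Hyp(L_\alpha)$ is \emph{not} a $\KM$-realization of $L_\alpha$. This is precisely the content of lemma~\ref{lem:hypv-exists} and the argument for theorem~\ref{thm:main} later in the paper. Since $\GB + \PCA$ proves that $\Hyp(V)$ exists, if $(L_\alpha,\Xcal)$ were a model of $\KM$ then its unrolling---which is $\Hyp(L_\alpha)$ itself---would contain as an element an admissible set having $L_\alpha$ as a member, contradicting the minimality of $\Hyp(L_\alpha)$. Barwise's hard-core theorem correctly identifies $\Hyp(L_\alpha) \cap \powerset(L_\alpha)$ as the intersection of all $\KM$-realizations of $L_\alpha$ (this is lemma~\ref{lem:key}), but that intersection is strictly too small to satisfy $\KM$; indeed, this is exactly why there is no minimum \emph{transitive} model of $\KM$. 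Your minimality argument also breaks in the equal-height case $\Ord^M = \alpha$: then $L_\alpha \notin M$, so $M$ is not an admissible set with $L_\alpha$ as an element, and $\Hyp(L_\alpha) \not\subseteq M$ since $\Hyp(L_\alpha)$ has ordinal height strictly above $\alpha$.

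The paper's proof takes a much more direct route that avoids Barwise compactness entirely. Via the bi-interpretation, $\beta$-models of $\KMC + {}$Class Collection correspond to \emph{well-founded} models of $\ZFCmi$, so one simply takes the Shepherdson--Cohen-style minimum transitive model $L_\alpha \models \ZFCmi$, lets $\kappa < \alpha$ be its largest cardinal, and sets $(M,\Xcal) = (L_\kappa, \powerset(L_\kappa) \cap L_\alpha)$. Minimality is then checked by unrolling an arbitrary $\beta$-model into a transitive $W \models \ZFCmi$, which must contain $L_\alpha$, and splitting into the cases where the largest cardinal of $W$ equals or exceeds $\kappa$. The key point is that the classes come from $L_\alpha \models \ZFCmi$, not from $\Hyp(L_\kappa) \models \KP$; the former has strictly greater ordinal height, and that extra height is exactly what full Comprehension requires.
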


\begin{proof}
Suppose there is a $\beta$-model of $\KM$. By theorem \ref{thm:kmp-inside-km} there is a $\beta$-model of $\KMC$ $+$ Class Collection. Its unrolling is a well-founded model of $\ZFCmi$.\footnote{Recall definition~\ref{def:zfmi}.}
So there is a transitive model of $\ZFCmi$. 
Let $L_\alpha$ be the minimum transitive model of $\ZFCmi$.\footnote{To see such exists: Take $M \models \ZFCmi$ transitive with $\kappa \in M$ the largest cardinal. Then $\kappa$ is inaccessible in $L^M$ so $L_{(\kappa^+)^{L^M}} \models \ZFCmi$. So the minimum transitive model of $\ZFCmi$ is $L_\alpha$ for $\alpha$ least such that there is a transitive model of $\ZFCmi$ of height $\alpha$.}
Let $\kappa < \alpha$ be the largest cardinal in $L_\alpha$. Set $(M,\Xcal) = (L_\kappa, \powerset(L_\kappa) \cap L_\alpha)$. I claim that $(M,\Xcal)$ is the minimum $\beta$-model of $\KM$. First, observe that it is itself a $\beta$-model since $L_\alpha$ is correct about well-foundedness. 

Take $(N,\bar \Ycal)$ a $\beta$-model of $\KM$. By theorem \ref{thm:kmp-inside-km} there is $\Ycal \subseteq \Ycal$ so that $(L^N,\Ycal) \models \KMC$ $+$ Class Collection. Now let $W \models \ZFCmi$ be the unrolling of $(N,\Ycal)$. Because $(N,\Ycal)$ is a $\beta$-model, $W$ must be well-founded. Identify $W$ with its transitive collapse. Then, $L_\alpha \subseteq W$, because $L_\alpha$ is the minimum transitive model of $\ZFCmi$. 

Let $\lambda$ be the largest cardinal in $W$. There are two cases. The first is if $\lambda > \kappa$. Then, because $W$ thinks $\lambda$ is inaccessible, $W$ thinks $\powerset(L_\kappa)$ exists. Therefore, $\Xcal = \powerset(L_\kappa) \cap L_\alpha \in {V_\lambda}^W = N$. 
The second case is if $\lambda = \kappa$. Then, $\Ord^W \ge \alpha$ by the minimality of $L_\alpha$. Thus, $\Xcal = \powerset(L_\kappa) \cap L_\alpha \subseteq \powerset(L_\kappa) \cap W$ so $\Xcal \subseteq \Ycal$. In either case, we then conclude that $M \subseteq N$ and $\Xcal \subseteq \bar \Ycal$.
\end{proof}

Essentially the same argument gives the existence of minimum $\beta$-$\KM$-realizations.

\begin{theorem}[Marek and Mostowski \cite{marek-mostowski1975}] \label{thm:mm-min-beta-km-rlzn}
Suppose $M \models \ZF$ is $\beta$-$\KM$-realizable. Then $M$ has a minimum $\beta$-$\KM$-realization. In particular, in this case $(M,\Xcal_{\beta,\KM}) \models \KM$.\footnote{Recall definition \ref{def:xcalt}.} \qed
\end{theorem}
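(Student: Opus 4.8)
The plan is to run the proof of theorem~\ref{thm:least-beta-km} relative to the fixed first-order part $M$. Assume $M \models \ZF$ is $\beta$-$\KM$-realizable; it suffices to show $(M, \Xcal_{\beta,\KM}) \models \KM$, since then $\Xcal_{\beta,\KM}$ is automatically the smallest $\beta$-$\KM$-realization for $M$. First I would reduce to Class Collection: given any $\beta$-$\KM$-realization $\Ycal$ for $M$, theorem~\ref{thm:kmp-inside-km} applied to $M$ itself (as the trivial inner model $\{x : x = x\}$ of $M$) produces $\Ycal_0 \subseteq \Ycal$ with $(M,\Ycal_0) \models \KM +$ Class Collection, which is again a $\beta$-model by observation~\ref{obs:inner-beta-models}(1). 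Hence the intersection defining $\Xcal_{\beta,\KM}$ is unchanged if we restrict attention to $\beta$-models of $\KM +$ Class Collection; moreover, when $M$ carries a definable global well-order the $\Ycal_0$ may be taken to satisfy Global Choice, and this will pass down to $\Xcal_{\beta,\KM}$ at the end.

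Next I would pass through the unrolling construction. Being a $\beta$-model, each such $(M,\Ycal_0)$ unrolls to a transitive model $W_{\Ycal_0} \models \ZFmi$ in which $M$ reappears as $V_\kappa$, where $\kappa = \Ord^M$ is the largest cardinal of $W_{\Ycal_0}$ and is deemed inaccessible there; cutting off recovers $(M,\Ycal_0)$, so $\Ycal_0 = \powerset(M) \cap W_{\Ycal_0}$. Thus $\Xcal_{\beta,\KM} = \powerset(M) \cap \bigcap_{\Ycal_0} W_{\Ycal_0}$, the intersection ranging over all transitive models of $\ZFmi$ that have $M$ as their $V_\kappa$. The crux is to produce a single such model $W_M$ contained in every $W_{\Ycal_0}$: then $\Xcal := \powerset(M) \cap W_M$ lies inside every $\Ycal_0$, hence inside $\Xcal_{\beta,\KM}$, and since $(M,\Xcal)$ is recovered by cutting off $W_M$ it is a $\beta$-model of $\KM +$ Class Collection, so $\Xcal = \Xcal_{\beta,\KM}$ and we are done. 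I would take $W_M = L_{\lambda_M}(M)$, the least level of the constructible-over-$M$ hierarchy that models $\ZFmi$ with $V_\kappa = M$ --- the analogue, with $M$ in place of the constructible $L_\kappa$, of the minimum transitive model of $\ZFmi$ used in theorem~\ref{thm:least-beta-km}. The reason this works over a non-constructible $M$ is that inside any $W_{\Ycal_0}$ the constructible-over-$M$ hierarchy creates no new sets of rank $< \kappa$: a would-be new subset of some $a \in M$ is carved out by Separation in $W_{\Ycal_0}$ and has rank $< \kappa$, so already belongs to $V_\kappa^{W_{\Ycal_0}} = M$. Hence the constructible-over-$M$ part of $W_{\Ycal_0}$ again has $V_\kappa = M$, and, exactly as in theorem~\ref{thm:least-beta-km}, $L_{\lambda_M}(M)$ sits inside $W_{\Ycal_0}$.

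The step I expect to be the main obstacle is, as in the proof of theorem~\ref{thm:least-beta-km}, verifying $\lambda_M \le \Ord^{W_{\Ycal_0}}$ for every $\Ycal_0$ --- equivalently, that the constructible-over-$M$ part of $W_{\Ycal_0}$ does reach a model of $\ZFmi$ with $V_\kappa = M$. This needs the two-case analysis of theorem~\ref{thm:least-beta-km}: either that constructible part has a cardinal strictly above $\kappa$, in which case the successor-cardinal level $L_{\kappa^+}(M)$ computed there already models $\ZFmi$ with $V_\kappa = M$ and $\kappa$ largest, or it has no such cardinal, in which case the whole constructible part works; either way one obtains a witnessing level of height $\le \Ord^{W_{\Ycal_0}}$. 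Everything else is routine: the reduction to Class Collection; the absoluteness of the definition of $L_{\lambda_M}(M)$; and that cutting off a transitive model of $\ZFmi$ yields a $\beta$-model of $\KM +$ Class Collection (with Global Choice whenever $M$ has a definable global well-order) --- the last via the cutting-off construction and amenability bookkeeping of the sort used in the proof of lemma~\ref{lem:etr-in-l}.
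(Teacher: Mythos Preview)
The proposal is correct and takes essentially the same approach as the paper. The paper gives no explicit proof here, merely remarking that ``essentially the same argument'' as for theorem~\ref{thm:least-beta-km} applies; your relativization---replacing the minimum transitive model $L_\alpha$ of $\ZFCmi$ by the least level $L_{\lambda_M}(M)$ of the constructible-over-$M$ hierarchy that models $\ZFmi$ with $V_\kappa = M$---is exactly the intended adaptation, and your reduction to Class Collection via theorem~\ref{thm:kmp-inside-km} (with $N = M$) together with the two-case height comparison faithfully mirror the structure of the original argument.
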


Ratajczyk improved their methods to apply to weaker theories.

\begin{theorem}[Ratajczyk \cite{ratajczyk1979}]
Let $k \ge 1$. There is a minimum $\beta$-model of $\GB + \PnCA k$, if there is any $\beta$-model of $\GB + \PnCA k$. Moreover, this minimum model also satisfies Global Choice and Class Choice. And if $M \models \ZF$ is $\beta$-$(\GB + \PnCA k)$-realizable then $M$ has a minimum $\beta$-$(\GB + \PnCA k)$-realization.
\end{theorem}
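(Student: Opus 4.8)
The plan is to run the unrolling/cutting-off argument used to prove theorem~\ref{thm:least-beta-km} and theorem~\ref{thm:mm-min-beta-km-rlzn}, with $\KM$ everywhere replaced by $\GB + \PnCA k$. Three ingredients are needed. First, a first-order theory $T_k$ which is a level-$k$ restriction of $\ZFCmi$ --- that is, $\ZFCm$ with Separation and Collection cut down to the complexity level matching $\PnCA k$, together with ``there is a largest cardinal and it is inaccessible'' --- such that $\GBC + \PnCA k$ plus the matching restricted form of Class Collection is bi-interpretable with $T_k$ via the unrolling and cutting-off constructions. Second, Ratajczyk's analogue of theorem~\ref{thm:kmp-inside-km}: if $(N,\bar\Ycal) \models \GB + \PnCA k$ and $N' \in \bar\Ycal$ is an inner model of $N$, then some $\Ycal' \subseteq \bar\Ycal$ has $(N',\Ycal') \models \GBC + \PnCA k$ plus restricted Class Collection --- applied with $N' = L^N$, which carries a definable global well-order, so as to secure Global Choice. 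Third, that $T_k$ has a minimum transitive model, necessarily of the form $L_\alpha$.

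Granting these, the argument proceeds as for theorem~\ref{thm:least-beta-km}. Given any $\beta$-model of $\GB + \PnCA k$, applying the second ingredient to its constructible part and then unrolling yields a model of $T_k$ that is well-founded, because a $\beta$-model is correct about well-foundedness; collapsing, $T_k$ has a transitive model, hence by the third ingredient a minimum one $L_\alpha$, with largest cardinal $\kappa < \alpha$. The cutting-off $(L_\kappa,\powerset(L_\kappa)\cap L_\alpha)$ is then a model of $\GBC + \PnCA k$ plus restricted Class Collection; it is a $\beta$-model since $L_\alpha$ is correct about well-foundedness; it satisfies Global Choice because the $L$-order is definable; and hence it satisfies the Class Choice appropriate to $\PnCA k$, by the proposition equating restricted Class Choice with restricted Class Collection plus Global Choice. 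For minimality, given any other $\beta$-model $(N,\bar\Ycal) \models \GB + \PnCA k$, apply the second ingredient to $L^N$, unroll, and collapse to get a well-founded $W \models T_k$; minimality of $L_\alpha$ gives $L_\alpha \subseteq W$, and comparing the largest cardinal of $W$ with $\kappa$ exactly as in theorem~\ref{thm:least-beta-km} shows $L_\kappa \subseteq N$ and $\powerset(L_\kappa)\cap L_\alpha \subseteq \bar\Ycal$. For the realization statement one runs the identical argument with the first-order part held fixed: the minimum $\beta$-$(\GB + \PnCA k)$-realization for $M$ is the cutting-off of the minimum-height well-founded model of the parameter-$M$ version of $T_k$ (built over $L(M)$, or over $M$ itself in the presence of a suitable global well-order) that end-extends $M$ with $M$ as its largest-cardinal level, the existence of such a minimum-height model coming from a relativized Shepherdson--Cohen argument.

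The main obstacle is the first ingredient: identifying the exact complexity level at which Separation and Collection must be restricted so that unrolling a model of $\GBC + \PnCA k$ plus restricted Class Collection produces a model of $T_k$ and conversely --- this is precisely the work Ratajczyk does, refining the Marek--Mostowski bi-interpretation. Entangled with it is the third ingredient: since $T_k$ is Powerset-free with only restricted Separation, it is not automatic that $T_k$ can build the $L$-hierarchy and is absolute to $L$, so one must redo the condensation argument from the footnote to theorem~\ref{thm:least-beta-km} in this weaker setting, checking that for transitive $W \models T_k$ with largest cardinal $\kappa$, either $L^W$ or $L_{(\kappa^+)^L}$ witnesses that $T_k$ has a transitive model of the form $L_\alpha$. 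The second ingredient --- Ratajczyk's refinement of the Marek--Mostowski inner-model theorem with a restricted Class Collection --- is technically delicate but conceptually parallel to the $\KM$ case, as is the relativized version needed for the realization statement.
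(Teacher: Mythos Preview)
The paper does not prove this theorem; it is stated with attribution to Ratajczyk and no argument is given beyond the remark that ``Ratajczyk improved their methods to apply to weaker theories.'' So there is no proof in the paper to compare against directly.

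That said, your outline is exactly the shape of Ratajczyk's argument and matches the Marek--Mostowski proof of theorem~\ref{thm:least-beta-km} that the paper does present: pass to $L^N$ via the level-$k$ analogue of theorem~\ref{thm:kmp-inside-km}, unroll to a well-founded model of the appropriate restricted first-order theory, take its minimum transitive model of the form $L_\alpha$, and cut off. Your identification of the three ingredients is accurate, and your honesty that the first and third ingredients (the precise calibration of $T_k$ and the verification that $T_k$ supports the $L$-construction and condensation) are where the real work lies is appropriate --- those are indeed the technical content of Ratajczyk's paper, not things that follow for free from the $\KM$ case. As a roadmap this is correct; as a proof it defers the substantive content to the cited reference, which is precisely what the paper itself does.
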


The analogous result fails if we look at transitive models, rather than confining ourselves to $\beta$-models.

\begin{theorem}[Main Theorem \ref{min-trans-km}] \label{thm:main}
There is not a minimum transitive model of $\KM$. Moreover, for any real $r$ there is not a minimum transitive model of $\KM$ which contains $r$. Indeed, the same holds if $\KM$ is replaced by any $T$ extending $\GB + \PCA$ with the property that $T$ is an element of any transitive model of $T$. In particular, this holds for any computably axiomatizable extension of $\GB + \PCA$.
\end{theorem}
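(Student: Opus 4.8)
The plan is to prove the following uniform statement, from which every clause of the theorem follows: \emph{for every countable transitive $T$-realizable $M \models \ZF$, the structure $(M,\Xcal_T(M))$ is not a model of $T$.} Indeed, suppose $(M_0,\Xcal_0)$ were a minimum transitive model of $T$, or a minimum transitive model of $T$ containing a real $r$ (a real in a model of $\GB$ is, by amenability, necessarily an element of the first-order part, so this just forces $r \in M_0$). First, $M_0$ is countable: taking a countable elementary submodel of an arbitrary transitive model of $T$ and applying the Mostowski collapse produces a countable transitive model of $T$, which must contain $(M_0,\Xcal_0)$. Second, every $T$-realization $\Ycal$ of $M_0$ gives a transitive model $(M_0,\Ycal) \models T$, hence $\Xcal_0 \subseteq \Ycal$; since $\Xcal_0$ is itself such a $\Ycal$, we get $\Xcal_0 = \Xcal_T(M_0)$ and therefore $(M_0,\Xcal_T(M_0)) \models T$, contradicting the claim. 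Note also that, by hypothesis on $T$, we have $T \in M$ whenever $(M,\Xcal)$ is a transitive model of $T$; for computably axiomatizable $T$ this is automatic, since such $M$ has standard $\omega$.

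The first step toward the claim is to bound $\Xcal_T(M)$ inside $\Hyp(M)$ using Barwise's hard-core theorem (Theorem~\ref{thm:barwise-hard-core}). Regard $M$, with its true membership relation, as a countable structure $U$ of urelements and put $\Afrak = \Hyp(M)$. Let $T^{\ast}$ be the $\Lcal_\Afrak$-theory in the language $\{\in, E\}$ (with a unary predicate $\mathrm{Urelt}$) asserting of a structure $\Bfrak = (B,M;E,\in^M)$: the elementary diagram of $M$ (a set of $\Afrak$, since finitary satisfaction over $M$ is $\Delta_1$ over $\Hyp(M)$ and $M \in \Hyp(M)$); the $\Lcal_\Afrak$-sentences $\forall x\,(\mathrm{Urelt}(x) \iff \bigvee_{a \in M} x = \check a)$ and, for each $a \in M$, $\forall x\,(x \in \check a \iff \bigvee_{b \in a} x = \check b)$, which pin the first sort down to $M$ with its true membership relation; and the axioms of $T$, reinterpreted so that the first sort plays the role of sets and the sets of $\Bfrak$ (via $E$) play the role of classes---each such axiom is finitary, and the schemata contribute countably many instances, available because $T \in M \subseteq \Hyp(M) = \Afrak$. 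Then $T^{\ast}$ is $\Sigma_1$-definable over $\Afrak$ and it has models of the indicated form, namely codings of $(M,\Xcal)$ for the various $T$-realizations $\Xcal$ of $M$; conversely a model of $T^{\ast}$ of that form codes the $T$-realization $\{ b_E : b \in B \}$ of $M$. Hence a set $S \subseteq M$ is internal to every model of $T^{\ast}$ precisely when $S \in \Xcal_T(M)$, and Theorem~\ref{thm:barwise-hard-core} gives $\Xcal_T(M) \subseteq \Hyp(M)$.

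Now suppose for contradiction that $(M,\Xcal_T(M)) \models T$. Since $T \supseteq \GB + \PCA$, Lemma~\ref{lem:hypv-exists} provides a class $H \in \Xcal_T(M)$ which $(M,\Xcal_T(M))$ regards as $\Hyp(V)$: a well-founded (in the model's sense), extensional membership code over $M$, with $M$ as its urelements, which the model proves satisfies $\KP$. Because $M$ is transitive it has standard $\omega$, so the first-order satisfaction class for the class structure $(M;H)$ built by $(M,\Xcal_T(M))$ (available from $\PCA$) lies in $\Xcal_T(M) \subseteq \Hyp(M)$ and, by uniqueness of satisfaction classes in the admissible set $\Hyp(M)$, has the same extension as the genuine one; hence $(M;H)$ genuinely satisfies every axiom and every instance of the schemata of $\KP$. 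Next, using $\PCA$ together with the fact (Observation~\ref{obs:inner-beta-models}(1)) that well-foundedness of a class relation is $\Pi^0_1$, form the class $P = \wfp(H) \in \Xcal_T(M)$. Then $P$ is genuinely well-founded and extensional, retains $M$ as its urelements, and genuinely satisfies $\KP$ (the well-founded part of a model of $\KP$ is again a model of $\KP$), so $\mathrm{collapse}(P)$ is a transitive model of $\KP$ with $M$ as an element; consequently $\Hyp(M) \subseteq \mathrm{collapse}(P)$ and $\Ord^{\mathrm{collapse}(P)} \ge \Ord^{\Hyp(M)}$. On the other hand $P \in \Xcal_T(M) \subseteq \Hyp(M)$, and $\Hyp(M)$ is admissible, so it computes a rank function for the genuinely well-founded relation $P$ with range bounded below $\Ord^{\Hyp(M)}$, and it likewise computes the transitive collapse of $P$, whence $\Ord^{\mathrm{collapse}(P)} < \Ord^{\Hyp(M)}$. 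This contradiction establishes the claim. In the $r$-version one runs the identical argument over $\Hyp(M,r) = \Hyp(M)$ (using $r \in M$), which changes nothing.

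The main obstacle I anticipate is the honesty argument in the third step---verifying that $(M,\Xcal_T(M))$'s internal construction of $\Hyp(V)$ is faithful enough that $\wfp(H)$ genuinely codes an admissible set over $M$, rather than, say, an ill-founded Harrison-order-like structure whose well-founded part is spuriously short. This is exactly what lets $\PCA$ extract a genuine contradiction against the rank bound supplied by $\Hyp(M)$, and it is also where one sees that $\PCA$, not merely $\ETR$, is needed (cf.\ Lemma~\ref{lem:hypv-exists}). A secondary and more routine point is the bookkeeping on definability complexity in the second step, ensuring that $T^{\ast}$ really is $\Sigma_1$-definable over $\Hyp(M)$; this depends on having $T \in M$, which is precisely where the hypothesis on $T$ enters.
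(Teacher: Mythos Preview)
Your overall strategy---reduce to showing $(M,\Xcal_T(M)) \not\models T$ for countable transitive $T$-realizable $M$, bound $\Xcal_T(M)$ inside $\Hyp(M)$ via Barwise's hard-core theorem, and then derive a contradiction from the existence of a membership code for $\Hyp(V)$---matches the paper's. But there is a genuine gap in your third step.

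The problem is the claim $P = \wfp(H) \in \Xcal_T(M)$. You justify this by invoking $\PCA$ together with the fact that well-foundedness of a class relation is $\Pi^0_1$. But that $\Pi^0_1$ formula is interpreted \emph{inside} $(M,\Xcal_T(M))$, and the model believes $H$ is entirely well-founded; so the class you actually form is all of $\dom H$, not the genuine well-founded part. No formula evaluated in $(M,\Xcal_T(M))$ can isolate the external $\wfp(H)$, precisely because the model need not be a $\beta$-model. So if $H$ is genuinely ill-founded, your $P$ is not genuinely well-founded, $\Hyp(M)$ need not compute a rank function for it, and the contradiction evaporates. You correctly flag this worry in your final paragraph, but the resolution you offer does not address it.

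The paper closes this gap by first proving the \emph{full equality} $\Xcal_T(M) = \Hyp(M) \cap \powerset(M)$: the reverse inclusion, which you omit, comes from the fact that every element of $\Hyp(M) \cap \powerset(M)$ is $\Delta^1_1$ over $M$, together with $\DCA \subseteq T$. With the equality in hand, $(M,\Xcal_T(M))$ is literally the cutting-off of $\Hyp(M)$, so its unrolling is (isomorphic to) $\Hyp(M)$ itself. The membership code $H$ for $\Hyp(V)$ then corresponds to a genuine element $A \in \Hyp(M)$ which is admissible with $M \in A$, contradicting the minimality of $\Hyp(M)$. The reverse inclusion is exactly what ties the model's internal notion of well-foundedness back to $\Hyp(M)$; without it, taking well-founded parts cannot bridge the gap.
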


The original proof I found for this went through Barwise's notion of the \emph{admissible cover}. Ali Enayat privately communicated to me an alternative argument which goes through the above theorem of Barwise. With his kind permission it is his proof I present here.\footnote{The reader who wishes to see the admissible cover argument can find it in my dissertation \cite{williams-diss}, written while the current article was under review.}

The key fact to establish this theorem is encapsulated by the following lemma.

\begin{lemma}[Key Lemma; Ali Enayat] \label{lem:key}
Suppose $M \models \ZF$ is countable. Let $T \subseteq \omega$ be a second-order set theory extending $\GB + \DCA$ with one of the following properties, depending on whether $M$ is $\omega$-standard. If $M$ is $\omega$-standard, then $T \in M$; or if $M$ is $\omega$-nonstandard, then there is $t \in \powerset(\omega)^M$ so that $T$ is the standard part of $t$. Suppose $M$ is $T$-realizable. Then
\[
\Xcal_T(M) = \bigcap\{ \Xcal \subseteq \powerset(M) : (M,\Xcal) \models T \} = \Hyp(M) \cap \powerset(M).
\]
\end{lemma}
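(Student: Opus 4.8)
The plan is to prove the two inclusions separately. The inclusion $\Xcal_T(M) \subseteq \Hyp(M) \cap \powerset(M)$ is the ``hard core'' direction, and I would obtain it from Barwise's theorem \ref{thm:barwise-hard-core}; the reverse inclusion $\Hyp(M) \cap \powerset(M) \subseteq \Xcal_T(M)$ is the set-theoretic analogue of the fact that $\HYP$ is contained in every $\omega$-model of $\Delta^1_1\text{-}\axiom{CA}_0$, and I would get it from $\DCA$ together with the least-admissibility of $\Hyp(M)$. The main obstacle, discussed at the end, is the verification step in this second inclusion.

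\emph{The inclusion $\Xcal_T(M) \subseteq \Hyp(M) \cap \powerset(M)$.} View $M$ as the structure $(M;\in^M)$ of urelements and put $\Afrak = \Hyp(M)$. The key move is to write down the right $\Lcal_\Afrak$-theory $T^*$, in the language with a binary relation symbol $E$ (relating urelements to a class-sort) together with the urelement symbols, whose models of the form $\Bfrak = (B,M;E,\in^M)$ correspond, via $b \mapsto b_E = \{x \in M : x \mathbin E b\}$, exactly to the $T$-realizations for $M$, so that the internal sets of $\Bfrak$ are precisely the classes of the corresponding realization. Concretely $T^*$ asserts: (i) the set-sort is $M$, and the atomic diagram of $(M;\in^M)$ holds --- both expressible in $\Lcal_\Afrak$ because $M \in \Afrak$ and the diagram is $\Delta_1$ over $\Afrak$; (ii) $E$ relates elements of $M$ to the class-sort, and Extensionality holds for classes; and (iii) the relativised axioms of $T$ (with the set-sort playing the role of the first-order part). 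The hypothesis on $T$ --- $T \in M$ when $M$ is $\omega$-standard, or $T$ the standard part of some $t \in \powerset(\omega)^M$ when $M$ is $\omega$-nonstandard --- is exactly what is needed to see that the passage from an axiom of $T$ (coded as an element of $\omega$) to the corresponding $\Lcal_\Afrak$-sentence, and hence $T^*$ itself, is $\Sigma_1$-definable over $\Afrak$. Since $M$ is $T$-realizable, $T^*$ has a model of the required form. If now $S \in \Xcal_T(M)$, then $S \subseteq M$ is internal to every model of $T^*$, so theorem \ref{thm:barwise-hard-core} yields $S \in \Hyp(M)$.

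\emph{The inclusion $\Hyp(M) \cap \powerset(M) \subseteq \Xcal_T(M)$.} Since every $T$-realization for $M$ is in particular a model of $\GB + \DCA$, it suffices to show that every $\GB + \DCA$-realization $\Xcal$ for $M$ already contains $\Hyp(M) \cap \powerset(M)$. Fix $S \in \Hyp(M) \cap \powerset(M)$. As $\Hyp(M)$ is the least admissible set over $M$, every one of its elements is $\Sigma_1$-definable over $\Hyp(M)$ from the parameter $M$; in particular both $S$ and $M \setminus S$ are, so $S$ is $\Delta_1$ over $\Hyp(M)$ from $M$. Transcribing ``$\Hyp(M) \models \dots$'' by quantifying over well-founded, extensional class relations that code admissible end-extensions of $\Hyp(M)$ containing $M$ (as in the unrolling construction, with the well-foundedness clause being the single first-order statement isolated in observation \ref{obs:inner-beta-models}) exhibits $S$ and $M \setminus S$ as $\Sigma^1_1$-over-$M$ classes. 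One then checks that in any $(M,\Xcal) \models \GB + \DCA$ these two $\Sigma^1_1$-definitions provably partition $M$ --- using $\Sigma_1$-persistence between $\Hyp(M)$ and its admissible end-extensions, the uniqueness of iterated truth predicates, and the fact that $\GB + \DCA$ itself provides enough iterated truth predicates (below the ordinals needed to reach $\Hyp(M)$) to produce a witnessing structure --- so that $\DCA$ applies and delivers $S \in \Xcal$.

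The main obstacle is precisely this last verification: that an \emph{arbitrary} $\GB + \DCA$-realization for $M$ recognises the $\Sigma^1_1$-definitions of $S$ and of $M \setminus S$ as complementary, rather than being ``too thin'' to contain any code for an admissible end-extension of $\Hyp(M)$. I expect this to be handled by a coherence argument: the witnessing structures are determined by $\Sigma_1$-recursions that $\GB + \DCA$ can carry out far enough, and any two candidate witnesses must agree on the $\Delta_1$-definition of $S$, so the $\Sigma^1_1$-definitions cannot disagree in any model of $\GB + \DCA$. This is where the bulk of the work lies, and it is also the point most sensitive to whether $M$ is transitive: when $M$ is transitive, well-foundedness of class relations is absolute downward to $M$ and the argument is cleanest, whereas for ill-founded $M$ one must argue more syntactically (or, as in an alternative approach, route through Barwise's admissible cover of $M$). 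The $\subseteq$ direction, by contrast, is a clean application of theorem \ref{thm:barwise-hard-core} once $T^*$ is correctly set up.
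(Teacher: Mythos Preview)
Your $(\subseteq)$ direction is correct and essentially identical to the paper's: set up an $\Lcal_{\Hyp(M)}$-theory whose models correspond to $T$-realizations of $M$, verify it is $\Sigma_1$ over $\Hyp(M)$ using the hypothesis on $T$ (treating the $\omega$-standard and $\omega$-nonstandard cases separately), and apply theorem~\ref{thm:barwise-hard-core}.

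The $(\supseteq)$ direction, however, has a genuine gap---one you yourself flag as the ``main obstacle'' but do not actually resolve. Your plan is to exhibit $S$ and $M \setminus S$ as $\Sigma^1_1$ over $M$ by quantifying over class codes for admissible end-extensions of $M$, and you correctly worry that a $\GB + \DCA$-realization $\Xcal$ may be too thin to contain any such code. That worry is fatal: $\GB + \DCA$ does \emph{not} prove that $\Hyp(V)$ exists; as the paper itself remarks after sublemma~\ref{lem:hypv-exists}, even $\GB + \ETR + \DCA$ fails to prove this. So there are $T$-realizations $\Xcal$ containing no membership code for an admissible set with $M$ as an element, and in such $\Xcal$ your $\Sigma^1_1$ formulae are vacuously unsatisfied and do not partition $M$. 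Your proposed rescue---that ``$\GB + \DCA$ itself provides enough iterated truth predicates''---is simply false: $\GB + \DCA$ does not prove even $\ETR_\omega$, so it does not yield a single truth predicate for $(M,\in)$, let alone iterated ones or the $\Sigma_1$-recursions you gesture at.

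The paper avoids this detour entirely. Rather than transcribing ``$\Hyp(M) \models \cdots$'' via codes, it invokes directly Barwise's characterization (corollary~IV.3.4 of \cite{barwise1975}) that $\Hyp(M) \cap \powerset(M)$ is exactly the collection of $\Delta^1_1$-over-$M$ subsets, with only set parameters. Thus $S$ already comes equipped with a $\Sigma^1_1$ and a $\Pi^1_1$ definition, neither involving class parameters, that agree in $(M,\powerset(M))$. One then observes that such $\Sigma^1_1$ assertions are upward absolute from $(M,\Xcal)$ to $(M,\powerset(M))$ and $\Pi^1_1$ assertions downward absolute, and combines this with $\DCA$ in $(M,\Xcal)$ to conclude $S \in \Xcal$. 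The key point you missed is that the $\Delta^1_1$ definitions supplied by Barwise's theorem quantify over \emph{arbitrary} classes, not over codes for admissible sets, so the argument never requires $\Xcal$ to contain any particular witness.
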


\begin{proof}
$(\subseteq)$ We want to consider two-sorted models of the form $\Bfrak = (B,M;E)$. This argument now breaks into cases depending upon whether $M$ is $\omega$-standard. First work in the case where $M$ is $\omega$-standard. Consider the following $\Lcal_{\Hyp(M)}$-theory $\bar T$ which can hold of such a two-sorted model. Namely, $\bar T$ asserts that $M$ and $C$ make up, respectively, the sets and classes of a model of $T$, using the membership relation $E$ so that $E \rest M$ is $\in^M$. It is immediate that $(B,M;E) \models \bar T$. Note $M$ is definable by a single $\Lcal_{\Hyp(M)}$ sentence, and so we get that $\bar T$ really can express that the first-order part of the model is $M$. Similarly, we can express as a single $\Lcal_{\Hyp(M)}$ assertion that $E \rest M$ is $\in^M$. Combined with the assumption that $T \in M$ we get that $\bar T \in \Hyp(M)$, and in particular is $\Sigma_1$-definable over $\Hyp(M)$. Observe that any $(M,\Xcal) \models T$ gives rise to $\Bfrak \models \bar T$ in the obvious manner. So if $S \subseteq M$ is an element of every $T$-realization for $M$ then $S$ is internal for every $\Bfrak \models \bar T$. So by Barwise's theorem \ref{thm:barwise-hard-core} we get that $S \in \Hyp(M)$. 

Now let us consider the case when $M$ is $\omega$-nonstandard. In this case, we want $\bar T$ to be the $\Lcal_{\Hyp(M)}$-theory asserting that $M$ and $C$ make up, respectively, the sets and classes of a model of the standard part of $t$, using the membership relation $E$ so that $E \rest M$ is $\in^M$. Again, as a single $\Lcal_{\Hyp(M)}$-sentence we can express that the first-order part really is $M$ and that $E \rest M$ is $\in^M$. So to see that $\bar T \in \Hyp(M)$, and thus that $\bar T$ really is a $\Lcal_{\Hyp(M)}$-theory, we only have to see that the standard part of $t$ is in $\Hyp(M)$. First, observe that since $\Hyp(M)$ has an infinite set as an element, namely $M$ itself, $\Hyp(M)$ must contain $\omega$ as an element. Therefore, $\Hyp(M)$ can see the unique embedding of $\omega$ onto an initial segment of $\omega^M$. From this embedding $\Hyp(M)$ can extract the standard part of any $x \in M$ so that $M \models x \subseteq \omega^M$. The rest of the argument is now the same: If $S \subseteq M$ is an element of every $T$-realization for $M$ then $S$ is internal for every $\Bfrak \models \bar T$ and so by Barwise's theorem \ref{thm:barwise-hard-core} we get that $S \in \Hyp(M)$.

$(\supseteq)$ This argument is the same whether or not $M$ is $\omega$-standard. Take $S \in \Hyp(M) \cap \powerset(M)$. Then $S$ is $\Delta^1_1$ over $M$ without class parameters, but possibly allowing parameters from $M$; see \cite[corollary IV.3.4]{barwise1975}. Observe that $\Sigma^1_1$ definitions (without class parameters) over $M$ are upward absolute. That is, for $a \in M$ and $\Xcal \subseteq \powerset(M)$, if $\phi(x)$ is $\Sigma^1_1$ then $(M,\Xcal) \models \phi(a)$ implies $(M,\powerset(M)) \models \phi(a)$. This is because any witness in $\Xcal$ to $\phi(a)$ must also be in $\powerset(M)$. And dually, $\Pi^1_1$ definitions (without class parameters) over $M$ are downward absolute; if $\phi(x)$ is $\Pi^1_1$ then $(M,\powerset(M)) \models \phi(a)$ implies $(M,\Xcal) \models \phi(a)$.
Let $\Xcal$ be any $T$-realization for $M$. Then, combining the absoluteness facts with the fact that $T$ includes $\Delta^1_1$-Comprehension we get that $S \in \Xcal$, completing the argument.
\end{proof}

\begin{proof}[Proof of theorem \ref{thm:main} (Ali Enayat)]
Fix a real $r$. If $T$ has no transitive model containing $r$, then the conclusion is trivially satisfied. So assume we are in the case where there is a transitive model of $T$ containing $r$. Suppose toward a contradiction that $(M,\Xcal)$ is the minimum transitive model of $T$ which contains $r$. By a L\"owenheim--Skolem argument it must be that $M$ is countable. So by the key lemma we get that $\Xcal = \Hyp(M) \cap \powerset(M)$. Let us see that this is in fact impossible, giving the desired contradiction.

Recall from the discussion at the end of section \ref{sec:prelim} that $T$ is strong enough to carry out the unrolling construction, working with membership codes for `sets' of rank $> \Ord$. As I remarked, the unrolling and cutting off constructions are inverses. Applied to this case, we get that cutting off $\Hyp(M)$ to get $(M,\Xcal)$ and then unrolling produces an isomorphic copy of $\Hyp(M)$. In particular, we get that $(M,\Xcal)$ unrolls to $\Hyp(M)$. 

In that earlier discussion we were interested in the structure obtained by looking at all of these membership codes, but here we are also interested in properties of individual membership codes. Specifically, given a membership code which represents a transitive set\footnote{To be clear, $E$ is a membership code for a transitive set if $a \mathbin E b \mathbin E m_E$, where $m_E$ is the maximum element of $E$, implies $a \mathbin E m_E$.}
we can ask which set theoretic axioms it satisfies. We can sensibly formulate this, since given the truth predicate $\Tr(E)$ we can extract a truth predicate for the transitive set coded by $E$. More specifically, we can ask whether $E$ represents a transitive model of $\KP$. And we can also ask whether the canonical membership code representing $M$ is a member of $E$. That is, we can query whether there is $a \mathbin E m_E$ so that $E$ restricted to below $a$ is isomorphic to $\in^M$. In summary, we can ask whether there is coded in $\Xcal$ an admissible set which contains $M$ as an element.

Let ``$\Hyp(V)$ exists'' be the statement in second-order set theory asserting that there is a membership code for a transitive model of $\KP$ which contains the class of all sets as an element. If there is any such membership code, there there is a minimum up to isomorphism such membership code. This is then the membership code for $\Hyp(V)$.

\begin{sublemma} \label{lem:hypv-exists}
$\GB + \PCA$ proves that $\Hyp(V)$ exists.
\end{sublemma}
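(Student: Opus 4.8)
The plan is to realize $\Hyp(V)$ concretely as a single membership code, namely one for a relativized constructible hierarchy $L_\alpha(\widehat V)$, where $\widehat V$ is a canonical membership code for the class of all sets and $\alpha$ is the least ordinal — which here will be a class well-order of length $>\Ord$ — at which this hierarchy becomes admissible. First I would fix $\widehat V$: take the class relation on $V\sqcup\{t\}$, with $t$ a fresh point, where $x\mathbin{\widehat E}y$ iff $x\in y$ for $x,y\in V$ and $x\mathbin{\widehat E}t$ for every $x\in V$. By Foundation and Extensionality in the first-order part this is a well-founded, extensional class relation with a maximum element, i.e.\ a membership code, and it is definable, so it lies in $\Xcal$; it codes a transitive set whose members are, up to isomorphism, exactly the codes $\mathord\in\rest\tc(\{x\})$ for $x\in V$. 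Next, since $\GB+\PCA$ proves $\ETR$, the unrolling machinery lets me build the hierarchy $L_\Gamma(\widehat V)$ along class well-orders: for each class well-order $\Gamma$ there is, by an elementary transfinite recursion of rank $\Gamma$, a membership code $E_\Gamma$ for $L_\Gamma(\widehat V)$, and these cohere, $E_\Delta$ being isomorphic to an initial segment of $E_\Gamma$ for $\Delta<\Gamma$. Each $E_\Gamma$ is a single class; the proper hyperclass "$\bigcup_\Gamma L_\Gamma(\widehat V)$" of all sets constructible over $\widehat V$ is only ever handled through this uniform definition, never formed as an object.

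The heart of the argument is to produce a class well-order $\alpha$ with $L_\alpha(\widehat V)\models\KP$; this is the internal analogue of the fact that every set belongs to some admissible set. I would run the standard $\Sigma_1$-reflection construction: set $\gamma_0=\Ord$, and given $\gamma_n$ let $\gamma_{n+1}$ be the supremum of the least class well-orders $\Delta$ such that $L_\Delta(\widehat V)$ contains a witness for some $\Sigma_1$ formula, with parameters drawn from $L_{\gamma_n}(\widehat V)$, that is satisfiable somewhere along the hierarchy; finally put $\gamma_\omega=\bigcup_n\gamma_n$. A routine computation then shows $L_{\gamma_\omega}(\widehat V)$ satisfies $\Sigma_0$-Collection, and since levels of $L(\widehat V)$ automatically satisfy Extensionality, Pairing, Union, $\Sigma_0$-Separation and $\Pi_1$-Foundation, this gives $L_{\gamma_\omega}(\widehat V)\models\KP$.

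The subtle point — and the only place where full $\PCA$, rather than merely $\ETR$, is required — is the assembly of the "least witnessing level" function used to define $\gamma_{n+1}$: the condition "$\Delta$ is the least class well-order with $L_\Delta(\widehat V)\models\exists v\,\psi(p,v)$" carries an embedded universal class quantifier ranging over would-be smaller well-orders, so it is $\Pi^1_1$ (not elementary), and it is $\Pi^1_1$-Comprehension that produces this class and hence the $\omega$-step iteration. This is precisely the analogue, one level up, of the remark in Section~\ref{sec:prelim} that $\PCA$ is used to show there is no least failure of a partial solution to an elementary recursion — and it explains why $\GB+\ETR$ does not suffice here. Having carried out the iteration, by Elementary Foundation I may then take $\alpha$ minimal with $L_\alpha(\widehat V)$ admissible.

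Finally, the membership code $E_\alpha$ for $L_\alpha(\widehat V)$ is the desired witness that $\Hyp(V)$ exists: levels of $L(\widehat V)$ are transitive sets, so $E_\alpha$ codes a transitive set; $L_\alpha(\widehat V)\models\KP$; and $\widehat V\in L_\alpha(\widehat V)$ because $\alpha$ is well past the initial stages, so the class of all sets is a member of the coded structure. Minimality of $\alpha$ makes $E_\alpha$ least up to isomorphism, so it is the code for $\Hyp(V)$. I expect the main obstacle to be making the reflection argument of the second and third paragraphs fully rigorous inside the membership-code calculus: producing the $\Pi^1_1$-defined function of least witnessing levels via $\PCA$ and verifying that the resulting $\gamma_\omega$ genuinely closes off to an admissible level, all while scrupulously manipulating only individual membership codes, since the constructible-over-$V$ universe is a proper hyperclass and cannot itself be taken as a parameter.
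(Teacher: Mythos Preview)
Your overall architecture---build the relativized constructible hierarchy $L_\Gamma(\widehat V)$ over a membership code for $V$ and find an admissible level by $\Sigma_1$-reflection---matches the paper's strategy. But the paper does something you skip, and that omission is exactly where your argument breaks.

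The paper's proof (given only from $\KM$, with the $\GB+\PCA$ case deferred to Antos--Barton--Friedman and the author's dissertation) first invokes theorem~\ref{thm:kmp-inside-km} to assume Class Collection without loss, then works \emph{inside the unrolled structure}, which now satisfies $\ZFmi$. There the reflection schema for the $L(A)$-hierarchy is available because $\ZFm$ has full Collection, and one immediately obtains an admissible $L_\alpha(V_\kappa)$. For $\GB+\PCA$ one uses Ratajczyk's analogue of theorem~\ref{thm:kmp-inside-km} to the same effect. The whole point of the detour through Class Collection is that the supremum step in the reflection argument \emph{is} a Collection step.

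Your argument tries to run the reflection construction directly in the base model and claims that $\PCA$ produces the ``least witnessing level'' function. This is a category error. The condition ``$\Delta$ is the least class well-order with $L_\Delta(\widehat V)\models\exists v\,\psi(p,v)$'' is indeed $\Pi^1_1$, but it is $\Pi^1_1$ \emph{as a predicate of the class $\Delta$}. $\Pi^1_1$-Comprehension forms classes from $\Pi^1_1$ predicates of \emph{sets}; it does not collect a proper-class-indexed family of classes into a single class. What you need to form $\gamma_{n+1}=\sup_{p,\psi}\Delta_{p,\psi}$ is precisely a uniform choice of representatives $\Delta_{p,\psi}$ packaged into one class---i.e., an instance of Class Collection. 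Without it, there is no guarantee the supremum exists as a class well-order at all, and indeed the paper notes that $\GB+\ETR+\DCA$ (which already has your $\ETR$-based machinery) does \emph{not} prove $\Hyp(V)$ exists. So the gap is genuine: you must either first pass to a submodel with (a fragment of) Class Collection, as the paper does, or find a genuinely different way to bound the witnessing levels---your current invocation of $\PCA$ does not do the job.
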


This lemma can also be found as \cite[theorem 64]{antos-barton-friedman2018}. Antos, Barton, and Friedman also show that the classes coded as elements of $\Hyp(V)$ form a model of $\GB + \DCA$, whence they conclude that $\GB + \DCA$ does not prove that $\Hyp(V)$ exists. One can check that this model also satisfies $\ETR$, and so $\GB + \DCA + \ETR$ does not prove that $\Hyp(V)$ exists.
For the benefit of the reader, I provide a proof of this result here from the assumption of $\KM$. The reader who wants a proof from the weaker theory should consult Antos, Barton, and Friedman, or see my dissertation \cite[lemma 4.24]{williams-diss}.

\begin{proof}[Proof from $\KM$]
By theorem \ref{thm:kmp-inside-km} we may assume Class Collection without loss. Now work in the unrolling, which satisfies $\ZFmi$. Let $\kappa$ denote the largest cardinal in the unrolled structure, so that $V_\kappa$ in the unrolling is the $V$ of the ground model. Note that $\ZFm$ proves the reflection schema relative to the $L(A)$ hierarchy for any $A$. That is, for each formula $\phi(x,a)$ with a parameter $a$, $\ZFm$ proves that there are cofinally many ordinals $\alpha$ so that for all $x \in L_\alpha(A)$ we have $L_\alpha(A) \models \phi(x,a)$ if and only if $\phi(x,a)^{L(A)}$. This is proved via the usual argument, but using the $L(A)$ hierarchy instead of the $V$ hierarchy. Since $\ZFm$ includes $\KP$ and ``$N \models \KP$'' can be expressed as a single formula in the language of set theory, we get that in the unrolling there is an ordinal $\alpha > \Ord^M$ so that $L_\alpha(V_\kappa) \models \KP$. Thus, in the ground model satisfying $\KM$ $+$ Class Collection we get that there is a membership code for a transitive model of $\KP$ which contains the class of all sets as an element.
\end{proof}

By this lemma, there is a membership code which $(M,\Xcal)$ thinks represents $\Hyp(M)$ inside $\Xcal = \Hyp(M) \cap \powerset(M)$. Thus, if we unroll $\Xcal$ to get $\Hyp(M)$ then we find that there is $A \in \Hyp(M)$ which is a transitive model of $\KP$ with $M \in A$. This contradicts that $\Hyp(M)$ is the minimum such transitive model of $\KP$. Therefore, our original assumption that $(M,\Xcal)$ is the minimum transitive model of $T$ must be wrong. So $T$ does not have a minimum transitive model. 
\end{proof}

Note that we actually proved the following stronger result.

\begin{corollary}
Let $M \models \ZF$ be countable and transitive and suppose $T \in M$ is an extension of $\GB + \PCA$. Then $M$ does not have a minimum $T$-realization.
\end{corollary}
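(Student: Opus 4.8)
The plan is to recognize that this corollary is essentially a localized restatement of what the proof of theorem \ref{thm:main} already established: instead of quantifying over all transitive models of $T$, we fix the first-order part $M$ and examine the poset of its $T$-realizations. First I would dispose of the trivial case: if $M$ is not $T$-realizable, then it has no $T$-realization at all, hence a fortiori no minimum one, and we are done. So from now on assume $M$ is $T$-realizable.

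Next I would invoke the Key Lemma (lemma \ref{lem:key}). Since $M$ is transitive it is $\omega$-standard, and by hypothesis $T \in M$; moreover $T$ extends $\GB + \PCA$, which in turn extends $\GB + \DCA$ (as $\Pi^1_1$-Comprehension subsumes $\Delta^1_1$-Comprehension). Thus the hypotheses of the Key Lemma are satisfied and we conclude
\[
\Xcal_T(M) = \bigcap\{ \Xcal \subseteq \powerset(M) : (M,\Xcal) \models T \} = \Hyp(M) \cap \powerset(M).
\]
Now suppose toward a contradiction that $M$ has a minimum $T$-realization $\Xcal$. Being the least element of the poset of $T$-realizations for $M$, it must equal the intersection of all of them, so $\Xcal = \Xcal_T(M) = \Hyp(M) \cap \powerset(M)$, and in particular $(M,\Xcal) \models T$.

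The final step derives a contradiction exactly as in the proof of theorem \ref{thm:main}. Since $T$ extends $\GB + \PCA$, it is strong enough to carry out the unrolling construction, and by sublemma \ref{lem:hypv-exists} it proves that $\Hyp(V)$ exists; hence $(M,\Xcal)$ contains a membership code for a transitive model of $\KP$ having $M$ as an element. Because unrolling and cutting off are mutually inverse, unrolling $\Xcal = \Hyp(M) \cap \powerset(M)$ yields an isomorphic copy of $\Hyp(M)$, and under this identification the membership code in question becomes an honest element $A \in \Hyp(M)$ that is a transitive (hence admissible) set with $M \in A$. But $A \in \Hyp(M)$ forces $A \subsetneq \Hyp(M)$, contradicting that $\Hyp(M)$ is the smallest admissible set containing $M$. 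Therefore $(M,\Xcal_T(M)) \not\models T$, and $M$ has no minimum $T$-realization.

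I do not expect a genuinely new obstacle here beyond what theorem \ref{thm:main} already overcomes; the only point that needs care is the bookkeeping that the passage from ``there is a membership code for an admissible set containing $M$ coded in $\Xcal$'' to ``there is an element of $\Hyp(M)$ that is admissible and contains $M$'' is legitimate, and this is precisely the inverse relationship between the unrolling and cutting-off constructions already used in the proof of theorem \ref{thm:main}.
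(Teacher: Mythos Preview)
Your proposal is correct and follows essentially the same approach as the paper's proof: assume a minimum $T$-realization exists, apply the Key Lemma to identify it with $\Hyp(M)\cap\powerset(M)$, and then use that $T$ proves $\Hyp(V)$ exists together with the unrolling/cutting-off correspondence to produce an admissible set $A\in\Hyp(M)$ with $M\in A$, contradicting the minimality of $\Hyp(M)$. The paper's proof is simply a terser version of what you wrote.
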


\begin{proof}
Suppose towards a contradiction that $\Xcal$ is the minimum $T$-realization for $M$. By lemma~\ref{lem:key}, it must be that $\Xcal = \Hyp(M) \cap \powerset(M)$. But because $T$ proves $\Hyp(V)$ exists we get $A \in \Hyp(M)$ which is an admissible set containing $M$ as an element, a contradiction.
\end{proof}

This result also holds in case $M$ is not transitive. (But note that if $M$ is $\omega$-nonstandard then instead of asking that $T \in M$ we have to ask that $T$ is the standard part of an $M$-real $t \in \powerset(\omega)^M$.) See \cite[corollary 4.26]{williams-diss}.

A natural next question is to weaken minimum to minimal, and ask whether there are minimal transitive models of $\KM$. This is answered by the following theorem.\footnote{I thank the referee for pointing out to me the forward direction of the argument for this theorem.}

\begin{theorem} \label{thm:when-min-rlzn}
Let $M \models \ZF$ be countable. Then $M$ has a minimal $\KM$-realization if and only if $M$ has a $\beta$-$\KM$-realization.
\end{theorem}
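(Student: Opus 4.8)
The plan is to treat the two directions separately; the backward implication is short, and the forward implication is where the real content lies.

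For the direction ``$M$ is $\beta$-$\KM$-realizable $\Rightarrow$ $M$ has a minimal $\KM$-realization'': I would invoke Marek and Mostowski's theorem \ref{thm:mm-min-beta-km-rlzn} to obtain the minimum $\beta$-$\KM$-realization $\Xcal = \Xcal_{\beta,\KM}(M)$, which satisfies $\KM$. Then I claim $\Xcal$ is minimal among \emph{all} $\KM$-realizations: if $\Ycal \subseteq \Xcal$ with $(M,\Ycal) \models \KM$, then in particular $(M,\Ycal) \models \GB$, so by observation \ref{obs:inner-beta-models}$(1)$ the model $(M,\Ycal)$ is again a $\beta$-model, hence a $\beta$-$\KM$-realization, hence contains the minimum $\beta$-$\KM$-realization; thus $\Ycal = \Xcal$. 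This finishes the easy direction.

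For the direction ``$M$ has a minimal $\KM$-realization $\Rightarrow$ $M$ is $\beta$-$\KM$-realizable'': let $\Xcal$ be a minimal $\KM$-realization for $M$, and aim to show that $\Xcal$ is itself a $\beta$-model. First, by theorem \ref{thm:kmp-inside-km} (applied with $N = M$) and minimality, I may assume $(M,\Xcal) \models \KM + \text{Class Collection}$, and so pass to its unrolling: a model $W \models \ZFmi$ whose largest cardinal $\kappa$ satisfies ${V_\kappa}^W = M$ and whose rank-$\kappa$ sets code exactly $\Xcal$, in such a way that cutting off any inner model $N$ of $W$ with ${V_\kappa}^N = M$ yields a $\KM$-realization for $M$ contained in $\Xcal$. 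If $W$ is well-founded, its transitive collapse exhibits $\Xcal$ as a $\beta$-$\KM$-realization and we are done, so the crux is to rule out $W$ being ill-founded. Here I would use the Key Lemma \ref{lem:key}: since $\Xcal_{\KM}(M) = \Hyp(M) \cap \powerset(M)$ is not a $\KM$-realization (because $\GB + \PCA$ proves ``$\Hyp(V)$ exists'' by lemma \ref{lem:hypv-exists}, whereas $\Hyp(M)$ contains no admissible set having $M$ as an element), $\Xcal$ properly contains $\Hyp(M) \cap \powerset(M)$; when $M$ is well-founded this forces $\wfp(W)$ to be a genuine admissible set strictly taller than $\Hyp(M)$. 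The goal is then to convert that excess into a \emph{proper} inner model $N \models \ZFmi$ of $W$ with ${V_\kappa}^N = M$, whose cut-off $\Ycal \subsetneq \Xcal$ contradicts minimality. The case where $M$ is ill-founded (in particular when $M$ is $\omega$-nonstandard) must be handled separately: there $M$ is not $\beta$-$\KM$-realizable at all, so instead one shows directly that $\Xcal$ is not minimal, again by trimming the unrolling of $(M,\Xcal)$.

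The hard part will be constructing the proper inner model $N \subseteq W$ with the same $V_\kappa$. The standard inner models do not cooperate: $L(M)^W$ can acquire new subsets of small sets (so $V_\kappa$ grows), $\HOD^W$ can shed them when $M \not\models V = \HOD$ (so $V_\kappa$ shrinks), and a proper elementary submodel of $W$ over $M$ need not exist, since $W$ may be pointwise definable. The promising alternative is to exploit ill-foundedness directly, using that $\wfp(W)$ is a genuine admissible set properly extending $\Hyp(M)$, or else to recast the whole argument through Barwise compactness over $\Hyp(M)$, producing a second, smaller model of the $\Lcal_{\Hyp(M)}$-theory asserting ``$\mathcal{W} \models \ZFmi$ with ${V_\kappa}^{\mathcal{W}} = M$''. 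Verifying that some $\KM$-realization genuinely survives this trimming, together with the $\omega$-nonstandard bookkeeping already present in the Key Lemma, is where I expect the difficulty to concentrate.
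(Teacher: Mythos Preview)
Your backward direction is correct and matches the paper exactly.

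For the forward direction, you have the right architecture---pass to the unrolling $W$ and argue that if $W$ is ill-founded you can find a strictly smaller $\KM$-realization---but you are missing the one tool that makes this work, and the substitutes you propose do not close the gap. The paper uses a theorem of Quinsey (\cite[corollary~6.7]{quinsey1980}): for any fixed $n$, every countable \emph{nonstandard} $W \models \ZFm$ has a proper $\Sigma_n$-elementary end-submodel $U$. Applied with $n$ large enough that ``$\kappa$ is the largest cardinal'' is preserved, this gives $U \models \ZFmi$ with ${V_\kappa}^U = {V_\kappa}^W = M$ and ${V_{\kappa+1}}^U \subsetneq {V_{\kappa+1}}^W$, so cutting off yields $\Ycal \subsetneq \Xcal$ with $(M,\Ycal) \models \KM$. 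This is precisely the ``proper inner model with the same $V_\kappa$'' you were looking for, and it directly answers your worry about pointwise definability: Quinsey gives only $\Sigma_n$-elementarity for a single external $n$, which a pointwise-definable nonstandard $W$ does not obstruct.

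Your proposed route through the Key Lemma and $\Hyp(V)$ does not lead anywhere here. Knowing that $\Xcal$ properly contains $\Hyp(M) \cap \powerset(M)$, or that $\wfp(W)$ is admissible and strictly taller than $\Hyp(M)$, does not by itself produce an $N \models \ZFmi$ inside $W$ with the same $V_\kappa$; you still need an end-extension/reflection principle for nonstandard models, which is exactly what Quinsey supplies. The Barwise-compactness idea you float would at best produce \emph{some other} model with first-order part $M$, not a submodel of the given $\Xcal$, so it does not witness non-minimality of $\Xcal$. In short: the missing ingredient is Quinsey's theorem, and once you invoke it the argument is two lines; without it, the ``hard part'' you flag remains genuinely open in your write-up.
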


\begin{proof}
$(\Leftarrow)$ By theorem~\ref{thm:mm-min-beta-km-rlzn} if $M$ has a $\beta$-$\KM$-realization it must have a minimum $\beta$-$\KM$-realization. Then by observation~\ref{obs:inner-beta-models} this minimum $\beta$-$\KM$-realization must be a minimal $\KM$-realization for $M$.

$(\Rightarrow)$ This direction goes through a theorem due to Quinsey.

\begin{theorem*}[Quinsey {\cite[corollary 6.7]{quinsey1980}}]
Fix $n \in \omega$ and let $W \models \ZFm$ be countable and nonstandard. Then there is $U \models \ZFm$ so that $U \prec_{\Sigma_n} W$ is strictly end-extended by $W$.
\end{theorem*}

The argument goes by the contrapositive. Suppose that $M$ has no $\beta$-$\KM$-realization. If $M$ is not $\KM$-realizable at all, then the conclusion is trivial. For the substantive case, suppose $(M,\Xcal) \models \KM$. If $\Xcal$ were uncountable, then by the downward L\"owenheim--Skolem theorem it could not be a minimal $\KM$-realization for $M$. In the case where $\Xcal$ is countable, let $W \models \ZFmi$ be the unrolling of $(M,\Xcal)$. Then $W$ is nonstandard, as $(M,\Xcal)$ is wrong about well-foundedness. Because being the largest cardinal of a model of $\ZFm$ is a definable property, Quinsey's theorem gives us $U \models \ZFmi$ which is strictly end-extended by $W$ so that $U$ and $W$ agree on $V_\kappa$, with $\kappa$ the largest cardinal of the model. Thus, $(M,\Ycal) = ({V_\kappa}^U,{V_{\kappa+1}}^U) \models \KM$ is a strict submodel of $(M,\Xcal)$. Therefore, $\Xcal$ is not a minimal $\KM$-realization for $M$, as desired.
\end{proof}

\begin{corollary}
There is no minimal transitive model of $\KM$. That is, if $(M,\Xcal) \models \KM$ then there is $(N,\Ycal) \models \KM$ a strict submodel of $(M,\Xcal)$. 
\end{corollary}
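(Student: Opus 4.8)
The plan is to show that every transitive $(M,\Xcal)\models\KM$ has a transitive $\KM$-model strictly below it, meaning with first-order part a subset of $M$ and class part a subset of $\Xcal$, at least one inclusion proper. First I would reduce to the case that $(M,\Xcal)$ is countable: passing to a countable elementary submodel and taking its Mostowski collapse produces a countable transitive model of $\KM$, and when $(M,\Xcal)$ was uncountable this already gives a strictly smaller transitive model of $\KM$, since the collapsed model has countable ordinal height and so sits inside $M$. With $(M,\Xcal)$ countable and transitive I would then split into two cases according to whether it is a $\beta$-model.

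If $(M,\Xcal)$ is not a $\beta$-model, then, since $\KM$ is strong enough to carry out the unrolling construction, its unrolling is a countable model $W\models\ZFmi$ which is ill-founded --- precisely because $(M,\Xcal)$ is wrong about well-foundedness for some class relation. This is exactly the situation handled inside the proof of Theorem~\ref{thm:when-min-rlzn}: applying Quinsey's theorem to $W$, with the $\Sigma_n$ taken large enough that ``$\kappa$ is the largest cardinal'' and the definition of $V_\kappa$ are captured, yields $U\models\ZFmi$ which $W$ strictly end-extends and which agrees with $W$ below $\kappa$; cutting $U$ off at $\kappa$ gives $(M,\Ycal)\models\KM$ with $\Ycal\subsetneq\Xcal$, and this is transitive because its first-order part is the transitive set $M$. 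That is the desired strict submodel.

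If $(M,\Xcal)$ is a $\beta$-model I would first try to shrink it: if $\Xcal_{\beta,\KM}(M)\subsetneq\Xcal$ we win via Theorem~\ref{thm:mm-min-beta-km-rlzn}, and if the realization of $L^M$ for $\KMC$ plus Class Collection supplied by Theorem~\ref{thm:kmp-inside-km} is a proper submodel of $(M,\Xcal)$ we win too; so by Theorem~\ref{thm:least-beta-km} we may assume $(M,\Xcal)$ is the minimum $\beta$-model of $\KM$, of the form $(L_\kappa,\powerset(L_\kappa)\cap L_\alpha)$ with $L_\alpha$ the minimum transitive model of $\ZFCmi$ and $\kappa$ its largest cardinal (which, by minimality of $L_\alpha$, is the least inaccessible of $L_\alpha$). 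Now $L_\alpha$, being transitive, correctly computes heights of transitive models of $\KM$ and knows that one exists; running Marek and Mostowski's argument \cite[theorem 3.2]{marek-mostowski1975} inside $L_\alpha$ shows that the least height of a transitive model of $\KM$ lies strictly below the least height of a $\beta$-model, hence below $\kappa$. Thus $L_\alpha$ contains as an element a transitive model $(N,\Ycal_1)\models\KM$ with $\Ord^N<\kappa$; replacing it by its $L$-reduct (Theorem~\ref{thm:kmp-inside-km} carried out inside $L_\alpha$) we may take $N=L_\tau$ for some $\tau<\kappa$ and $\Ycal_1\in L_\alpha$. Then $L_\tau\subsetneq L_\kappa=M$ and $\Ycal_1\subseteq\powerset(L_\tau)\cap L_\alpha\subseteq\powerset(L_\kappa)\cap L_\alpha=\Xcal$, so $(L_\tau,\Ycal_1)$ is a transitive $\KM$-model strictly inside $(M,\Xcal)$, completing the argument.

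I expect the $\beta$-model case to be the main obstacle, and within it the delicate point is arranging a short transitive model of $\KM$ to be \emph{coded inside} $L_\alpha$, not merely to exist below $\kappa$. Naive constructions of an ill-founded model of $\ZFmi$ whose well-founded part reaches through $\kappa$ --- ultrapowers of $L_\alpha$, or compactness with a descending sequence of ordinals placed above $\kappa$ --- leak ill-foundedness below $\kappa$. So one must either verify carefully that Marek and Mostowski's height inequality is provable in the theory of $L_\alpha$, so the short model is manufactured within $L_\alpha$'s own constructible hierarchy as above, or instead run a Barwise-compactness argument over the admissible set $L_\alpha$, checking both that the relevant $\Lcal_{L_\alpha}$-theory is $\Sigma_1$ over $L_\alpha$ and that the resulting model's classes can be kept inside $L_\alpha$ --- the admissible-set machinery the paper has already developed for Theorem~\ref{thm:main}.
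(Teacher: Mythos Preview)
Your overall strategy coincides with the paper's: handle non-$\beta$-models via Quinsey's theorem (exactly the $(\Rightarrow)$ direction of Theorem~\ref{thm:when-min-rlzn}) and $\beta$-models via Marek--Mostowski's theorem~3.2. The paper packages this more tightly: assuming $(M,\Xcal)$ is a minimal transitive model, $\Xcal$ must be a minimal $\KM$-realization for $M$; Theorem~\ref{thm:when-min-rlzn} then forces $(M,\Xcal)$ to be a $\beta$-model; and Marek--Mostowski's result applies to \emph{every} $\beta$-model of $\KM$, producing a transitive model of $\KM$ as a set inside $M$. Your reduction in the $\beta$-model case to the minimum $\beta$-model $(L_\kappa,\powerset(L_\kappa)\cap L_\alpha)$, and the subsequent worry about manufacturing a short model \emph{inside} $L_\alpha$, is unnecessary work: Marek--Mostowski already gives the submodel as an element of $M$, and elements of $M$ are automatically classes in $\Xcal$.

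There is a genuine gap in your reduction to the countable case. You claim that the Mostowski collapse $(N,\Ycal)$ of a countable $(M',\Xcal')\prec(M,\Xcal)$ ``sits inside $M$'' because $N$ has countable ordinal height. This does not follow: countable ordinal height does not force $N\subseteq M$, since $M$ need not contain every countable transitive set (consider $M=L_\kappa$ in a universe with nonconstructible reals), and the collapse map is built from $M'$, which is chosen in $V$ and need not be in $M$. Even if $N\subseteq M$, there is no reason each collapsed class $\pi[X\cap M']$ should lie in $\Xcal$. The paper sidesteps this by arguing directly with Theorem~\ref{thm:when-min-rlzn} rather than attempting an explicit collapse; note, though, that the countability hypothesis in that theorem is itself being used tacitly, so the uncountable case deserves a word in either proof.
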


\begin{proof}
If $(M,\Xcal)$ were to be a minimal transitive model of $\KM$, then $\Xcal$ would have to be a minimal $\KM$-realization for $M$. In such a case, by the previous theorem it must be that $(M,\Xcal)$ is a $\beta$-model. Now apply Marek and Mostowski's result \cite[theorem 3.2]{marek-mostowski1975} that every $\beta$-model of $\KM$ contains a transitive model of $\KM$ as a set. This then gives that there is $(N,\Ycal) \in M$ a transitive model of $\KM$, witnessing that $(M,\Xcal)$ cannot be a minimal transitive model of $\KM$.
\end{proof}

\printbibliography

\end{document}